\def\acts{\ \rotatebox[origin=c]{-90}{$\circlearrowright$}\ }
\def\racts{\ \rotatebox[origin=c]{90}{$\circlearrowleft$}\ }
\theoremstyle{plain}
    \newtheorem{thm}{Theorem}[section]
    \newtheorem{claim}[thm]{Claim}
     \newtheorem{conjecture}[thm]{Conjecture}
    \newtheorem{corollary}[thm]{Corollary}
    \newtheorem{lemma}[thm]{Lemma}
    \newtheorem{proposition}[thm]{Proposition}
    \newtheorem{question}[thm]{Question}
    \newtheorem{theorem}[thm]{Theorem}
\theoremstyle{definition}
    \newtheorem{example}[thm]{Example}
    \newtheorem{definition}[thm]{Definition}
    \newtheorem*{notation*}{Notation and Terminology}
    \newtheorem*{convention*}{Convention}
    \newtheorem{remark}[thm]{Remark}
\theoremstyle{remark}
\newcommand{\C}{\mathbb{C}}
\newcommand{\Q}{\mathbb{Q}}
\newcommand{\R}{\mathbb{R}}
\newcommand{\Z}{\mathbb{Z}}
\newcommand{\Amp}{\operatorname{Amp}}
\newcommand{\Exc}{\operatorname{Exc}}
\newcommand{\id}{\operatorname{id}}
\newcommand{\NE}{\overline{\operatorname{NE}}}
\newcommand{\Nef}{\operatorname{Nef}}
\newcommand{\NS}{\operatorname{NS}}
\newcommand{\PE}{\operatorname{PE}}
\newcommand{\Sing}{\operatorname{Sing}}
\newcommand{\Supp}{\operatorname{Supp}}
\newcommand{\N}{\operatorname{N}}
\newcommand{\Sym}{\operatorname{Sym}}
\newcommand{\Pic}{\operatorname{Pic}}
\newcommand{\red}{\mathrm{red}}
\begin{document}

\title[Endomorphisms of projective threefolds]
{Structures theorems and applications of non-isomorphic surjective endomorphisms of smooth projective threefolds}

\author{Sheng Meng, De-Qi Zhang}

\address{
    \textsc{School of Mathematical Sciences, Ministry of Education Key Laboratory of Mathematics and Engineering Applications \& 
    Shanghai Key Laboratory of PMMP}\endgraf
    \textsc{East China Normal University, Shanghai 200241, China}\endgraf
}
\email{smeng@math.ecnu.edu.cn}

\address
{
\textsc{Department of Mathematics} \endgraf
\textsc{National University of Singapore,
Singapore 119076, Republic of Singapore
}}
\email{matzdq@nus.edu.sg}

\begin{abstract}
Let $f:X\to X$ be a non-isomorphic (i.e., $\deg f>1$) surjective endomorphism of a smooth projective threefold $X$.
We prove that any birational minimal model program becomes $f$-equivariant after iteration, provided that $f$ is  $\delta$-primitive. Here $\delta$-primitive means that there is no $f$-equivariant (after iteration) dominant rational map $\pi:X\dashrightarrow Y$ to a positive lower-dimensional projective variety $Y$ such that the first dynamical degree remains unchanged.
This way, we further determine the building blocks of $f$.

As the first application, we prove the Kawaguchi-Silverman conjecture for every non-isomorphic surjective endomorphism of a smooth projective threefold.
As the second application, we reduce the Zariski dense orbit conjecture for $f$ to a terminal threefold with only $f$-equivariant Fano contractions.
\end{abstract}

\subjclass[2020]{
37P55, 
14E30,   
08A35.  
}

\keywords{Equivariant Minimal Model Program, Dynamical Iitaka fibration, $\delta$-primitive endomorphism, Polarized endomorphism, Int-amplified endomorphism, Arithmetic degree, Dynamical degree, Kawaguchi-Silverman conjecture, Zariski dense orbit conjecture, Log Calabi-Yau variety, Toric variety}

\maketitle
\tableofcontents

\section{Introduction}

Unless otherwise specified, we always assume that the base field $K$ is an algebraically closed field of characteristic $0$.

Let $f:X\to X$ be a non-isomorphic (i.e., $\deg f>1$) surjective endomorphism of a normal projective variety $X$.
It is fundamental to characterize $f$ and $X$.
A natural reduction idea is to ``decompose'' $f$ and $X$ into lower-dimensional objects.
A classical approach is to make use of various fibrations with universal properties such as the Iitaka fibration, special maximal rationally connected fibration, and the Albanese map.

A more delicate surgery is to perform the Equivariant Minimal Model Program (EMMP) on a mildly singular $X$ as illustrated by the following commutative diagram
$$\xymatrix{
X=X_1\ar@{-->}[r]^{\pi_1}\ar[d]_{f_1:=f^s} & X_2\ar@{-->}[r]^{\pi_2}\ar[d]^{f_2} &\cdots \ar@{-->}[r]^{\pi_{n-1}}&X_n\ar[d]^{f_n}\\
X=X_1\ar@{-->}[r]^{\pi_1} & X_2\ar@{-->}[r]^{\pi_2} &\cdots \ar@{-->}[r]^{\pi_{n-1}}&X_n
}$$
for some positive integer $s>0$. 
Here $\pi_i$ is either a divisorial contraction, a flip, or a Fano contraction of a $K_{X_i}$-negative extremal ray.
After iteration of $f_i$, a Fano contraction is always equivariant, while a divisorial contraction (resp.~flip) is equivariant if we can show the exceptional locus (resp.~flipping locus) is $f_i$-periodic, see \cite[Lemma 6.2]{MZ18}.

When $X$ is a normal projective surface, we can run EMMP due to the finiteness of negative curves by Nakayama \cite[Section 2]{Nak02} (for the smooth case) and the authors \cite[Section 5]{MZ22} (for the general case).
The detailed characterization is further provided in \cite[Theorem 1.1]{JXZ23}.

In higher dimensions, one can also run EMMP when $X$ admits a polarized or an int-amplified endomorphism due to the finiteness of contractible extremal rays, in increased generality, proved in \cite{MZ18, CMZ20, Men20, MZ20a, MZ20b}.
Of course, we do not expect every MMP to be EMMP in general and indeed there is an easy counter-example in dimension $3$ that breaks this illusion, see Example \ref{example-emmp}.
So it is important to explore the phenomenon when EMMP fails.

{\it In the paper, we shall mainly focus on smooth projective threefolds.}

Surprisingly, a new $f$-equivariant fibration, called {\it dynamical Iitaka fibration}, will emerge whenever the birational MMP starting from $X$ fails to be $f$-equivariant even after iteration.
Further study on this fibration reveals that it preserves the {\it first dynamical degree} 
$$\delta_f:=\lim\limits_{m\to+\infty}((f^m)^*H\cdot H^{\dim X-1})^{1/m},$$
where $H$ is an ample divisor of $X$ (cf.~Definition \ref{def-d1}).
Precisely, we show the following:

\begin{theorem}\label{mainthm-A}
Let $f:X\to X$ be a non-isomorphic surjective endomorphism of a smooth projective threefold $X$.
Then, after iteration of $f$, either one of the following holds:
\begin{enumerate}
\item any birational MMP of $X$ is $f$-equivariant, or
\item there is an $f$-equivariant dominant rational map $\phi:X\dashrightarrow Y$ to a normal projective variety $Y$ with $0<\dim Y<3$ such that the first dynamical degree is preserved: $\delta_f=\delta_{f|_Y}$ (equvalently, $f$ is $\delta$-imprimitive as in Definition \ref{def_endo_notion}).
\end{enumerate}
\end{theorem}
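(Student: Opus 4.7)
The plan is to establish the contrapositive: assuming $f$ is $\delta$-primitive (so (2) fails), I would deduce that every birational MMP of $X$ becomes $f$-equivariant after iteration.

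Suppose toward contradiction that for some birational MMP $X = X_1 \dashrightarrow \cdots \dashrightarrow X_n$, no iterate of $f$ is equivariant with respect to every step. By \cite[Lemma 6.2]{MZ18}, Fano contractions are automatically equivariant after iteration, so the obstruction must be either a divisorial contraction with non-$f$-periodic exceptional divisor or a flip with non-periodic flipping locus. Locating the first such non-equivariant step $X_i \dashrightarrow X_{i+1}$ and pulling the obstruction back to $X$ along the preceding equivariant steps, I obtain on $X$ a proper closed subvariety with infinite forward $f$-orbit. By taking an appropriate divisorial closure (the exceptional divisor of a resolution in the flip case, the strict transform in the divisorial case), I produce a prime divisor $D \subset X$ with $\{f^k(D)\}_{k\ge 0}$ infinite.

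Next I would construct the \emph{dynamical Iitaka fibration}. Set $B_m := \Supp\bigl(\sum_{k=0}^{m-1}(f^k)^{*}D\bigr)$, viewed as a reduced divisor. The Iitaka dimensions $\kappa(X, B_m)$ are nondecreasing in $m$ and bounded above by $\dim X = 3$, hence stabilize to some $\kappa$. Since the support grows without bound (as $D$ is non-periodic), one checks $\kappa \ge 1$. After passing to a sufficiently high iterate of $f$, the associated Iitaka fibration $\phi: X \dashrightarrow Y$ with $\dim Y = \kappa$ becomes $f$-equivariant, because $f^{*}$ merely shifts the filtration $\{B_m\}$ and so preserves the stable section ring. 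That $\dim Y < 3$ follows because $D$ is a contracted divisor (or traced on one) in a $K_X$-negative MMP, which rules out maximal Iitaka dimension by a standard bigness obstruction on the side of the MMP.

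The technical heart is then to verify $\delta_f = \delta_{f|_Y}$, which would contradict $\delta$-primitivity. Applying the Dinh--Nguyen product formula
\[
\delta_f \;=\; \delta_{f|_Y} \cdot \lambda_1(f|_F),
\]
where $\lambda_1(f|_F)$ is the relative first dynamical degree along a general fiber $F$ of $\phi$, it suffices to show $\lambda_1(f|_F) = 1$. Since $\dim F \le 2$, one analyzes the action of $f$ on $F$ directly: in the curve case the relative degree is automatically $1$, while in the surface case one reduces via the surface EMMP of \cite[Section 2]{Nak02} and \cite[Section 5]{MZ22} to a minimal model and argues using the structure of surface endomorphisms. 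The main obstacle I anticipate is precisely this last step: controlling $\lambda_1(f|_F)$ when $\dim F = 2$ requires that the construction of $\phi$ via the non-periodic divisor $D$ uniformly constrains the restrictions $f|_F$ across the family of fibers, and then a careful intersection-theoretic argument (tracking how $(f^k)^{*}D$ meets $F$) is needed to force $\lambda_1(f|_F) = 1$. Combined with the resulting contradiction to $\delta$-primitivity, this completes the proof.
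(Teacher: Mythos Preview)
Your overall contrapositive setup matches the paper's, and the idea of constructing an $f$-equivariant fibration from the non-periodic exceptional divisor is exactly right (this is the dynamical Iitaka fibration of Section~4). However, the argument breaks down at the step where you try to deduce $\delta_f = \delta_{f|_Y}$.

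First, the product formula you invoke is misremembered: the Dinh--Nguy\^en formula gives $\delta_f = \max\{\delta_{f|_\pi}, \delta_{f|_Y}\}$, not a product (Definition~\ref{def-d1}(3)). More seriously, even with the correct formula, your goal of showing the relative degree is $1$ is false in the regime that actually arises. The paper proves (Theorem~\ref{thm-fiitaka2}) that for the dynamical Iitaka fibration $\phi:X\dashrightarrow Y$ of the non-periodic exceptional divisor $E$ one has $\dim Y = 2$ and $f|_Y$ is an \emph{automorphism}; the general fibre is then a curve and the relative degree equals $\deg f > 1$. So $\phi$ alone never witnesses $\delta$-imprimitivity directly, and your proposed fibre-by-fibre surface analysis cannot succeed.

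The paper's route is entirely different after constructing $\phi$: it brings in a \emph{second} fibration and uses Lemma~\ref{lem-imprimitive} (if $X\dashrightarrow Y\times Z$ is generically finite then $\delta_f\in\{\delta_{f|_Y},\delta_{f|_Z}\}$). Concretely, one analyses the dynamical Iitaka dimension of the ramification divisor $R_f$: if $0<\kappa_f(X,R_f)<3$ one pairs $\phi$ with the $f$-Iitaka fibration of $R_f$ (Corollary~\ref{cor-kappa-rf12}); showing $\kappa_f(X,R_f)<3$ requires the non-trivial Theorem~\ref{thm-E-Rf-notbig}. If $\kappa_f(X,R_f)=0$ one shows $f$ is \'etale (Corollary~\ref{cor-kappa-rf0}), hence $X$ is not rationally connected, and then pairs $\phi$ with the special MRC fibration. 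None of this machinery appears in your sketch.

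Finally, your treatment of flips is a gap: producing a divisor from the flipping locus ``by resolution'' does not yield a non-periodic divisor with the needed properties. The paper handles flips by a separate and substantial argument (Section~7, Theorem~\ref{thm-emmp-flip}) using a Connecting Lemma on exceptional divisors and Mori's classification of smooth threefold contractions, showing that flips are \emph{always} equivariant after iteration in this setting.
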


Now we introduce various notions of endomorphisms for the better exposition.

\begin{definition}\label{def_endo_notion} Let $f:X\to X$ be a surjective endomorphism of a normal projective variety $X$.
\begin{enumerate}
\item $f$ is {\it non-isomorphic} if $\deg f>1$.
\item $f$ is {\it imprimitive} if there exists an $f^s$-equivariant dominant rational map $\pi:X\dashrightarrow Y$ for some $s>0$, with $0<\dim Y<\dim X$.
\item $f$ is {\it primitive} if it is not imprimitive.
\item $f$ is {\it $\delta$-imprimitive} if there exists an $f^s$-equivariant dominant rational map $\pi:X\dashrightarrow Y$ for some $s>0$, with $\delta_{f^s|_Y}=\delta_{f^s}$  and $0<\dim Y<\dim X$.
\item $f$ is {\it $\delta$-primitive} if it is not $\delta$-imprimitive.
\item $f$ is {\it weakly $\delta$-imprimitive} if there exists a finite surjective morphism $\pi:\widetilde{X}\to X$ such that $f$ lifts to a surjective endomorphism $\widetilde{f}:\widetilde{X}\to \widetilde{X}$ and $\widetilde{f}$ is $\delta$-imprimitive.
\item $f$ is {\it strongly $\delta$-primitive} if it is not weakly $\delta$-imprimitive.
\item $f$ is {\it strongly imprimitive} if there is an $f^s$-equivariant dominant rational map $\pi:X\dashrightarrow Y$ for some $s>0$, with $\dim Y=1$ (equivalently $\dim Y>0$ or $Y=\mathbb{P}^1$) and $f^s|_Y=\id_Y$.
\item $f$ is {\it weakly primitive} if it is not strongly imprimitive.
\item $f$ is {\it quasi-abelian} if it is a surjective endomorphism of a quasi-abelian variety, i.e., there exists a finite surjective morphism $\pi:A\to X$ \'etale in codimension $1$ such that $f^s$ lifts to a surjective endomorphism $\widetilde{f}:A\to A$ of an abelian variety $A$ for some $s>0$.
\item $f$ is {\it $q$-polarized} (or simply polarized) if $f^*H\sim qH$ for some $q>1$ and ample Cartier divisor $H$. 
\end{enumerate}

With the above notion, Theorem \ref{mainthm-A} says that: starting from a smooth projective threefold, we can run any birational EMMP if $f$ is non-isomorphic and $\delta$-primitive.
A subsequent issue is whether a suitable choice of EMMP, especially the Fano contraction, preserves the first dynamical degree.
So we further explore how the EMMP behaves. 

When the canonical divisor $K_X$ is pseudo-effective (equivalently the Kodaira dimension $\kappa(X)\ge 0$ by the three-dimensional minimal model theory,
see \cite[\S 3.13]{KM98}), Fujimoto and Nakayama \cite{FN07} proved that any EMMP involves only smooth $X_i$ and ends up with a smooth minimal model $X_n$.
Furthermore, a full classification is given there and we can conclude that $f$ belongs to one of the three cases: weakly $\delta$-imprimitive, quasi-abelian, and strongly imprimitive.

The situation is much more complicated when $K_X$ is not pseudo-effective.
After finitely many steps of birational EMMP, we arrive at a Fano contraction
$\tau:X_r\to X_{r+1}=:Y$.
If $\dim Y=0$, then $f$ is polarized and one can explore more based on previous results.
Here, we do not go into details on the literature review but only mention one recent result by the first author \cite{Men22} that such $X$ is {\it log Calabi-Yau}, i.e., $(X,\Delta)$ is log canonical for some effective $\Q$-divisor $\Delta$ such that $K_X+\Delta\sim_{\Q} 0$.
Interestingly, the log Calabi-Yau and the log canonical pairs play an essential role in our treatment of the case $0<\dim Y<\dim X$.
In this way, we can show that $f$ is either weakly primitive, $\delta$-primitive, or polarized after iteration.

Now we state the structure theorem.

\end{definition}

\begin{theorem}\label{mainthm-B}
Let $f:X\to X$ be a non-isomorphic surjective endomorphism of a smooth projective threefold $X$.
Then the following hold in terms of the Kodaira dimension $\kappa(X)$.
\begin{enumerate}
\item If $\kappa(X)<0$, then $f$ is either $\delta$-imprimitive, strongly imprimitive, or polarized
after iteration.
\item If $\kappa(X)=0$, then $f$ is either weakly $\delta$-imprimitive or quasi-abelian.
\item If $\kappa(X)>0$, then $f$ is strongly imprimitive.
\end{enumerate}
\end{theorem}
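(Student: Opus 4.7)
The plan is to split according to $\kappa(X)$ and, in each range, combine Theorem~\ref{mainthm-A} with the three-dimensional MMP and existing classifications of endomorphisms on low-dimensional varieties and on smooth threefolds of non-negative Kodaira dimension.

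For case~(3), $\kappa(X)>0$, I would invoke the canonicity of the Iitaka fibration to obtain, after replacing $f$ by a suitable iterate, an $f$-equivariant dominant rational map $\phi:X\dashrightarrow Y$ with $0<\dim Y\le 2$; the option $\dim Y=3$ is excluded because varieties of general type admit no non-isomorphic surjective endomorphism. The induced endomorphism $g:Y\to Y$ is then constrained by the canonical bundle formula: on a smooth model there is an effective boundary $\Delta$ with $(Y,\Delta)$ klt and $K_Y+\Delta$ big, so $g$ must have degree one, and by finiteness of the birational automorphism group of a log-general-type pair, $g^t=\id_Y$ for some $t>0$ after a further iteration. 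This gives strong imprimitivity.

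For case~(2), $\kappa(X)=0$, I would invoke Fujimoto--Nakayama~\cite{FN07}: the full birational EMMP runs, the end product $X_n$ is a smooth minimal model with $K_{X_n}\equiv 0$, and $f$ descends to a non-isomorphic endomorphism. Applying the classification of such triples together with the Bogomolov--Beauville decomposition on a finite \'etale cover, I would deduce that either the cover is entirely abelian (so $f$ is quasi-abelian by definition), or a nontrivial strict Calabi--Yau factor carries an equivariant intermediate-dimensional fibration preserving the first dynamical degree, yielding weak $\delta$-imprimitivity.

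For case~(1), $\kappa(X)<0$, I would first apply Theorem~\ref{mainthm-A}: if $f$ is $\delta$-imprimitive we are done, so assume every birational MMP of $X$ is $f$-equivariant after iteration, and run one to reach a Fano contraction $\tau:X_r\to Y$, which by \cite[Lem.~6.2]{MZ18} becomes equivariant after a further iteration. If $\dim Y=0$, then $X_r$ is a terminal Picard-number-one Fano and $f_r$ is polarized; descending the polarization along the equivariant MMP step by step then shows $f$ itself is polarized. If $0<\dim Y<3$, composing $\tau$ with the birational MMP yields an $f$-equivariant rational map $X\dashrightarrow Y$; I would combine the log Calabi--Yau structure of $\tau$ from~\cite{Men22} with the surface endomorphism classification~\cite{JXZ23} when $\dim Y=2$, or a direct curve analysis when $\dim Y=1$, to force the induced endomorphism on $Y$ to become $\id_Y$ after iteration, giving strong imprimitivity.

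The hard part will be the last subcase of~(1). A priori the descended $g:Y\to Y$ could be a nontrivial endomorphism of positive degree, and ruling this out requires simultaneously exploiting the relative log Calabi--Yau structure on the fibers of $\tau$ and the $\delta$-primitivity of $f$ inherited from the failed alternative of Theorem~\ref{mainthm-A}. This is exactly where the interplay between log canonical pair theory and equivariant dynamics highlighted in the introduction becomes indispensable.
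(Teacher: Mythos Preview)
Your overall architecture matches the paper: case~(3) via the Iitaka fibration (the paper simply cites \cite[Theorem~A]{NZ09}, which your sketch essentially reproves), case~(2) via Fujimoto's \'etale-cover classification, and case~(1) via Theorem~\ref{mainthm-A} plus an equivariant MMP ending in a Fano contraction. Cases~(3) and~(2) are close enough, though in~(2) your phrase ``a strict Calabi--Yau factor carries an equivariant intermediate-dimensional fibration'' is not quite right: the cover is either abelian or $S\times C$ with $S$ a K3 and $C$ elliptic, and it is the \emph{projections} of this product to $S$ and to $C$ (not a fibration on $S$) that, via Lemma~\ref{lem-imprimitive}, force $\delta$-imprimitivity of the lift.

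The genuine gaps are both in case~(1) with $0<\dim Y<3$. First, for $\dim Y=2$ the paper does \emph{not} force $g=\id_Y$; instead it shows this case is impossible under $\delta$-primitivity by analyzing the horizontal part $R_f^h$ of the ramification divisor and its dynamical Iitaka fibration (Corollary~\ref{cor-fano-n-1}): either $R_f^h$ is big, contradicting Theorem~\ref{thm-rf-big}, or one produces a second equivariant fibration making $X\dashrightarrow Y\times Z$ generically finite, or $R_f^h=0$ forces the general fibre of $\tau$ to be elliptic, contradicting the Fano condition. Your plan to invoke \cite{Men22} here is misdirected---that result concerns the $\dim Y=0$ polarized case---and nothing in the surface classification \cite{JXZ23} forces $g$ to become the identity. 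Second, for $\dim Y=1$ you omit a key step: before Theorem~\ref{thm-fano31} can be applied one must show the end product $X_n$ is \emph{smooth}. The paper does this by a dynamical-degree argument (if a divisorial exceptional divisor were contracted to a point, a comparison of $\deg f=a^3$ with $\delta_f^2\delta_g$ and the polarized restriction to the exceptional divisor yields a contradiction), so every divisorial step contracts to a curve and Mori's classification keeps $X_i$ smooth. Without this, the delicate fibrewise toric analysis of Section~9 does not get off the ground.
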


Based on Theorem \ref{mainthm-B}, we propose the following conjecture.

\begin{conjecture}[Dream Building Blocks]\label{mainconj}
Let $f:X\to X$ be a non-isomorphic surjective endomorphism of a normal projective variety $X$.
Then $f$ is either weakly $\delta$-imprimitive, strongly imprimitive, quasi-abelian, or polarized after iteration.
\end{conjecture}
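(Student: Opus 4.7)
The plan would be to proceed by induction on $\dim X$, taking Theorem B as the base case in dimension three and the surface case from \cite{JXZ23}. The first step is to replace $f$ by a sufficiently divisible iterate so that the equivariant MMP structures from Theorems A and B become available in higher dimensions (granting an appropriate analogue). The natural dichotomy is then whether $f$ is strongly $\delta$-primitive or weakly $\delta$-imprimitive; in the latter case the conclusion is immediate by definition, so the substantive content is to show that every strongly $\delta$-primitive non-isomorphic $f$ must be strongly imprimitive, quasi-abelian, or polarized after iteration.

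When $K_X$ is pseudo-effective, the plan is to run a $K_X$-MMP and invoke a hypothetical higher-dimensional version of Theorem A to make it $f$-equivariant after iteration, ending at a minimal model $X'$. The $f$-equivariant Iitaka fibration $\phi:X'\dashrightarrow Z$ has $\delta_{f|_Z}=1$, and strong $\delta$-primitivity forces either $\dim Z=\dim X$ (ruled out, since general type excludes non-isomorphic endomorphisms) or $\dim Z=0$, i.e., $K_{X'}\equiv 0$. From there I would pursue a quasi-abelian conclusion via a Beauville--Bogomolov--type decomposition, generalizing the Fujimoto--Nakayama classification \cite{FN07} beyond smooth threefolds. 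When $K_X$ is not pseudo-effective, an equivariant birational MMP terminates at a Mori fibration $\tau:X'\to Y$. If $\dim Y=0$, then $X'$ is Fano and log Calabi--Yau by \cite{Men22}, and one expects polarization of $f$ after iteration via the rigidity of such $X'$. If $0<\dim Y<\dim X$, then equivariance of $\tau$ together with strong $\delta$-primitivity gives $\delta_{f|_Y}<\delta_f$; by the product formula $\delta_f=\delta_{f|_Y}\cdot\delta_{f|_F}$ for a general fiber $F$, the restriction $f|_F$ is non-isomorphic, so induction on dimension applies on $F$ and a descent argument propagates the building-block structure to $X$.

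The main obstacle will be extending Theorem A beyond threefolds: the threefold proof leans on detailed control of divisorial and flipping contractions together with finiteness results such as \cite{MZ18, CMZ20, MZ20a, MZ20b}, all of which currently require $f$ to be int-amplified, whereas $\delta$-primitivity is strictly weaker. A second difficulty is the fiberwise-to-total passage in the Mori fibration case: even granting the induction on a general fiber, transporting a weakly $\delta$-imprimitive or quasi-abelian structure across the whole $X$ requires a delicate coordination of finite covers and iterations, since the relative structure of $\tau$ need not be stable under those operations. Finally, the $K_X\equiv 0$ branch demands a structure theorem of Beauville--Bogomolov type for normal varieties admitting non-isomorphic endomorphisms, which at present is available only in low dimensions and smooth settings and is itself a substantial open problem.
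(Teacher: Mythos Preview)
The statement you are addressing is a \emph{conjecture}, not a theorem: the paper does not prove it in general. It explicitly says that Conjecture~\ref{mainconj} is verified only for surfaces (Theorem~\ref{thm-dbbc-surf}) and smooth projective threefolds (Theorem~\ref{mainthm-B}). So there is no ``paper's own proof'' to compare against, and your proposal is necessarily a speculative program rather than a proof. You acknowledge this yourself in your final paragraph, which correctly identifies the genuine obstacle: there is currently no analogue of Theorem~\ref{mainthm-A} in dimension $\ge 4$, and the $\delta$-primitive hypothesis is strictly weaker than int-amplified, so the existing EMMP machinery does not apply.

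That said, there are also technical errors in the sketch that would need fixing before the program makes sense even conditionally.
\begin{itemize}
\item Your product formula $\delta_f=\delta_{f|_Y}\cdot\delta_{f|_F}$ is incorrect. The relevant formula (Definition~\ref{def-d1}(3)) is $\delta_f=\max\{\delta_{f|_\pi},\delta_g\}$, a maximum, not a product. The inference that $f|_F$ is non-isomorphic does follow from $\delta_g<\delta_f$, but via this max formula.
\item In the pseudo-effective case you claim that strong $\delta$-primitivity forces $\dim Z\in\{0,\dim X\}$ for the Iitaka base $Z$. This is false: when $\kappa(X)>0$ one has $f|_Z=\id$ after iteration (cf.~\cite{NZ09}), so $\delta_{f|_Z}=1<\delta_f$, and this does \emph{not} contradict $\delta$-primitivity. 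Rather, it places $f$ directly in the ``strongly imprimitive'' outcome, which you should not be trying to exclude.
\item When $\dim Y=0$ at the end of the MMP, polarization follows immediately from $\rho(X')=1$ (any ample class is an eigenvector with eigenvalue $>1$); the log Calabi--Yau structure of \cite{Men22} is not what gives it.
\item The ``fiberwise-to-total'' descent you propose is not a known technique and, as you note, would be delicate even if the rest of the program worked; it is not clear that an induction on fibers of a Mori fibration can ever recover structure on the total space without something like the machinery already developed in Sections~5--10, which is dimension-specific.
\end{itemize}
In short: the paper offers no proof of this conjecture, and your proposal is a reasonable outline of where the difficulties lie, but it contains errors and rests on several results that do not yet exist.
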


The Dream Building Blocks conjecture asserts:
$$\textbf{``Problems on endomorphisms should be reduced to tractable cases.''}$$
Conjecture \ref{mainconj} has been verified for surfaces (even with the possibility ``strongly imprimitive'' removed) 
by Theorem \ref{thm-dbbc-surf}, and smooth projective threefolds by Theorem \ref{mainthm-B}.

\vskip 2mm

In the rest of this section, we discuss several applications.

Let $f:X\to X$ be a surjective endomorphism of a projective variety $X$ over $\overline{\mathbb{Q}}$.
The {\it arithmetic degree} is defined as a function
$$\alpha_f(x) := \lim\limits_{m\to+\infty} \max\{1, h_H (f^m(x))\}^{1/m},$$
where $h_H$ is a Weil height function associated with an ample divisor $H$ of $X$; see Definition \ref{def-a-deg}.
The following {\it Kawaguchi - Silverman Conjecture} \ref{conj-KSC} ({\it KSC} for short, see \cite{KS16b}) asserts that $\alpha_f$ is well-defined, i.e., the limit exists, and the dynamical degree $\delta_f$ equals the arithmetic degree $\alpha_f(x)$ at any point $x$ with Zariski dense $f$-orbit.

\begin{conjecture}\label{conj-KSC} ({\bf Kawaguchi-Silverman Conjecture = KSC})
Let $f:X\to X$ be a surjective endomorphism of a projective variety $X$ over $\overline{\mathbb{Q}}$.
Then the following hold.
\begin{itemize}
\item[(1)] The limit defining arithmetic degree $\alpha_f(x)$ exists for any $x \in X(\overline{\mathbb{Q}})$.
\item[(2)] If the (forward) orbit $O_f(x) = \{f^n(x)\, |\, n\ge 0\}$ is Zariski dense
in $X$, then the arithmetic degree at $x$ is equal to the dynamical degree of $f$, i.e.,
$\alpha_f(x) = \delta_f$.
\end{itemize}
\end{conjecture}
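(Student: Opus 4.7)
Although the statement is given as a general conjecture, in this paper it is the special case of a non-isomorphic surjective endomorphism $f$ of a smooth projective threefold $X/\overline{\mathbb{Q}}$ that I would aim to settle. Part (1), the existence of $\alpha_f(x)$, is known in this generality by work of Matsuzawa, so the task is the equality $\alpha_f(x)=\delta_f$ for $x$ with Zariski-dense $f$-orbit. The idea is to reduce, via the structure theorem (Theorem \ref{mainthm-B}), to cases that are already known or fall to induction on dimension. Since passing to an iterate does not change the truth of KSC ($\alpha_{f^s}(x)=\alpha_f(x)^s$, $\delta_{f^s}=\delta_f^s$), after such replacement Theorem \ref{mainthm-B} places $f$ into one of five classes: polarized, quasi-abelian, strongly imprimitive, $\delta$-imprimitive, or weakly $\delta$-imprimitive.

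The first three classes succumb to existing results. A polarized $f$ satisfies KSC by the original theorem of Kawaguchi--Silverman. A quasi-abelian $f$ lifts through a finite cover (\'etale in codimension one) to an endomorphism of an abelian variety, where KSC is due to Silverman; since arithmetic and dynamical degrees are invariant under finite covers, the conclusion descends to $X$. If $f$ is strongly imprimitive, there is an $f^s$-equivariant dominant rational map $\pi:X\dashrightarrow Y$ with $\dim Y=1$ and $f^s|_Y=\id_Y$; then the $f^s$-orbit of any $x$ in the domain of $\pi$ projects to a single point of $Y$ and hence lies in a two-dimensional fiber, so no Zariski-dense orbit exists and KSC(2) holds vacuously.

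For the two remaining classes I would induct on dimension, invoking KSC for curves and surfaces, which is established. When $f$ is $\delta$-imprimitive, choose the equivariant dominant rational map $\pi:X\dashrightarrow Y$ with $0<\dim Y\le 2$ and $\delta_{f|_Y}=\delta_f$. After passing to an $f$-equivariant resolution of the indeterminacy of $\pi$, a point $x$ with Zariski-dense $f$-orbit projects to a point $\pi(x)$ whose $f|_Y$-orbit is Zariski-dense in $Y$ (the image of the orbit closure is all of $Y$). By induction $\alpha_{f|_Y}(\pi(x))=\delta_{f|_Y}$, and the standard inequalities
\[ \alpha_{f|_Y}(\pi(x))\ \le\ \alpha_f(x)\ \le\ \delta_f \]
combined with $\delta_{f|_Y}=\delta_f$ force equality. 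The weakly $\delta$-imprimitive case reduces to this by pulling back to a finite cover $\widetilde{X}\to X$ on which the lift of $f$ is $\delta$-imprimitive and then descending via invariance of arithmetic and dynamical degrees under finite covers.

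\textbf{Main obstacle.} The delicate point is the $\delta$-imprimitive case: the height comparison $\alpha_f(x)\ge\alpha_{f|_Y}(\pi(x))$ through the rational map $\pi$ requires an equivariant birational model on which $\pi$ becomes a morphism, and one must verify that the forward orbit of $x$ avoids the indeterminacy and exceptional loci. One also needs KSC for the possibly singular surface or curve targets $Y$ that can arise; this I would handle by a further equivariant resolution together with birational invariance of arithmetic and dynamical degrees. Matsuzawa's upper bound $\alpha_f(x)\le\delta_f$ in the non-polarized setting is a further subtlety, but it is well-established in the literature.
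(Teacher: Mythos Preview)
Your proposal is correct and follows essentially the same route as the paper's proof of Theorem~\ref{mainthm-C}: reduce via Theorem~\ref{mainthm-B} to the polarized, quasi-abelian, strongly imprimitive, and (weakly) $\delta$-imprimitive cases, dispatching the first three by known results and the last two by induction on dimension through Lemma~\ref{lem-ksc-iff}. The paper compresses your explicit treatment of the $\delta$-imprimitive and weakly $\delta$-imprimitive cases into the single line ``by Lemma~\ref{lem-ksc-iff} we may assume $f$ is strongly $\delta$-primitive,'' but the content is the same; your ``main obstacle'' (equivariant resolution of indeterminacy, height comparison through $\pi$, singular targets) is precisely what that lemma packages, and part~(1) of the conjecture is credited in the paper to Kawaguchi--Silverman \cite{KS16a} rather than Matsuzawa.
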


The original conjecture is formulated for dominant rational self-maps of smooth projective varieties.
In our setting, Conjecture \ref{conj-KSC} (1) has been proved by Kawaguchi and Silverman themselves (cf.~\cite{KS16a});
more precisely, $\alpha_f(x)$ is either $1$ or the absolute value of an eigenvalue of $f^*|_{\NS(X)}$ for any $x\in X(\overline{\mathbb{Q}})$, see also \cite{Mat20b}.
In particular, $\alpha_f(x)\le \delta_f$.

We list the known results, to the best of our knowledge, towards KSC.
\begin{remark}\label{rmk-ksc}
The KSC holds for a surjective endomorphism $f$ 
on a projective variety $X$ which fits one of the following cases.
\begin{enumerate}
\item $f$ is polarized (cf. \cite{KS14}).
\item $f$ is quasi-abelian (cf. \cite{Sil17} and \cite[Theorem 2.8]{MZ22}).
\item $X$ is a Mori dream space (cf. \cite{Mat20a}).
\item $\dim X\le 2$ (for smooth case, see \cite{MSS18}; for singular case, see \cite{MZ22}).
\item $X$ is a Hyperk\"ahler manifold (cf. \cite{LS21}).
\item $X$ is a smooth rationally connected projective variety admitting an int-amplified endomorphism (see \cite{MZ22} and \cite{MY22} for respectively threefolds and $n$-folds).
\item $X$ is a projective threefold which admits an int-amplified endomorphism and has at worst $\Q$-factorial terminal singularities (cf. \cite[Corollary 6.19]{MMSZ23}).
\item $X$ is a smooth projective threefold with irregularity $q(X)>0$ and $f$ is an automorphism (cf. \cite{CLO22}).
\end{enumerate}
We also refer to \cite{CLO22, JSXZ21, KS14, MW22} for KSC in other settings.
\end{remark}

We briefly explain how Conjecture \ref{mainconj} and the automorphic KSC would imply KSC.
According to Remark \ref{rmk-ksc}, KSC holds for polarized and quasi-abelian cases.
Since KSC itself requires a Zariski dense orbit, we may assume $f$ is weakly primitive.
We may also assume $f$ is non-isomorphic if automorphic KSC holds.
Conjecture \ref{mainconj} then asserts that $f$ is $\delta$-imprimitive.
Thus KSC holds by the induction on $\dim X$ and Lemma \ref{lem-ksc-iff}.
In particular, applying the structure Theorem \ref{mainthm-B}, we have the following progress on KSC.

\begin{theorem}\label{mainthm-C}
Kawaguchi-Silverman conjecture holds for every non-isomorphic surjective endomorphism of a smooth projective threefold.
\end{theorem}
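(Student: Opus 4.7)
The plan is to combine the structure Theorem \ref{mainthm-B} with the cases of KSC already listed in Remark \ref{rmk-ksc} (plus a descent step along finite covers and an induction on dimension), following the template sketched by the authors in the paragraph just before the theorem. Throughout, I would use the standard fact that $\alpha_{f^s}(x)=\alpha_f(x)^s$ and $\delta_{f^s}=\delta_f^s$, so KSC is invariant under replacing $f$ by any iterate; this lets me freely pass to the iterate that Theorem \ref{mainthm-B} provides.

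First I would dispose of the uninteresting cases. If $f$ is strongly imprimitive, then by definition there is an $f^s$-equivariant rational map $\pi:X\dashrightarrow Y$ with $\dim Y\ge 1$ and $f^s|_Y=\mathrm{id}_Y$; every forward orbit of $f^s$ is contained in a fiber of $\pi$, hence is never Zariski dense in $X$, so part (2) of Conjecture \ref{conj-KSC} is vacuous, while part (1) is covered by the general result of Kawaguchi--Silverman recalled after Conjecture \ref{conj-KSC}. If $f$ is polarized or quasi-abelian, KSC is already known by Remark \ref{rmk-ksc}(1)(2).

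Next I would handle the $\delta$-imprimitive case by induction on dimension. Let $\pi:X\dashrightarrow Y$ be the $f^s$-equivariant dominant rational map with $0<\dim Y<3$ and $\delta_{f^s|_Y}=\delta_{f^s}$. Normalizing $Y$ and resolving $\pi$ if necessary, $Y$ is a normal projective variety of dimension $1$ or $2$, where KSC is known for all (possibly singular) surjective endomorphisms by Remark \ref{rmk-ksc}(4). The $\delta$-preserving hypothesis is exactly the condition under which the reduction lemma of the paper (Lemma \ref{lem-ksc-iff}) applies: given a Zariski dense $f^s$-orbit on $X$ whose image is a Zariski dense $f^s|_Y$-orbit, one gets $\alpha_{f^s}(x)\ge\alpha_{f^s|_Y}(\pi(x))=\delta_{f^s|_Y}=\delta_{f^s}$, and the reverse inequality is automatic; if the image orbit is not dense, one can replace $X$ by the Zariski closure of the orbit (a proper subvariety which is $f^s$-invariant after further iteration) and recurse. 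This reduces KSC on $X$ to KSC in dimension $\le 2$.

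The remaining and technically most delicate case is that $f$ is weakly $\delta$-imprimitive, which by Theorem \ref{mainthm-B}(2) occurs with $\kappa(X)=0$. Here one has a finite surjective morphism $\pi:\widetilde{X}\to X$ with a lift $\widetilde{f}:\widetilde{X}\to \widetilde{X}$ that is $\delta$-imprimitive, and I would transfer KSC between $\widetilde{X}$ and $X$: arithmetic heights pull back to equivalent heights under finite morphisms between projective varieties, and dynamical degrees satisfy $\delta_{\widetilde{f}}=\delta_f$, so $\alpha_{\widetilde{f}}(\tilde x)=\alpha_f(\pi(\tilde x))$; moreover, any Zariski dense $f$-orbit in $X$ lifts to a Zariski dense $\widetilde{f}$-orbit on some irreducible component of $\widetilde X$ after a further iterate. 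Thus KSC for $\widetilde{f}$, which follows from the $\delta$-imprimitive case treated above (applied to the normalization/resolution of $\widetilde X$, possibly after another iteration so that $\widetilde X$ is $\widetilde f$-equivariantly normal), yields KSC for $f$.

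The main obstacle I anticipate is precisely this last descent step. One must verify that the finite cover $\pi$ produced by the definition of ``weakly $\delta$-imprimitive'' can be taken so that KSC on $\widetilde X$ actually transfers to $X$: this requires controlling the behavior of heights and Zariski dense orbits under $\pi$, and if $\widetilde X$ is singular one must either appeal to the singular version of the lower-dimensional KSC after contracting to a normal base, or resolve $\widetilde X$ equivariantly (which costs further iteration and requires the reduction lemma to tolerate birational modifications). Everything else is essentially a bookkeeping application of Theorem \ref{mainthm-B}, Remark \ref{rmk-ksc}, and Lemma \ref{lem-ksc-iff}.
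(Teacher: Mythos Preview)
Your proposal is correct and follows essentially the same route as the paper: apply Theorem \ref{mainthm-B}, dispose of the strongly imprimitive, polarized, and quasi-abelian cases via Remark \ref{rmk-ksc}, and reduce the (weakly) $\delta$-imprimitive cases to dimension $\le 2$ using Lemma \ref{lem-ksc-iff}. Your anticipated obstacle in the finite-cover descent step is not an obstacle at all: Lemma \ref{lem-ksc-iff}(1) already asserts that KSC transfers in both directions along any equivariant generically finite dominant map, so no separate height or orbit analysis on $\widetilde{X}$ is needed, and the singularities of $\widetilde{X}$ are irrelevant. The paper's proof is accordingly shorter: it assumes outright that $f$ is weakly primitive, not quasi-abelian, and strongly $\delta$-primitive (this last reduction packaging both parts of Lemma \ref{lem-ksc-iff} together with the known KSC in dimension $\le 2$), whereupon Theorem \ref{mainthm-B} forces $f$ to be polarized.
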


A stronger version of KSC, called the small Arithmetic Non-Density ($=$sAND) conjecture, also takes into account the points with no Zariski dense orbit, see \cite[Conjecture 1.3]{MMSZ23} or Conjecture \ref{conj-sand}.
The only troubled case is when $f$ is strongly imprimitive.
Excluding this, the same strategy of the proof of Theorem \ref{mainthm-C} works:  sAND holds for every weakly primitive and non-isomorphic surjective endomorphism of a smooth projective threefold, see Theorem \ref{thm-sand}.

Another related hard conjecture: Zariski dense orbit (ZDO for short) conjecture
asserts that either $f$ is strongly imprimitive or $f$ has a Zariski dense orbit, see Conjecture \ref{conj-zdo}.
We refer to \cite[Remark 1.13]{JXZ23} for a survey of known results.
The following theorem gives a different version of Theorem \ref{mainthm-A} from the aspect of ZDO.
In other words, for a non-isomorphic surjective endomorphism of a smooth projective threefold, we can reduce Conjecture ZDO to the case of a $\Q$-factorial terminal projective threefold with no birational contractions, but only (automatically $f$-equivariant) Fano contractions. 

\begin{theorem}\label{mainthm-D}
Let $f:X\to X$ be a non-isomorphic surjective endomorphism of a smooth projective threefold $X$.
Suppose $f$ is weakly primitive and has no Zariski dense orbit.
Then $X$ is uniruled and any birational MMP of $X$ is $f$-equivariant after iteration.
\end{theorem}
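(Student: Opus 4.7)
The strategy is to combine the Kodaira-dimension trichotomy of Theorem \ref{mainthm-B} with known instances of the Zariski dense orbit conjecture in order to pin down the structure of $f$, and then to deduce birational MMP-equivariance from Theorem \ref{mainthm-A} by a contrapositive argument using the no-dense-orbit hypothesis.

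First I would show that $X$ is uniruled, i.e.\ $\kappa(X)<0$. Theorem \ref{mainthm-B}(3) excludes $\kappa(X)>0$ outright, as it would make $f$ strongly imprimitive, contradicting the weak primitivity assumption. For $\kappa(X)=0$, Theorem \ref{mainthm-B}(2) leaves only the quasi-abelian or weakly $\delta$-imprimitive possibilities; in either case, via the Fujimoto-Nakayama classification of smooth minimal threefolds admitting non-isomorphic endomorphisms, one reduces (after a suitable finite cover and iteration) to a non-isomorphic endomorphism of an abelian threefold, for which a Zariski dense orbit is classical. That dense orbit descends to $X$ and contradicts the hypothesis, so $\kappa(X)<0$, and $X$ is uniruled by the three-dimensional minimal model theory.

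Next, given $\kappa(X)<0$, Theorem \ref{mainthm-B}(1) leaves $f$ $\delta$-imprimitive, strongly imprimitive, or polarized. Weak primitivity excludes the strongly imprimitive case, and the polarized case is excluded because ZDO is known for polarized endomorphisms (see the literature referenced in Remark \ref{rmk-ksc}). Hence $f$ is $\delta$-imprimitive. To deduce that any birational MMP of $X$ becomes $f$-equivariant after iteration, I would argue contrapositively via Theorem \ref{mainthm-A}: if some birational MMP failed to be equivariant after iteration, the construction underlying Theorem \ref{mainthm-A} would produce an $f$-equivariant dominant rational map $\phi:X\dashrightarrow Y$ with $0<\dim Y<3$ and $\delta_{f|_Y}=\delta_f$. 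Since $\dim Y\in\{1,2\}$, the curve and surface cases of ZDO, which are established, would yield a Zariski dense orbit for $f|_Y$ on $Y$. The equality $\delta_{f|_Y}=\delta_f$ (i.e.\ relative dynamical degree $1$) is exactly the condition that allows lifting such a dense orbit to a dense orbit on $X$ through a sufficiently generic fiber point, a standard reduction whose KSC analogue is encoded in Lemma \ref{lem-ksc-iff}. This contradicts the no-dense-orbit hypothesis and so forces every birational MMP to be $f$-equivariant after iteration.

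The main obstacle I anticipate is precisely this final lifting step: constructing a dense $f$-orbit on $X$ from a dense $f|_Y$-orbit on the base of the dynamical Iitaka fibration. Although the identity $\delta_{f|_Y}=\delta_f$ tightly constrains the fiber dynamics, turning it into an honest dense orbit on $X$ requires careful control of the indeterminacy locus of $\phi$, of the generic fiber, and of the interaction of $f$ with $\phi$, mirroring the machinery that appears in the KSC/ZDO reductions invoked elsewhere in the paper.
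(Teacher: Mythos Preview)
Your proposal has a genuine gap in the crucial lifting step, and the gap is more serious than the ``obstacle'' paragraph suggests.

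First, a correction: the condition $\delta_{f|_Y}=\delta_f$ coming out of Theorem~\ref{mainthm-A} is \emph{not} equivalent to the relative dynamical degree being $1$. By the product formula $\delta_f=\max\{\delta_{f|_\phi},\delta_{f|_Y}\}$, so $\delta_{f|_Y}=\delta_f$ only says $\delta_{f|_\phi}\le\delta_f$; the relative degree can be anything up to $\delta_f$. More importantly, there is no general mechanism that lifts a Zariski dense orbit on the base to one on the total space under the bare hypothesis $\delta_{f|_Y}=\delta_f$. For instance, for a product $f=g\times h$ on $Y\times Z$ with $\delta_h\le\delta_g$, one has $\delta_{f|_Y}=\delta_g=\delta_f$, yet $f$ need not have a dense orbit even if $g$ does. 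So invoking Theorem~\ref{mainthm-A} alone does not suffice, and the analogy with Lemma~\ref{lem-ksc-iff} is misleading: KSC reduces downward precisely because $\alpha_g(\pi(x))\le\alpha_f(x)$, whereas ZDO requires pushing density upward.

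The paper does not use Theorem~\ref{mainthm-A} here. Instead it opens up the failure of equivariance: by Theorem~\ref{thm-emmp-flip} the bad step must be a divisorial contraction, and its exceptional divisor $E$ has $\kappa(X,E)=0$ and is not $f^{-1}$-periodic. Then Theorem~\ref{thm-fiitaka2} gives the $f$-Iitaka fibration $\phi:X\dashrightarrow Y$ of $E$ with the much sharper conclusions $\dim Y=2$ and $\deg f|_Y=1$ (an actual degree, not a dynamical degree). Since $f$ is weakly primitive so is $f|_Y$, and surface ZDO furnishes a dense orbit on $Y$. The lift is then handled by the tailored Lemma~\ref{lem-zdo-reldim1}, which needs exactly $\dim X=\dim Y+1$ and $\deg g=1$; its proof is not a soft reduction but uses the disjoint-divisor argument of \cite{BPS16} together with Theorem~\ref{thm-fiitaka-polarized} and Lemma~\ref{lem-zdo-prod}. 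None of this follows from $\delta$-imprimitivity alone.

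A smaller issue: in the $\kappa(X)=0$ step, the weakly $\delta$-imprimitive case does not reduce to an abelian threefold. After an \'etale cover one lands on $S\times C$ with $S$ a K3 surface, and the paper produces the dense orbit via the product Lemma~\ref{lem-zdo-prod}, using $\deg g=1<\deg h$ and surface/curve ZDO.
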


\par \vskip 1pc
{\bf Acknowledgement.}
The authors would like to thank Fei Hu, Jia Jia, Chen Jiang, Yohsuke Matsuzawa, Joseph Silverman, Junyi Xie and Guolei Zhong for their valuable discussions and comments.
The authors are supported respectively by Science and Technology Commission of Shanghai Municipality (No. 22DZ2229014) 
and a National Natural Science Fund; and ARF: A-8000020-00-00 of NUS. 
Many thanks to IMS at NUS, NJU at Nanjing, and ECNU at Shanghai for the warm hospitality and kind support.

\section{Preliminaries}

\begin{notation*}\label{notation}
Let $X$ and $Y$ be projective varieties of dimension $n$.
Let $f:X\to X$ be a surjective endomorphism and $\pi:X\to Y$ a finite surjective morphism.
Two $\R$-Cartier divisors $D_i$ of $X$ are {\it numerically equivalent}, denoted by $D_1\equiv D_2$, if $(D_1 - D_2)\cdot C=0$ for any curve $C$ on $X$.
Two $r$-cycles $C_i$ of $X$ are {\it weakly numerically equivalent}, denoted by $C_1 \equiv_w C_2$, if
$(C_1 - C_2)\cdot L_1 \cdots L_{n - r} = 0$ for
all Cartier divisors $L_i$.
For $\R$-Cartier divisors, the numerical equivalence coincides with the weak numerical equivalence; see \cite[Section 2]{MZ18}.

We use the following notation throughout the paper unless otherwise stated.
\renewcommand*{\arraystretch}{1.1}

\begin{longtable}{p{2cm} p{9cm}}
$\Pic(X)$    &the group of Cartier divisors of $X$ modulo linear equivalence~$\sim$\\
$\Pic_{\mathbb{K}}(X)$    &$\Pic(X)\otimes_{\Z}\mathbb{K}$ with $\mathbb{K}=\mathbb{Q}, \mathbb{R},\mathbb{C}$\\
$\Pic^0(X)$    &the neutral connected component of $\Pic(X)$\\
$\Pic^0_{\mathbb{K}}(X)$    &$\Pic^0(X)\otimes_{\Z}\mathbb{K}$ with $\mathbb{K}=\mathbb{Q}, \mathbb{R},\mathbb{C}$\\
$\NS(X)$    &$\Pic(X)/\Pic^0(X)$, the N\'eron-Severi group\\
$\N^1(X)$    &$\NS(X)\otimes_{\Z}\mathbb{R}$, the space of $\R$-Cartier divisors modulo numerical equivalence $\equiv$\\
$\NS_{\mathbb{K}}(X)$    &$\NS(X)\otimes_{\Z}\mathbb{K}$ with $\mathbb{K}=\mathbb{Q}, \mathbb{R},\mathbb{C}$\\
$\N_r(X)$    &the space of $r$-cycles modulo weak numerical equivalence~$\equiv_w$\\
$f^*|_V$    & the pullback action on $V$, which is any group or space above\\
$f_*|_V$    & the pushforward action on $V$, which is any group or space above\\
$\Nef(X)$   & the cone of nef classes in $\N^1(X)$\\
$\Amp(X)$   & the cone of ample classes in $\N^1(X)$\\
$\NE(X)$   & the cone of pseudo-effective classes in $\N_1(X)$\\
$\PE^1(X)$  & the cone of pseudo-effective classes in $\N^1(X)$\\
$\Supp D$   & the support of effective divisor $D=\sum a_i D_i$ which is $\bigcup_{a_i >0} D_i$, where the $D_i$ are prime divisors\\
$R_{\pi}$    & the ramification divisor of $\pi$ assuming that $X$ and $Y$ are normal\\
$B_{\pi}$    & $\pi(\Supp R_f)$ the (reduced) branch divisor of $\pi$ assuming that $X$ and $Y$ are normal\\
$\kappa(X,D)$ & Iitaka dimension of a $\Q$-Cartier divisor $D$\\
$\rho(X)$    & Picard number of $X$ which is $\dim_{\R} \N^1(X)$
\end{longtable}

\end{notation*}

\begin{convention*}
We refer to \cite{KM98,BCHM10} for the standard notation and facts for singularities ({\it terminal, klt, lc}) and divisors involved in the minimal model program (MMP for short).
Throughout this article, our minimal model program of $X$ involves no boundary part.
By a divisorial contraction, a flipping contraction, a flip, or a Fano contraction, we always mean an operation of some $K_X$-negative extremal ray.
\end{convention*}

We recall the following type of Hodge index theorem which is known to experts, see also \cite[Lemma 4.1]{MZg22}.
\begin{lemma}(cf.\,\cite[Corollarie 3.2]{DS04} or \cite[Lemma 3.2]{Zha16})
\label{lem-hodge}
Let $X$ be a normal projective variety. 
Let $D_1\not\equiv0$ and $D_2\not\equiv 0$ be two nef $\mathbb{R}$-Cartier divisors such that $D_1\cdot D_2\equiv_w 0$. 
Then $D_1\equiv aD_2$ for some $a>0$.
\end{lemma}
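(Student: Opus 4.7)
The plan is to reduce the statement to a single Hodge--index quadratic form computation against an auxiliary ample test class. Fix an ample Cartier divisor $H$ on $X$ and set $n=\dim X$. The key technical input, which I would quote from \cite[Corollarie 3.2]{DS04} (or the normal variety formulation in \cite{Zha16}), is the following Hodge-index statement: any class $E \in \N^1(X)$ with $E\cdot H^{n-1}=0$ satisfies $E^2 \cdot H^{n-2}\le 0$, with equality only if $E\equiv 0$.

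First, I would verify that $D_2 \cdot H^{n-1}>0$, so that a scalar can be defined. If instead $D_2 \cdot H^{n-1}=0$, the Hodge-index input applied to $D_2$ gives $D_2^2\cdot H^{n-2}\le 0$; nefness of $D_2$ gives the reverse inequality, forcing $D_2\equiv 0$ and contradicting the hypothesis. So we may set
\[
a \;:=\; \frac{D_1\cdot H^{n-1}}{D_2\cdot H^{n-1}} \;\ge\; 0, \qquad E \;:=\; D_1 - a\, D_2,
\]
so that $E\cdot H^{n-1}=0$ by construction, and $a\ge 0$ because $D_1$ is nef and $H^{n-1}$ is represented by a movable curve class.

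Next, the hypothesis $D_1\cdot D_2 \equiv_w 0$ specializes to $D_1\cdot D_2\cdot H^{n-2}=0$, which kills the cross term upon expansion:
\[
E^2\cdot H^{n-2} \;=\; D_1^2\cdot H^{n-2} \;+\; a^2\, D_2^2\cdot H^{n-2} \;\ge\; 0,
\]
the last inequality from nefness of $D_1$ and $D_2$. Combined with the Hodge-index upper bound $E^2\cdot H^{n-2}\le 0$, I obtain $E^2\cdot H^{n-2}=0$ and then $E\equiv 0$ from the equality clause; that is, $D_1\equiv a\, D_2$. The coefficient satisfies $a>0$, for otherwise $D_1\equiv 0$, contrary to hypothesis.

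The one nontrivial ingredient is the Hodge-index statement on a possibly singular normal projective variety; this is the main obstacle. On a smooth $X$ it is the classical signature $(1,\rho-1)$ fact for the form $\alpha\cdot\beta\cdot H^{n-2}$, and on a normal $X$ it can either be quoted directly from the references above, or derived by passing to a resolution $\pi:\widetilde X\to X$ and perturbing $\pi^*H$ by an ample class of $\widetilde X$ before taking a limit. All remaining steps are purely linear-algebraic manipulations of intersection numbers.
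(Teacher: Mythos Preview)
Your argument is correct and is the standard Hodge-index proof. Note that the paper itself does not prove this lemma at all: it is stated with the citations to \cite[Corollarie 3.2]{DS04} and \cite[Lemma 3.2]{Zha16} and then used as a black box, so there is no authorial proof to compare against. Your write-up is essentially a reconstruction of what those references contain; the only minor imprecision is that the cited statements are the full lemma rather than merely the signature input you invoke, but the Hodge-index inequality you use (for $E$ with $E\cdot H^{n-1}=0$, one has $E^2\cdot H^{n-2}\le 0$ with equality iff $E\equiv 0$) is classical and available in the same sources or via the resolution argument you sketch.
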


The following type of projection formula (cf.~\cite[Proposition 3.7]{MZ22}) will be frequently used throughout this paper.
\begin{proposition}\label{prop-f*f*}
Let $f:X\to X$ be a surjective endomorphism of a projective variety $X$.
Then $f_*f^*=f^*f_*=(\deg f)\id$ on $\Pic_{\mathbb{K}}(X)$.
In particular, $f^*P\sim_{\Q} aP$ holds for some rational number $a>0$ when $P$ is an $f$-invariant $\Q$-Cartier prime divisor;
and Iitaka dimension satisfies $\kappa(X, Q) = \kappa(X, f^*Q) = \kappa(X, f_*Q)$ if $Q$ and $f_*Q$ are $\Q$-Cartier divisors
(see \cite[Theorem 5.13]{Uen75}, or Lemma \ref{lem-pushforward-fiitka} below).
\end{proposition}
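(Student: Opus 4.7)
My plan is to reduce everything to the standard projection formula for a finite surjective morphism. Since $f$ is a surjective endomorphism of a projective variety, it is automatically finite of degree $d:=\deg f$. The standard projection formula gives $f_* f^* D \sim d D$ for every Cartier divisor $D$, yielding $f_* f^* = d \cdot \id$ on $\Pic_{\mathbb{K}}(X)$. To obtain the reverse identity $f^* f_* = d \cdot \id$, I would use the decomposition $\Pic_{\mathbb{K}}(X) = \Pic^0_{\mathbb{K}}(X) \oplus \NS_{\mathbb{K}}(X)$, which is preserved by both $f^*$ and $f_*$. On the finite-dimensional factor $\NS_{\mathbb{K}}(X)$, the relation $f_* f^* = d \cdot \id$ forces $f^*$ to be injective and hence bijective with two-sided inverse $d^{-1} f_*$, so $f^* f_* = d \cdot \id$. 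On $\Pic^0_{\mathbb{K}}(X)$, both $f^*$ and $f_*$ come from isogenies of the abelian variety $\Pic^0(X)_{\red}$ whose composition in either order is multiplication by $d$, yielding the same identity after tensoring with $\mathbb{K}$.

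For $f$-invariant prime divisors: if $P$ is an $f$-invariant $\Q$-Cartier prime divisor, then $f|_P : P \to P$ is a surjective endomorphism of some degree $e := \deg(f|_P) > 0$, so $f_* P = e P$ by the definition of pushforward on Weil divisors. Applying $f^*$ and using the already-established $f^* f_* = d \cdot \id$ on $\Pic_{\Q}(X)$, I get $e \cdot f^* P \sim_{\Q} d P$, and hence $f^* P \sim_{\Q} a P$ with $a := d/e > 0$, as required.

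For the Iitaka dimension equalities, $\kappa(X, Q) = \kappa(X, f^* Q)$ is the classical invariance under a finite surjective pullback (\cite[Theorem 5.13]{Uen75} or Lemma \ref{lem-pushforward-fiitka}). Applying the same invariance to the $\Q$-Cartier divisor $f_* Q$ gives $\kappa(X, f_* Q) = \kappa(X, f^*(f_* Q))$, and $f^* f_* Q \sim_{\Q} d Q$ from the first paragraph yields $\kappa(X, f^*(f_* Q)) = \kappa(X, Q)$, completing the chain. The main point I expect to require care is the $\Pic^0$ part of $f^* f_* = d \cdot \id$: one must argue that $f^*|_{\Pic^0}$ is an isogeny rather than merely injective, so that its inverse up to a scalar exists at the level of $\Q$-vector spaces; alternatively one can bypass this by invoking Ueno's theorem directly for the Iitaka dimension part, at the cost of a slightly weaker conclusion for divisors not in the Picard component where finite dimensionality is automatic.
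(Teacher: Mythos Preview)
Your approach is essentially correct and matches the standard argument behind the cited \cite[Proposition 3.7]{MZ22}; the paper itself does not give an independent proof here but simply quotes that reference together with \cite[Theorem 5.13]{Uen75}. Your deductions of the two ``in particular'' statements (using $f_*P = (\deg f|_P)\,P$ and then applying $f^*$; using Ueno's invariance plus $f^*f_*Q \sim_{\Q} (\deg f)Q$) are exactly right.

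One point deserves tightening. The direct sum $\Pic_{\mathbb{K}}(X) = \Pic^0_{\mathbb{K}}(X) \oplus \NS_{\mathbb{K}}(X)$ exists as $\mathbb{K}$-vector spaces, but there is no reason a chosen splitting is preserved by $f^*$ or $f_*$; what is true is that $\Pic^0_{\mathbb{K}}(X)$ is an $f^*$- and $f_*$-stable subspace and both operators descend to the quotient $\NS_{\mathbb{K}}(X)$. So the cleaner phrasing is: from $f_*f^* = d\cdot\id$ you know $f^*$ is injective on $\Pic_{\mathbb{K}}(X)$; on the finite-dimensional quotient $\NS_{\mathbb{K}}(X)$ injectivity gives bijectivity; on $\Pic^0_{\mathbb{K}}(X)$ the action of $f^*$ lies in the finite-dimensional $\mathbb{Q}$-algebra $\End(\Pic^0(X)_{\red})\otimes\mathbb{Q}$, where having a one-sided inverse $d^{-1}f_*$ forces a two-sided inverse. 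These together show $f^*$ is surjective on all of $\Pic_{\mathbb{K}}(X)$, hence bijective, and then its left inverse $d^{-1}f_*$ is also a right inverse, giving $f^*f_* = d\cdot\id$. This is the point you flagged as needing care, and once stated via the filtration rather than a direct sum it goes through without difficulty.
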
 

\begin{definition}
Let $f:X\to X$ be a surjective endomorphism of a variety~$X$.
A subset $Z\subseteq X$ is said to be {\it $f$-invariant} (resp. {\it $f^{-1}$-invariant}) if $f(Z)=Z$ (resp.~$f^{-1}(Z)=Z$), and $Z$ is said to be {\it $f$-periodic} (resp. {\it $f^{-1}$-periodic}) if $f^s(Z)=Z$ (resp.~$f^{-s}(Z)=Z$) for some $s>0$.
\end{definition}

\begin{definition}
Let $f:X\to X$ be a surjective endomorphism of a projective variety $X$. 
A prime divisor $E$ is said {\it $f$-prime} if $f^i(f^j(E))$ is irreducible for any $i, j\in \Z$.
Denote by 
$$E_n:=f^n(E)$$
where $n\in \mathbb{Z}$.
Note that an $f$-prime divisor $E$ is $f$-periodic if and only if it is $f^{-1}$-periodic.
\end{definition}

\begin{lemma}\label{lem-fprime}
Let $f:X\to X$ be a surjective endomorphism of a $\Q$-factorial normal projective variety $X$. 
Let $E$ be a prime divisor with $\kappa(X,E)=0$.
Then $E$ is $f$-prime.
\end{lemma}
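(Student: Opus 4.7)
The plan is to establish a single reduction step and iterate it. The reduction I would aim for is: \emph{if $D$ is any prime divisor on $X$ with $\kappa(X, D) = 0$, then $f^{-1}(D)$ is irreducible, and the unique reduced prime component $D'$ again satisfies $\kappa(X, D') = 0$}. Granted the reduction, a straightforward induction on $k$ shows $f^{-k}(E)$ is a single prime divisor with Iitaka dimension zero for every $k \ge 0$. On the other side, each forward image $f^j(E)$ is automatically a prime divisor (since $f$ is finite and surjective), with $\kappa(X, f^j(E)) = 0$ by iterating Proposition \ref{prop-f*f*}; applying the reduction to each $f^j(E)$ then shows $f^{-k}(f^j(E))$ is irreducible for all $j, k \ge 0$. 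Combined with the set-theoretic identity $f^i(f^j(E)) = f^{i+j}(E)$ (which uses surjectivity of $f$ to see $f \circ f^{-1} = \id$ on subsets), this yields the $f$-primeness of $E$.

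For the reduction itself, I would first invoke the standard fact that, for a prime divisor $D$, $\kappa(X, D) = 0$ is equivalent to $h^0(X, mD) = 1$ for every $m \ge 0$; otherwise two linearly independent sections of some $H^0(m_0 D)$ would define a nonconstant map to $\mathbb{P}^1$, forcing $\kappa \ge 1$. Now suppose for contradiction that $f^{-1}(D)$ has two distinct prime components $F_1 \ne F_2$. Since $f$ is finite with $f(F_i) = D$, one has $f_* F_i = d_i D$ with $d_i := \deg(f|_{F_i}) \ge 1$, and by Proposition \ref{prop-f*f*}, $\kappa(X, F_i) = \kappa(X, f_* F_i) = \kappa(X, d_i D) = 0$; in particular $h^0(m F_i) = 1$ for all $m \ge 0$. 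The computation $f_*(d_2 F_1 - d_1 F_2) = d_2 d_1 D - d_1 d_2 D = 0$, together with the projection formula $f^* f_* = (\deg f)\,\id$ on $\Pic_{\Q}$ from Proposition \ref{prop-f*f*}, forces $d_2 F_1 \sim_{\Q} d_1 F_2$. Clearing denominators yields a linear equivalence $N d_2 F_1 \sim N d_1 F_2$ of effective integral divisors for some $N \ge 1$; the rigidity $h^0(N d_2 F_1) = 1$ then forces the literal equality $N d_2 F_1 = N d_1 F_2$ of divisors, contradicting $F_1 \ne F_2$. Finally, once $f^{-1}(D)$ is known to be irreducible, $f^* D = b D'$ for some prime $D'$ and $b \ge 1$, and $\kappa(X, D') = \kappa(X, b D') = \kappa(X, f^* D) = \kappa(X, D) = 0$ by Proposition \ref{prop-f*f*} again.

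The only real obstacle is pinpointing the contradiction mechanism: one must see that the projection formula compels any two distinct prime preimages of $D$ to be $\Q$-linearly proportional, and that the rigidity coming from $\kappa = 0$ then collapses that $\Q$-proportionality to an actual equality of effective divisors, which is impossible for distinct primes. Once this observation is in hand, the remaining pieces (the induction on $k$, the automatic primeness of forward images, and the propagation of $\kappa = 0$) are routine and require only Proposition \ref{prop-f*f*}.
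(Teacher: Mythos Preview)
Your proof is correct and follows essentially the same approach as the paper's. Both arguments hinge on the same mechanism: if two distinct primes $F_1, F_2$ map (via $f$) onto the same prime, the projection formula $f^*f_* = (\deg f)\,\id$ on $\Pic_{\Q}(X)$ forces $F_1 \sim_{\Q} a F_2$ for some $a>0$, which is incompatible with $\kappa = 0$; the paper states this contradiction in one line (``$E\sim_{\Q} aE'$ \dots hence $\kappa(X,E)>0$''), while you spell out the rigidity step $h^0(mF_1)=1 \Rightarrow Nd_2 F_1 = Nd_1 F_2$ explicitly, and both then propagate $\kappa=0$ along the orbit via Proposition~\ref{prop-f*f*} to iterate.
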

\begin{proof}
By Proposition \ref{prop-f*f*}, we have
$$f^*f_*=f_*f^*=(\deg f) \id$$ on $\Pic_{\Q}(X)$.
Suppose $f(E)=f(E')$ for some other prime divisor $E'$.
Then $E\sim_{\Q} aE'$ for some rational number $a>0$ and hence $\kappa(X,E)>0$, a contradiction.
Let $E_i:=f^i(E)$.
We have $f^{-1}(E_{i+1})=E_i$ and 
$$\kappa(X,E_{i+1})=\kappa(X, f^*E_{i+1})=\kappa(X, E_i)=0$$ when $i>0$ by \cite[Theorem 5.13]{Uen75}.
Similarly, $E_i$ is irreducible when $i<0$.
\end{proof}

\begin{definition}
Let $X$ be a projective variety.
The {\it numerical Cartier index} of $X$ is the minimal positive integer $r$ such that $rD\equiv  D'$ with $D'$ Cartier for any $\Q$-Cartier Weil divisor $D$.
The numerical Cartier index is always finite by the following lemma.
\end{definition}

The following lemma is suggested by Chen Jiang by using a similar proof of \cite[Lemma 4.1]{MZ22}.

\begin{lemma}\label{lem-index}
Let $X$ be a projective variety.
Then there is a positive integer $r$ such that $rD\equiv D'$ with $D'$ Cartier for any $\Q$-Cartier Weil divisor $D$.
\end{lemma}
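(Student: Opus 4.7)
The plan is to prove the stronger statement that $\operatorname{CN}^1(X)$, the group of $\Q$-Cartier Weil divisors on $X$ modulo numerical equivalence, is a finitely generated abelian group. Since every $\Q$-Cartier Weil divisor has an integer multiple that is Cartier, $\operatorname{CN}^1(X)$ embeds naturally into $\NS_{\Q}(X)$ via $[D]\mapsto\frac{1}{m}[mD]$ and contains $\NS(X)$ as a subgroup of the same $\Q$-rank. Once finite generation is established, the quotient $\operatorname{CN}^1(X)/\NS(X)$ is a finitely generated torsion abelian group, hence finite, and its exponent is the desired $r$.

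To show $\operatorname{CN}^1(X)$ is finitely generated, I would fix a resolution of singularities $\pi:\widetilde X\to X$ with exceptional prime divisors $E_1,\ldots,E_e$. By the projection formula together with surjectivity of $\pi$, the pullback $\pi^*:\operatorname{CN}^1(X)\hookrightarrow \NS_{\Q}(\widetilde X)$ is injective, so it suffices to place $\pi^*(\operatorname{CN}^1(X))$ inside $\frac{1}{N}\NS(\widetilde X)$ for a fixed integer $N$ independent of $D$. For any $\Q$-Cartier Weil divisor $D$ on $X$, write the unique exceptional correction
$$\pi^*D=\widetilde D+\sum_{i=1}^e a_iE_i\quad\text{in }\Pic_{\Q}(\widetilde X),$$
where $\widetilde D$ is the strict transform and $a_i\in\Q$. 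Because $\widetilde X$ is smooth, $\widetilde D$ is Cartier and $[\widetilde D]\in\NS(\widetilde X)$, so the task reduces to uniformly bounding the denominators of the $a_i$.

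The $a_i$ are constrained by the integer linear system $\sum_j a_j(E_j\cdot C)=-\widetilde D\cdot C\in\Z$ as $C$ ranges over curves in $\widetilde X$ contracted by $\pi$, using $\pi^*D\cdot C=D\cdot\pi_*C=0$. I would then produce a collection of $\pi$-exceptional test curves $C_1,\ldots,C_k$—for instance curves cut out in general fibers of the restrictions $\pi|_{E_i}$, which have positive dimension because $\pi(E_i)$ has codimension at least $2$ in $X$—such that the integer intersection matrix $(E_j\cdot C_k)$ has a non-degenerate principal minor whose determinant $N$ depends only on $\widetilde X$ and $\pi$ (and not on $D$). Cramer's rule then yields $a_i\in\frac{1}{N}\Z$ uniformly, so $\pi^*(ND)$ is Cartier on $\widetilde X$ and $\pi^*(\operatorname{CN}^1(X))\subseteq \frac{1}{N}\NS(\widetilde X)$, as required.

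The main obstacle is ensuring the existence and uniform boundedness of the determinant of such an intersection matrix. In dimension two, this is guaranteed by the classical Mumford negative-definiteness of the intersection form on an exceptional configuration; in higher dimensions it requires a careful geometric selection of test curves inside each $E_i$ (exploiting that the $\pi$-fibers over the image of each exceptional divisor have positive dimension) so that the relevant minor remains non-degenerate with integer bounded determinant. This is the technical heart of the argument following \cite[Lemma~4.1]{MZ22}.
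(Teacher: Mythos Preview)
Your route through a resolution $\pi:\widetilde X\to X$ is genuinely different from the paper's, and the step you flag as ``the main obstacle'' is a real gap: you do not establish that the intersection matrix $(E_j\cdot C_k)$ can be made nonsingular, and this is not what \cite[Lemma~4.1]{MZ22} does. The paper works entirely on $X$. It fixes Cartier ample divisors $H_1,\dots,H_m$ giving a basis of $\N^1(X)$ and a $\Z$-basis $C_1,\dots,C_m$ of $\N^{n-1}_{\Z}(X)$, the lattice of integer combinations of $(n{-}1)$-fold products of Cartier divisors modulo weak numerical equivalence. The crucial observation is that for \emph{any} Weil divisor $D$ one has $D\cdot C_j\in\Z$, because each $C_j$ is built from Cartier classes. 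Writing $D\equiv\sum a_iH_i$ and inverting the integral invertible matrix $A=(H_i\cdot C_j)$ gives $(a_i)\in A^{-1}\Z^m\subseteq\frac{1}{\det A}\Z^m$, so $r=\det A$ works. No resolution, no exceptional geometry.

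Your strategy can be salvaged, but by the negativity lemma rather than any Mumford-type definiteness in higher dimension: if a $\Q$-combination $\sum b_iE_i$ is $\pi$-numerically trivial, then both $\pm\sum b_iE_i$ are $\pi$-nef with zero pushforward, so the negativity lemma forces effectivity of both, hence all $b_i=0$. Thus no nonzero combination of the $E_i$ is $\pi$-numerically trivial, and one can then choose integral $\pi$-contracted $1$-cycles $C_1,\dots,C_e$ with $\det(E_i\cdot C_j)\ne 0$; that determinant is your uniform $N$. As written, however, your proposal stops exactly where the content lies, and the paper's direct Cramer argument on $X$ is shorter and avoids the issue entirely.
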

\begin{proof}
Let $n:=\dim X$.
Choose a basis $\{[H_i]\}_{i=1}^m$ of $\N^1(X)$ where $H_i$ is ample and Cartier.
Consider the following space
$$\N^{n-1}_{\mathbb{K}}(X):=\{\sum_{\text{\rm finite sum}}
a x_1\cdots x_{n-1}\,|\, a\in\mathbb{K}, x_1,\cdots, x_{n-1} \text{ are Cartier divisors}\}/\equiv_w,$$
see also \cite[Section 3]{Men20}.
Note that $\N^{n-1}_{\mathbb{R}}(X)$ can be identified with $\N_1(X)$ by \cite[Lemma 3.2]{Zha16}. 
Choose a basis $\{[C_j]\}_{j=1}^m$ of $\N^{n-1}_{\mathbb{Z}}(X)$.
Then the intersection of $C_j$ with any Weil divisor is an integer.
Let $A$ be the intersection matrix $(H_i\cdot C_j)_{i,j}$ which is invertible and integral.
Let $D$ be a $\Q$-Cartier Weil divisor.
Then there exists $a_j\in \Q$ such that 
$$D\cdot C_j=\sum_{i=1}^m a_i H_i\cdot C_j\in \mathbb{Z}$$ holds for each $i$
and hence
$$(a_1,\cdots, a_m)\in A^{-1}(\mathbb{Z}^m)\subseteq \mathbb{Z}^m/\det(A).$$
We are done by letting $r =\det(A)$. 
\end{proof}

\begin{definition}\label{def-awd}
Let $\pi:X\dashrightarrow Y$ be a dominant rational map of projective varieties.
$\pi$ is {\it almost well-defined} (or {\it almost holomorphic} when $K = \C$) if it is well defined outside a Zariski closed proper subset of $Y$.
\end{definition}

\begin{definition}\label{def-hv}
Let $\pi:X\dashrightarrow Y$ be a dominant rational map of normal projective varieties.
Let $D=\sum\limits_{i=1}^n a_iD_i$ be a Weil $\R$-divisor of $X$ where $D_i$ are prime divisors.
Assume that $D_i$ dominates $Y$ when $i\le r$ and $D_i$ does not dominate $Y$ when $i>r$.
Denote by $D^h:=\sum\limits_{i=1}^r a_i D_i$ the {\it $\pi$-horizontal part} of $D$ and $D^v:=\sum\limits_{i=r+1}^n a_i D_i$ the {\it $\pi$-verticle part} of $D$.
\end{definition}

\begin{definition}\label{def-d1}(Dynamical degree; $\delta_f$, $\iota_f$)
Let $f:X\to X$ be a surjective endomorphism of a projective variety $X$.
The (first) {\it dynamical degree} of $f$ is defined by
$$\delta_f=\lim\limits_{m\to+\infty}((f^m)^*H\cdot H^{\dim X-1})^{1/m},$$
where $H$ is any nef and big Cartier divisor of $X$.
This limit exists and is independent of the choice of $H$.
Since our $f$ is well-defined, $\delta_f$ is the spectral radius of $f^*|_{\N^1(X)}$.
Denote by $\iota_f$ the {\it minimal modulus of eigenvalues} of $f^*|_{\N^1(X)}$.

Consider the equivariant dynamical systems of projective varieties
$$\xymatrix{
f \acts X \ar@{-->}[r]^{\pi} &Y\racts g
}$$
where $\pi$ is a dominant rational map.
Define the \textit{relative (first) dynamical degree} of $f$ as
$$\delta_{f|_{\pi}}= \lim_{n \to \infty} ((f^n)^*H_X \cdot (\pi^*H_Y)^{\dim Y}\cdot H_X^{\dim X-\dim Y -1})^{1/n}$$
where $H_X$ and $H_Y$ are respectively nef and big Cartier divisors of $X$ and $Y$.
The definition is independent of choices of $H_X$ and $H_Y$.
Note that $\delta_{f|_\pi}$ is just the degree of $f$ on the general fibre of $\pi$ when $\dim X=\dim Y+1$.

We recall the following facts.
\begin{enumerate}
\item $\delta_f=\deg f$ if $\dim X=1$.

\item $\delta_{f|_Z}\le \delta_f$ for any $f$-invariant irreducible closed subvariety $Z\subseteq X$.

\item Product formula: $\delta_f=\max\{\delta_{f|_{\pi}}, \delta_g\}$.
In particular, $\delta_f=\delta_g$ if $\pi$ is generically finite and dominant.

\item $\delta_{f_1\times f_2}=\max\{\delta_{f_1},\delta_{f_2}\}$.

\item $\delta_{f^n}=\delta_f^n$ for any $n>0$.
\end{enumerate}
We refer to \cite[Notation 2.1]{MMS+22} for a more general setting and references.
\end{definition}

\begin{lemma}\label{lem-imprimitive}
Consider the equivariant dynamical systems of projective varieties:
$$\xymatrix{
\save[]+<3.1pc,0.05pc>*{f \acts} \restore &X\ar@{-->}[ld]_\phi\ar@{-->}[rd]^\psi\\
g \acts Y&&Z \racts h
}$$
where $\phi$ and $\psi$ are dominant rational maps.
Suppose the induced map $\sigma:X\dashrightarrow Y\times Z$ is generically finite.
Then either $\delta_f=\delta_g$ or $\delta_f=\delta_h$.
\end{lemma}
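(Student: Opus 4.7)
The plan is to combine two facts from Definition \ref{def-d1}: the product formula for a generically finite dominant equivariant map and the formula for dynamical degree of a product.

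First, I would observe that the induced map $\sigma:X\dashrightarrow Y\times Z$ is equivariant with respect to $f$ on $X$ and $g\times h$ on $Y\times Z$. Indeed, from $\phi\circ f=g\circ\phi$ and $\psi\circ f=h\circ\psi$ (valid on the open loci where $\phi$ and $\psi$ are defined and where $f$ preserves these loci up to iteration), the pair $\sigma=(\phi,\psi)$ satisfies $\sigma\circ f=(g\times h)\circ\sigma$ as rational maps. The hypothesis that $\sigma$ is generically finite (which, combined with $\dim X\leq \dim Y+\dim Z$, forces $\sigma$ to be dominant onto its image of maximal dimension, hence dominant to $Y\times Z$) places us in the setting of the product formula.

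Next, I would apply part (3) of Definition \ref{def-d1} to the equivariant generically finite dominant rational map $\sigma$: this gives $\delta_f=\delta_{g\times h}$. Then part (4) of the same definition yields $\delta_{g\times h}=\max\{\delta_g,\delta_h\}$. Combining, $\delta_f=\max\{\delta_g,\delta_h\}$, which equals one of $\delta_g$ or $\delta_h$, as desired.

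There is no real obstacle; the lemma is essentially a formal consequence of the two recalled properties of the dynamical degree. The only mild subtlety to check carefully is the technical one that the product formula and the degree-of-a-product formula listed in Definition \ref{def-d1} genuinely apply to rational (not only regular) equivariant maps in this generality — this is covered by the reference to \cite[Notation 2.1]{MMS+22}. Once that is granted, the argument is a two-line invocation.
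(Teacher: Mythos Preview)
Your argument has a genuine gap: you assert that $\sigma:X\dashrightarrow Y\times Z$ is dominant, but this is neither stated in the hypotheses nor a consequence of $\sigma$ being generically finite. ``Generically finite'' only says the image has dimension $\dim X$; when $\dim X<\dim Y+\dim Z$ the map is certainly not dominant onto $Y\times Z$. This situation actually occurs in the paper's applications (for instance in Corollary~\ref{cor-kappa-rf12} one has $\dim X=3$, $\dim Y=2$, and $\dim Z$ possibly equal to $2$). Note also that Definition~\ref{def-d1}(3) explicitly requires ``generically finite \emph{and} dominant'' for the equality $\delta_f=\delta_g$, so in the paper's conventions the two conditions are distinct.

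The fix is exactly what the paper does: let $\overline{X}\subseteq Y\times Z$ be the closure of the image of $\sigma$ and set $\overline{f}:=(g\times h)|_{\overline{X}}$. Now $\sigma:X\dashrightarrow\overline{X}$ is generically finite \emph{and} dominant, so $\delta_f=\delta_{\overline{f}}$ by Definition~\ref{def-d1}(3). Since $\overline{X}$ is $(g\times h)$-invariant, Definition~\ref{def-d1}(2) gives $\delta_{\overline{f}}\le\delta_{g\times h}=\max\{\delta_g,\delta_h\}$. For the reverse inequality one uses that $\phi$ and $\psi$ are dominant, so $\delta_g\le\delta_f$ and $\delta_h\le\delta_f$; hence $\max\{\delta_g,\delta_h\}\le\delta_f$, and equality follows. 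Your idea of invoking the product formula together with $\delta_{g\times h}=\max\{\delta_g,\delta_h\}$ is the right skeleton, but you must route it through the image $\overline{X}$ rather than the full product.
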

\begin{proof}
Note that $\delta_{g\times h}=\max\{\delta_g, \delta_h\}\le \delta_f$ by the dynamical product formula.
Let $\overline{X}$ be the closure of the image $\sigma(X)$ in $Y\times Z$ and $\overline{f}:=(g\times h)|_{\overline{X}}$.
Then $\delta_{\overline{f}}\le \delta_{g\times h}$.
Since $\sigma$ is generically finite, we have $\delta_f=\delta_{\overline{f}}\le \max\{\delta_g, \delta_h\}\le \delta_f$.
\end{proof}

\begin{lemma}\label{lem-d1-weil}
Let $f:X\to X$ be a surjective endomorphism of a projective variety $X$ of dimension $n$.
Suppose $f^*D\equiv aD$ for some $D\in \N_{n-1}(X)\backslash \{0\}$.
Then $|a| \le \delta_f$.
\end{lemma}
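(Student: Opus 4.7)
The plan is to establish the stronger spectral statement that $f^*$ acting on $\N_{n-1}(X)$ has spectral radius exactly $\delta_f$; the lemma then follows at once, since the hypothesis exhibits $a$ as an eigenvalue of $f^*|_{\N_{n-1}(X)}$.

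First I would invoke two adjointness relations furnished by the projection formula. Under the perfect intersection pairing $\N^1(X)\times \N_1(X)\to \R$, the operator $f_*|_{\N_1(X)}$ is the adjoint of $f^*|_{\N^1(X)}$ thanks to $f^*A\cdot \gamma = A\cdot f_*\gamma$; hence these two operators share the same spectrum, and their common spectral radius is $\delta_f$ by Definition \ref{def-d1}. Next I would use the identification $\N^{n-1}_{\R}(X)\cong \N_1(X)$ recalled in the proof of Lemma \ref{lem-index} to set up a second perfect pairing $\N_{n-1}(X)\times \N_1(X)\to \R$, $(D,\gamma)\mapsto D\cdot \gamma$. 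Under this pairing, the projection formula $f^*D\cdot \gamma = D\cdot f_*\gamma$ exhibits $f^*|_{\N_{n-1}(X)}$ as the adjoint of $f_*|_{\N_1(X)}$, so its spectrum is again that of $f^*|_{\N^1(X)}$, yielding $|a|\le \delta_f$.

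The hard part will be the set-up rather than the main argument: one must verify that the pairing $\N_{n-1}(X)\times \N_1(X)\to \R$ is well defined (by representing $\gamma\in \N_1(X)$ as a product of $n-1$ Cartier classes) and non-degenerate on both sides (the non-trivial direction uses the inclusion $\N^1(X)\hookrightarrow \N_{n-1}(X)$ together with the perfectness of the $\N^1$--$\N_1$ pairing). If one wishes to avoid this abstract set-up, an alternative concrete route is a direct growth-rate estimate: pick $\gamma_0\in \N_1(X)$ with $D\cdot \gamma_0\ne 0$, iterate the hypothesis to obtain $|a|^m|D\cdot \gamma_0|=|D\cdot (f^m)_*\gamma_0|$, and bound the right-hand side by $O_\epsilon((\delta_f+\epsilon)^m)$ via Gelfand's formula applied to $f_*$ on $\N_1(X)$; letting $\epsilon\to 0$ then completes the plan.
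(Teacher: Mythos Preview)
Your argument is correct and genuinely different from the paper's. The paper proceeds via positivity: it lets $b$ be the spectral radius of $f^*|_{\N_{n-1}(X)}$, applies the Perron--Frobenius theorem to the pseudo-effective cone in $\N_{n-1}(X)$ to produce a pseudo-effective eigenvector $D'$ with $f^*D'\equiv bD'$, dominates $D'$ by an ample Cartier divisor $H$ in $\N_{n-1}(X)$, and then reads off $\delta_f\ge b$ from the growth of $(f^m)^*H\cdot H^{n-1}\ge (f^m)^*D'\cdot H^{n-1}=b^m\,D'\cdot H^{n-1}$. Your route is purely linear-algebraic: once the pairing $\N_{n-1}(X)\times \N_1(X)\to\R$ is seen to be perfect (and your sketch for this is right --- non-degeneracy on the $\N_{n-1}$ side is the definition of $\equiv_w$, while the other side follows from $\N^1(X)\hookrightarrow \N_{n-1}(X)$ together with perfectness of the $\N^1$--$\N_1$ pairing), one even gets the isomorphism $\N^1(X)\xrightarrow{\sim}\N_{n-1}(X)$ intertwining the two $f^*$-actions, hence equality of spectra rather than just the spectral-radius bound. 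The trade-off: your argument leans on the identification $\N^{n-1}_{\R}(X)\cong\N_1(X)$ (cited in the paper from \cite{Zha16}) and on checking that the cycle-theoretic $f^*$ on Weil divisors agrees with the Cartier pullback under the inclusion, while the paper's approach is more self-contained and fits the positivity toolkit used elsewhere in the article. Your alternative growth-rate estimate via Gelfand's formula is also valid and is essentially the paper's argument with the Perron--Frobenius step replaced by a direct operator-norm bound.
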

\begin{proof}
Let $b$ be the spectral radius of $f^*|_{\N_{n-1}(X)}$,
so $b\ge |a|$.
Let $V$ be the cone of pseudo-effective cycles in $\N_{n-1}(X)$.
Applying the Perron-Frobenius theorem to $f^*|_V$, $f^*D'\equiv bD'$ for some $D'\in V\backslash\{0\}$.
Let $H$ be an ample Cartier divisor such that and $H-D'\in V$.
Note that $D'\cdot H^{n-1}>0$.
Then the lemma follows from the upper bound of $b$ below:
$$\delta_f=\lim\limits_{m\to+\infty}((f^m)^*H\cdot H^{n-1})^{1/m}\ge \lim\limits_{m\to+\infty}((f^m)^*D'\cdot H^{n-1})^{1/m}=b.$$
\end{proof}

\begin{definition}\label{def-a-deg}(Weil height function and arithmetic degree)
Let $X$ be a normal projective variety defined over $\overline{\mathbb{Q}}$.
We refer to \cite[Part B]{HS00}, \cite{KS16b} or \cite[\S 2.2]{Mat20a} for the detailed definition of the {\it Weil height function} 
$h_D:X(\overline{\mathbb{Q}})\to \R$ associated with some $\R$-Cartier divisor $D$ on $X$.
Here, we simply list some fundamental properties of the height function.
\begin{itemize}
\item $h_D$ is determined by the divisor $D$ only up to a bounded function.
\item $h_{\sum a_i D_i}=\sum a_i h_{D_i}+O(1)$ where $O(1)$ means some bounded function.
\item $h_E$ is bounded below outside $\Supp E$ for any effective Cartier divisor $E$.
\item Let $\pi:X\to Y$ be a surjective morphism of normal projective varieties and $B$ some $\R$-Cartier divisor of $Y$.
Then $h_B(\pi(x))=h_{\pi^*B}(x)+O(1)$ for any $x\in X(\overline{\mathbb{Q}})$.
\end{itemize}

The {\it arithmetic degree} $\alpha_f(x)$ of $f$ at $x \in X(\overline{\mathbb{Q}})$ is defined as
$$\alpha_f(x) = \lim\limits_{m\to+\infty} \max\{1, h_H (f^m(x))\}^{1/m},$$
where $H$ is an ample Cartier divisor.
This limit exists and is independent of the choice of $H$ (cf.~\cite[Theorem 2]{KS16a}, \cite[Proposition 12]{KS16b}).
Moreover, $\alpha_f(x)$ is either $1$ or the absolute value of an eigenvalue of $f^*|_{\N^1(X)}$ (cf.~\cite[Remark 23]{KS16a}).
Note that $\alpha_f(x)\le \delta_f$ and $\alpha_{f^s}(x)=\alpha_f(x)^s$.
This allows us to replace $f$ by any positive power whenever needed.
\end{definition}

We recall the following lemma for the induction purpose of KSC, see \cite[Lemma 5.6]{Mat20a} and \cite[Lemma 2.5]{MZ22}.
\begin{lemma}\label{lem-ksc-iff}
Consider the equivariant dynamical systems of projective varieties
$$\xymatrix{
f \acts X \ar@{-->}[r]^{\pi} &Y\racts g
}$$
where $\pi$ is a dominant rational map.
Then the following hold.
\begin{itemize}
\item[(1)] Suppose $\pi$ is generically finite.
Then KSC holds for $f$ if and only if it holds for $g$.
\item[(2)] Suppose $\delta_f=\delta_g$ and KSC holds for $g$.
Then KSC holds for $f$.
\end{itemize}
\end{lemma}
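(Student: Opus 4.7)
My approach rests on two standard height-functoriality facts for arithmetic degrees under an equivariant surjective morphism, combined with an orbit-density principle. First I would reduce to the case where $\pi$ is a morphism: take a birational modification $p:\widetilde{X}\to X$ resolving the indeterminacy of $\pi$ so that $\widetilde{\pi}:=\pi\circ p:\widetilde{X}\to Y$ is a morphism, and, after replacing $f$ by a sufficiently high iterate (harmless by $\alpha_{f^s}(x)=\alpha_f(x)^s$ and $\delta_{f^s}=\delta_f^s$) and blowing up further if necessary, arrange a lift $\widetilde{f}:\widetilde{X}\to\widetilde{X}$ with $p\circ\widetilde{f}=f\circ p$ and $\widetilde{\pi}\circ\widetilde{f}=g\circ\widetilde{\pi}$. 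The birational instance of part~(1) (KSC is preserved under $p$) is direct, using that $p^*H_X$ is nef and big on $\widetilde{X}$ so that heights satisfy $h_{p^*H_X}(\widetilde{x})=h_{H_X}(p(\widetilde{x}))+O(1)$ and dense orbits correspond bijectively; hence we may assume $\pi:X\to Y$ itself is a morphism.

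Under this assumption I would establish two facts. Let $H_X$, $H_Y$ be ample Cartier divisors. (i) Since $\pi^*H_Y$ is nef, $CH_X-\pi^*H_Y$ is ample for $C\gg0$, which combined with the identity $h_{H_Y}(\pi(x))=h_{\pi^*H_Y}(x)+O(1)$ yields unconditionally
\[
\alpha_g(\pi(x))\ \le\ \alpha_f(x)\quad\text{for every } x\in X(\overline{\mathbb{Q}}).
\]
(ii) If $\pi$ is generically finite, $\pi^*H_Y$ is nef and big, so by Kodaira's lemma some positive multiple of it decomposes as an ample divisor plus an effective Cartier divisor $E$; the height bound $h_E\ge O(1)$ off $\Supp E$, together with a descent to $\overline{O_f(x)}$ and induction on dimension to handle the case when that orbit gets trapped in $\Supp E$, produces the reverse inequality and hence $\alpha_f(x)=\alpha_g(\pi(x))$.

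These ingredients make both parts immediate. For (2): if $x$ has Zariski dense $f$-orbit, then $\pi(x)$ has Zariski dense $g$-orbit as the image of a dense set under the dominant morphism $\pi$; KSC for $g$ gives $\alpha_g(\pi(x))=\delta_g$, and (i) combined with the hypothesis $\delta_f=\delta_g$ and the general bound $\alpha_f(x)\le\delta_f$ pins down $\alpha_f(x)=\delta_f$. For (1), the dynamical product formula forces $\delta_f=\delta_g$ automatically, so (2) supplies ``KSC for $g$ $\Rightarrow$ KSC for $f$''. For the converse, given $y\in Y$ with dense $g$-orbit, choose $x\in\pi^{-1}(y)$; then $\pi(\overline{O_f(x)})$ is a $g$-forward-invariant closed set containing the dense orbit of $y$, hence equals $Y$. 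Generic finiteness of $\pi$ forces $\dim\overline{O_f(x)}=\dim X$, and replacing $f$ by an iterate that fixes some top-dimensional irreducible component gives $\overline{O_f(x)}=X$. Thus $\alpha_f(x)=\delta_f$ by KSC for $f$, and by (ii), $\alpha_g(y)=\alpha_f(x)=\delta_f=\delta_g$.

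The main obstacle is making (ii) rigorous for \emph{every} $x\in X(\overline{\mathbb{Q}})$, not just those with dense orbit: the orbit of $x$ may lie entirely in the effective divisor $\Supp E$ produced by Kodaira's lemma, in which case the naive comparison of heights fails. The standard workaround is to restrict both $f$ and $\pi$ to the $f$-invariant subvariety $\overline{O_f(x)}$ and induct on dimension, but this requires careful bookkeeping on possibly singular subvarieties and often a further iteration of $f$. A secondary technicality is securing the compatible lift $\widetilde{f}$ on the resolution $\widetilde{X}$, handled by blowing up the total transforms of the indeterminacy loci of all relevant iterates of $\pi\circ f^i$.
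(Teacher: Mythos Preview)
The paper does not supply its own proof of this lemma; it simply cites \cite[Lemma 5.6]{Mat20a} and \cite[Lemma 2.5]{MZ22}. Your overall strategy---reduce to the case where $\pi$ is a morphism, then compare heights via $\pi^*H_Y$---is exactly the standard one in those references, and your deductions of (1) and (2) from the two height facts (i), (ii) are correct.

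Two points deserve correction, however. First, your reduction step is phrased in a way that does not work: you cannot in general lift a surjective endomorphism $f$ to a \emph{morphism} on a blowup of $X$, and ``blowing up the total transforms of the indeterminacy loci of all relevant iterates of $\pi\circ f^i$'' does not produce a well-defined self-map. The clean fix is to take $\widetilde{X}$ to be the closure $\Gamma\subseteq X\times Y$ of the graph of $\pi$; then $(f\times g)$ automatically restricts to a surjective endomorphism $F:\Gamma\to\Gamma$, the projection $p_X:\Gamma\to X$ is birational, and $p_Y:\Gamma\to Y$ is a morphism. No iteration of $f$ and no further blowing up is needed. This is likely what you intended by ``resolving the indeterminacy,'' but the subsequent remarks obscure it.

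Second, you overstate the difficulty of (ii). KSC concerns only points with Zariski-dense orbit, and in your proof of the converse direction of (1) you first \emph{establish} that the chosen $x$ has dense $f$-orbit before invoking (ii). For such $x$ the orbit meets the complement of $\Supp E$ infinitely often, and since the arithmetic degree is already known to be a genuine limit (Kawaguchi--Silverman), evaluating along that subsequence gives $\alpha_f(x)\le\alpha_g(\pi(x))$ directly. The induction on $\dim\overline{O_f(x)}$ you flag as the ``main obstacle'' is never actually needed for the lemma as stated.
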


\section{Dynamics on surfaces}

The following Lemma \ref{lem-surf-toric} is used in the proof of Lemma \ref{lem-fano-toric}.

\begin{definition}
Let $f:X\to Y$ be a non-isomorphic finite surjective morphism of projective varieties.
We say $f$ is {\it totally ramified} if $f^{-1}Q$ is prime for any primed divisor $Q\subseteq B_f$.
Let $y\in Y$ be a closed point.
We say $y$ is an {\it $f$-unsplitting point} if $f^{-1}(y)$ is a single point. 
\end{definition}

\begin{lemma}\label{lem-surf-toric}
Let $f:X\to Y$ be a totally ramified, non-isomorphic, finite surjective morphism of del Pezzo surfaces with $\rho(X)=\rho(Y)$.
Suppose $(Y, B_f)$ is log Calabi-Yau.
Then $(X,\Supp R_f){\stackrel{\mathclap{\sigma}}{\cong}} (Y, B_f)$ are toric pairs and $\Sing(B_f)$ is the set of $f$-unsplitting points.
\end{lemma}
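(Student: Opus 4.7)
\emph{Plan.} The plan is to use Riemann--Hurwitz to transport the log Calabi--Yau condition from $(Y, B_f)$ to $(X, \Supp R_f)$, then force both pairs to be toric via a complexity bound combined with the Picard-number hypothesis $\rho(X) = \rho(Y)$, and finally read off the $f$-unsplitting points from the toric combinatorics.

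\emph{Setting up the log Calabi--Yau pair on $X$.} Total ramification yields, for each prime component $Q \subseteq B_f$, a single prime $P_Q := f^{-1}(Q)$ with $f^*Q = e_Q P_Q$ and $e_Q \geq 2$. Hence $R_f = \sum_Q (e_Q - 1) P_Q$, $\Supp R_f = \sum_Q P_Q$, and both boundaries have the same number $r$ of irreducible components. The ramification formula combined with $K_Y + B_f \sim_{\Q} 0$ gives
\[
K_X + \Supp R_f \;=\; f^*(K_Y + B_f) \;\sim_{\Q}\; 0,
\]
and the standard descent of log canonicity through finite covers shows that $(X, \Supp R_f)$ is also log canonical, hence log Calabi--Yau. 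In particular, $\Supp R_f \sim_{\Q} -K_X$ is ample, since $X$ is del Pezzo.

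\emph{Toricity.} I would apply a two-dimensional complexity bound in the spirit of Brown--McKernan--Svaldi--Zong: for a log canonical log Calabi--Yau surface pair $(Z, D)$ with $D$ reduced, the number of components of $D$ is at most $\rho(Z) + 2$, with equality iff $(Z, D)$ is a toric pair carrying the full torus-invariant boundary. This gives $r \leq \rho(Y) + 2 = \rho(X) + 2$. \emph{The main obstacle is to promote this inequality to equality.} I would approach it by contradiction: assume $r < \rho(Y) + 2$, so $(Y, B_f)$ is a non-toric log Calabi--Yau del Pezzo pair; the \'etale cover $X \setminus \Supp R_f \to Y \setminus B_f$ of degree $\deg f > 1$ combined with $\rho(X) = \rho(Y)$ (so $f^* \colon \N^1(Y) \to \N^1(X)$ is an isomorphism) would then have to exist for such a pair. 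But a case-by-case analysis via the classification of non-toric log Calabi--Yau pairs on del Pezzo surfaces (or a direct Picard-number computation on the associated cyclic covers branched over the smooth irrational components of $B_f$, such as a smooth anti-canonical cubic in $\mathbb{P}^2$) shows that any non-trivial totally ramified finite cover strictly increases the Picard number, contradicting the hypothesis. Hence $r = \rho(Y) + 2$ and both pairs are toric.

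\emph{The isomorphism $\sigma$ and the unsplitting locus.} With both pairs toric carrying the full torus-invariant boundary, the finite morphism $f$ is combinatorially given by an injective lattice endomorphism $\phi \colon N \to N$ of finite index $\deg f$ that preserves the fan; this identifies the two fans and provides the toric isomorphism $\sigma \colon (X, \Supp R_f) \cong (Y, B_f)$. For the unsplitting claim: a point $y \in Y$ lies in $\Sing(B_f)$ iff it is a torus-fixed point (pairwise intersection of two boundary components), and its unique preimage is the torus-fixed point of $X$ indexed by the same maximal cone, so it is $f$-unsplitting. Conversely, the log Calabi--Yau hypothesis forces $e_{Q_j} \geq 2$ for every $j$, which in the toric picture yields $\deg(f|_{P_{Q_i}}) = \deg f/e_{Q_i} > 1$ for every $i$; hence a point on a $1$-dimensional orbit (smooth point of $B_f$) has multiple preimages, and a point on the dense torus has $\deg f > 1$ preimages. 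Therefore $\Sing(B_f)$ is exactly the set of $f$-unsplitting points.
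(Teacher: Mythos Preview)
Your overall framework---transporting the log Calabi--Yau condition via Riemann--Hurwitz and then invoking a complexity bound $r \leq \rho(Z)+2$ with equality characterizing toric pairs---is a legitimate alternative to the paper's argument. However, there is a genuine gap precisely at the step you yourself flag as ``the main obstacle'': promoting the inequality $r \leq \rho(Y)+2$ to equality. You defer this to an unspecified ``case-by-case analysis via the classification of non-toric log Calabi--Yau pairs'' or a ``direct Picard-number computation on the associated cyclic covers,'' but neither is carried out, and neither is routine. For instance, when $Y = \mathbb{P}^2$ (so $\rho(X)=1$ forces $X = \mathbb{P}^2$ as well), the non-toric reduced anticanonical boundaries include a smooth cubic, a nodal cubic, and a transversal conic-plus-line; ruling each of these out as the branch locus of a totally ramified self-map of $\mathbb{P}^2$ is a nontrivial theorem of Gurjar, and this is exactly what the paper invokes. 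Your fallback suggestion of analyzing cyclic covers is also problematic, since $f$ is not assumed cyclic.

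The paper proceeds instead by induction on $\rho(Y)$. The base case $Y\cong\mathbb{P}^2$ uses Gurjar's theorem to force $B_f$ to be a union of lines; the base case $Y\cong\mathbb{P}^1\times\mathbb{P}^1$ (separated from the Hirzebruch surface $S_1$ by a negative-curve argument) uses the product splitting $f=g\times h$; and the inductive step contracts a $(-1)$-curve $C_X\subset X$, shows its image $C_Y=f(C_X)$ is a $(-1)$-curve contained in $B_f$, and descends all hypotheses to the blow-downs $X'\to Y'$ with $\rho$ dropped by one. This is concrete and complete. Your toric-combinatorics reading of the unsplitting locus in the final paragraph is essentially correct once toricity is established, and gives a clean alternative to the paper's int-amplified argument there; but when asserting the isomorphism $\sigma$ you should also address the $\rho=2$ ambiguity between $\mathbb{P}^1\times\mathbb{P}^1$ and $S_1$, which the paper handles explicitly.
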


\begin{proof}
There are exactly $5$ isomorphic classes of toric del Pezzo surfaces 
$$\mathbb{P}^1\times \mathbb{P}^1, \mathbb{P}^2, S_j \, (j \le 3) $$ where 
$S_j$ is the blowup of $\mathbb{P}^2$ at $j$ distinct points (not on the same line when $j = 3$).
We remark (for late use) that there is no finite surjective morphism from $\mathbb{P}^1\times \mathbb{P}^1$ to $S_1$, or the other way round, 
since the latter has a negative curve while the former has none.

Suppose we have proved that
$(X,\Supp R_f)$ and $(Y, B_f)$ are toric pairs. Then there is an isomorphism $\sigma:X\to Y$ such that $\sigma^{-1}(B_f)=\Supp R_f$ since $\rho(X)=\rho(Y)$.
Under the identification of $\sigma$, we can regard $f$ as a surjective endomorphism such that $f^{-1}(B_f)=B_f$ since $f$ is totally ramified.
Since $(Y,B_f)$ is log Calabi-Yau and $Y$ is Fano, $B_f$ is ample and $R_f$ is big.
Now $f^*B_f-B_f=R_f$ implies that $f$ is int-amplified (cf.~\cite[Theorem 1.1]{Men20}).
Let $Q$ be an irreducible component $Q$ of $B_f$.
After iteration, we may assume $f^{-1}(Q)=Q$.
So $f|_Q$ is int-amplified for any irreducible component $Q$ of $B_f$.
Note that $B_f$ is a simple normal crossing loop of smooth rational curves.
Then $B_{f|_Q}=(B_f-Q)\cap Q$ and hence the set of $f$-unsplitting points is exactly $\Sing(B_f)=\Sing(\Supp R_f)$.
Thus it suffices to show $(X,\Supp R_f)$ and $(Y, B_f)$ are toric pairs. 

We first consider two relatively minimal cases of $Y$.

\textbf{Case $Y\cong \mathbb{P}^2$.}
Note that $X\cong \mathbb{P}^2$ since $\rho(X)=\rho(Y)=1$.
Since $f$ is totally ramified, $B_f$ is a union of lines by \cite[Theorem]{Gur03}.
Since $(Y, B_f)$ is log Calabi-Yau and hence LC, $B_f$ is a union of three lines that do not intersect at one point.
Therefore, $(Y, B_f)$ is a toric pair.
Since $f$ is totally ramified, $\Supp R_f$ has three irreducible components.
Note that $K_X+\Supp R_f=f^*(K_Y+B_f)$.
So $(X, \Supp R_f)$ is log Calabi-Yau (cf.~\cite[Proposition 5.20]{KM98}). Hence clearly it is a toric pair.

\textbf{Case $Y\cong \mathbb{P}^1\times \mathbb{P}^1$.}
Since $\rho(X)=\rho(Y)=2$, $X$ is either $\mathbb{P}^1\times \mathbb{P}^1$ or $S_1$.
The existence of $f: X \to Y$ implies that $X = \mathbb{P}^1\times \mathbb{P}^1$ as remarked above.
Then 
$f$ splits as two surjective endomorphisms $g, h$ of $\mathbb{P}^1$.
Since $(Y, B_f)$ is log Calabi-Yau and hence LC,
$B_f= \sum_{j=1}^2 (\mathbb{P}^1\times \{a_j\}+\{b_j\}\times \mathbb{P}^1)$ with $a_1\neq a_2$ and $b_1\neq b_2$.
Since $f$ is totally ramified, $g$ and $h$ are totally ramified.
Therefore, $(Y, B_f)$ and $(X, \Supp R_f)$ are toric pairs.

Next, we consider the case when $Y$ is not relatively minimal and show by induction on $\rho(Y)$.
Let $C_X$ be a $(-1)$-curve on $X$ and $C_Y:=f(C_X)$.
Note that $f^*C_Y=qC_X$ for some integer $q>0$;
so $C_Y^2<0$ and hence $C_Y$ is a $(-1)$-curve on $Y$ since $K_Y\cdot C_Y<0$
(cf. Proposition \ref{prop-f*f*}).
Note that $\deg f>1$ since $B_f$ is non-empty.
So $q^2=\deg f>1$.
Then $\deg f|_{C_X}=q>1$ and $C_Y\subseteq B_f$.
Consider the following commutative diagram
$$\xymatrix{
X\ar[r]^f\ar[d]_{\pi_{C_X}} &Y\ar[d]^{\pi_{C_Y}}\\
X'\ar[r]^{f'} &Y'
}
$$
where $\pi_{C_X}$ and $\pi_{C_Y}$ are the blowdowns of $C_X$ and $C_Y$ respectively.
By the rigidity lemma (cf.~\cite[Lemma 1.15]{Deb01}), $f'$ is well-defined. 
Note that $\rho(X')=\rho(Y')=\rho(X)-1$ and $f'$ is finite surjective. 
Moreover, $f'$ is totally ramified and $B_{f'}=\pi_{C_Y}(B_f)$.
Then $K_{Y'}+B_{f'}={\pi_{C_Y}}_*(K_Y+B_f)\equiv 0$ and hence $K_Y+B_f=\pi_{C_Y}^*(K_{Y'}+B_{f'})$.
So $(Y', B_{f'})$ is log Calabi-Yau and hence a toric pair by induction.
Note that $\Sing(B_{f'})$ is the set of $f'$-unsplitting points which contains $\pi_{C_Y}(C_Y)$.
Then $B_{f'}$ has multiplicity $2$ at $\pi_{C_Y}(C_Y)$.
So $\pi_{C_Y}(C_Y)$ is a closed toric orbit and $\pi_{C_Y}$ is a toric blowup.
In particular, $(Y, B_f)$ is a toric pair.
Similarly, $(X, \Supp R_f)$ is a toric pair.
\end{proof}

For convenience, we give a proof of the lemma below, which should be well-known. 

\begin{lemma}\label{lem-surf-deg}
Let $f:X\to X$ be a surjective endomorphism of a projective surface $X$.
Then $\deg f=\delta_f\cdot \iota_f$.
\end{lemma}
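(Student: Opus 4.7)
The plan is to exploit the identity $f_* f^* = f^* f_* = (\deg f)\,\id$ from Proposition \ref{prop-f*f*} together with the fact that on a surface $\N^1(X) = \N_1(X)$, so $f^*$ and $f_*$ act on the same finite-dimensional $\R$-vector space. I aim to show that the spectrum of $f^*$ on $\N^1(X)$ is closed under the involution $\lambda \mapsto \deg f/\lambda$, which is order-reversing on moduli; the maximum modulus $\delta_f$ and the minimum modulus $\iota_f$ must then be matched by this involution, giving $\delta_f \cdot \iota_f = \deg f$.

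First I would observe that $f^* \circ f_* = (\deg f)\,\id$ forces $f^*$ to be invertible on $\N^1(X)$ with $(f^*)^{-1} = (\deg f)^{-1} f_*$. In particular $\iota_f > 0$, and if $\lambda$ is an eigenvalue of $f^*$ then $\deg f/\lambda$ is an eigenvalue of $f_*$. To transport this information back to $f^*$, I would show that $f^*$ and $f_*$ share the same characteristic polynomial on $\N^1(X)$. This is where the surface hypothesis enters: the intersection pairing is a non-degenerate symmetric bilinear form on $\N^1(X)$ by the Hodge index theorem, and the projection formula $f^*D \cdot C = D \cdot f_* C$ makes $f_*$ the adjoint of $f^*$ with respect to this pairing. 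Concretely, writing $A$, $B$, and $G$ for the matrices of $f^*$, $f_*$, and the Gram form in a chosen basis, adjointness reads $A^T G = G B$; since $G$ is invertible, $B$ is similar to $A^T$, so $A$ and $B$ have the same eigenvalues with multiplicity.

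Combining the two steps, the spectrum $\{\lambda_1, \ldots, \lambda_{\rho(X)}\}$ of $f^*$ is invariant under $\lambda \mapsto \deg f/\lambda$. Since this involution reverses the order of the moduli, the largest modulus $\delta_f$ is paired with the smallest modulus $\iota_f$, producing $\delta_f \cdot \iota_f = \deg f$, which is the desired identity.

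The main technical obstacle I expect is confirming the adjointness setup in full generality. If $X$ is only assumed projective (not normal or smooth), one can pass to the normalization, using that the induced endomorphism has the same degree, dynamical degree, and minimal eigenvalue modulus, since finite pullback/pushforward and numerical equivalence are compatible with normalization. A minor edge case is $\rho(X) = 1$, where the pairing argument is trivial: $f^*H = aH$ directly gives $\delta_f = \iota_f = a$ and $\deg f = (f^*H)^2/H^2 = a^2$.
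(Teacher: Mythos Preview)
Your proof is correct and a bit slicker than the paper's. Both arguments rest on the same two ingredients --- the projection formula (equivalently $f^*f_* = (\deg f)\id$) and the non-degeneracy of the intersection pairing on $\N^1$ of a surface --- but they package them differently. The paper works directly with a Jordan basis $x_1,\dots,x_r$ for $f^*$ on $\NS_{\C}(X)$: given $x_1$ with eigenvalue $\lambda_1$, non-degeneracy produces a maximal index $j$ with $x_1\cdot x_j\neq 0$, and the projection formula then forces $\deg f=\lambda_1\lambda_j$; choosing $|\lambda_1|=\delta_f$ and $|\lambda_1|=\iota_f$ in turn gives both inequalities. You instead observe once and for all that $f_*$ is the adjoint of $f^*$ for the intersection form, hence has the same characteristic polynomial, so the identity $f_*=(\deg f)(f^*)^{-1}$ makes the spectrum of $f^*$ stable under $\lambda\mapsto(\deg f)/\lambda$; applying this involution to eigenvalues of extremal modulus yields $\deg f\ge\delta_f\iota_f$ and $\deg f\le\delta_f\iota_f$ immediately. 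Your formulation has the advantage of making the symmetry $\lambda\leftrightarrow(\deg f)/\lambda$ of the whole spectrum transparent, not just the extremal pairing. One cosmetic remark: non-degeneracy of the pairing on $\N^1(X)$ is essentially the definition of numerical equivalence rather than the Hodge index theorem, which gives the signature; and your normalization remark is a reasonable safeguard, though the paper simply works on $\NS_{\C}(X)$ without further comment.
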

\begin{proof} 
Consider the Jordan canonical form of $f^*|_{\NS_{\C}(X)}$.
Let $x_1,\cdots, x_r$ be a basis of $\NS_{\C}(X)$ such that either $f^*x_i=\lambda_ix_i$ or $f^*x_i=\lambda_ix_i+x_{i+1}$.
Note that $x_1\cdot x_j\neq 0$ and $x_1\cdot x_{j+1}=0$ for some $j$ (write $x_{r+1}=0$).
By the projection formula, $\deg f=\lambda_1\cdot \lambda_j$.
If we take $\lambda_1=\delta_f$,
then $\deg f\ge \delta_f\cdot \iota_f$.
If we take $\lambda_1=\iota_f$,
then $\deg f\le \iota_f\cdot \delta_f$.
\end{proof}

We shall use the following lemma in the proof of Theorem \ref{thm-fiitaka2}.
\begin{lemma}\label{lem-surf-deg=1}
Let $f:X\to X$ be a non-isomorphic surjective endomorphism of a normal projective surface $X$.
Let $C$ be an $f$-prime curve which is not $f^{-1}$-periodic.
Then $X$ is $\Q$-factorial and $\kappa(X,C)>0$.
\end{lemma}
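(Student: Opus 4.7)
The engine of the proof is the infinite family $\{C_n:=f^n(C)\}_{n\in\Z}$ of distinct prime divisors: $f$-primeness gives irreducibility of each $C_n$, and non-$f^{-1}$-periodicity gives pairwise distinctness (any equality $C_n=C_m$ with $n>m$ would, upon applying $f^{|m|}$, yield $f^{-(n-m)}(C)=C$, contradicting the hypothesis). By $f$-primeness and the projection formula (Proposition \ref{prop-f*f*}),
\begin{equation*}
f^*C_n=e_nC_{n-1},\qquad f_*C_n=d_nC_{n+1},\qquad e_nd_{n-1}=\deg f,
\end{equation*}
for positive integers $e_n,d_n$.

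For $\Q$-factoriality, I would take a minimal resolution $\pi:\widetilde X\to X$ and, after replacing $f$ by a suitable iterate, lift $f$ to a non-isomorphic finite surjective endomorphism $\widetilde f:\widetilde X\to\widetilde X$ stabilizing the reduced exceptional divisor $\Exc(\pi)$; such a lift exists after iteration because $f$ permutes the finite set $\Sing X$ and its negative-definite exceptional configurations (rigidity lemma). The proper transform $\widetilde C$ is then $\widetilde f$-prime and not $\widetilde f^{-1}$-periodic on the smooth surface $\widetilde X$. Appealing to Nakayama's classification in \cite{Nak02} of smooth projective surfaces admitting a non-isomorphic surjective endomorphism, every $\widetilde f$-periodic configuration of exceptional curves must consist of smooth rational curves with negative-definite dual graph, so each singular point of $X$ is a rational singularity; by Lipman's theorem (rational surface singularities have finite local class group), $X$ is $\Q$-factorial.

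For $\kappa(X,C)>0$, suppose to the contrary $\kappa(X,C)=0$. With $\Q$-factoriality in hand, each $C_n$ is $\Q$-Cartier, and Proposition \ref{prop-f*f*} gives $\kappa(X,C_n)=\kappa(X,C)=0$ for all $n$, so every $C_n$ is rigid. The classes $[C_n]\in\N^1(X)_{\R}$ lie in the pseudo-effective cone and satisfy $f^*[C_n]=e_n[C_{n-1}]$. Combining the spectral radius $\delta_f$ of $f^*$ with Lemma \ref{lem-surf-deg} ($\deg f=\delta_f\cdot\iota_f$) and the identity $e_nd_{n-1}=\deg f$, one bounds $\|[C_n]\|$ and $\|[C_{-n}]\|$ uniformly in $n$; pigeonhole in the finitely generated abelian group $\NS(X)$ then produces $n<m$ with $C_n\equiv C_m$. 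Hence some multiple of $C_m-C_n$ lies in $\Pic^0(X)$, and combining rigidity of both $|C_n|=\{C_n\}$ and $|C_m|=\{C_m\}$ with Lemma \ref{lem-hodge} applied to appropriate nef classes (e.g., the nef part in the Zariski decomposition of $C_n+C_m$) produces a nontrivial section in $|k(C_n+C_m)|$ for some $k>0$, contradicting the rigidity of $C_n$.

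The main obstacle is Stage 1: even with $\widetilde f$ in hand, excluding non-rational singularities of $X$ from the dynamical hypotheses alone demands extracting the right statement from \cite{Nak02} about the structure of $\widetilde f$-periodic exceptional configurations, since cones over elliptic curves do admit non-isomorphic endomorphisms and fail to be $\Q$-factorial, so the infinite non-periodic orbit must be used in an essential way to rule out these examples. A secondary subtlety in Stage 2 is the uniform bound on $\|[C_n]\|$, which requires simultaneously controlling the expanding ($\delta_f$) and contracting ($\iota_f$) spectral directions of $f^*|_{\N^1(X)}$; it is precisely the surface-specific identity $\deg f=\delta_f\cdot\iota_f$ of Lemma \ref{lem-surf-deg} that makes this two-sided bound possible.
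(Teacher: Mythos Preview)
Your proposal has genuine gaps in both stages, and in each case the paper's argument follows a quite different (and shorter) route.

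\textbf{Stage 1 ($\Q$-factoriality).} You correctly identify the obstacle---non-$\Q$-factorial surfaces such as elliptic cones do admit non-isomorphic endomorphisms---but you do not resolve it; saying ``the infinite non-periodic orbit must be used in an essential way'' is not a proof. The paper does not argue via the minimal resolution at all. Instead it observes that if $f$ were int-amplified then by \cite[Lemma 8.1]{Men20} every $f$-prime divisor would be $f^{-1}$-periodic, contradicting the hypothesis on $C$. Hence $f$ is \emph{not} int-amplified, and then the structure theorem \cite[Theorem 1.1]{JXZ23} for non-int-amplified surface endomorphisms (together with \cite[Theorem 5.4]{MZ22} and \cite[Proposition 2.33]{Nak17} in the $K_X$ non-pseudo-effective case) forces $X$ to be $\Q$-factorial. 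This is exactly how the non-periodic orbit gets used: it rules out the int-amplified case, which is where the elliptic cone examples live.

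\textbf{Stage 2 ($\kappa(X,C)>0$).} Your pigeonhole strategy rests on a uniform bound for $\|[C_n]\|$ that you do not prove. For $n\gg 1$ one has $e_n=1$, so $[C_{n+m}]=((f^*)^{-1})^m[C_n]$; bounding this requires $\iota_f\ge 1$, which you neither state nor justify, and the identity $\deg f=\delta_f\cdot\iota_f$ alone does not give it. Even granting $C_n\equiv C_m$, your passage from numerical to $\Q$-linear equivalence via ``Lemma \ref{lem-hodge} applied to appropriate nef classes'' is not an argument: you would need to kill the $\Pic^0$ part, and Hodge index does not do that. The paper bypasses all of this. Since $f^*C_{i+1}=C_i$ for $i\gg 1$, the projection formula gives
\[
C_i\cdot C_j=(\deg f)\,C_{i+1}\cdot C_{j+1},
\]
so $C_i\cdot C_j\to 0$; but $r^2C_i\cdot C_j\in\Z$ for $r$ the numerical Cartier index (Lemma \ref{lem-index}), hence $C_i\cdot C_j=0$ for $i,j\gg 1$ and then for all $i,j$ by the projection formula again. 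Thus the $C_i$ are pairwise disjoint, and \cite[Theorem 1.1]{BPS16} gives semi-ampleness of each $C_i$, whence $\kappa(X,C)>0$.
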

\begin{proof}
If $f$ is int-amplified, then any $f$-prime curve $C$ is $f^{-1}$-periodic by \cite[Lemma 8.1]{Men20}.
So we may assume $f$ is not int-amplified.
By \cite[Theorem 1.1]{JXZ23}, $X$ is further $\Q$-factorial.
Indeed, if $K_X$ is pseudo-effective, then $X$ has quotient singularities and hence is $\Q$-factorial.
If $K_X$ is not pseudo-effective, then $\rho(X)=2$ and $X$ admits a Fano contraction to a curve by \cite[Theorem 5.4]{MZ22}.
By \cite[Proposition 2.33]{Nak17}, $X$ is $\Q$-factorial.

Let $C_i:=f^i(C)$ with $i\in \mathbb{Z}$.
Since $C$ is not $f^{-1}$-periodic, $f^*C_{i+1}=C_i$ when $i\gg 1$.
By the projection formula, 
$$C_i\cdot C_j=f^*C_{i+1}\cdot f^*C_{j+1}=(\deg f) C_{i+1}\cdot C_{j+1}.$$
Note that $\deg f>1$.
Then we have
$$\lim_{i,j\to+\infty} C_i\cdot C_j=0.$$
By Lemma \ref{lem-index}, we have
$$r^2C_i\cdot C_j\in\mathbb{Z}$$ 
where $r$ is the numerical Cartier index of $X$.
So we have $C_i\cdot C_j=0$ for $i,j\gg 1$.
By the projection formula again, this holds for any $i,j\in \mathbb{Z}$.
Thus, $C_i\cap C_j=\emptyset$ for any $i\neq j$.
By \cite[Theorem 1.1]{BPS16}, each $C_i$ is semi-ample and hence $\kappa(X,C)>0$.
\end{proof}

\begin{theorem}\label{thm-dbbc-surf}
Let $f:X\to X$ be a non-isomorphic surjective endomorphism of a normal projective surface $X$.
Then $f$ is either $\delta$-imprimitive, quasi-abelian, or polarized after iteration.
In particular, Conjecture \ref{mainconj} holds for surfaces.
\end{theorem}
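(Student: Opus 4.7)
The strategy combines the equivariant MMP for normal projective surfaces from \cite[Theorem 1.1]{JXZ23} with the Enriques--Kodaira classification and a spectral analysis of $f^*|_{\N^1(X)}$ via Lemma \ref{lem-surf-deg}, Lemma \ref{lem-hodge}, and Lemma \ref{lem-imprimitive}. Since $\deg f>1$, any surjective endomorphism of a surface of general type is an automorphism, so I may assume $\kappa(X)\in\{-\infty,0,1\}$. Throughout, I assume $f$ is $\delta$-primitive and aim to show it is either quasi-abelian or polarized after iteration.

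For $\kappa(X)=0$, after passing to the minimal model $X$ is K3, Enriques, bielliptic, or abelian. K3 and Enriques surfaces admit only automorphisms, since $K_X$ is $\Q$-torsion and the identity $f^*K_X=K_X+R_f$ forces $R_f\equiv 0$, giving an \'etale self-map which must be an automorphism; this contradicts $\deg f>1$. Hence $X$ is abelian or bielliptic and $f$ is quasi-abelian. For $\kappa(X)=1$, the Iitaka fibration $\pi\colon X\dashrightarrow C$ to a curve is $f$-equivariant after iteration; I would use the canonical bundle formula for elliptic fibrations, writing $mK_X\sim_{\Q}\pi^*L$ modulo an $f$-periodic vertical divisor with $L$ ample on $C$, and push forward $f^*K_X=K_X+R_f$ to conclude $g^*L\equiv \delta_f L$ on $C$, so $\delta_{f|_C}=\delta_f$ and $f$ is $\delta$-imprimitive via $\pi$, contradicting $\delta$-primitivity; hence this case does not occur under our assumption.

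For $\kappa(X)<0$, I would run an $f$-equivariant MMP (after iteration) by \cite[Theorem 1.1]{JXZ23} to arrive at an equivariant Fano contraction $\tau\colon X_r\to Y$. If $\dim Y=0$, then $\rho(X_r)=1$, the ample generator $H$ satisfies $f_r^*H\sim qH$ with $q=\sqrt{\deg f_r}$, so $f_r$ is polarized, and this polarization pulls back through the preceding $f$-equivariant divisorial contractions and flips to exhibit $f$ as polarized after iteration. If $\dim Y=1$, then $\tau$ is an $f$-equivariant $\mathbb{P}^1$-fibration over the curve $Y$; by the product formula $\delta_f=\max\{\delta_{f|_\tau},\delta_g\}$, and $\delta$-primitivity rules out $\delta_g=\delta_f$, so $\delta_{f|_\tau}=\delta_f>\delta_g$. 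In this regime, the two extremal rays of $\NE(X_r)$ are both $f_*$-stable after iteration, and Perron--Frobenius applied to $f^*|_{\N^1(X_r)}$ yields a nef eigenclass $D$ with eigenvalue $\delta_f$ independent of the fiber class; Lemma \ref{lem-hodge} combined with $D^2=0$ (forced by the Hodge index pairing with the fiber class and the relation $\deg f=\delta_f\iota_f$) then shows $D$ defines a second $f$-equivariant rational map $X_r\dashrightarrow C'$ to a curve preserving $\delta_f$, giving $\delta$-imprimitivity by Lemma \ref{lem-imprimitive}; if this construction collapses onto $\tau$, then all eigenvalues of $f^*|_{\N^1(X_r)}$ coincide in modulus, and Lemma \ref{lem-surf-toric} together with the log Calabi--Yau structure arising from $K_{X_r}+\text{branch}$ forces a toric polarized form on $X_r$.

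The principal obstacle is this last regime ($\kappa<0$, $\dim Y=1$, $\delta_{f|_\tau}>\delta_g$): $f$ need not respect any obvious ``second projection,'' so the alternative equivariant map must be extracted abstractly from the Perron--Frobenius eigendata, and one must check via the numerical Cartier index (Lemma \ref{lem-index}) and the intersection pairing that the resulting map is genuinely $f$-equivariant and preserves $\delta_f$; the dichotomy between a genuine second fibration (giving $\delta$-imprimitivity) and a collapsed one (forcing polarization) requires precisely the Hodge-theoretic splitting $\deg f=\delta_f\cdot \iota_f$ to control the eigenvalue structure.
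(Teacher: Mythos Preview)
Your overall case split by Kodaira dimension is reasonable, and the $\kappa=0$ discussion is essentially correct. However, there is a genuine gap in the $\kappa=1$ case, and this is precisely the case the paper has to work for.

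In the $\kappa=1$ case you claim that pushing forward the ramification formula $f^*K_X=K_X+R_f$ along the Iitaka fibration $\pi\colon X\to C$ yields $g^*L\equiv \delta_f L$, hence $\delta_g=\delta_f$. This is not correct: nothing in the ramification formula produces the factor $\delta_f$. In the essential subcase --- $X$ a quasi-\'etale quotient of $E\times T$ with $E$ elliptic and $T$ of genus $\ge 2$ --- the map $f$ is quasi-\'etale, so $R_f=0$ and $f^*K_X=K_X$; writing $mK_X\sim_{\Q}\pi^*L$ one gets $g^*L\equiv L$, i.e.\ $\deg g=1$ and indeed $g=\id$ after iteration. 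Thus the Iitaka fibration does \emph{not} witness $\delta$-imprimitivity: $\delta_g=1<\delta_f=\deg f$. The paper's proof handles exactly this case by a different mechanism: since $f|_{X_y}$ is a non-translation isogeny of the elliptic fibre, $\operatorname{Fix}(f)$ contains a horizontal curve $C$; one checks $f^*C\sim_{\Q}(\deg f)C$, $C^2=0$, and (using that $f$ is quasi-\'etale) that $f^{-1}(C)\neq C$, producing a disjoint $C'\sim_{\Q}C$, so $C$ is semi-ample with $\kappa(X,C)=1$. The Iitaka fibration of $C$ is then a second $f$-equivariant fibration $\psi\colon X\to Z$ with $\delta_{f|_Z}=\delta_f$, giving $\delta$-imprimitivity. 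You are missing this construction entirely.

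For $\kappa<0$ your sketch is too vague to be a proof; in particular the ``second fibration from the Perron--Frobenius eigenclass $D$'' requires $D$ to be semi-ample (or at least to have positive Iitaka dimension), which does not follow from $D$ nef and $D^2=0$ alone, and your proposed dichotomy at the end is not justified. The paper bypasses this by citing \cite[Theorems 5.1 and 5.4]{MZ22}, which already show that when $K_X$ is not pseudo-effective $f$ is $\delta$-imprimitive or polarized, and when $K_X$ is pseudo-effective $X$ is a quasi-\'etale quotient of an abelian surface (quasi-abelian case) or of $E\times T$ as above (the $\kappa=1$ case just discussed). So the only new content in the paper's proof is the fixed-curve argument for that last case.
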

\begin{proof}
By \cite[Theorems 5.1 and 5.4]{MZ22}, we only need to consider the case where
$f$ is quasi-\'etale, and $X$ is a quasi-\'etale quotient of $E\times T$ with $E$ an elliptic curve and $T$ a curve of genus $\ge 2$.
Then $\kappa(X,K_X)=1$ and $K_X$ is semi-ample.
By \cite[Theorem A]{NZ09}, after iteration, we have the following equivariant dynamical systems:
$$\xymatrix{
f \acts X \ar[r]^{\varphi} &Y\racts g=\id
}$$
where $\varphi$ is the Iitaka fibration and $\dim Y=1$.
Note that $\delta_f=\deg f|_{X_y}=\deg f>1$ and $X$ is non-uniruled.
So the general fibre $X_y$ of $\varphi$ is an elliptic curve.
Note that $f|_{X_y}$ is not a translation.
So the set of fixed points $\text{Fix}(f|_{X_y})$ is non-empty for general $y\in Y$.
In particular, $\text{Fix}(f)$ contains an irreducible curve $C$ such that $\varphi|_{C}:C\to Y$ is finite surjective.
Note that $f|_C=\id$ and $f^*C\sim_{\Q} (\deg f)C$ by Proposition \ref{prop-f*f*}, so $C^2 = 0$.
If $f^{-1}(C)=C$, then $f^*C=(\deg f)C$ and hence $C\subseteq B_f$, a contradiction since $f$ is quasi-\'etale.
So there exists some irreducible curve $C'\neq C$ such that $f(C')=C$.
Now $f^*C\sim_{\Q} (\deg f)C$ and Proposition \ref{prop-f*f*} imply
$f^*C'\sim_{\Q} (\deg f) C'$.
Since $f|_{C'} = \id$ too and 
$$(f|_C)^*(C'|_C)\sim_{\Q} (\deg f)C'|_C ,$$
$C'\cap C=\emptyset$. Hence $C$ is semi-ample, so 
$\kappa(X,C)=1$ since $C^2 = 0$.
Let $\psi:X\to Z$ be the Iitaka fibration of $C$.
Then $\psi$ is $f$-equivariant and $\delta_{f|_Z}=\delta_f$ by the dynamical product formula.
Thus $f$ is $\delta$-imprimitive.
\end{proof}

\begin{remark}
By the proof of Theorem \ref{thm-dbbc-surf}, we have indeed proved that ``strongly $f$-imprimitive'' is ``$\delta$-imprimitive'' for surfaces.
\end{remark}

\section{Dynamical Iitaka fibration}

In this section, we introduce the new concepts: {\it dynamical Iitaka dimension} and {\it dynamical Iitaka fibration}.

\begin{definition}[Dynamical Iitaka dimension]
Let $X$ be a normal projective variety and $D$ a $\Q$-Cartier divisor.
Denote by $V_f(D)$ the subspace of $\Pic_{\Q}(X)$ spanned by $D_i:=(f^*)^i(D)$ with $i\in\Z$.
By \cite[Proposition 3.7]{MZ22} and noting that $f^*|_{\Pic_{\Q}(X)}$ is invertible, $V_f(D)$ is finite dimensional.
We define the {\it dynamical $f$-Iitaka dimension} of $D$ as 
$$\kappa_f(X,D):=\max\{\kappa(X,D')\,|\,D'\in V_f(D)\}.$$
If $D_0,\cdots, D_n$ are effective and span $V_f(D)$, 
then $\kappa_f(X,D)=\kappa(X, \sum\limits_{i=0}^n D_i)$.
\end{definition}

\begin{lemma}\label{lem-fkappa>0}
Let $f:X\to X$ be a surjective endomorphism of a normal projective variety $X$.
Let $D$ be a $\Q$-Cartier effective divisor of $X$ such that $\Supp D$ is not $f^{-1}$-periodic.
Then $\kappa_f(X,D)>0$.
\end{lemma}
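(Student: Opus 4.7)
The plan is to argue the contrapositive: assume $\kappa_f(X,D)=0$ and deduce that $\Supp D$ is $f^{-1}$-periodic. By the definition of $\kappa_f$, every effective $\Q$-Cartier divisor $D'\in V_f(D)$ then satisfies $\kappa(X,D')=0$. The key ingredient is the following rigidity fact: if $E$ is effective $\Q$-Cartier with $\kappa(X,E)=0$, then $h^0(X,\mathcal{O}(mE))\le 1$ for every $m$ with $mE$ integral Cartier (otherwise two independent sections would produce a map $\phi_{|mE|}$ with image of positive dimension); consequently, any effective $\Q$-Cartier $F$ with $F\sim_{\Q} cE$ must equal $cE$ as a Weil $\Q$-divisor.

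Since $V_f(D)$ is finite-dimensional and coincides with the $f^*$-cyclic subspace generated by $D$, there exist rationals $p_0,\ldots,p_d$ with $p_d=1$, $p_0\ne 0$, and $d=\dim V_f(D)$ such that $\sum_{i=0}^d p_iD_i\sim_{\Q} 0$, where $D_i=(f^*)^iD$. Splitting into positive and negative parts gives nonzero effective $\Q$-Cartier divisors $P=\sum_{p_i>0}p_iD_i$ and $N=\sum_{p_i<0}|p_i|D_i$ in $V_f(D)$ with $P\sim_{\Q} N$; the rigidity fact applied to $P$ forces $P=N$ as Weil $\Q$-divisors, so $\sum_{i=0}^d p_iD_i=0$ as Weil $\Q$-divisors. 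Pulling this identity back by $(f^*)^j$ for each $j\ge 0$ yields $\sum_{i=0}^d p_iD_{i+j}=0$ as divisors; comparing multiplicities at any prime $Q\in\Supp D_{d+j}$ shows that $Q\in\Supp D_{i+j}$ for some $i<d$, so $\Supp D_{d+j}\subseteq\bigcup_{k=j}^{j+d-1}\Supp D_k$. Induction on $M$ then gives $\Supp D_M\subseteq S:=\bigcup_{k=0}^{d-1}\Supp D_k$ for every $M\ge 0$.

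Since $S$ is a fixed finite union of prime divisors, the sets $\Supp D_i=f^{-i}(\Supp D)$ for $i\in\mathbb{Z}_{\ge 0}$ lie in the finite collection of subsets of $S$, so by pigeonhole $f^{-i}(\Supp D)=f^{-j}(\Supp D)$ for some $0\le i<j$. Applying $f^i$ and using the set-theoretic identity $f^i\circ f^{-j}=f^{-(j-i)}$ (valid since $f$ is surjective, so $f\circ f^{-1}=\id$ on subsets of $X$), I conclude $\Supp D=f^{-(j-i)}(\Supp D)$, contradicting the assumption that $\Supp D$ is not $f^{-1}$-periodic. The principal delicate step is the rigidity upgrade from $\Q$-linear equivalence to actual equality of Weil divisors: this is precisely what converts an abstract linear relation in the finite-dimensional space $V_f(D)$ into concrete support control, and the rest of the argument is then bookkeeping.
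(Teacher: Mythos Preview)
Your proof is correct and follows essentially the same approach as the paper's. Both arguments use a monic polynomial relation $\sum_i p_i D_i \sim_{\Q} 0$ in $V_f(D)$, split it into positive and negative parts $P \sim_{\Q} N$, and exploit the dichotomy ``$P \neq N$ forces $\kappa(X,P)>0$'' versus ``$\kappa(X,P)=0$ forces $P=N$.'' The paper argues directly (non-periodicity $\Rightarrow$ $\Supp D_n \not\subseteq \bigcup_{i<n}\Supp D_i$ $\Rightarrow$ $A\neq B$ $\Rightarrow$ $\kappa(X,A)>0$), while you run the contrapositive and spell out the pigeonhole step the paper leaves implicit; the mathematical content is the same.
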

\begin{proof}
Let $D_i:=(f^*)^i(D)$.
By \cite[Lemma 19]{KS16a}, there is a monic integral polynomial 
$$P_f(t)=\sum_{i=0}^n a_i t^i\in \mathbb{Z}[t]$$ such that $P_f(f^*)$ annihilates $\Pic_{\Q}(X)$.
So we have $$A:=\sum_{a_i>0} a_i D_i\sim_{\Q} \sum_{a_i<0} -a_i D_i=:B$$
where $\Supp D_n\subseteq \Supp A$.
Since $\Supp D$ is not $f^{-1}$-periodic, we have $\Supp D_n\not\subseteq \bigcup\limits_{i=0}^{n-1} \Supp D_i$.
In particular, $A\neq B$ and hence $\kappa_f(X,D)\ge \kappa(X,A)>0$.
\end{proof}

\begin{lemma}\label{lem-pushforward-fiitka}
Consider the equivariant dynamical systems of normal projective varieties
$$\xymatrix{
f \acts X \ar@{-->}[r]^{\pi} &Y\racts g
}$$
Then the following hold.
\begin{enumerate}
\item If $\pi$ is a surjective morphism, then $\kappa_f(X,\pi^*D)= \kappa_g(Y,D)$ for any $\Q$-Cartier divisor $D$ of $Y$.
\item If $\pi$ is a birational map and $Y$ is $\Q$-factorial, then $\kappa_f(X,D)\le  \kappa_g(Y,\pi_*D)$ for any $\Q$-Cartier divisor $D$ of $X$.
\end{enumerate}
\end{lemma}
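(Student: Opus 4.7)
The plan is to transport the spanning set of $V_f(D)$ to $Y$ via $\pi$ using the intertwining $\pi \circ f = g \circ \pi$, and then combine with the standard behavior of Iitaka dimension under surjective/birational maps. For (1), since $\pi$ is an honest surjective morphism, the intertwining gives $f^* \pi^* = \pi^* g^*$ on $\Pic_{\Q}$; iterating yields $V_f(\pi^* D) = \pi^*(V_g(D))$. The pullback identity $\kappa(X, \pi^* D') = \kappa(Y, D')$ for surjective $\pi$ (\cite[Theorem 5.13]{Uen75}, already cited in the excerpt) will then give $\kappa_f(X, \pi^* D) = \kappa_g(Y, D)$ after taking the maximum over $D' \in V_g(D)$.

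For (2), I will fix a common resolution $\sigma: W \to X$, $\tau: W \to Y$ of $\pi$ with $\sigma, \tau$ birational morphisms and $\tau = \pi \circ \sigma$; since $Y$ is $\Q$-factorial, $\pi_* := \tau_* \sigma^*$ is well-defined on $\Pic_{\Q}(X)$. The first ingredient needed is the Iitaka comparison: for any $\Q$-Cartier $D'$ on $X$, $\kappa(X, D') \le \kappa(Y, \pi_* D')$. To establish it, note that $\sigma^* D' - \tau^* \pi_* D'$ is $\tau$-exceptional (its $\tau_*$ vanishes, using $\tau_* \sigma^* = \pi_*$ and $\tau_* \tau^* = \id$), so write it as $F_+ - F_-$ with $F_\pm$ effective $\tau$-exceptional and no common component. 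Then $\sigma^* D' + F_- = \tau^* \pi_* D' + F_+$, so for each sufficiently divisible $m$,
\[
h^0(X, mD') = h^0(W, m\sigma^* D') \le h^0(W, m(\sigma^* D' + F_-)) = h^0(W, m(\tau^* \pi_* D' + F_+)) = h^0(Y, m\pi_* D'),
\]
where the last equality uses $\tau_* \OO_W(mF_+) = \OO_Y$ (standard since $F_+$ is effective $\tau$-exceptional and $Y$ is normal) together with the projection formula.

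The second ingredient is the commutation $\pi_* \circ f^* = g^* \circ \pi_*$ on $\Pic_{\Q}(X)$, which I would verify on a prime divisor $P \subset X$. If $P$ is $\pi$-exceptional, then for each prime component $E$ of $f^* P$ one has $\pi(E) \subseteq g^{-1}(\pi(P))$ (from $\pi \circ f = g \circ \pi$), still of codimension $\ge 2$ by finiteness of $g$, so $E$ is $\pi$-exceptional and both sides vanish. Otherwise, setting $Q := \pi_* P$, both $\pi_*(f^* P)$ and $g^* Q$ are supported on $g^{-1}(Q)$, and the prime components match bijectively with equal ramification indices by comparing $\pi \circ f$ and $g \circ \pi$ at generic points (where $\pi$ is an isomorphism). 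Invertibility of $f^*, g^*$ on $\Pic_{\Q}$ (Proposition \ref{prop-f*f*}) then extends the identity to all iterates $(f^*)^i$, $i \in \Z$. Combining, for any $D' = \sum a_i (f^*)^i D \in V_f(D)$ one gets $\pi_* D' = \sum a_i (g^*)^i \pi_* D \in V_g(\pi_* D)$, whence $\kappa(X, D') \le \kappa(Y, \pi_* D') \le \kappa_g(Y, \pi_* D)$, and the maximum yields (2). The main obstacle is this commutation step: the ramification-index matching is morally forced by $\pi \circ f = g \circ \pi$ but demands careful local bookkeeping along exceptional divisors, and an alternative route is to pass to an $f$-equivariant resolution $W$ on which $f$ lifts to $f_W : W \to W$, deriving the identity from $\sigma_* f_W^* = f^* \sigma_*$ and $\tau_* f_W^* = g^* \tau_*$.
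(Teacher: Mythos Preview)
Your argument is correct; part (1) coincides with the paper's. For part (2), the paper takes precisely your ``alternative route'': it passes to the normalization $W$ of the graph of $\pi$, where $f$ lifts to an endomorphism $h:W\to W$, applies part (1) to $p_X:W\to X$ to get $\kappa_f(X,D)=\kappa_h(W,p_X^*D)$, and notes $\pi_*D=(p_Y)_*p_X^*D$; this reduces to $\pi$ being a birational \emph{morphism}. From there the paper replaces $D$ by an effective element of $V_f(D)$ realizing the maximum and uses that $n\pi^*\pi_*D-D$ is effective for $n\gg 1$ to obtain $\kappa(X,D)\le\kappa(X,\pi^*\pi_*D)=\kappa(Y,\pi_*D)\le\kappa_g(Y,\pi_*D)$. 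Your $h^0$-comparison via the $F_+-F_-$ decomposition proves the same Iitaka inequality and is in fact slightly more robust (the paper's inequality $n\pi^*\pi_*D\ge D$ can fail when $D$ carries $\pi$-exceptional components not appearing in $\operatorname{Supp}\pi^*\pi_*D$, though the conclusion still holds in that edge case). Both routes ultimately rely on $\pi_*(V_f(D))\subseteq V_g(\pi_*D)$ to pass the maximum through $\pi_*$; you treat this commutation explicitly, whereas the paper leaves it implicit. Once $\pi$ is a morphism it drops out of $f^*\pi^*=\pi^*g^*$ together with $\pi_*\pi^*=\id$ and your own observation that $f^*$ preserves $\pi$-exceptionality, so your ramification-index matching can be bypassed.
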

\begin{proof}
(1) Note that $V_f(\pi^*D)=\pi^*(V_g(D))$ and $\kappa(X,\pi^*D)= \kappa(Y, D)$ for any $\Q$-Cartier divisor $D$ of $Y$ (cf.~\cite[Theorem 5.13]{Uen75}).
Then $\kappa_f(X,\pi^*D)=\kappa_g(Y,D)$.

(2) Let $W$ be the normalization of the graph of $\pi$ and $h:W\to W$ the lifting.
Let $p_X:W\to X$ and $p_Y:W\to Y$ be the two projections.
Then $\kappa_f(X,D)=\kappa_h(W,p_X^*D)$ by (1).
Note that $\pi_*D={p_Y}_*p_X^*D$.
So we may assume that $\pi$ is well-defined.
We may also assume that $D$ is effective such that $\kappa_f(X,D)=\kappa(X,D)$.
Then $n\pi^*\pi_*D=D+E$ with $E$ effective when $n\gg 1$.
Then $\kappa_g(Y,\pi_*D)\ge \kappa(X,\pi^*\pi_*D)\ge \kappa(X,D)=\kappa_f(X,D)$.
\end{proof}

We recall the Chow reduction as introduced in \cite[Proposition 4.14, Definition 4.15 and Theorem 4.19]{Nak10}; see also \cite[Theorem 7.2]{MZ22} and the statement before it.

\begin{theorem}\label{thm-chow}
Let $\pi:X\dashrightarrow Y$ be a dominant rational map of normal projective varieties defined over an algebraically closed field. 
Suppose $\pi$ has connected fibres.
Then there exist a normal projective variety $Z$ and a birational map $\mu:Y\dashrightarrow Z$ satisfying the following conditions:
\begin{enumerate}
\item Let $\Gamma_Z\subseteq X\times Z$ be the graph of the composite $\mu\circ \pi: X\dashrightarrow Y\dashrightarrow Z$. 
Then $\Gamma_Z\to Z$ is equi-dimensional.
\item Let $\mu':Y\dashrightarrow Z'$ be a birational map to another normal projective variety $Z'$ such that the induced projection $\Gamma_{Z'}\to Z'$ is equi-dimensional.
Then there exists a birational morphism $\nu:Z'\to Z$ with $\mu=\nu\circ\mu'$.
\end{enumerate}
In particular, for any surjective endomorphism $f:X\to X$ and a dominant self-map $g:Z\dashrightarrow Z$ with $g\circ (\mu\circ \pi)=(\mu\circ \pi)\circ f$, the map $g$ is 
a surjective endomorphism.
\end{theorem}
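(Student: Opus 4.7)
The plan is to realize $Z$ as the normalization of the image of $Y$ inside the Chow variety $\Chow(X)$. Let $d = \dim X - \dim Y$ be the dimension of a general fibre of $\pi$. Since $\pi$ has connected, hence generically irreducible, fibres, for general $y \in Y$ the fibre $X_y$ is an irreducible $d$-dimensional subvariety of $X$ and so defines a point of $\Chow(X)$. The assignment $y \mapsto [X_y]$ yields a rational map $\phi: Y \dashrightarrow \Chow(X)$; I would define $Z$ as the normalization of the closure of $\phi(Y)$ and let $\mu: Y \dashrightarrow Z$ be the induced map.

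First I would verify conclusion (1). Generic injectivity of $\phi$ is immediate because distinct general fibres of $\pi$ are distinct irreducible $d$-cycles in $X$; together with the normalization step this makes $\mu$ birational. For equi-dimensionality, $\Gamma_Z$ is by construction the pullback to $Z$ of the universal family over $\Chow(X)$, so every fibre of $\Gamma_Z \to Z$ is a $d$-cycle in $X$, giving the required equi-dimensionality $\dim \Gamma_z = d$ for all $z \in Z$.

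For the universal property (2), let $\mu': Y \dashrightarrow Z'$ be any birational map to a normal projective variety $Z'$ for which the projection $\Gamma_{Z'} \to Z'$ is equi-dimensional. An equi-dimensional proper family over a normal base has a well-defined cycle-theoretic classifying morphism, yielding $\psi: Z' \to \Chow(X)$. Because $\mu'$ is birational, $\psi$ agrees with the rational map $\phi \circ (\mu')^{-1}$ on a dense open of $Z'$, so its image is precisely $Z$. Since $Z$ is normal, $\psi$ factors through a morphism $\nu: Z' \to Z$; comparison on a dense open of $Y$ gives $\mu = \nu \circ \mu'$, and birationality of $\nu$ is immediate since both $Z$ and $Z'$ are birational to $Y$.

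Finally, for the equivariance statement, the finite surjective morphism $f: X \to X$ acts on $\Chow(X)$ by proper pushforward of cycles, yielding a morphism $f_*: \Chow(X) \to \Chow(X)$. The compatibility $g \circ (\mu \circ \pi) = (\mu \circ \pi) \circ f$ together with the definition of $\mu$ forces $g$ to coincide with $f_*|_Z$ on a dense open of $Z$; since $f_*$ is a morphism of varieties and $Z$ is closed in $\Chow(X)$, this identifies $g$ with a genuine morphism $Z \to Z$ (not merely a rational map). Surjectivity of $g$ follows from surjectivity of $f$ combined with dominance of $\mu \circ \pi$. I expect the main technical obstacle to be verifying that equi-dimensionality of $\Gamma_{Z'} \to Z'$ (rather than full flatness) is enough to produce the classifying morphism $\psi$ into $\Chow(X)$; this is handled in Nakayama's framework via the theory of the cycle space and families of effective cycles, and it is the key input that distinguishes Chow reduction from a naive Stein factorization.
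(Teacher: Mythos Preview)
Your approach is correct and is precisely the construction behind Nakayama's Chow reduction; the paper itself does not give a proof but only cites \cite{Nak10} and \cite{MZ22}, so there is no alternative argument to compare against.

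Two points in your write-up deserve sharpening. First, $Z$ is the \emph{normalization} of the closure $\bar Z\subseteq\Chow(X)$, not $\bar Z$ itself, so the sentence ``$Z$ is closed in $\Chow(X)$'' is not literally true; whenever you produce a morphism into $\bar Z$ you must still lift it to $Z$ via the universal property of normalization (which you do use correctly in the proof of (2)).

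Second, and more substantively, in the equivariance step $g$ does \emph{not} coincide with $f_*|_Z$: for general $z$ one has $f_*[\Gamma_z]=e\,[\Gamma_{g(z)}]$ where $e=\deg f/\deg g$ is the degree of $f$ along the general fibre, so $f_*$ sends $\bar Z\subseteq\Chow_{d,\delta}(X)$ into $m_e(\bar Z)\subseteq\Chow_{d,e\delta}(X)$, the image of $\bar Z$ under the multiplication-by-$e$ map on cycles. The fix is straightforward: $m_e$ is finite and bijective onto its image, so $Z$ is also the normalization of $m_e(\bar Z)$; the dominant composite $Z\to\bar Z\xrightarrow{f_*}m_e(\bar Z)$ then lifts (by normality of $Z$) to a morphism $Z\to Z$ agreeing with $g$ on a dense open, hence equal to $g$. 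With this correction your argument is complete.
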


\begin{definition}
We call the composition $\mu\circ \pi:X\dashrightarrow Z$ in Theorem \ref{thm-chow} the {\it Chow reduction} of $\pi$ which is unique by the condition (2).
\end{definition}

\begin{theorem}\label{thm-fibration}
Let $f:X\to X$ be a surjective endomorphism of a normal projective variety $X$.
Let $D$ be a $\Q$-Cartier divisor with $\kappa_f(X,D)\ge 0$.
Then there is an $f$-equivariant dominant rational map $\phi_{f,D}:X\dashrightarrow Y$ to a normal projective variety $Y$
(with $f|_Y$ a surjective endomorphism too)
satisfying the following conditions.
\begin{enumerate}
\item $\dim Y=\kappa_f(X,D)$.
\item Let $\Gamma$ be the graph of $\phi_{f,D}$. Then the induced projection $\Gamma\to Y$ is equi-dimensional.
\item $\phi_{f,D}$ is birational to the Iitaka fibration of any $D'\in V_f(D)$ with $\kappa(X,D')=\kappa_f(X,D)$. 
\item Suppose further $D$ is an $f$-prime divisor dominating $Y$. 
Then $D$ is $f^{-1}$-periodic.
\end{enumerate}
\end{theorem}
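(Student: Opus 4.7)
The plan is to construct $\phi_{f,D}$ as the Chow reduction (Theorem \ref{thm-chow}) of the Iitaka fibration of a maximally chosen effective divisor in $V_f(D)$, and then read off the four conditions. First I would pick an effective $\Q$-Cartier $D^* \in V_f(D)$ with $\kappa(X, D^*) = \kappa := \kappa_f(X, D)$; such a $D^*$ exists because $\kappa_f(X, D) \ge 0$ forces an effective representative in the class of any Iitaka-dimension-maximising element of $V_f(D)$. Let $\psi: X \dashrightarrow Z$ denote the Iitaka fibration of $D^*$, with $Z$ normal and $\dim Z = \kappa$. The key rigidity lemma I would prove is that for any two effective $D_1', D_2' \in V_f(D)$ with $\kappa(X, D_i') = \kappa$, the Iitaka fibrations of $D_1'$ and $D_2'$ are birationally equivalent: indeed $D_1' + D_2' \in V_f(D)$ is effective with $\kappa(X, D_1' + D_2') = \kappa$, and the multiplication-of-sections map $H^0(X, mD_1') \otimes H^0(X, mD_2') \to H^0(X, m(D_1' + D_2'))$ shows that $\phi_{|m(D_1' + D_2')|}$ refines each $\phi_{|mD_i'|}$; matching image dimensions $\kappa$ force the general fibers to coincide, and after Stein factorization the Iitaka fibrations agree birationally. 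This yields condition (3).

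Applying the rigidity to the pair $D^*, f^* D^*$ (both effective in $V_f(D)$ of Iitaka dimension $\kappa$ by Proposition \ref{prop-f*f*}), and noting that the Iitaka fibration of $f^* D^*$ arises as the Stein factorization of $\phi_{|mD^*|} \circ f$, the connected-fiber base of $\psi \circ f$ sits as a finite cover of $Z$ and is birational to $Z$; Zariski's main theorem (for normal $Z$) then forces this cover to be an isomorphism. Hence $\psi \circ f$ itself has connected fibers and $f$ carries general fibers of $\psi$ to general fibers of $\psi$, producing a dominant rational self-map $g: Z \dashrightarrow Z$ with $g \circ \psi = \psi \circ f$. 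Taking the Chow reduction of $\psi$ yields $\phi_{f,D}: X \dashrightarrow Y$ with equi-dimensional graph projection (condition (2)); conjugating $g$ across the birational $Z \dashrightarrow Y$ gives a rational self-map $g_Y$ of $Y$ intertwining $\phi_{f,D}$ with $f$, which the closing clause of Theorem \ref{thm-chow} upgrades to a surjective endomorphism of $Y$. Conditions (1) and (3) are then immediate.

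For condition (4), suppose $D$ is $f$-prime and dominates $Y$. Being $f$-prime forces $f^{-1}(D)$ to consist of a single prime divisor $D^{(-1)}$, and iterating produces a backward orbit $\{D^{(-i)}\}_{i \ge 0}$ of $f$-prime divisors whose classes lie in $V_f(D)$; the $f$-equivariance of $\phi_{f,D}$ together with the finite surjectivity of $g_Y$ forces each $D^{(-i)}$ to dominate $Y$. The divisor $E_N := D^* + \sum_{i=0}^{N} D^{(-i)}$ lies in $V_f(D)$, is effective, and satisfies $\kappa(X, E_N) = \kappa$, so by (3) its Iitaka fibration is $\phi_{f,D}$; consequently $\kappa(F, E_N|_F) = 0$ on a general fiber $F$, forcing $\kappa(F, D^{(-i)}|_F) = 0$ for every $i$ while each $D^{(-i)}|_F$ remains a non-zero effective divisor. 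Combining this fiberwise rigidity with the finite-dimensionality of $V_f(D)$ and a Perron--Frobenius/spectral analysis of $f^*$ on the $f^*$-invariant effective cone in $V_f(D)$ (together with Lemma \ref{lem-index} to bound denominators of intersection numbers), one argues that the backward-orbit classes $[D^{(-i)}]$ cannot provide infinitely many distinct rigid $\kappa = 0$ divisors on $F$, so $D$ must be $f^{-1}$-periodic. The main obstacle is precisely this last step: cleanly translating fiberwise Iitaka-zero rigidity plus the spectral data on $V_f(D)$ into genuine backward-orbit finiteness is delicate, while the transition in the middle of the argument from birational equivalence of Iitaka fibrations to honest $f$-equivariance of $\psi$ (via Zariski's main theorem applied to the Stein cover) is a secondary subtlety.
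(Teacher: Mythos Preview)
Your construction for (1)--(3) and the equivariance of $\phi_{f,D}$ is essentially the paper's: choose a maximiser $D^*\in V_f(D)$, show via $D_1'+D_2'$ that all maximisers share the same Iitaka fibration up to birational equivalence, then Chow-reduce. Your explicit Stein-factorisation/Zariski's main theorem step for equivariance is fine (the paper leaves this implicit in the last clause of Theorem~\ref{thm-chow}).

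For (4), however, there is a genuine gap, and the paper's argument is different and decisive. Your plan restricts to a general fibre $F$ and observes $\kappa(F,D^{(-i)}|_F)=0$ for every $i$, then hopes that ``finite-dimensionality of $V_f(D)$ plus spectral/Perron--Frobenius data'' rules out infinitely many such divisors. But nothing prevents a variety from carrying infinitely many distinct prime divisors of Iitaka dimension $0$ whose classes lie in a fixed finite-dimensional subspace of $\Pic_{\Q}$; neither Lemma~\ref{lem-index} nor any eigenvalue bound on $f^*|_{V_f(D)}$ supplies the missing rigidity. You yourself flag this step as the main obstacle, and indeed it does not close.

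The paper's mechanism for (4) is a \emph{movability} trick you are missing. Set $P_i:=(f^*)^iD$; these are effective with pairwise distinct irreducible supports (by $f$-primeness and non-periodicity). Since $V_f(D)$ is finite-dimensional, there is a minimal $n$ with $P_0,\dots,P_n$ spanning it, and then
\[
D' \,:=\, P_{n+1}+\textstyle\sum_{i\in I}a_iP_i \;\sim_{\Q}\; \sum_{j\in J}b_jP_j
\]
with $a_i,b_j>0$, $I\cap J=\emptyset$, $J\neq\emptyset$. The two sides have \emph{disjoint} supports, so $D'$ is $\Q$-movable; hence $D'$ is vertical for its own Iitaka fibration $\phi_{D'}$. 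But $\phi_{D'}$ factors through $\phi_{f,D}$ (add $D^*$ and compare), and every component of $D'$---being some $P_i$---dominates $Y$ by $f$-equivariance, hence dominates the base of $\phi_{D'}$. This contradiction is what forces $D$ to be $f^{-1}$-periodic. The point you should take away is that the linear dependence in $V_f(D)$ is used not to bound eigenvalues but to manufacture a basepoint-free pencil, turning the horizontal prime $D$ into a vertical movable package.
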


\begin{proof}
Let $D_i\in V_f(D)$ with $\kappa(X,D_i)=\kappa_f(X,D)$ and $i=1,2$.
Let $\phi_{D_i}: X\dashrightarrow Y_{D_i}$ be the Iitaka fibration of $D_i$.
Note that 
$$\kappa_f(X, D)\ge \kappa(X, D_1+D_2)\ge \kappa(X, D_1)=\kappa_f(X,D).$$
So we have
$$\kappa(X, D_1+D_2)=\kappa(X, D_1)=\kappa(X, D_2).$$
Therefore, $\phi_{D_i}=\sigma_i\circ \phi_{D_1+D_2}$ for some birational maps $\sigma_i:Y_{D_1+D_2}\dashrightarrow Y_{D_i}$ (cf.~\cite[Lemma 7.3]{MZ22}).
Then we simply take $\phi_{f,D}$ as the Chow reduction of $\phi_{D_1}$ in Theorem \ref{thm-chow}.
This proves (1), (2), and (3).

For (4), suppose the contrary that $D$ is $f$-prime and dominating $Y$ and yet it is not $f^{-1}$-periodic.
Let $P_i:=(f^*)^i(D)$.
Then $\Supp P_i$ is irreducible and $\Supp P_i\neq \Supp P_j$.
Let $n\ge 0$ be the minimal integer such that $P_0,\cdots, P_n$ span $V_f(D)$.
Then we have
$$D':=P_{n+1}+\sum_{i\in I} a_i P_i\sim_{\Q} \sum_{j\in J} b_j P_j$$
where $a_i,b_j$ are positive rational numbers, $J\neq\emptyset$, and $I \cap J=\emptyset$.
Then $D'$ is $\Q$-movable.
Note that $\phi_{f,D'}:X\dashrightarrow Y_{D'}$ factors through $\phi_{f,D}$ and $D'$ does not dominate $Y_{D'}$.
So we get a contradiction.
\end{proof}

\begin{definition}\label{def-fiitaka}
We call $\phi_{f,D}$ in Theorem \ref{thm-fibration} the {\it dynamical $f$-Iitaka fibration} ($f$-Iitaka fibration for short) of $D$. It is unique by the uniqueness of the Chow reduction and Theorem \ref{thm-fibration} (3).
\end{definition}

A special case of the $f$-Iitaka fibration has been studied in \cite[Theorem 7.8]{MZ22}.
\begin{theorem}\label{thm-fiitaka-polarized}
Let $f:X\to X$ be a surjective endomorphism of a projective variety $X$.
Let $D$ be a $\Q$-Cartier divisor such that $f^*D\equiv qD$ for some $q>1$ and $\kappa(X,D)>0$.
Let $\phi:X\dashrightarrow Y$ be the $f$-Iitaka fibration of $D$.
Then $f|_Y$ is $q$-polarized.
\end{theorem}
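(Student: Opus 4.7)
The plan is to choose a good effective representative in $V_f(D)$, pass to an $f$-equivariant resolution where $\phi$ becomes a morphism, and then exploit the eigenvalue identity $f^*D\equiv qD$ to descend a $q$-eigenvector to an ample class on $Y$.

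First, I would pick an effective $D'\in V_f(D)$ that spans $V_f(D)$ and satisfies $\kappa(X,D')=\kappa_f(X,D)=\dim Y$. Since $[(f^*)^iD]=q^i[D]$ in $\N^1(X)$, any such $D'=\sum a_i(f^*)^iD$ automatically satisfies $f^*D'\equiv qD'$ numerically. By Theorem~\ref{thm-fibration}(3), $\phi$ is birational to the Iitaka fibration of $D'$ and is $f$-equivariant. Resolving indeterminacy yields a birational $\tau\colon\tilde X\to X$ with $\tilde\phi\colon\tilde X\to Y$ a morphism and a compatible lift $\tilde f\colon\tilde X\to\tilde X$ satisfying $\tilde\phi\circ\tilde f=f|_Y\circ\tilde\phi$ and $\tau\circ\tilde f=f\circ\tau$.

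By the decomposition of $|m\tau^*D'|$ into moving and fixed parts associated with the Iitaka fibration, for $m$ sufficiently divisible I can arrange (after further $\tilde f$-equivariant blow-ups of $\tilde X$ and $Y$ if needed) an ample Cartier divisor $H$ on $Y$ and an effective $\tilde\phi$-vertical $\Q$-divisor $E$ on $\tilde X$ with $m\tau^*D'\sim_{\Q}\tilde\phi^*H+E$. Pulling back by $\tilde f^*$, using the equivariance identity $\tilde f^*\tau^*D'=\tau^*f^*D'\equiv q\tau^*D'$, and comparing with $q$ times the displayed relation gives the key identity
$$\tilde\phi^*\bigl((f|_Y)^*H-qH\bigr)\equiv qE-\tilde f^*E\quad\text{in }\N^1(\tilde X).$$
Since $\tilde\phi^*\colon\N^1(Y)\hookrightarrow\N^1(\tilde X)$ is injective for a surjective morphism with connected fibers, the task reduces to showing $qE\equiv\tilde f^*E$ in $\N^1(\tilde X)$.

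To conclude, I would analyze the action of $\tilde f^*$ on the finite-dimensional $\tilde f^*$-stable subspace of $\N^1(\tilde X)$ generated by the $\tilde\phi$-vertical components of $E$ together with their $\tilde f^{\pm 1}$-iterates (which remain $\tilde\phi$-vertical by equivariance). A Jordan/Perron--Frobenius analysis, combined with the eigenvalue constraint coming from $f^*D\equiv qD$ and the fact that $\tilde f^*$ on vertical classes mirrors $(f|_Y)^*$ on divisors of $Y$, should pin down $\tilde f^*E\equiv qE$ after replacing $f$ by a sufficiently high iterate. This yields $(f|_Y)^*H\equiv qH$ on $Y$. To upgrade from numerical to linear equivalence, after a further iteration ensuring $q$ is not an eigenvalue of $(f|_Y)^*$ on $\Pic^0_{\Q}(Y)$, the operator $\tfrac{1}{q}(f|_Y)^*-\id$ becomes invertible on $\Pic^0_{\Q}(Y)$, so $H$ can be adjusted by a class in $\Pic^0_{\Q}(Y)$ to yield an ample $H'$ with $(f|_Y)^*H'\sim_{\Q}qH'$; clearing denominators then delivers the required $q$-polarization. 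I expect the ``vertical rigidity'' step $\tilde f^*E\equiv qE$ to be the main obstacle, as it requires either a careful $\tilde f$-invariant choice of $E$ after iteration, or a precise spectral argument on the vertical subspace of $\N^1(\tilde X)$.
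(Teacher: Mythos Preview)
Your overall strategy---pass to an equivariant resolution, decompose $m\tau^*D'$ as $\tilde\phi^*H+E$, and compare $\tilde f^*$ with multiplication by $q$---is natural, but the argument has two genuine gaps.

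First, the assertion that $E$ can be chosen $\tilde\phi$-vertical is unjustified and in general false. The fixed part of $|m\tau^*D'|$ restricts on a general fibre $F$ to the unique effective divisor in $|mD'|_F|$; Iitaka's theorem only gives $\kappa(F,D'|_F)=0$, not $D'|_F\sim_{\Q}0$, so this horizontal contribution need not vanish. No amount of $\tilde f$-equivariant blowing up removes a genuinely horizontal component. Without verticality, your heuristic that ``$\tilde f^*$ on vertical classes mirrors $(f|_Y)^*$ on divisors of $Y$'' is inapplicable, and even when $E$ \emph{is} vertical its components need not be pullbacks from $Y$ (they may lie over codimension~$\ge 2$ loci), so the descent is not automatic.

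Second, the step $\tilde f^*E\equiv qE$ is exactly the content of the theorem, and you offer only a sketch (``Jordan/Perron--Frobenius analysis \dots\ should pin down''). There is no a priori reason why the particular effective divisor $E$---which depends on the choice of $m$ and on linear, not numerical, data---should be a $q$-eigenvector; indeed $f^*D'\equiv qD'$ says nothing about how $f^*$ acts on the individual pieces of the decomposition. A workable argument here requires a genuinely different input, for instance controlling the action of $f^*$ on the graded pieces of the section ring, or exploiting the equi-dimensionality of the Chow reduction (Theorem~\ref{thm-chow}) to compare pushforwards to $Y$ directly, as in the cited \cite[Theorem~7.8]{MZ22}. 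A minor additional point: iterating $f$ does not help avoid $q$ as an eigenvalue on $\Pic^0_{\Q}(Y)$, since the eigenvalues there iterate in lockstep with $q$; one instead needs an independent bound on those eigenvalues (e.g.\ via the Albanese).
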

\begin{proof}
The same proof of \cite[Theorem 7.8]{MZ22} works after replacing $-K_X$ by $D$.
\end{proof}

\section{Dynamical Iitaka fibration of an $f$-prime divisor}
In this section, we further characterize the dynamical Iitaka fibration of the exceptional divisor. 
Theorem \ref{thm-fiitaka2} is the main result.

Our first result of the section is on $f$-prime divisors.

\begin{theorem}\label{thm-fiitaka1}
Let $f:X\to X$ be a non-isomorphic surjective endomorphism of a $\Q$-factorial normal projective threefold $X$.
Let $E$ be an $f$-prime divisor of $X$. 
Then either $E$ is $f^{-1}$-periodic or $0<\kappa_f(X,E)<3$.
\end{theorem}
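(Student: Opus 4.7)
My plan is to treat the two inequalities $\kappa_f(X,E)>0$ and $\kappa_f(X,E)<3$ separately under the hypothesis that $E$ is not $f^{-1}$-periodic.

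The inequality $\kappa_f(X,E)>0$ is essentially immediate. Since $E$ is $f$-prime, $\Supp E = E$ is a single prime divisor and by hypothesis it is not $f^{-1}$-periodic, so Lemma~\ref{lem-fkappa>0} applies verbatim to yield $\kappa_f(X,E) > 0$.

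For the strict upper bound $\kappa_f(X,E) < 3 = \dim X$, I will argue by contradiction: suppose $\kappa_f(X,E) = 3$. Writing $E_i := f^i(E)$, each $E_i$ is a prime divisor, and finitely many (say $E_{-a},\dots,E_b$) span the finite-dimensional $f^*$-invariant subspace $V_f(E) \subseteq \Pic_{\Q}(X)$. Then $D := \sum_{i=-a}^b E_i$ is an effective divisor with $\kappa(X,D) = \kappa_f(X,E) = 3$, hence big. By Theorem~\ref{thm-fibration}(1), the dynamical $f$-Iitaka fibration $\phi = \phi_{f,E}: X \dashrightarrow Y$ has $\dim Y = 3$, so $\phi$ is birational; by Theorem~\ref{thm-fibration}(2) the graph $\Gamma \subseteq X \times Y$ projects equidimensionally to $Y$, and generic one-to-oneness combined with Zariski's Main Theorem force $\Gamma \to Y$ to be an isomorphism. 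The first projection thus realizes a birational morphism $\pi : Y \to X$, which by Theorem~\ref{thm-chow} is $f$-equivariant via a surjective endomorphism $\tilde f : Y \to Y$. In particular, the strict transforms $\tilde E_i \subseteq Y$ of the $E_i$ satisfy $\tilde f(\tilde E_i) = \tilde E_{i+1}$ and form an infinite $\tilde f$-orbit of pairwise distinct prime divisors; however, none of the $\tilde E_i$ is $\pi$-exceptional, since $\pi(\tilde E_i) = E_i$ is itself a divisor, so a naive ``only finitely many exceptional divisors'' argument is not available.

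To extract a contradiction, I would next use the polynomial relation $P_f(f^*)(E) \sim_{\Q} 0$ from \cite[Lemma~19]{KS16a} to produce an effective movable $\Q$-divisor $A \in V_f(E)$ with $A \sim_{\Q} B$ of disjoint supports drawn from the orbit, then aim for $\kappa_f(X,A) < 3$ while arranging that some $E_j$ dominates the target $Y_A$ of the auxiliary $f$-Iitaka fibration $\phi_{f,A}$. Once those conditions are arranged, Theorem~\ref{thm-fibration}(4) forces $E_j$---and hence $E$---to be $f^{-1}$-periodic, the desired contradiction. The main obstacle is precisely the verification of this dominance condition: I expect it to require delicate intersection arguments in the spirit of Lemma~\ref{lem-surf-deg=1} (projection-formula identities combined with the discreteness of intersection numbers guaranteed by Lemma~\ref{lem-index}), suitably upgraded to the threefold setting by exploiting the $\Q$-factoriality of $X$ and the Perron-Frobenius structure of $f^*$ restricted to $V_f(E)$.
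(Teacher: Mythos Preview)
Your handling of the lower bound $\kappa_f(X,E)>0$ via Lemma~\ref{lem-fkappa>0} is correct and matches the paper.

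For the upper bound there is a genuine gap. Your plan to invoke Theorem~\ref{thm-fibration}(4) through an auxiliary fibration $\phi_{f,A}$ does not go through as stated: that clause applies to the $f$-prime divisor used to \emph{define} the fibration, so to conclude that some $E_j$ is $f^{-1}$-periodic you would have to work with $\phi_{f,E_j}$; but $V_f(E_j)=V_f(E)$, hence $\phi_{f,E_j}=\phi_{f,E}$ has three-dimensional target, and a surface cannot dominate a threefold. Choosing instead some $A\in V_f(E)$ with $\kappa_f(X,A)<3$ does not help, because Theorem~\ref{thm-fibration}(4) then only speaks about $A$, which is typically not $f$-prime. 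You yourself flag the ``dominance condition'' as the obstacle, but in fact the whole detour through the Chow reduction and auxiliary fibrations does not visibly lead to a contradiction.

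The paper's argument is different and more direct. It proves two intersection-theoretic facts about the orbit $E_i:=f^i(E)$, both via exactly the mechanism you anticipate (projection formula plus the integrality from Lemma~\ref{lem-index}): first, $E_i\cdot E_j\cdot E_k=0$ for all $i,j,k\in\Z$ (Lemma~\ref{lem-ijk}); second, $E_s|_E$ is nef on $E$ for every $s>0$ (Lemma~\ref{lem-nef}). With these in hand, assume $\sum_{i=1}^k E_i$ is big and write it as $A+F$ with $A$ ample and $F$ effective. Pushing forward by a suitable $(f^n)_*$ one arranges that $E$ is not contained in the support of the new effective part; restricting to $E$ then yields a divisor that is big (ample plus effective not containing $E$) and, by Lemma~\ref{lem-nef}, nef. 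Hence its square paired with $E$ is positive, contradicting Lemma~\ref{lem-ijk}. So the missing idea is not a fibration/dominance argument but the pair of lemmas on vanishing triple intersections and nefness of restrictions, followed by a big-and-nef self-intersection contradiction on $E$.
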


We do some preparation for the proof of Theorem \ref{thm-fiitaka1}.
Let $E_i:=f^i(E)$ with $i\in\mathbb{Z}$.
We may assume $E$ not $f^{-1}$-periodic.
Then $E_i\neq E_j$ for any $i\neq j$.
Moreover, $E_n\not\subseteq \Supp R_f$ when $|n|\gg 1$.
Here $R_f$ is the ramification divisor of $f$.
So we have 
$$f^*E_{n+1}=E_n$$ 
when $|n|\gg 1$.
Now the story of the non-isomorphic $f$ starts with the following easy but useful lemma.

\begin{lemma}\label{lem-ijk}
$E_i\cdot E_j\cdot E_k=0$ for any $i,j,k\in \Z$. 
\end{lemma}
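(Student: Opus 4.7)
The plan is to combine the projection formula with the uniform-denominator bound from Lemma \ref{lem-index}. Since $E$ is $f$-prime and not $f^{-1}$-periodic, the $E_l$ are pairwise distinct prime divisors, and only finitely many can lie in $\Supp R_f$; pick $N$ so that $E_{l+1} \not\subseteq B_f$, equivalently $f^*E_{l+1} = E_l$, for all $l \geq N$. Applying the projection formula and iterating then yields
$$E_n \cdot E_m \cdot E_k \;=\; (\deg f)^s \cdot E_{n+s} \cdot E_{m+s} \cdot E_{k+s} \quad \text{for } n,m,k \geq N,\ s \geq 0.$$

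Next I would invoke Lemma \ref{lem-index}: there is a fixed integer $r > 0$ (the numerical Cartier index of $X$, available since $X$ is $\Q$-factorial) such that $r \cdot D'$ is numerically Cartier for every $\Q$-Cartier Weil divisor $D'$. Consequently $r^3 \cdot E_a \cdot E_b \cdot E_c \in \Z$ for every triple of indices. Fixing $n, m, k \geq N$ and rearranging the displayed identity gives
$$r^3 \cdot E_{n+s} \cdot E_{m+s} \cdot E_{k+s} \;=\; (\deg f)^{-s} \cdot r^3 \, E_n \cdot E_m \cdot E_k.$$
Since $\deg f > 1$, the right-hand side has absolute value $< 1$ for $s$ large, while the left-hand side is an integer; both must therefore vanish. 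Hence $E_n \cdot E_m \cdot E_k = 0$ whenever $n, m, k \geq N$.

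To extend to arbitrary $i, j, k \in \Z$, I would use that by $f$-primeness $f^{-1}(E_{l+1}) = E_l$ set-theoretically, so $f^*E_{l+1} = m_l E_l$ for some positive rational $m_l$. The projection formula then yields
$$E_i \cdot E_j \cdot E_k \;=\; \frac{\deg f}{m_i m_j m_k} \cdot E_{i+1} \cdot E_{j+1} \cdot E_{k+1},$$
and iterating $s$ steps with $s$ large enough that $i+s, j+s, k+s \geq N$ expresses $E_i \cdot E_j \cdot E_k$ as a positive rational multiple of the already-vanishing $E_{i+s} \cdot E_{j+s} \cdot E_{k+s}$, as required.

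The one step requiring real care is the integrality argument: a priori the intersections $E_{n+s} \cdot E_{m+s} \cdot E_{k+s}$ are only rational numbers that shrink geometrically, which does not by itself force vanishing. Lemma \ref{lem-index}, applied uniformly across the shifted indices, supplies a denominator bound independent of $s$, so the sequence lives in a discrete subset of $\R$ in which ``tending to zero'' upgrades to ``eventually zero''. This is the essential input that makes the argument go through.
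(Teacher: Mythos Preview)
Your proposal is correct and follows essentially the same approach as the paper: use the projection formula to show the triple intersections shrink by a factor of $\deg f>1$ under simultaneous shifts, invoke Lemma~\ref{lem-index} to get a uniform denominator bound $r^3$ forcing eventual (hence actual) vanishing for large indices, and then propagate back to arbitrary $i,j,k$ via the projection formula. The only cosmetic difference is that you spell out the last step with explicit coefficients $m_l$, whereas the paper simply says ``by the projection formula again''.
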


\begin{proof}
Take $i,j,k\gg 1$, we have
$$E_i\cdot E_j\cdot E_k=f^*E_{i+1}\cdot f^*E_{j+1}\cdot f^*E_{k+1}=(\deg f) E_{i+1}\cdot E_{j+1}\cdot E_{k+1}$$
by the projection formula.
Note that $\deg f>1$. Then we have
$$\lim_{i,j,k\to+\infty}E_i\cdot E_j\cdot E_k=0.$$
By Lemma \ref{lem-index}, we have
$$r^3E_i\cdot E_j\cdot E_k\in \mathbb{Z}$$
where $r$ is the numerical Cartier index of $X$.
So we have 
$$E_i\cdot E_j\cdot E_k=0$$
for $i,j,k\gg 1$.
By the projection formula again, this holds for any $i,j,k\in \Z$.
\end{proof}

%

\begin{lemma}\label{lem-nef}
$E_s|_E$ is a nef $\Q$-Cartier divisor on $E$ when $s>0$.
\end{lemma}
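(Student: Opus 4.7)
The plan is to prove nefness of $E_s|_E$ by checking that $E_s\cdot C\ge 0$ on $X$ for every irreducible curve $C\subseteq E$; this suffices since $X$ is $\mathbb{Q}$-factorial, so $E_s|_E$ is $\mathbb{Q}$-Cartier on $E$ and intersection numbers are read off from $X$. The case $C\not\subseteq E_s$ is immediate from effectivity of $E_s$. For the nontrivial case $C\subseteq E\cap E_s$, I would track the forward orbit $C_n:=f^n(C)$ and use that $f$-primality of $E$ is inherited by every $E_m$, giving $(f^n)^*E_{s+n}=c_{n,s}\,E_s$ for some positive integer $c_{n,s}$. The projection formula then yields
$$c_{n,s}\,(E_s\cdot C)=E_{s+n}\cdot(f^n)_*C.$$
If $f^N|_C$ is constant for some $N$, the right-hand side vanishes and $E_s\cdot C=0$; otherwise each $C_n$ is a curve and the right-hand side equals $\deg(f^n|_C)\cdot E_{s+n}\cdot C_n$, which is $\ge 0$ as soon as $C_n\not\subseteq E_{s+n}$ for some $n$. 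So the analysis is reduced to the persistent case $C_n\subseteq E_n\cap E_{s+n}$ for all $n\ge 0$.

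The essential dimensional input in the persistent case is that $X$ is a threefold: any curve contained in three distinct prime divisors among $\{E_j\}_{j\in\mathbb{Z}}$ would be $0$-dimensional, so $C_n$ lies in no $E_j$ other than $E_n$ and $E_{s+n}$. Using $f^{-1}(E_{n+1}\cap E_{s+n+1})=E_n\cap E_{s+n}$ (a consequence of $f$-primality), the number of irreducible components of $E_n\cap E_{s+n}$ is non-increasing in $n$ and hence stabilizes, so in the stable range $f$ induces a bijection between components of consecutive layers. This forces $f^{-1}(C_{n+1})=C_n$ set-theoretically for $n\gg 0$, whence $f^*C_{n+1}=m_n C_n$ as $1$-cycles, with $\deg(f|_{C_n})=\deg f/m_n$.

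The hard part will be ruling out the possibility that the ramification index $m_n>1$ persists. The resolution I would use is that $m_n>1$ forces $C_n\subseteq R_f$, and then $C_n\subseteq E_n\cap E_{s+n}\cap R_f$; but for $n\gg 0$ neither $E_n$ nor $E_{s+n}$ is a component of $R_f$ (since $R_f$ has only finitely many prime components while the $E_j$ are pairwise distinct by the standing non-$f^{-1}$-periodicity of $E$), so this becomes a triple intersection of three distinct prime divisors on a threefold, which has dimension $\le 0$, contradicting $\dim C_n=1$. Hence $m_n=1$ eventually, so $\deg(f^n|_C)\to\infty$. The identity
$$E_{s+n}\cdot C_n=\frac{c_{n,s}\,(E_s\cdot C)}{\deg(f^n|_C)}$$
then gives $E_{s+n}\cdot C_n\to 0$; combined with the integrality $r\cdot E_{s+n}\cdot C_n\in\mathbb{Z}$ from Lemma~\ref{lem-index}, this forces $E_{s+n}\cdot C_n=0$ for $n$ large, yielding $E_s\cdot C=0$ and completing the proof of nefness.
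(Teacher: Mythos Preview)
Your overall plan---pick a curve $C\subseteq E\cap E_s$, push forward to $C_n=f^n(C)\subseteq E_n\cap E_{s+n}$, show that the component structure of $E_n\cap E_{s+n}$ stabilizes so that $f^*C_{n+1}=m_nC_n$, and then use integrality of $r\,E_{s+n}\cdot C_n$ to get $E_s\cdot C=0$---is exactly the route the paper takes. The genuine gap is your argument that $m_n=1$ eventually. You assert twice that a curve contained in three distinct prime divisors on a threefold must be $0$-dimensional, and you use this to conclude that $C_n\subseteq E_n\cap E_{s+n}\cap R_f$ (with $E_n,E_{s+n}$ not components of $R_f$) is impossible. This is false: three distinct surfaces in a threefold can share a curve (three planes in $\mathbb{P}^3$ through a line, say). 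Nothing rules out some fixed component of $R_f$ containing $C_n$ for all large $n$. There is also a smaller gap in ``$m_n>1\Rightarrow C_n\subseteq\operatorname{Supp}R_f$'': purity only gives $C_n\subseteq\operatorname{Supp}R_f\cup\operatorname{Sing}X\cup f^{-1}(\operatorname{Sing}X)$, and you would additionally need to know the $C_n$ are eventually distinct to discard the singular-locus contribution.

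The paper bypasses $R_f$ entirely. It keeps track of the multiplicities in the cycle $E_{n+s}\cdot E_n=\sum_i a_{n,i}C_{n,i}$. Since $f^*(E_{n+s}\cdot E_n)=E_{n+s-1}\cdot E_{n-1}$ for $n\gg 1$, one gets $a_{n,i}=t_{n+1,i}\cdots t_{n+m,i}\,a_{n+m,i}$ for all $m>0$; as $r^2a_{n+m,i}\in\mathbb{Z}_{>0}$, the product $\prod t$ is bounded by $r^2 a_{n,i}$, forcing $t_{n,i}=1$ for $n\gg 0$. This one-line multiplicity argument is the missing ingredient that replaces your triple-intersection step, and the rest of your proof then goes through exactly as written.
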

\begin{proof}
Let $s>0$.
We may assume $E_s\cap E\neq \emptyset$. 
Write 
$$E_{n+s}\cdot E_n=\sum a_{n,i} C_{n,i}$$ 
where $C_{n,i}$ is an irreducible curve and $a_{n,i}$ is a positive rational number.
By Lemma \ref{lem-index}, $r^2a_{n,i}$ is an integer where $r$ is the numerical Cartier index of $X$.
Note that 
$$f^*(E_{n+s}\cdot E_n)=E_{n+s-1}\cdot E_{n-1}$$ 
when $n\gg 1$.
So we may assume that the number of irreducible components of $E_{n+s}\cdot E_n$ is stable when $n\gg 1$.
Therefore, we may assume 
$$f^*C_{n,i}=t_{n,i}C_{n-1,i}$$ for some integer $t_{n,i}>0$ when $n\gg 1$.
Then for $n\gg 1$ and any $m>0$, we have 
$$r^2a_{n,i}C_{n,i}=(f^m)^*r^2a_{n+m,i}C_{n+m,i}=(t_{n+m,i} t_{n+m-1,i}\cdots t_{n+1,i})\cdot (r^2a_{n+m,i}) C_{n,i}$$
and hence $t_{n+m,i}=1$ for $m\gg 1$.
In particular, $t_{n,i}=1$ when $n\gg 1$.
Then we have 
$$E_{n+s}\cdot C_{n,i}=(f^m)^*E_{n+m+s}\cdot (f^m)^*C_{n+m,i}=(\deg f)^m E_{n+m+s}\cdot C_{n+m,i}$$
when $n\gg 1$.
So we have 
$$\lim_{m\to+\infty}E_{n+m+s}\cdot C_{n+m,i}=0$$
when $n\gg 1$.
Note that $rE_{n+m+s}\cdot C_{n+m,i}$ is an integer for any $n,m$.
Then 
$$E_{n+m+s}\cdot C_{n+m,i}=0$$ 
for $n\gg 1$ and $m\gg 1$.
By the projection formula again, we have
$$E_{n+s}\cdot C_{n,i}=0$$
for any $i$ when $n\gg 1$.
So $E_{n+s}|_{E_n}$ is nef when $n\gg 1$.
Finally, we see that
$$E_s|_E=(f^n|_E)^*E_{n+s}|_{E_n}$$
is nef as desired.
\end{proof}

\begin{proof}[Proof of Theorem \ref{thm-fiitaka1}]
By Lemma \ref{lem-fkappa>0}, $\kappa_f(X,E)>0$.
We show in the following that $\kappa_f(X,E)<3$.
Suppose the contrary.
Then $\sum\limits_{i=1}^k E_i$ is big for some $k>0$.
So 
$$\sum\limits_{i=1}^k E_i\sim_{\Q} A+F$$ 
where $A$ is an ample $\Q$-divisor and $F$ is an effective $\Q$-divisor.
Note that there exists $n>0$ such that $E_{-n}\not\subseteq \Supp F$.
Write 
$$(f^n)_*\sum\limits_{i=1}^k E_i=\sum\limits_{i=n+1}^{n+k} a_iE_i$$ 
where $a_i$ are positive integers.
Then we have
$$\sum\limits_{i=n+1}^{n+k} a_iE_i\sim_{\Q} (f^n)_*A+(f^n)_*F$$
where $(f^n)_*A$ is still ample.
Note that $E\not\subseteq \Supp (f^n)_*F$.
Indeed, if $E\subseteq \Supp (f^n)_*F$, then there is a prime divisor $F_1\subseteq \Supp F$ such that $f^n(F_1)=E$ and hence $F_1\subseteq f^{-n}(E)=E_{-n}$.
So $E_{-n} = F_1 \subseteq \Supp F$, a contradiction.
In particular, 
$$(\sum\limits_{i=n+1}^{n+k} a_iE_i)|_E\sim_{\Q} ((f^n)_*A)|_E+((f^n)_*F)|_E$$ is big.
By Lemma \ref{lem-nef}, $(\sum\limits_{i=n+1}^{n+k} a_iE_i)|_E$ is further nef.
So $(\sum\limits_{i=n+1}^{n+k} a_iE_i)^2\cdot E>0$, contradicting Lemma \ref{lem-ijk}.
\end{proof}

\begin{remark}
We believe Theorem \ref{thm-fiitaka1} can be generalized to higher dimensions provided that Lemma \ref{lem-nef} can be so generalized too.
Moreover, with Lemmas \ref{lem-ijk} and \ref{lem-nef} holding true, our proof of Theorem \ref{thm-fiitaka1} still works even when $f$ is an automorphism of positive entropy. 
This provides a new approach to Lesieutre's \cite[Theorem 1.7]{Les18}.
\end{remark}

\begin{theorem}\label{thm-fiitaka2}
Let $f:X\to X$ be a non-isomorphic surjective endomorphism of a $\Q$-factorial normal projective threefold $X$.
Let $E$ be a prime divisor of $X$ which is not $f^{-1}$-periodic and has $\kappa(X,E)=0$.
Then the following hold.
\begin{enumerate}
\item $\kappa_f(X,E)=2$.
\item Let $\phi_{f,E}:X\dashrightarrow Y$ be the $f$-Iitaka fibration of $E$.
Then $f|_Y$ is an automorphism.
\item $\phi_{f,E}$ is almost well-defined (cf.~Definition \ref{def-awd}).
\end{enumerate}
\end{theorem}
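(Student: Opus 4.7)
My plan has three steps, matching (1)--(3). As setup: by Lemma \ref{lem-fprime}, the hypotheses $\kappa(X,E)=0$ and $\mathbb{Q}$-factoriality force $E$ to be $f$-prime, so $E_i:=f^i(E)$ are pairwise distinct (since $E$ is not $f^{-1}$-periodic) with $f^*E_{n+1}=E_n$ for $|n|\gg 0$; moreover each $E_i$ satisfies $\kappa(X,E_i)=0$ by Proposition \ref{prop-f*f*}. Combining Lemma \ref{lem-fkappa>0} with Theorem \ref{thm-fiitaka1} forces $\kappa_f(X,E)\in\{1,2\}$. Write $\phi:=\phi_{f,E}: X\dashrightarrow Y$ and $g:=f|_Y$; by Theorem \ref{thm-fibration}(4), $E$ does not dominate $Y$.

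For (1), I would argue by contradiction that $\dim Y\ne 1$. If $\dim Y=1$ then the equidimensional Chow reduction $\Gamma\to Y$ has $2$-dimensional fibers containing only finitely many divisorial components of $X$, and the infinitude of $\{E_i\}$ forces the $g$-orbit $\{y_i:=\phi(E_i)\}\subseteq Y$ to be infinite; in particular the $E_i$ are pairwise disjoint. Picking an effective $D'=\sum a_iE_i\in V_f(E)$ realizing $\kappa(X,D')=1$, whose Iitaka fibration is birational to $\phi$ by Theorem \ref{thm-fibration}(3), the core step is to exploit this disjointness together with the shift relation $f^*E_{i+1}=E_i$ and the rigidity $\kappa(X,E_i)=0$ to derive a contradiction from the nontrivial linear system $|mD'|$. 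The precise execution tracks how each section of $mD'$ decomposes along the disjoint support and uses the finite-dimensionality of $V_f(E)$ as an $f^*$-invariant subspace of $\Pic_{\mathbb{Q}}(X)$; this is the principal technical obstacle of the proof.

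For (2), assume $\dim Y=2$. Since fibers of $\Gamma\to Y$ are $1$-dimensional while $\dim E=2$, $C:=\phi(E)$ must be a curve. Suppose for contradiction that $g$ is non-isomorphic. The curve $C$ is $g$-prime (as each $g^i(C)=\phi(E_i)$ is irreducible), and the component-counting argument from (1) shows $C$ is not $g^{-1}$-periodic. Hence Lemma \ref{lem-surf-deg=1} applies to the surface $Y$ and gives $\kappa(Y,C)>0$. Taking a resolution $\tilde\phi:\tilde X\to Y$ of $\phi$ and writing $\tilde\phi^*C=a\tilde E+R$ where $\tilde E$ is the strict transform of $E$, one has $\kappa(\tilde X,\tilde\phi^*C)=\kappa(Y,C)>0$ by Lemma \ref{lem-pushforward-fiitka}(1), while $\kappa(\tilde X,\tilde E)=\kappa(X,E)=0$; a direct manipulation of the linear system $|m(a\tilde E+R)|$ then produces a nontrivial divisor linearly equivalent to a positive multiple of $E$, contradicting $\kappa(X,E)=0$. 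Hence $g$ is an automorphism, proving (2).

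Finally for (3), the almost well-definedness of $\phi$ follows from the equidimensionality of $\Gamma\to Y$ combined with (2): by $f$-equivariance the $\phi$-image of the indeterminacy locus of $\phi$ is a $g$-invariant closed subset of $Y$, and since $g$ is an automorphism this image remains proper (as $\phi$ itself is dominant), yielding an open subset $U\subseteq Y$ over which $\phi$ is well-defined.
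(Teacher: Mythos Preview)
Your proof of (3) does not work. You assert that the image in $Y$ of the $p_X$-exceptional locus is a $g$-invariant closed subset, and then claim that ``since $g$ is an automorphism this image remains proper.'' But a $g$-invariant closed subset of $Y$ can perfectly well equal $Y$; nothing about $g$ being an automorphism or $\phi$ being dominant rules this out. The content of (3) is precisely to exclude the possibility that some $p_X$-exceptional prime divisor $P\subset W$ dominates $Y$ under $p_Y$, and this requires real work. The paper handles it via Claim~\ref{claim-pxp}: one decomposes a $p_X$-ample exceptional divisor $-F$ into $h^*$-eigencomponents $F_q$ (each shown to be Cartier by a limit argument), and then for each exceptional prime $P$ argues by cases on $\dim p_X(P)$ and $\dim p_Y(P)$, using that $h|_P$ would be $q$-polarized (forcing $\deg g>1$, contradicting (2)) or that $q>1$ would be an integral eigenvalue of $(h|_P)^*$ on $\NS(P)$ (impossible for an automorphism). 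None of this is captured by your argument.

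Your approaches to (1) and (2) also miss the clean mechanism. In both cases the paper passes to $E_i$ for $i\gg 1$ and works on the Chow-reduction graph $W$, where $p_Y$ is \emph{equidimensional} with irreducible reduced general fibre. For (1), since the images $C_i\subset Y$ are infinitely many points, $E_i'$ is a full general fibre of $p_Y$ for $i\gg 1$, so $\kappa(W,E_i')=1$; but $p_X^*E_i\ge E_i'$ gives $\kappa(X,E_i)\ge 1$, contradicting $\kappa(X,E_i)=0$. Your sketch via manipulating $|mD'|$ is much more indirect and you yourself flag it as the ``principal technical obstacle.'' For (2), the same idea works: for $i\gg 1$ equidimensionality forces $E_i'=p_Y^{-1}(C_i)$ as divisors, so $p_Y^*C_i\le p_X^*E_i$ and hence $\kappa(X,E_i)\ge\kappa(Y,C_i)>0$, a contradiction. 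Your version writes $\tilde\phi^*C=a\tilde E+R$ with $R\ge 0$ and tries to extract from $\kappa(a\tilde E+R)>0$ a nontrivial divisor $\sim nE$; but in general $\kappa(a\tilde E+R)>0$ is perfectly compatible with $\kappa(\tilde E)=0$ (the movement can live entirely in $R$), and you give no mechanism to strip off $R$. The pass to $i\gg 1$ is exactly what makes $R$ disappear.
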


\begin{proof}
By Lemma \ref{lem-fprime}, $E$ is $f$-prime.
Let $\phi_{f,E}:X\dashrightarrow Y$ be an $f$-Iitaka fibration of $E$.
Then $0<\dim Y=\kappa_f(X,E)<3$ by Theorem \ref{thm-fiitaka1}.
Let $E_i:=f^i(E)$.

Consider the following equivariant dynamical systems
$$\xymatrix{
\save[]+<2pc,0pc>*{h \acts} \restore &W \ar[r]^{p_X}\ar[rd]_{p_Y}  & X \ar@{-->}[d]^{\phi_{f,E}}  &\save[]+<-2pc,0pc>*{\racts f} \restore\\
&& Y&\save[]+<-2pc,0pc>*{\racts g} \restore
}$$
where $W$ is the normalization of the graph of $\phi_{f,E}$.
Let $E_i'$ be the strict transform of $E_i$ in $W$ and $C_i:=p_Y(E_i')$.
By Theorem \ref{thm-fibration}, $E_i$ does not dominate $Y$.
So $C_i\subsetneq Y$.

Suppose the contrary $\dim Y=1$.
Note that $p_Y$ has irreducible general fibres.
Then $E_i'$ is some full general fibre of $p_Y$ when $i\gg 1$.
By \cite[Theorem 5.13]{Uen75}, we have
$$0=\kappa(X,E)=\kappa(X,(f^i)^*E_i)=\kappa(X,E_i)=\kappa(W, p_X^*E_i)\ge \kappa(W,E_i')=1$$
when $i\gg 1$, a contradiction.
So $\dim Y=2$ and (1) is proved.

Note that $p_Y$ has connected fibres.
So $C_i=g^{-1}(C_{i+1})$ by \cite[Lemma 7.3]{CMZ20}.
In particular, $C_i$ is $g$-prime.
Note that $p_Y$ is equi-dimensional (cf.~Theorem \ref{thm-fibration}) and $p_Y$ has irreducible and reduced general fibres.
So $E_i'=p_Y^{-1}(C_i)$ when $i\gg 1$.
If $\deg g>1$, then $Y$ is $\Q$-factorial and $\kappa(Y, C_0)>0$ by Lemma \ref{lem-surf-deg=1}.
However, by \cite[Theorem 5.13]{Uen75},
$$\kappa(X,E)=\kappa(X, E_i)=\kappa(W,p_X^*E_i)\ge \kappa(W, p_Y^*C_i)=\kappa(Y,C_i)>0$$
when $i\gg 1$, a contradiction.
So (2) is proved.

\begin{claim}\label{claim-pxp}
Let $P$ be a $p_X$-exceptional prime divisor.
Then $\dim p_X(P)=\dim p_Y(P)=1$.
\end{claim}
After iteration, we may assume each $p_X$-exceptional prime divisor is $h^{-1}$-invariant.
There is an effective exceptional Cartier divisor $F$ such that $-F$ is $p_X$-ample (cf.~\cite[Lemma 2.62]{KM98}).
Write 
$$F=\sum_{q=1}^s F_q \text{ and }  F_q=\sum_{i=1}^{n_q}a_{q,i} F_{q,i}$$ 
where $F_{q,i}$ are $p_X$-exceptional prime divisors with $h^*F_{q,i}=q F_{q,i}$ and $a_{q,i}>0$.

Denote by $\text{Div}_{\Q}(W)$ the space of Weil $\Q$-divisors without modulo any equivalence.
Denote by $\text{CDiv}_{\Q}(W)$ the space of $\Q$-Cartier divisors without modulo any equivalence.
We can naturally regard $\text{CDiv}_{\Q}(W)$ as a subspace of $\text{Div}_{\Q}(W)$.
Let $V$ be the finite dimensional subspace of $\text{Div}_{\Q}(W)$ spanned by the basis $F_{q,i}$.
Then $h^*|_V$ is a diagonal action.
Let $U=V\cap \text{CDiv}_{\Q}(W)$.
Note that $(h^m)^*F/s^m\in U$ for each $m$ and 
$$\lim\limits_{m\to+\infty}(h^m)^*F/s^m=F_s.$$
So $F_s\in U$.
Since $F\in U$, we have $\sum\limits_{q=1}^{s-1} F_q\in U$.
In a similar way, we see $F_q\in U$ for each $q$.

There exists some $q$ such that $P\subseteq \Supp F_q$.
Suppose $p_X(P)$ is a point.
Then $-F|_P$ is ample and 
$$-F_q|_P=-F|_P + (F-F_q)|_P$$ is big.
Moreover,
\begin{equation}\tag{$*$}
(h|_P)^*(-F_q|_P)\equiv q(-F_q|_P).
\end{equation} 
By \cite[Proposition 1.1]{MZ18}, $(h|_P)^*|_{\N^1(P)}$ admits an ample eigenvector with eigenvalue $q$. 
Then $\deg h|_P=q^2$ and hence
$$1<\deg f=\deg h=q(\deg h|_P)=q^3.$$
So we have $q>1$.
Then $h|_P$ and hence $g|_{p_Y(P)}$ are $q$-polarized by \cite[Theorem 3.11]{MZ18}.
Since $\dim p_Y(P)>0$, we get the following contradiction to (2):
$$\deg g\ge \deg g|_{p_Y(P)}>1.$$ 

Suppose $p_Y(P)=Y$.
Then $\deg h|_P=1$ by (2) and hence
$$\deg f=\deg h=q(\deg h|_P)=q>1.$$
We remark that for any eigenvalue $\lambda$ of $(h|_P)^*|_{\NS(P)}$, $1/\lambda$ is an algebriac integer,
so $\lambda$ can not be a rational number $>1$.
Note that $\dim p_X(P)=1$.
Then $-F|_P$ is $p_X|_P$-ample and $-F_q|_P$ is $p_X|_P$-big.
In particular, $-F_q|_P\not\equiv 0$ and hence $q>1$ is an integral eigenvalue of $(h|_P)^*|_{\N^1(P)}$ 
by observing the same ($*$) above. This is absurd as remarked above.
Thus the claim and hence (3) are proved since $\dim Y=2$ by (1).
\end{proof}

\section{Dynamical Iitaka fibration of $R_f$}

In this section, we characterize the $f$-Iitaka fibration of the ramification divisor $R_f$, where $f:X\to X$ is a surjective endomorphism of a $\Q$-factorial normal projective variety $X$.
We shall frequently use the following simple trick.
Observe that
$$R_{f^s}=\sum\limits_{i=0}^{s-1}(f^i)^*R_f$$
holds for any $s>0$.
Then we have $\kappa_f(X,R_f)=\kappa(X, R_{f^s})$ for some $s\gg 1$.

\begin{lemma}\label{lem-reduced}
Consider the equivariant dynamical systems
$$\xymatrix{
f \acts X \ar@{-->}[r]^{\pi} &Y\racts g
}$$
where $\pi$ is a dominant rational map of normal projective varieties with connected fibres.
Suppose $\deg g=1$.
Then $R_f^v=0$ where $R_f^v$ is the $\pi$-vertical part of $R_f$.
\end{lemma}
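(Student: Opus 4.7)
The plan is to argue by contradiction using a local multiplicativity identity for fiber multiplicities and a pigeonhole/periodicity argument.

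First, since the indeterminacy locus of $\pi$ has codimension $\ge 2$ in $X$, at a general point of any $\pi$-vertical prime divisor $P$ of $X$ the map $\pi$ is regular. I assume for simplicity that $\pi(P)$ is a prime divisor of $Y$; otherwise one can reduce to this case by passing to a Chow-reduction model of $\pi$ (cf.~Theorem \ref{thm-chow}) or by localizing at a chosen prime divisor of $Y$ passing through $\pi(P)$. Let $n_P$ denote the multiplicity of $P$ in the Weil divisor $\pi^*\pi(P)$, so $n_P\ge 1$. The key local identity to establish is
\[
n_P \;=\; e_P \cdot n_{f(P)},
\]
where $e_P$ is the ramification index of $f$ along $P$. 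To derive it: since $\deg g = 1$ and $Y$ is normal projective, $g$ is an automorphism, so $g^*\pi(f(P)) = \pi(P)$ as divisors (with multiplicity one). At a general $x\in P$ choose uniformizers $s$ for $P$ at $x$ and $s'$ for $f(P)$ at $f(x)$, and local equations $t,t'$ for $\pi(P)$ at $\pi(x)$ and $\pi(f(P))$ at $\pi(f(x))$. Then $\pi^*t = s^{n_P}\cdot(\mathrm{unit})$, $\pi^*t' = (s')^{n_{f(P)}}\cdot(\mathrm{unit})$, $f^*s' = s^{e_P}\cdot(\mathrm{unit})$, and $g^*t' = t\cdot(\mathrm{unit})$. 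Comparing $s$-adic valuations in the commutativity relation $f^*\pi^*t' = \pi^*g^*t'$ yields the displayed identity.

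Now suppose for contradiction that some vertical prime $P_0$ with $e_{P_0}\ge 2$ exists. The identity gives $n_{P_0} = e_{P_0}\cdot n_{f(P_0)}\ge 2$, so $P_0$ lies in the set
\[
\mathcal{S} \;:=\; \{P : P \text{ is a $\pi$-vertical prime of } X \text{ with } n_P\ge 2\}.
\]
This set is finite, since the locus in $Y$ over which $\pi$ has non-reduced fibers is a proper closed subset with only finitely many irreducible components, each contributing only finitely many vertical primes above it. By the surjectivity of $f$, choose an infinite backward path $P_0 \leftarrow P_{-1}\leftarrow P_{-2}\leftarrow\cdots$ with $f(P_{-i-1})=P_{-i}$; the identity gives $n_{P_{-i-1}}\ge n_{P_{-i}}\ge 2$, so each $P_{-i}\in \mathcal{S}$. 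Pigeonhole yields $P_{-s}=P_{-t}$ for some $0\le s<t$. Applying $f^{t-s}$ shows that $P_{-s}$ is $f^{t-s}$-periodic, and hence so is $P_0 = f^s(P_{-s})$: there exists $m>0$ with $f^m(P_0)=P_0$. Iterating the identity around this cycle gives
\[
n_{P_0} \;=\; n_{P_0}\cdot\prod_{i=0}^{m-1} e_{f^i(P_0)},
\]
forcing $\prod_{i=0}^{m-1}e_{f^i(P_0)}=1$, hence each factor is $1$ and in particular $e_{P_0}=1$, contradicting $e_{P_0}\ge 2$.

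The heart of the proof is the local multiplicativity identity; once established, the pigeonhole/periodicity argument is short and clean. The only genuine technical subtlety is the edge case where $\pi(P_0)$ is not a prime divisor of $Y$ (for instance, if $P_0$ lies in a jumping fiber), which can be handled either by first applying the Chow reduction of $\pi$ to make the fibration equidimensional, or by running the local computation with a chosen prime divisor of $Y$ passing through $\pi(P_0)$ in place of $\pi(P_0)$ itself.
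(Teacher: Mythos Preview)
Your proof is correct and follows essentially the same strategy as the paper: vertical ramification forces the fibre to be non-reduced, the non-reduced locus is a proper closed subset of $Y$ stable under $g^{-1}$, and periodicity of the resulting finitely many divisorial components then forces the ramification indices to equal $1$. The only difference is packaging---the paper first replaces $\pi$ by a morphism via the graph, iterates $f$ so that every irreducible component of $\pi^{-1}(Z)$ (with $Z$ the non-reduced locus) is individually $f^{-1}$-invariant, and then reads off the contradiction from the scheme-theoretic equality $f^*X_{\pi(Q)}=X_{\pi(Q)}$, whereas you make the local identity $n_P=e_P\,n_{f(P)}$ explicit and run a backward-orbit pigeonhole argument in place of iteration.
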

\begin{proof}
We may assume $\pi$ is a morphism after taking the normalization of the graph of $\pi$.
For an irreducible closed subset $S\in Y$, we denote by $X_S$ the fibre product $X\times_Y S$.
Note that $X$ is integral.
Then the general fibre of $\pi$ is reduced since our base field has characteristic $0$.

Let $Z$ be the closed subset of $Y$ such that $X_y$ is non-reduced exactly when $y\in Z$. 
Note that $\dim Z<\dim Y$. For any $y\in Z$, $f^*X_y=X_{g^{-1}(y)}$ is non-reduced and hence $g^{-1}(y)\in Z$. So $g^{-1}(Z)=Z$ and $\pi^{-1}(Z)$ is $f^{-1}$-invariant.
Iterating $f$, we may assume that irreducible components of $Z$ and $\pi^{-1}(Z)$ are respectively $g^{-1}$- and $f^{-1}$-invariant.

Let $Q\subseteq X$ be a prime divisor such that $f^*Q=rP +$(others) for some prime divisor $P$ and $r>1$.
Suppose the contrary that $\pi(Q) \ne Y$.
Then $Q \subseteq X_{\pi(Q)}$ implies that $f^*X_{\pi(Q)}=X_{g^{-1}(\pi(Q))}$ is non-reduced, hence $g^{-1}(\pi(Q))\subseteq Z$
and $\pi(Q))\subseteq g(Z) = Z$.
Then $\deg g=1$ implies $f^*X_{\pi(Q)}=X_{\pi(Q)}$ and $f^*Q=Q$, a contradiction.
%
\end{proof}

\begin{lemma}\label{lem-fiitaka-rf}
Let $f:X\to X$ be a surjective endomorphism of a normal projective variety $X$.
Suppose $R_f$ is $\Q$-Cartier.
Let $\phi_{f,R_f}:X\dashrightarrow Y$ be the $f$-Iitaka fibration of $R_f$.
Then either $\dim Y=0$ or $\deg f|_Y>1$.
\end{lemma}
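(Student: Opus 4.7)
The plan is to argue by contradiction. Suppose $\dim Y > 0$ while $\deg(f|_Y) = 1$. Writing $g := f|_Y$ (a surjective endomorphism of $Y$ by Theorem \ref{thm-chow}), this means $g$ is an automorphism of $Y$. The goal is to show that $h^0(X, mD) \le 1$ for every $m$, where $D$ is a suitably chosen effective element of $V_f(R_f)$, which will contradict $\kappa(X, D) = \dim Y > 0$.

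My first step would be to invoke Lemma \ref{lem-reduced} for $\phi := \phi_{f, R_f}$. Since the $f$-Iitaka fibration has connected fibres by its Chow-reduction construction (Theorem \ref{thm-chow}), the lemma yields $R_f^v = 0$: every prime component of $R_f$ dominates $Y$. Because $g$ is an automorphism, pullback by each $f^i$ preserves $\phi$-horizontality, so each $R_{f^n} = \sum_{i=0}^{n-1}(f^i)^*R_f$ is entirely $\phi$-horizontal. Using Theorem \ref{thm-fibration}(3) together with the spanning observation $\kappa_f(X, R_f) = \kappa\bigl(X, \sum_i (f^i)^*R_f\bigr)$ noted just after the definition of $\kappa_f$ (applied with $i$ ranging over a spanning set of $V_f(R_f)$), I can choose $n$ such that $D := R_{f^n}$ satisfies $\kappa(X, D) = \dim Y > 0$ and $\phi$ is birational to the classical Iitaka fibration of $D$.

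The decisive step is the following. Take $m \gg 1$ with $h^0(X, mD) \ge 2$; let $s_0$ be the tautological section of $\mathcal{O}_X(mD)$ (so $\divv(s_0) = mD$) and let $s_1 \in H^0(X, mD)$ be linearly independent of $s_0$. Then $h := s_1/s_0 \in K(X)$ is a non-constant rational function. By the defining property of the Iitaka fibration, ratios of sections of $mD$ lie in $\phi^*K(Y)$, so $\divv(h) = \phi^*\divv_Y(\tilde h)$ is $\phi$-vertical. Write $\divv(h) = (h)_0 - (h)_\infty$ with $(h)_0,(h)_\infty \ge 0$ of disjoint support, both $\phi$-vertical. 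Effectivity of $\divv(s_1) = mD + (h)_0 - (h)_\infty$ becomes $mD + (h)_0 \ge (h)_\infty$. But for any prime $P \subseteq \Supp(h)_\infty$, the divisor $P$ is $\phi$-vertical, hence lies neither in $\Supp(mD)$ (which is $\phi$-horizontal) nor in $\Supp(h)_0$ (disjoint from $(h)_\infty$). The coefficient of $P$ on the left is therefore $0$ while on the right it is $\ge 1$, a contradiction. Hence $(h)_\infty = 0$, making $h$ regular on the projective variety $X$ and therefore constant, contradicting the linear independence of $s_0$ and $s_1$.

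The main technical hurdle I anticipate is step two: cleanly identifying $\phi$ with the classical Iitaka fibration of a $\phi$-horizontal effective divisor. This is precisely what Theorem \ref{thm-fibration} and the surrounding discussion are designed to supply, so it should cost no extra work. Once that is in hand, the contradiction rests purely on the disjointness of $\phi$-horizontal and $\phi$-vertical prime supports, which is immediate.
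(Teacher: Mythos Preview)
Your proposal is correct and follows the same two-step outline as the paper: apply Lemma~\ref{lem-reduced} (to each iterate) to make $R_{f^s}$ purely $\phi$-horizontal, then deduce $\dim Y=0$. The paper compresses the second step into the single phrase ``by the construction of $f$-Iitaka fibration, $\dim Y=0$,'' whereas you spell it out via the linear-system description of the Iitaka fibration; your argument is a valid way to justify that step.
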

\begin{proof}
Suppose $\deg f|_Y=1$.
By Lemma \ref{lem-reduced}, even iterating $f$, every irreducible component of $\Supp R_f$ dominates $Y$.
By the construction of $f$-Iitaka fibraiton, $\dim Y=0$. 
\end{proof}

\begin{corollary}\label{cor-kappa-rf12}
Let $f:X\to X$ be a non-isomorphic surjective endomorphism of a $\Q$-factorial normal projective threefold $X$.
Let $E$ be a prime divisor of $X$ which is not $f^{-1}$-periodic and has $\kappa(X,E)=0$.
Suppose $0<\kappa_f(X,R_f)<3$.
Then $f$ is $\delta$-imprimitive.
\end{corollary}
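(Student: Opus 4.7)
The natural candidate for the $\delta$-imprimitizing fibration is the dynamical $f$-Iitaka fibration of the ramification divisor. Set $\phi := \phi_{f,R_f}: X \dashrightarrow Y$ and, after replacing $f$ by a suitable iterate, let $g := f|_Y$ be the induced surjective endomorphism, whose existence is guaranteed by Theorems \ref{thm-fibration} and \ref{thm-chow}. The hypothesis $0 < \kappa_f(X, R_f) < 3$ gives $\dim Y \in \{1, 2\}$, and Lemma \ref{lem-fiitaka-rf} yields $\deg g > 1$. To establish $\delta$-imprimitivity, I must prove $\delta_g = \delta_f$. For this I plan to couple $\phi$ with a second $f$-equivariant fibration furnished by the hypothesis on $E$: applying Theorem \ref{thm-fiitaka2} to $E$ produces $\psi := \phi_{f,E}: X \dashrightarrow Z$ which is $f$-equivariant, has $\dim Z = 2$, and whose induced endomorphism $h := f|_Z$ is an automorphism.

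Consider the joint map $\sigma := (\phi, \psi): X \dashrightarrow Y \times Z$, which is $f$-equivariant for $g \times h$. Let $W := \overline{\sigma(X)} \subseteq Y \times Z$ with induced surjective endomorphism $f_W := (g\times h)|_W$. If $\sigma$ is generically finite (so $\dim W = 3$), Lemma \ref{lem-imprimitive} gives $\delta_f \in \{\delta_g, \delta_h\}$, and since $0 < \dim Y, \dim Z < 3$, $f$ is $\delta$-imprimitive via $\phi$ or $\psi$, as required.

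The remaining work is to rule out the non-generically-finite case. Assume $\sigma$ is not generically finite. Since $\phi, \psi$ are dominant, $\dim W \geq \max(\dim Y, \dim Z) = 2$, forcing $\dim W = 2$. Let $p_Y, p_Z$ denote the two dominant projections from $W$. When $\dim Y = 2$, both $p_Y$ and $p_Z$ are generically finite between surfaces, and the equivariances $p_Y \circ f_W = g \circ p_Y$ and $p_Z \circ f_W = h \circ p_Z$ give $\deg f_W = \deg g$ and $\deg f_W = \deg h = 1$, contradicting $\deg g > 1$. When $\dim Y = 1$, $p_Z: W \to Z$ is generically finite between surfaces, so $\deg f_W = \deg h = 1$ and $f_W$ is an automorphism of $W$; moreover the fibration $p_Y: W \to Y$ has connected general fibers, since the fiber over a general $y$ equals $\{y\}\times \overline{\psi(\phi^{-1}(y))}$, which is connected as the closure of the image of the connected set $\phi^{-1}(y)$ under the rational map $\psi$. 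For general $y$ with $g^{-1}(g(y)) = \{y_1, \ldots, y_{\deg g}\}$, the equivariance then gives
$$p_Y^{-1}(g(y)) \;=\; f_W\!\left(\bigsqcup_{i=1}^{\deg g} p_Y^{-1}(y_i)\right),$$
which is a disjoint union of $\deg g \geq 2$ connected curves (the automorphism $f_W$ preserves disjointness and connectedness), contradicting the connectedness of the generic fiber of $p_Y$. Thus only the generically finite case survives. The main obstacle will be the careful verification of the connected-fiber property and the surjectivity of $p_Y, p_Z$ after any necessary birational modifications, which I expect to reduce to routine use of Chow reduction.
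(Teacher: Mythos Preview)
Your proof is correct and follows essentially the same approach as the paper: combine the two dynamical Iitaka fibrations $\phi_{f,E}$ and $\phi_{f,R_f}$, exploit the degree discrepancy on the bases ($\deg h=1$ versus $\deg g>1$), and conclude via Lemma~\ref{lem-imprimitive}. The only difference is that the paper dispatches the generic finiteness of $\sigma$ in one line: since $\deg g>1$ while $\deg h=1$, the map $\phi_{f,R_f}$ cannot factor through $\phi_{f,E}$, and because the general fibre of $\phi_{f,E}$ is a curve, non-factoring is equivalent to $\sigma$ being generically finite---so your case analysis (especially the connected-fibre argument in the $\dim Y=1$ case, which is correct but would benefit from resolving $\sigma$ to a morphism or invoking Stein factorization of $p_Y$) is unnecessary.
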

\begin{proof}
Let $\phi_{f,E}:X\dashrightarrow Y$ be the $f$-Iitaka fibraiton of $E$ and $\phi_{f,R_f}:X\dashrightarrow Z$ the $f$-Iitaka fibraiton of $R_f$.
By Theorem \ref{thm-fiitaka2}, $\dim Y=2$ and $\deg f|_Y=1$.
By Lemma \ref{lem-fiitaka-rf}, $\deg f|_Z>1$.
In particular, $\phi_{f,R_f}$ does not factor through $\phi_{f,E}$.
Then the induced map $X\dashrightarrow Y\times Z$ is generically finite.
By Lemma \ref{lem-imprimitive}, $f$ is $\delta$-imprimitive.
\end{proof}

Instead of simply considering $\kappa_f(X,R_f)=0$, we deal with a more general situation for the purpose of later treatment on the Fano contractions.

\begin{lemma}\label{lem-kappa0}
Consider the equivariant dynamical systems
$$\xymatrix{
\widetilde{f} \acts \widetilde{X} \ar[r]^{\pi} &X\racts f
}$$
with the following settings:
\begin{enumerate}
\item  $\pi$ is a finite surjective morphism of normal projective varieties with $\deg \pi=2$.
\item  $X$ is smooth and $\delta_f>1$.
\item  $P$ is a prime divisor with $f^*P=\delta_f P$.
\item  $\pi^*P=P_1+P_2$ has two irreducible components.
\end{enumerate}
Then either $\delta_{f|_P}= \delta_f$, or $\kappa(X, P)>0$ and $f$ is $\delta$-imprimitive.
\end{lemma}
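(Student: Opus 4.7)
The plan is to dichotomize on the vanishing of the normal class $P|_P \in \N^1(P)$ and use the double cover $\pi$ to refine the analysis when it vanishes. First I would set up the dynamics on $\widetilde X$: pulling back $f^*P \equiv \delta_f P$ gives $\widetilde f^*(P_1+P_2) \equiv \delta_f(P_1+P_2)$, and since $\widetilde f$ must permute $\{P_1,P_2\}$, after squaring if needed I may assume $\widetilde f^*P_i \equiv \delta_f P_i$ for each $i$. Because $\pi^*P=P_1+P_2$ has each coefficient equal to $1$, the restriction $\pi|_{P_i}\colon P_i \to P$ is birational, and therefore $\delta_{\widetilde f|_{P_i}} = \delta_{f|_P}$.

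The main dichotomy proceeds as follows. If $P|_P \not\equiv 0$, then $(f|_P)^*(P|_P) \equiv \delta_f\,P|_P$, and Lemma \ref{lem-d1-weil} applied directly on $P$ with this nonzero class yields $\delta_f \le \delta_{f|_P}$; combined with the reverse inequality $\delta_{f|_P} \le \delta_f$ this gives the first alternative. Otherwise $P|_P \equiv 0$, and I observe that then $P$ cannot be big: Kodaira's lemma would decompose a multiple as $mP \equiv A+E$ with $A$ ample and $E$ effective, and restricting would produce an ample-plus-effective class $A|_P + E|_P \not\equiv 0$ on $P$, contradicting $mP|_P \equiv 0$. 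Consequently $\kappa(X,P) \le \dim X - 1$.

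To extract nonzero information in the $P|_P \equiv 0$ case I would refine through the cover. From $(P_1+P_2)|_{P_1} \equiv (\pi|_{P_1})^*(P|_P) \equiv 0$, working in $\N^1(P_1)$ after $\mathbb{Q}$-factorializing $\widetilde X$ if needed to make the $P_i$ individually $\mathbb{Q}$-Cartier, one gets $P_1|_{P_1} \equiv -P_2|_{P_1}$, where the effective class $P_2|_{P_1}$ represents $P_1 \cap P_2$ cut out on $P_1$. If $P_2|_{P_1} \not\equiv 0$, then $(\widetilde f|_{P_1})^*(P_2|_{P_1}) \equiv \delta_f\,P_2|_{P_1}$ and Lemma \ref{lem-d1-weil} on $P_1$ again forces $\delta_{f|_P} = \delta_{\widetilde f|_{P_1}} \ge \delta_f$, the first alternative. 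Otherwise $P_2|_{P_1} = 0$ (an effective numerically trivial divisor vanishes) and $P_1|_{P_1} \equiv 0$; this means $P_1 \cap P_2$ has codimension $\ge 3$ in $\widetilde X$, which is the degenerate subcase.

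In this degenerate subcase the objective is $\kappa(X,P) \ge 1$; once this is known, the Iitaka fibration $\phi_{f,P}\colon X \dashrightarrow Y$ exists, becomes $f$-equivariant after iteration by Theorem \ref{thm-fibration}, satisfies $0 < \dim Y = \kappa(X,P) < \dim X$ by the non-bigness of $P$, and by Theorem \ref{thm-fiitaka-polarized} applied to the $\delta_f$-eigenvector $P$ makes $f|_Y$ into a $\delta_f$-polarized endomorphism, giving $\delta_{f|_Y} = \delta_f$ and the $\delta$-imprimitivity of $f$. To establish $\kappa(X,P) \ge 1$ in the degenerate subcase, my plan is to analyze the restriction sequence
\[
0 \longrightarrow \mathcal O_{\widetilde X}((m-1)P_1) \longrightarrow \mathcal O_{\widetilde X}(mP_1) \longrightarrow \mathcal O_{P_1}(mP_1|_{P_1}) \longrightarrow 0,
\]
exploiting the numerical triviality of $P_1|_{P_1}$ and the compatibility of $\widetilde f|_{P_1}$ with the $\delta_f$-scaling to bootstrap the growth of $h^0(\widetilde X, mP_1)$; alternatively, one may try to show $P_1 \sim P_2$ via a Galois-antiinvariance analysis of $\Pic(\widetilde X)$, which would produce two independent sections of $\mathcal O(P_1)$. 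This last step---manufacturing a section in the degenerate subcase---is the main obstacle, while all the earlier reductions are clean invocations of Lemma \ref{lem-d1-weil}.
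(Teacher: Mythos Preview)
Your overall architecture matches the paper's, but there are two genuine gaps.

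First, the phrase ``after $\Q$-factorializing $\widetilde X$ if needed'' hides a real obstruction. The variety $\widetilde X$ is only known to be normal; the paper checks that $K_{\widetilde X}$ is $\Q$-Cartier and then shows that the non-lc locus $Z$ of $\widetilde X$ is $\widetilde f^{-1}$-invariant of codimension $\ge 2$ and, crucially via \cite[Theorem 1.4]{BH14}, that no component of $Z$ lies in $P_1\cup P_2$. This yields that $\widetilde X$ is klt near $P_1\cup P_2$ outside codimension $3$, hence $P_1,P_2$ are $\Q$-Cartier \emph{in codimension $2$} (using that klt is $\Q$-factorial in codimension $2$). A global $\Q$-factorialization need not exist here (you do not know $\widetilde X$ is klt), and even when it does, $\widetilde f$ need not lift to a morphism of the $\Q$-factorial model, so the eigenvalue identity $\widetilde f^*P_i\equiv\delta_f P_i$ could be lost. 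The paper's argument instead restricts to an open set $V$ with codimension-$3$ complement on which $P_2$ is $\Q$-Cartier, makes sense of $P_2|_{P_1\cap V}$ there, and compares $(\widetilde f|_{P_1})^*S$ computed two ways on a codimension-$2$ component $S\subseteq P_1\cap P_2$ to get the contradiction $a<\delta_f$ versus $a=\delta_f$. Your invocation of Lemma \ref{lem-d1-weil} in the ``$P_2|_{P_1}\not\equiv 0$'' case is morally the same step, but it only becomes rigorous once you have this $\Q$-Cartier-in-codimension-$2$ statement.

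Second, in your degenerate subcase you correctly identify the target $\kappa(X,P)\ge 1$ but do not reach it; neither proposed route (the restriction sequence, or a Galois anti-invariance argument) works as stated, since $P_1|_{P_1}$ numerically trivial gives no control on $h^0(P_1,\mathcal O_{P_1}(mP_1))$. The paper's mechanism is different and clean. In that subcase one has $B_\pi\cap P=\emptyset$, so $\pi$ is \'etale near $P$, $\widetilde X$ is smooth along $P_1\cup P_2$, and $P_1,P_2$ are genuinely Cartier. From $P_i|_{P_j}\equiv 0$ one sees each $P_i$ is nef with $P_1\cdot P_2\equiv_w 0$; the Hodge-type Lemma \ref{lem-hodge} then forces $P_1\equiv aP_2$ for some $a>0$. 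Now $r(P_1-aP_2)\in\Pic^0(\widetilde X)$ is a $\delta_f$-eigenvector of $\widetilde f^*$ with $\delta_f>1$, so by \cite[Proposition 6.3]{MMS+22} it is $\Q$-torsion, giving $P_1\sim_{\Q} aP_2$ and hence $\kappa(\widetilde X,P_1+P_2)>0$, i.e.\ $\kappa(X,P)>0$. The remainder of your argument (non-bigness of $P$, Theorem \ref{thm-fiitaka-polarized} on the $f$-Iitaka fibration) is then exactly as in the paper.
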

\begin{proof}

We may assume $\delta_{f|_P}\neq \delta_f=:\delta$.
Then $\delta_{f|_P}< \delta_f$.
Note that $\delta=\delta_f=\delta_{\widetilde{f}}>1$ is an integer by (3).
After iteration, we may assume $\widetilde{f}^*P_i=\delta P_i$ for $i=1,2$.
Note that $\delta_{\widetilde{f}|_{P_1}}=\delta_{f|_P}$.
Then $\delta$ is not an eigenvalue of $(\widetilde{f}|_{P_1})^*|_{\N^1(P_1)}$.

\begin{claim}\label{claim-qfactorial}
$P_1$ and $P_2$ are $\Q$-Cartier in codimension $2$.
\end{claim}
Note that $R_{\pi}=\pi^*B_{\pi}/2$ is a $\Q$-Cartier reduced divisor.
Then $K_{\widetilde{X}}=\pi^*K_X+R_{\pi}$ is $\Q$-Cartier.
Let $Z$ be the non-lc locus of $\widetilde{X}$ which has codimension at least $2$.
Note that $\deg f\ge 2$ since $P\subseteq B_f$.
Then $\widetilde{f}^{-1}(Z)=Z$ and each irreducible component of $Z$ is not contained in $P_1\cup P_2$ by \cite[Theorem 1.4]{BH14}.
In particular, $Z\cap P_i$ has codimension at least $3$ in $\widetilde{X}$.
Let $U:=\widetilde{X}\backslash Z$.
Then $U$ is log canonical and further $U$ is klt arround $(P_1\cup P_2)\cap U$ by \cite[Proposition 2.2]{Zha14}.
It is known that dlt and hence klt varieties are $\Q$-factorial in codimension $2$ (cf.~\cite[Proposition 9.1]{GKKP11}).
So the claim is proved.

%

\begin{claim}\label{claim-p1p2}
$R_{\pi}\cap P_1=R_{\pi}\cap P_2=\pi^{-1}(B_{\pi}\cap P)\subseteq P_1\cap P_2$.
\end{claim}

It is clear that
$R_{\pi}\cap P_1\subseteq \pi^{-1}(B_{\pi}\cap P)$.
Conversely, for any $x\in B_{\pi}\cap P$, we have $\pi^{-1}(x)\cap P_i\neq \emptyset$ since $ \pi|_{P_i}:P_i\to P$ is surjective.
Note that $x\in B_{\pi}$ and $\deg \pi=2$.
Then $\pi^{-1}(x)$ is a single point in $R_{\pi}$.
So we have $\pi^{-1}(x)\subseteq R_{\pi}\cap P_i$ and the claim is proved.

\vskip 2mm
We continue the proof of Lemma \ref{lem-kappa0}.

Suppose $B_{\pi}\cap P=\emptyset$.
By the purity of branch locus, $\pi$ is \'etale near $P$.
Then $\widetilde{X}$ is smooth near $P_1\cup P_2$.
In particular, $P_1$ and $P_2$ are Cartier.
Note that $(\widetilde{f}|_{P_i})^*(P_i|_{P_j})=\delta P_i|_{P_j}$.
So $P_i|_{P_j}\equiv 0$.
Then $P_1$ and $P_2$ (and hence $P$) are nef and hence $P_1\equiv aP_2$ for some rational number $a>0$ by Lemma \ref{lem-hodge}.
Note that $r(P_1-aP_2)\in \Pic^0(X)$ for some integer $r>0$ and $\widetilde{f}^*r(P_1-aP_2)=\delta\cdot r(P_1-aP_2)$.
So $P_1-aP_2\sim_{\Q} 0$ by \cite[Proposition 6.3]{MMS+22}.
In particular, $\kappa(X, P)=\kappa(\widetilde{X}, P_1+P_2)>0$.
Note that $P|_P\equiv 0$.
So $0 < \kappa_f(X, P)=\kappa(X, P)<\dim X$.
Let $\phi:X\dashrightarrow Y$ be the $f$-Iitaka fibraiton of $P$.
Then $f|_Y$ is $\delta$-polarized by Theorem \ref{thm-fiitaka-polarized}.
In particular, $f$ is $\delta$-imprimitive.

Suppose $B_{\pi}\cap P\neq \emptyset$.
By Claim \ref{claim-p1p2} and since $R_{\pi}$ is $\Q$-Cartier, $R_{\pi}\cap P_1$ is a non-empty pure codimension $2$ closed subset in $\widetilde{X}$.
Then $P_1\cap P_2$ contains an irreducible component $S$ of codimension $2$ in $\widetilde{X}$.
After iteration, we may assume $\widetilde{f}^{-1}(S)=S$.
By Lemma \ref{lem-d1-weil}, $(\widetilde{f}|_{P_1})^*S=aS$ for some integer $a\le\delta_{\widetilde{f}|_{P_1}}<\delta$.
By Claim \ref{claim-qfactorial}, we can find some open subset $V\subseteq X$ with complement of codimension $3$, such that $P_2$ is $\Q$-Cartier on $V$ and $f^{-1}(V)$.
Then $D:=P_2|_{P_1\cap V}$ (resp. $D':=P_2|_{P_1\cap f^{-1}(V)}$) is an effective $\Q$-Cartier divisor with $S\cap V$ (resp. $S\cap f^{-1}(V)$) being a component of $\Supp D$ (resp. $\Supp D'$).
Let $g:=\widetilde{f}|_{P_1}$ and $h:=g|_{P_1\cap f^{-1}(V)}:P_1\cap f^{-1}(V)\to P_1\cap V$.
We have $h^*D=(f^*P_2)|_{P_1\cap f^{-1}(V)}=\delta D'$.
Then $h^*(S\cap V)=\delta (S\cap f^{-1}(V))$ and hence $g^*S=\delta S$, a contradiction.
\end{proof}

\begin{lemma}\label{lem-Rf=0}
Consider the equivariant dynamical systems of normal projective varieties
$$\xymatrix{
f \acts X \ar@{-->}[r]^{\phi} &Y\racts g
}$$
with the following settings:
\begin{enumerate}
\item  $X$ is smooth with $\dim X=n$ and $\dim Y=n-1>0$.
\item  $\phi$ is an almost well-defined dominant rational map.
\item  $f$ is $\delta$-primitive.
\item  $\kappa_f(X,R_f^h)=0$ where $R_f^h$ is the $\phi$-horizontal part of $R_f$.
\end{enumerate}
Then $R_f^h=0$ and the general fibre of $\phi$ is an elliptic curve.
\end{lemma}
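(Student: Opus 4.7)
My plan is to combine a Riemann--Hurwitz calculation on a general fibre of $\phi$ with the dynamical product formula and the $\delta$-primitivity hypothesis.

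After replacing $Y$ by the Chow reduction of $\phi$ (Theorem \ref{thm-chow}), iterating $f$, and Stein factorizing if necessary, I may assume $\phi$ has connected fibres and $g:Y\to Y$ is a genuine surjective endomorphism with $g\circ\phi=\phi\circ f$. Let $d:=\delta_{f|_\phi}=\deg(f|_{X_y})$ for a general fibre. The product formula gives $\delta_f=\max(d,\delta_g)$; since $f$ is $\delta$-primitive and $0<\dim Y<\dim X$, the case $\delta_g=\delta_f$ is excluded, forcing $\delta_g<\delta_f=d$ and in particular $d>1$.

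For a very general $y\in Y$, both $X_y$ and $X_{g(y)}$ are smooth projective curves of the same genus $g_c$, and Riemann--Hurwitz applied to the degree-$d$ morphism $f|_{X_y}:X_y\to X_{g(y)}$ yields
\[
2g_c-2 \;=\; d(2g_c-2)+\deg R_f^h|_{X_y},
\]
so $\deg R_f^h|_{X_y}=(d-1)(2-2g_c)\ge 0$ and $g_c\in\{0,1\}$. If $g_c=1$, then $\deg R_f^h|_{X_y}=0$; since every horizontal prime divisor restricts to a nonzero effective $0$-cycle on the general fibre, this forces $R_f^h=0$ and an elliptic general fibre, which is what we want.

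The remaining and main task is to rule out $g_c=0$. Suppose it holds. Then $R_f^h\ne 0$, and by Lemma \ref{lem-fkappa>0} the hypothesis $\kappa_f(X,R_f^h)=0$ makes $\Supp R_f^h$ be $f^{-1}$-periodic. After iteration, each horizontal prime $E_i\subseteq R_f^h$ is $f^{-1}$-invariant with $f^*E_i=q_iE_i$, $q_i\ge 2$ (Proposition \ref{prop-f*f*}), and by monotonicity of the Iitaka dimension on effective sums $\kappa(X,E_i)=0$ for each $i$. The sought contradiction comes from combining: (a) $\delta_{f|_{E_i}}=\delta_g<d$ by the product formula for the generically finite map $\phi|_{E_i}$; (b) the identity $(f|_{E_i})^*(E_j|_{E_i})=q_jE_j|_{E_i}$ in $\N^1(E_i)$, which forces $E_j|_{E_i}\equiv 0$ whenever $q_j>\delta_g$, in particular whenever $q_j=d$; and (c) the Hurwitz equation $\sum_i m_i(q_i-1)=2(d-1)$ with $m_i=\deg(\phi|_{E_i})$ and $q_i\le d$. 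These together should force $\phi$ to be, generically, a toric $\mathbb{P}^1$-bundle with at most two totally ramified disjoint sections of numerically trivial normal bundle, giving a $\mathbb{G}_m$-torsor structure $X\setminus(E_1\cup E_2)\to Y$ classified by some $L\in\Pic^0(Y)$ with $g^*L\sim L^d$. Trivializing this classifier and descending then yields an $f$-equivariant rational map $X\dashrightarrow\mathbb{P}^1$ of relative dynamical degree $d=\delta_f$, contradicting the $\delta$-primitivity of $f$.

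The expected main obstacle is this last step: carefully organizing the combinatorial possibilities for the horizontal components of $R_f^h$ (including the case where all $q_i<d$, where the direct eigenvalue argument requires more care), and when the classifying class $L$ is nontrivial torsion in $\Pic^0(Y)$, executing the descent of the $\delta$-preserving fibration from a Galois cover back to $X$ itself rather than merely obtaining weak $\delta$-imprimitivity, which by Definition \ref{def_endo_notion} would not contradict $\delta$-primitivity on its own.
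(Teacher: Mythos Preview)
Your reduction to the dichotomy $g_c\in\{0,1\}$ via Riemann--Hurwitz on the general fibre, and your treatment of the elliptic case, are correct and match the paper. The gap is in the $g_c=0$ case, and it is exactly the one you flag at the end: you do not resolve it, and the torsor/descent route you sketch does not close.

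First, a simplification you miss: every $q_i$ equals $d$. Since $E_i$ is horizontal, $\phi|_{E_i}:E_i\dashrightarrow Y$ is generically finite, so the commuting square forces $\deg(f|_{E_i})=\deg g$; combined with $q_i\cdot\deg(f|_{E_i})=\deg f=d\cdot\deg g$ this gives $q_i=d$ for all $i$. Your worry about ``all $q_i<d$'' never arises, and the Hurwitz identity collapses to $\sum_i m_i=2$: either two components with $m_1=m_2=1$, or a single component with $m_1=2$. Your torsor picture only covers the first.

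For two components the paper avoids any descent. From $q_j=d>\delta_g=\delta_{f|_{E_i}}$ you already get $E_j|_{E_i}\equiv 0$ and likewise $E_i|_{E_i}\equiv 0$, so each $E_i$ is nef; Lemma~\ref{lem-hodge} then gives $E_1\equiv tE_2$ for some $t>0$. Now $r(E_1-tE_2)\in\Pic^0(X)$ for some $r>0$ and $f^*$ multiplies it by $d>1$, so \cite[Proposition~6.3]{MMS+22} forces $E_1\sim_{\Q}tE_2$. This immediately contradicts $\kappa_f(X,R_f^h)=0$; no $\mathbb{G}_m$-torsor, no classifying $L$, no cover of $Y$.

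For one component $P$ (a degree-$2$ bisection), your outline says nothing. The paper's move is to base-change: normalise $X$ in $k(X)\otimes_{k(Y)}k(P)$ to obtain an $f$-equivariant degree-$2$ cover $\pi:\widetilde{X}\to X$ in which $\pi^*P$ splits into two primes, and then invoke Lemma~\ref{lem-kappa0}. That lemma (whose proof again uses the $\Pic^0$ eigenvalue trick in the \'etale case, and a codimension-$2$ $\Q$-factoriality argument in the branched case) yields either $\delta_{f|_P}=\delta_f$ or $\kappa(X,P)>0$ with $f$ genuinely $\delta$-imprimitive; both contradict the hypotheses. The point is that the contradiction is produced on $X$ itself, so the weak-versus-strong $\delta$-primitivity issue you raise is never confronted.
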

\begin{proof}
Let $X_y$ be a general fibre of $\phi$.
Since $\phi$ is almost well-defined, we may assume $X_y$ and $X_{g(y)}$ are smooth projective curves with the same genus.
Since $f$ is $\delta$-primitive, $\delta_f>\delta_g \ge 1$. Then, $\dim X_y=1$ and the product formula imply
$\deg f|_{X_y} = \delta_f >1$.
By the adjunction formula and the ramification divisor formula, we have
$$R_{f|_{X_y}}=R_f^h|_{X_y}.$$
If $R_f^h=0$, then $f|_{X_y}$ is \'etale and hence $X_y$ is an elliptic curve.

Suppose the contrary that $R_f^h\neq 0$.
Since $\kappa_f(X,R_f^h)=0$, each irreducible component $P_i$ of $P:=\Supp R_f^h$ is $f^{-1}$-invariant after iteration by Lemma \ref{lem-fkappa>0}.

Suppose further that $P$ has at least $2$ irreducible components $P_1$ and $P_2$.
Write $f^*P_i=a_i P_i$.
Then we have 
$$a_i=\deg f/\deg f|_{P_i}=\deg f/ \deg g=\deg f|_{X_y}=\delta_{f|_{\phi}}=\delta_f,$$
where the second equality is due to $P_i$ dominating $Y$ and the last equality is by the dynamical product formula and $\delta_f>\delta_g$ according to (3).
Since $\phi|_{P_i}:P_i\dashrightarrow Y$ is generically finite, $\delta_{f|_{P_i}} = \delta_g<\delta_f$.
So $(f|_{P_i})^*P_j|_{P_i}=\delta_f P_j|_{P_i}$ implies $P_j|_{P_i}\equiv 0$.
Hence $P_i$ is nef and $P_1\equiv tP_2$ for some rational number $t>0$ by Lemma \ref{lem-hodge}.
Note that $s(P_1-tP_2)\in \Pic^0(X)$ and $f^*s(P_1-tP_2)=\delta_f\cdot s(P_1-tP_2)$.
Since $\delta_f>1$, we have $P_1-tP_2\sim_{\Q} 0$ by \cite[Proposition 6.3]{MMS+22}, contradicting (4).
Thus $P$ is irreducible.

Note that $R_{f|_{X_y}}=R_f^h|_{X_y}$.
So $f|_{X_y}$ is totally ramified.
Hence $X_y\cong \mathbb{P}^1$ and $\deg \phi|_P=2$.
We construct an $f$-equivariant double cover of $X$ by the following commutative diagram:
$$\xymatrix{
\widetilde{P}\ar[d]_{\widetilde{\phi|_P}}&\widetilde{W}\ar[r]^{p_{\widetilde{X}}}\ar[d]^{p_W}\ar[l]_{p_{\widetilde{P}}}&\widetilde{X}\ar[d]^{\pi}\\
Y&W\ar[l]^{\phi_Y}\ar[r]_{\phi_X}&X
}$$
where $W$ is the normalization of the graph of $\phi$ with two projections $\phi_X$ and $\phi_Y$, $\widetilde{\phi|_P}:\widetilde{P}\to Y$ is the normalization of $Y$ in the function field $k(P)$, $\widetilde{W}$ is the main component of the normalization of $W\times_Y \widetilde{P}$, and $\pi:\widetilde{X}\to X$ is obtained by taking the Stein factorization of $\widetilde{W}\to X$.
Note that this diagram is $f$-equivariant by \cite[Lemma 5.2]{CMZ20}.
Equivalently, $\widetilde{X}$ is the normalization of $X$ in $k(X)\otimes_{k(Y)} k(P)$.
Note that $\deg \pi=\deg \widetilde{\phi|_P}=2$.
The natural embedding of $k(P)\hookrightarrow k(P)\otimes_{k(Y)}k(P)$ implies that $\pi^*P=\pi^{-1}(P)=P_1+P_2$ has two irreducible components.
By Lemma \ref{lem-kappa0} and since $f$ is $\delta$-primitive, $\delta_f=\delta_{f|_P}=\delta_g$, a contradiction.
\end{proof}

\begin{corollary}\label{cor-kappa-rf0}
Let $f:X\to X$ be a non-isomorphic surjective endomorphism of a smooth projective threefold $X$.
Let $E$ be a prime divisor of $X$ which is not $f^{-1}$-periodic and has $\kappa(X,E)=0$.
Suppose $\kappa_f(X,R_f)=0$.
Then $f$ is either $\delta$-imprimitive or \'etale.
\end{corollary}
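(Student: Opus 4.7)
My plan is to first assume $f$ is $\delta$-primitive (otherwise $f$ is $\delta$-imprimitive and there is nothing to prove) and then show $R_f=0$. By Theorem \ref{thm-fiitaka2}, the dynamical $f$-Iitaka fibration $\phi:=\phi_{f,E}:X\dashrightarrow Y$ is almost well-defined with $\dim Y=2$ and $g:=f|_Y$ an automorphism; in particular $\deg g=1$. I will split $R_f=R_f^h+R_f^v$ with respect to $\phi$ and kill each piece separately.

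For the vertical part, Lemma \ref{lem-reduced} applied to $\phi$ immediately yields $R_f^v=0$ from $\deg g=1$. For the horizontal part, I will apply Lemma \ref{lem-Rf=0} to the same $\phi$ and invoke its conclusion $R_f^h=0$. Hypotheses (1)--(3) of that lemma hold by our setup together with the $\delta$-primitivity assumption, so the substantive step is to verify hypothesis (4), namely $\kappa_f(X,R_f^h)=0$.

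To verify (4), Lemma \ref{lem-fkappa>0} together with the standing hypothesis $\kappa_f(X,R_f)=0$ forces $\Supp R_f$ to be $f^{-1}$-periodic, so after replacing $f$ by an iterate every prime component of $\Supp R_f$ is $f^{-1}$-invariant. For each horizontal such component $P$, the restriction $\phi|_P:P\dashrightarrow Y$ is dominant and generically finite because $\dim P=\dim Y=2$; the equivariance $g\circ \phi|_P=\phi|_P\circ f|_P$ then forces $\deg f|_P=\deg g=1$, so the ramification identity $\deg f=e_P\cdot \deg f|_P$ yields $f^*P=(\deg f)P$ for every horizontal invariant component. Consequently $R_f^h=(\deg f-1)\sum_{P\ \text{horizontal}}P$ is an eigendivisor with $f^*R_f^h=(\deg f)R_f^h$, so $V_f(R_f^h)$ is the one-dimensional line $\mathbb{Q}R_f^h$ and $\kappa_f(X,R_f^h)=\kappa(X,R_f^h)$. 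Since $0\le R_f^h\le R_f$ and $\kappa(X,R_f)\le\kappa_f(X,R_f)=0$, we conclude $\kappa(X,R_f^h)=0$, completing the verification.

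Invoking Lemma \ref{lem-Rf=0} now produces $R_f^h=0$, and combined with $R_f^v=0$ this forces $R_f=0$, so $f$ is \'etale. The main obstacle in this approach is the verification of hypothesis (4); its crux is recognising that the $f^{-1}$-invariant horizontal components of $R_f$ all satisfy $f^*P=(\deg f)P$, which makes $R_f^h$ a polarized eigendivisor and collapses $V_f(R_f^h)$ to a single line, letting us bound $\kappa_f(X,R_f^h)$ by the ordinary Iitaka dimension of an effective subdivisor of $R_f$.
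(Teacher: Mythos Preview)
Your proof is correct and follows essentially the same route as the paper: use Theorem~\ref{thm-fiitaka2} to produce the almost well-defined fibration $\phi:X\dashrightarrow Y$ with $\dim Y=2$ and $\deg g=1$, then combine Lemmas~\ref{lem-reduced} and~\ref{lem-Rf=0} to conclude $R_f=0$.

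The one place you take an unnecessary detour is in verifying hypothesis~(4) of Lemma~\ref{lem-Rf=0}. You already invoked Lemma~\ref{lem-reduced} to obtain $R_f^v=0$; but this immediately gives $R_f^h=R_f$, so $\kappa_f(X,R_f^h)=\kappa_f(X,R_f)=0$ by the standing hypothesis. There is no need for the eigendivisor argument (showing $f^*P=(\deg f)P$ for each horizontal invariant component and collapsing $V_f(R_f^h)$ to a line). That argument is correct, but redundant.
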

\begin{proof}
Let $\phi:X\dashrightarrow  Y$ be the $f$-Iitaka fibration of $E$.
By Theorem \ref{thm-fiitaka2}, $\phi$ is almost well-defined, $\dim Y=2$ and $\deg f|_Y=1$.
Suppose $f$ is $\delta$-primitive.
Then $R_f = 0$ by
Lemmas \ref{lem-reduced} and \ref{lem-Rf=0}.
Hence $f$ is \'etale by the purity of branch locus. 
\end{proof}

When $\kappa_f(X,R_f)=\dim X$, or equivalently when $R_f$ is big (after iterating $f$), we propose the following general question.
Note that if Question \ref{que-big} is answered affirmatively, we can apply the int-amplified EMMP theory when $R_f$ is big.

\begin{question}\label{que-big}
Let $f:X\to X$ be a surjective endomorphism of a normal projective variety $X$.
Suppose $R_f$ is a big Weil divisor.
Is $f$ int-amplified?
\end{question}

For the purpose of this paper, it suffices for us to treat the following special cases.

\begin{theorem}\label{thm-E-Rf-notbig}
Let $f:X\to X$ be a non-isomorphic surjective endomorphism of a $\Q$-factorial lc projective threefold $X$.
Suppose $X$ admits a divisorial contraction $\pi_0:X\to X_0$ of some $K_X$-negative extremal ray with the exceptional (prime) divisor $E$ not being $f^{-1}$-periodic.
Then $\kappa_f(X,R_f)<3$.
\end{theorem}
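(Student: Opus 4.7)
The plan is to argue by contradiction. Suppose $\kappa_f(X,R_f)=3$. Since $R_{f^s}=\sum_{i=0}^{s-1}(f^i)^*R_f$, after replacing $f$ by a sufficiently high iterate I may assume $R_f$ itself is big and that every $f^{-1}$-periodic prime divisor is $f^{-1}$-invariant. The exceptional prime $E$ of the divisorial contraction $\pi_0$ is rigid, so $\kappa(X,E)=0$; by Lemma \ref{lem-fprime} $E$ is then $f$-prime, and by hypothesis its $f$-translates $E_i=f^i(E)$ for $i\in\mathbb{Z}$ are pairwise distinct prime divisors.

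Next I would feed $E$ into the machinery of Theorem \ref{thm-fiitaka2} (whose proof uses only $\Q$-factoriality, the $f$-primality of $E$, and the non-periodicity of $E$) to obtain an almost well-defined $f$-Iitaka fibration $\phi:=\phi_{f,E}:X\dashrightarrow Y$ with $\dim Y=2$ and $f|_Y$ an automorphism; Theorem \ref{thm-fibration}(4) moreover says that $E$ does not dominate $Y$. Because $\deg(f|_Y)=1$, Lemma \ref{lem-reduced} applied after resolving the indeterminacy of $\phi$ forces $R_f^v=0$, so every prime component of $R_f$ is $\phi$-horizontal. Bigness of $R_f$ then gives $R_f\cdot X_y>0$ on a general fiber $X_y$ of $\phi$, and the product formula combined with $\delta_{f|_Y}=1$ yields $\deg(f|_{X_y})=\delta_f>1$. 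Since the general fibers form a family of constant geometric genus (as $f|_Y$ is an automorphism), Riemann--Hurwitz applied to $f|_{X_y}:X_y\to X_{g(y)}$ forces $X_y\cong\mathbb{P}^1$ and $f|_{X_y}$ to be totally ramified over exactly two points. Consequently $R_f=(\delta_f-1)(P_1+P_2)$, or else $(\delta_f-1)P$ in a single-component case, with each $P_j$ a birational section of $\phi$ satisfying $f^*B_j=\delta_f P_j$ where $B_j=f(P_j)$.

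The main obstacle is to extract a contradiction from this highly constrained configuration. In the two-component case, after further iteration $P_1, P_2$ become $f^{-1}$-invariant eigenvectors of $f^*|_{\N^1(X)}$ with eigenvalue $\delta_f$; the Hodge-index Lemma \ref{lem-hodge} together with the Perron--Frobenius uniqueness of the top-eigenvector direction in the pseudoeffective cone would force $P_1\equiv cP_2$, which makes $f$ polarized by $P_1$ and therefore int-amplified by \cite[Theorem 1.1]{Men20}. But then every $f$-prime divisor of Iitaka dimension zero---including $E$---must be $f^{-1}$-periodic by \cite[Lemma 8.1]{Men20}, contradicting the hypothesis on $E$. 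In the single-component case I would apply the double-cover construction of Lemma \ref{lem-kappa0} to the totally ramified $P$: a degree-$2$ equivariant base change $\pi:\widetilde{X}\to X$ splits $\pi^*P=P_1+P_2$ and fits the hypotheses of that lemma, producing either $\delta_{\widetilde{f}|_{P_1}}=\delta_{\widetilde{f}}$ (which via Lemma \ref{lem-imprimitive} manufactures an $\widetilde{f}$-equivariant fibration incompatible with the $2$-dimensional $f$-Iitaka fibration of $E$ lifted to $\widetilde{X}$) or $\kappa(X,P)>0$ together with a new $f$-equivariant fibration whose interaction with $\phi$ again forces $E$ to be $f^{-1}$-periodic, closing the contradiction.
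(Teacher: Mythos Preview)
Your argument breaks down at the Riemann--Hurwitz step. From $R_f^v=0$ and $R_f$ big you correctly deduce that the general fibre $X_y$ is $\mathbb{P}^1$, but Riemann--Hurwitz for a degree-$d$ self-map of $\mathbb{P}^1$ only gives $\deg R_{f|_{X_y}}=2(d-1)$; it does \emph{not} force $f|_{X_y}$ to be totally ramified over exactly two points. A generic degree-$d$ rational function has $2(d-1)$ simple critical points, so $R_f$ could have many horizontal components and there is no reason for them to be birational sections. The subsequent dichotomy (two components versus one) therefore has no foundation. Even if one granted a two-component decomposition, your claim that ``Perron--Frobenius uniqueness \dots would force $P_1\equiv cP_2$, which makes $f$ polarized by $P_1$'' is unjustified: you have not shown that $P_1,P_2$ are $f^{-1}$-invariant (this would follow from $\kappa_f(X,R_f)=0$ via Lemma~\ref{lem-fkappa>0}, but you are assuming the \emph{opposite}, $\kappa_f(X,R_f)=3$), and even if they were nef $\delta_f$-eigenvectors, numerical proportionality would not make $P_1$ ample.

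The paper's proof is entirely different and does not try to analyse the structure of $R_f$. Instead it exploits the geometry of the contracted curves $\ell_i=f^i(\ell_0)$ inside the translated exceptional divisors $E_i$. Bigness of $R_f$ forces $R_f\cdot\ell_i>0$ for $i\gg0$, and summing over iterates gives $R_{f^n}\cdot\ell_s\to\infty$; combined with the ramification formula this makes $K_X\cdot\ell_{i}<0$ for $i\gg0$, so each $E_i$ is the exceptional divisor of its own divisorial contraction $\pi_i:X\to X_i$ onto a curve $B_i$. The rigidity lemma produces finite maps $f_i:X_i\to X_{i+1}$, and comparing $\deg f|_{E_i}$ with $\deg(f_i|_{B_i}\times g|_{C_i})$ via the generically finite map $E_i\dashrightarrow B_i\times C_i$ eventually yields $\deg f_0|_{B_0}=\deg f$. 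This last equality means that a general fibre $\pi_1^{-1}(v)$ lies entirely outside $B_f$, so $B_f\cdot\ell_1=0$, contradicting the bigness of $B_f|_{E_1}$. The key input you are missing is this interplay between the family of divisorial contractions of the $E_i$ and the $f$-Iitaka fibration, rather than any structural result about the components of $R_f$.
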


\begin{proof}
Suppose the contrary that $\kappa_f(X,R_f)=3$.
Then $R_f$ and hence $B_f=f(\Supp R_f)$ are big after iteration.
Let $\phi:X\dashrightarrow Y$ be the $f$-Iitaka fibraiton of $E$ and $g:=f|_Y$.
Note that $\kappa(X,E)=0$.
By Theorem \ref{thm-fiitaka2}, $\dim Y=2$, $\deg g=1$, and $\phi$ is almost well-defined.
Let $E_i:=f^i(E)$ and $C_i$ the image of $E_i$ in $Y$ with $i\ge 0$.
Note that $C_i$ is an irreducible curve by Theorem \ref{thm-fibration}, and $\phi|_{E_i}:E_i\dashrightarrow C_i$ is almost well-defined when $i\gg 1$.

Let $\ell_0$ be an irreducible curve contracted by $\pi_0$.
Let $\ell_i:=f^i(\ell_0)$ with $i\ge 0$.
Then $E_i$ is covered by the curves in the extremal ray $\mathbb{R}_{\ge 0} [\ell_i]$.
Since $R_f|_{E_i}$ is big when $i\gg 1$,
there exists some $s>0$ such that $R_f\cdot \ell_i>0$ when $i\ge s$.
Let $r$ be the numerical Cartier index of $X$.
Then $r(f^j)^*R_f\cdot \ell_s=rR_f\cdot (f^j)_*\ell_s$
is a positive integer for any $j>0$.
Therefore,
$$\lim\limits_{n\to +\infty} R_{f^n}\cdot \ell_s=\sum_{i=0}^{+\infty}(f^j)^*R_f\cdot \ell_s=+\infty.$$
Take $n\gg 1$.
By the ramification divisor formula,  we may assume
$$(f^n)^*K_X\cdot \ell_s=K_X\cdot \ell_s-R_{f^n}\cdot \ell_s<0.$$ 
By the projection formula, we have
$K_X\cdot \ell_{s+n}<0$.
By the cone theorem (cf.~\cite[Theorem 3.7]{KM98}), there exists a divisorial contraction 
$\pi_i:X\to X_i$ of $\ell_i$ with $E_i$ the excetpional divisor when $i\gg 1$.
We may replace $E$ by $E_{n}$ with $n\gg 1$ and assume: 
{\it $K_X\cdot \ell_i<0$,
$f^*E_{i+1} = E_i$ (and hence $\deg f|_{E_i} = \deg f$),
$B_f|_{E_i}$ is a big divisor,
$E_i\not\subseteq B_f\cup \Sing(X)$, and 
$\phi|_{E_i}:E_i\dashrightarrow C_i$ is almost well-defined
for any $i\ge 0$.}

Let $B_i:=\pi_i(E_i)$. If $B_i$ is a point, then $E_i^3 < 0$ by \cite[Lemma 2.62]{KM98}, contradicting
Lemma \ref{lem-ijk}. So $B_i$ is a curve.
By the rigidity lemma (cf.~\cite[Lemma 1.15]{Deb01}), we have the following commutative diagram
$$\xymatrix{
X\ar[r]^{\pi_i}\ar[d]_f &X_i\ar[d]^{f_i}\\
X\ar[r]^{\pi_{i+1}} &X_{i+1}
}$$
where $f_i$ is finite surjective with $f_i^{-1}(B_{i+1})=B_i$. 
Note that $-E_i$ is $\pi_i$-ample.
So $E_i\cdot \ell_i<0$.
On the other hand, $E_i$ is disjoint with the general fibre of $\phi$ and $\phi|_{E_i}:E_i\dashrightarrow C_i$ is almost well-defined when $i\gg 1$.
Then the induced map $\sigma_i:E_i\dashrightarrow B_i\times C_i$ is generically finite.
Consider the following commutative diagram
$$\xymatrix{
E_i\ar@{-->}[rr]^{\sigma_i}\ar[d]_{f|_{E_i}} && B_i\times C_i\ar[d]^{f_i|_{B_i}\times g|_{C_i}}\\
E_{i+1}\ar@{-->}[rr]_{\sigma_{i+1}} && B_{i+1}\times C_{i+1}
}$$
where $\deg f|_{E_i}=\deg f$ and $\deg (f_i|_{B_i}\times g|_{C_i})=\deg f_i|_{B_i}\le \deg f$.
So $\deg \sigma_i\ge \deg \sigma_{i+1}$.
We may further assume $\deg \sigma_i=\sigma_{i+1}$ for any $i\ge 0$ after replacing $E$ by $E_n$ with $n\gg 1$.
Now $\deg g|_{C_0}=1$ implies
$\deg f_0|_{B_0}=\deg f|_{E_0}=\deg f$ by the above diagram.

Let $v\in B_1$ be a general point such that $\sharp f_0^{-1}(v)=\deg f$. This is possible because the base field has characteristic $0$.
Note that $$f^{-1}(\pi_1^{-1}(v))=\bigcup_{u\in f_0^{-1}(v)}\pi_0^{-1}(u)$$
is a disjoint union.
Then $\sharp f^{-1}(x)=\deg f$ for any $x\in \pi_1^{-1}(v)$.
So $\pi_1^{-1}(v)\cap B_f=\emptyset$ and hence $B_f\cdot \ell_1=0$, contradicting $B_f|_{E_1}$ being a big divisor.
\end{proof}

\begin{theorem}\label{thm-fano-iamp}
Consider the following equivariant dynamical systems of $\Q$-factorial lc projective varieties
$$\xymatrix{
f \acts X\ar[r]^{\tau} &Y\racts g
}$$
where $\dim Y=1$, and $\tau$ is a Fano contraction of some $K_X$-negative extremal ray. 
Suppose $R_f$ is big.
Then $f$ is int-amplified and $\deg g>1$.
\end{theorem}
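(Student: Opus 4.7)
Since $\tau$ contracts a $K_X$-negative extremal ray and $\dim Y=1$, we have $\rho(X/Y)=\rho(Y)=1$, so $\rho(X)=2$. In a basis $\{\tau^*H_Y, L\}$ of $\N^1(X)$ with $L$ a $\tau$-ample Cartier divisor and $H_Y$ an ample Cartier divisor on $Y$, the matrix of $f^*$ is upper-triangular with diagonal entries $(\deg g,\lambda)$, where $\lambda>0$ satisfies $\lambda^{n-1}=\deg f/\deg g=\deg f|_F$ for a general fibre $F$ (by the projection formula applied to $(f^*L)^{n-1}\cdot f^*\tau^*H_Y$, using $(\tau^*H_Y)^2=0$). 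Since $f$ is int-amplified iff both eigenvalues of $f^*|_{\N^1(X)}$ exceed $1$ in modulus (see \cite{Men20}), the plan is to prove $\lambda>1$ and $\deg g>1$ separately. For $\lambda>1$: $R_f$ big implies $R_f|_F$ big on $F$, which is Fano since $-K_X$ is $\tau$-ample; by adjunction $K_X|_F=K_F$, so $R_f|_F=R_{f|_F}$ is big, giving $f|_F$ ramified, $\deg f|_F>1$, hence $\lambda>1$.

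For $\deg g>1$, I argue by contradiction assuming $\deg g=1$. Lemma \ref{lem-reduced} then forces $R_f^v=0$, so every prime component of $R_f$ is $\tau$-horizontal. After iterating $f$, each such prime $P_i$ is $f^{-1}$-invariant, and by Proposition \ref{prop-f*f*}, $f^*P_i\sim_\Q \mu_iP_i$ for some $\mu_i>0$. Writing $P_i\equiv\alpha_i\tau^*H_Y+\delta_iL$ with $\delta_i>0$ (from horizontality) and matching $f^*P_i\equiv\mu_iP_i$ against the matrix of $f^*$ (with off-diagonal entry $\beta$) forces $\mu_i=\lambda$ and $\alpha_i/\delta_i=\beta/(\lambda-1)$ for all $i$. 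Thus all $P_i$ are numerically proportional, and $R_f\equiv cP_0$ for a single prime divisor $P_0$ and some $c>0$. Since $R_f-cP_0\equiv 0$ is a $\lambda$-eigenvector of $f^*$ with $\lambda>1$, the $\mathrm{Pic}^0$-rigidity of \cite[Proposition 6.3]{MMS+22} upgrades this to $R_f\sim_\Q cP_0$; hence $\kappa(X,P_0)=\kappa(X,R_f)=n$ and $P_0$ is big.

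Applying Theorem \ref{thm-fiitaka-polarized} to $P_0$ with $q=\lambda$ yields the birational $f$-Iitaka fibration $\phi\colon X\dashrightarrow Y'$ (birational since $P_0$ is big), together with the $\lambda$-polarized endomorphism $g':=f|_{Y'}$: $(g')^*H\equiv\lambda H$ for some ample $H$ on $Y'$, and $\deg g'=\lambda^n$. Consider the dominant rational $g'$-equivariant map $\sigma:=\tau\circ\phi^{-1}\colon Y'\dashrightarrow Y$; via a common resolution, $\sigma^*H_Y\in\N^1(Y')$ is a nonzero effective class satisfying $(g')^*\sigma^*H_Y\equiv\sigma^*g^*H_Y=\sigma^*H_Y$ (using $\deg g=1$), so $\sigma^*H_Y$ is a $1$-eigenvector of $(g')^*$. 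On the other hand, $(g')^*H^{n-1}\equiv\lambda^{n-1}H^{n-1}$ combined with the projection formula yields $(g')^*D\cdot H^{n-1}=\lambda(D\cdot H^{n-1})$ for every $D\in\N^1(Y')$. Setting $D=\sigma^*H_Y$ and using the $1$-eigenvector property gives $(\lambda-1)(\sigma^*H_Y\cdot H^{n-1})=0$; since $\lambda>1$, $\sigma^*H_Y\cdot H^{n-1}=0$. But $\sigma^*H_Y$ is a positive combination of general $(n-1)$-dimensional fibres of $\sigma$, which intersect $H^{n-1}$ positively by ampleness of $H$, a contradiction. Hence $\deg g>1$ as well, so $f$ is int-amplified. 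The main obstacle is the chain in the second paragraph: the eigenvalue matching forcing numerical proportionality of the horizontal components, the $\mathrm{Pic}^0$-rigidity upgrade to $\Q$-linear equivalence, and extracting the final projection-formula contradiction from the polarization structure of $g'$.
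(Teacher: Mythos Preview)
Your argument has a genuine gap at the sentence ``After iterating $f$, each such prime $P_i$ is $f^{-1}$-invariant.'' There is no reason why the prime components of $R_f$ should be $f^{-1}$-periodic: for a generic non-isomorphic endomorphism the critical locus is not periodic at all, and nothing in the hypotheses (bigness of $R_f$, $\deg g=1$, horizontality from Lemma~\ref{lem-reduced}) forces this. Proposition~\ref{prop-f*f*} only gives $f^*P\sim_{\Q} aP$ once you already know $P$ is $f$-invariant; it does not produce periodicity. Without this step your eigenvalue-matching argument collapses, and everything downstream (the $\Pic^0$-rigidity upgrade, Theorem~\ref{thm-fiitaka-polarized}, the final contradiction) is built on it.

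The paper's proof avoids the components of $R_f$ entirely and is much shorter. Since $\rho(X)=2$, the cone $\PE^1(X)$ has two extremal rays, spanned by the fibre class $F$ and some other pseudo-effective class $D$; after iteration $f^*F\equiv aF$ and $f^*D\equiv bD$ with $a,b$ positive integers (and $a=\deg g$). Writing $K_X\equiv sF+tD$ gives
\[
R_f=K_X-f^*K_X\equiv s(1-a)F+t(1-b)D.
\]
If $a=1$ or $b=1$ then $R_f$ lies on a single extremal ray of $\PE^1(X)$ and cannot be big; hence $a,b\ge 2$, so $f$ is int-amplified and $\deg g=a>1$. Note that even if your periodicity claim were granted, your own computation already places every $P_i$ on the $\lambda$-eigenray, which \emph{is} the extremal ray $\mathbb{R}_{>0}[D]$ of $\PE^1(X)$; so $R_f$ would lie on that ray and fail to be big immediately --- the entire Iitaka-fibration detour in your last paragraph is unnecessary.
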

\begin{proof}
Let $F$ be a general fibre of $\tau$ and $D$ another pseudo-effective divisor extremal in $\PE^1(X)$.
After iteration, we may write $f^*F\equiv aF$ and $f^*D\equiv bD$ where $a$ and $b$ are positive integers.
Write $K_X\equiv sF+tD$.
Then the bigness of 
$$R_f=K_X-f^*K_X\equiv s(1-a)F+t(1-b)D$$ 
implies
$a \ne 1 \ne b$, i.e., $a>1$ and $b>1$.
By \cite[Theorem 1.1]{Men20}, $f$ is int-amplified.
\end{proof}

\begin{theorem}\label{thm-rf-big}
Consider the equivariant dynamical systems of $\Q$-factorial lc projective varieties
$$\xymatrix{
f \acts X\ar[r]^{\tau} &Y\racts g
}$$
satisfying the following 
\begin{enumerate}
\item $\dim X=n$ and $\dim Y=m>0$,
\item $\tau$ is a Fano contraction of some $K_X$-negative extremal ray,
\item $\tau$ is the only $K_X$-negative extremal contraction, and
\item $\delta_f>\delta_g$.
\end{enumerate}
Then $R_f$ is not big.
\end{theorem}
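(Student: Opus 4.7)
The plan is to argue by contradiction: assume $R_f$ is big, and derive a contradiction from the uniqueness of the $K_X$-negative extremal contraction together with the spectral gap $\delta_f>\delta_g$.

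First I would show that the unique $K_X$-negative extremal ray $R_1=\mathbb{R}_{\ge 0}[\ell]$ is $f_*$-invariant. Writing $R_f\equiv A+E$ with $A$ ample and $E$ effective, a general fibre curve $\ell$ of $\tau$ avoids $\Supp(E)$, so $R_f\cdot\ell>0$, and $R_f=K_X-f^*K_X$ gives $K_X\cdot f_*\ell=K_X\cdot\ell-R_f\cdot\ell<K_X\cdot\ell<0$; by uniqueness $f_*\ell\equiv c\ell$ for some $c>0$. Since $\tau$ is an extremal Fano contraction, $\rho(X/Y)=1$, and $\N^1(X)=\tau^*\N^1(Y)\oplus\mathbb{R}[H]$ is an $f^*$-invariant decomposition on which $f^*$ acts as the scalar $c$ on the quotient; the gap $\delta_g<\delta_f$ forces $c=\delta_f$. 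Perron--Frobenius on the nef cone then yields a nef class $H'$ with $f^*H'=\delta_f H'$ and $H'\cdot\ell>0$.

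Next I would establish that $H'$ is not big. Comparing $(f^*H')^n=\delta_f^n H'^n$ with the projection formula $(f^*H')^n=(\deg f)H'^n$ gives $(\delta_f^n-\deg f)H'^n=0$; if $H'^n>0$, then $H'$ is nef and big, so Theorem~\ref{thm-fiitaka-polarized} applied to $D=H'$ forces $f$ itself to be $\delta_f$-polarized, making every eigenvalue of $f^*|_{\N^1(X)}$ have modulus $\delta_f$ and contradicting $\delta_g<\delta_f$ with $\dim Y>0$. Hence $H'^n=0$. Decomposing $K_X\equiv\tau^*B+kH'$ with $k<0$ (from $K_X\cdot\ell<0$), uniqueness of the $K_X$-negative extremal contraction forces $B\in\Nef(Y)$: for any curve $C\not\in R_1$ the inequality $0\le K_X\cdot C=B\cdot\tau_*C+kH'\cdot C$ together with $kH'\cdot C\le 0$ gives $B\cdot\tau_*C\ge 0$, and horizontal curves generate $\NE(Y)$. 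Then $R_f\equiv\tau^*V+aH'$ with $V=B-g^*B$ and $a=-k(\delta_f-1)>0$, and iteration gives $R_{f^s}\equiv\tau^*V_s+a_s H'$ with $a_s\sim\delta_f^s$ and $\|V_s\|=O(\delta_g^s)$.

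The final step, extracting a contradiction, is the main obstacle. A volume comparison via $R_{f^s}/a_s\to H'$ and continuity of $\vol$ gives $\vol(R_{f^s})=o(\delta_f^{sn})$, which combined with Brunn--Minkowski on $R_{f^s}=\sum_{i=0}^{s-1}(f^i)^*R_f$ and $\vol((f^i)^*R_f)=(\deg f)^i\vol(R_f)$ yields only $\deg f<\delta_f^n$; via the Dinh--Nguyen product formula $\deg f=\delta_f^{n-m}\deg g$, this reduces to $\deg g<\delta_f^m$, which is automatic under $\delta_g<\delta_f$ and hence insufficient on its own. To upgrade this to a genuine contradiction, I plan to combine the above with the $f$-Iitaka fibration $\phi_{f,H'}\colon X\dashrightarrow Z$ from Theorem~\ref{thm-fiitaka-polarized}: since $H'\cdot\ell>0$, $\phi_{f,H'}$ does not contract $\tau$-fibres, so $\tau$ and $\phi_{f,H'}$ are essentially different $f$-equivariant fibrations with $f|_Z$ being $\delta_f$-polarized. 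Analysing the joint map $\sigma\colon X\dashrightarrow Y\times Z$ via Lemma~\ref{lem-imprimitive} together with the intersection identity $\tau_*H'^{n-m}=(H'|_F)^{n-m}[Y]>0$ (from $H'|_F$ ample on the general fibre $F$) should then produce either an extra $K_X$-negative extremal ray of $X$, contradicting uniqueness, or a forcing of $\delta_f\le\delta_g$, contradicting the spectral gap.
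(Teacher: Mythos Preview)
Your setup is solid and matches the paper almost exactly: the eigenvector $H'$, its $\tau$-ampleness, the failure of bigness via the polarized criterion, and the nefness of $K_X+aH'$ (your $\tau^*B$) from the uniqueness hypothesis are all correct and are precisely what the paper uses. The gap is entirely in the final step.

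Your volume argument is, as you recognise, too weak. Your proposed fix via the $f$-Iitaka fibration $\phi_{f,H'}$ does not work, for two reasons. First, Theorem~\ref{thm-fiitaka-polarized} requires $\kappa(X,H')>0$, and you have no control over this: $H'$ is produced by Perron--Frobenius as a nef Cartier class with $H'^n=0$, but nothing forces it to be $\Q$-effective, let alone to have positive Iitaka dimension. Second, even granting a nontrivial fibration to $Z$ with $f|_Z$ being $\delta_f$-polarized, Lemma~\ref{lem-imprimitive} only yields $\delta_f=\max\{\delta_g,\delta_{f|_Z}\}=\delta_f$, which is vacuous; the vague ``extra extremal ray'' claim is not substantiated.

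The paper's finishing move is a direct intersection-theoretic argument that uses everything you have already assembled. Let $t$ be the numerical dimension of $H'$, i.e.\ the integer with $H'^t\not\equiv_w 0$ but $H'^{t+1}\equiv_w 0$ (so $n-m\le t<n$). Since $K_X+aH'$ is nef, so is $(f^s)^*(K_X+aH')$, hence
\[
0\ \le\ (f^s)^*(K_X+aH')\cdot (H')^t\cdot H^{n-t-1}
\ =\ (K_X-R_{f^s}+a\delta_f^s H')\cdot (H')^t\cdot H^{n-t-1}
\ =\ (K_X-R_{f^s})\cdot (H')^t\cdot H^{n-t-1},
\]
the last equality because $(H')^{t+1}\equiv_w 0$ kills the $a\delta_f^s H'$ term. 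On the other hand, writing $R_f=(1/m)H+E$ with $E$ effective and noting that $H'^t\cdot H^{n-t-1}$ is a nonzero pseudo-effective $1$-cycle with integer coefficients, each $(f^i)^*H\cdot H'^t\cdot H^{n-t-1}$ is a positive integer, so
\[
R_{f^s}\cdot H'^t\cdot H^{n-t-1}\ \ge\ \frac{1}{m}\sum_{i=0}^{s-1}(f^i)^*H\cdot H'^t\cdot H^{n-t-1}\ \ge\ \frac{s}{m}\ \longrightarrow\ +\infty,
\]
contradicting the previous inequality for $s\gg 0$. The point you missed is that intersecting against $(H')^t$ rather than taking volumes annihilates the dominant $\delta_f^s$-term exactly, converting the problem into a bounded-vs-unbounded comparison on a \emph{fixed} curve class.
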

\begin{proof}
By Perron-Frobenius theorem, $f^*D \equiv \delta_f D$ for some nonzero nef $\R$-Cartier divisor $D$.
If $D$ is $\tau$-trivial, then the cone theorem \cite[Theorem 3.7]{KM98} implies that $D$ is pulled back from $Y$,
contradicting that $\delta_g < \delta_f$. Thus $D$ is $\tau$-ample. Hence $D^{n-m}\not\equiv_w 0$. 
Since $\N^1(X)/\pi^*\N^1(Y)$ is 1-dimensional and $f^*|_{\N^1(X)/\pi^*\N^1(Y)} = q \id$ for some integer $q>0$,
$ \delta_f=q$.
We may then choose $D$ to be Cartier, see also the second paragraph of the proof of \cite[ Proposition 9.2]{MZ22}.
Since $f$ is not polarized (or else $\delta_f = \delta_g$, absurd), $D$ is not big (cf.~\cite[Proposition 1.1]{MZ18}).
So $D^n=0$.
Let $t$ be the integer such that $D^t\not\equiv_w 0$ and $D^{t+1}\equiv_w 0$.
Fix an ample Cartier divisor $H$ of $X$.

Let $a$ be a positive rational number such that $K_X+aD$ is $\tau$-trivial.
Since $D$ is nef, any $(K_X+aD)$-negative extremal ray is also $K_X$-negative.
Then $K_X+aD$ is nef by the assumption (3).
Note that 
$$0\le (f^s)^*(K_X+aD)\cdot D^t\cdot H^{n-t-1}=(K_X-R_{f^s})\cdot D^t\cdot H^{n-t-1}$$
for any $s>0$.
Suppose the contrary that $R_f$ is big.
Then $R_f=(1/m)H+E$ for some effective $\Q$-Cartier divisor $E$ and $m>0$.
Note that
$$R_{f^s}\cdot D^t\cdot H^{n-t-1}\ge (1/m)\sum_{i=0}^{s-1} (f^i)^*H\cdot D^t\cdot H^{n-t-1}\ge s/m.$$
Then $(K_X-R_{f^s})\cdot D^t\cdot H^{n-t-1}<0$ when $s\gg 1$, contradicting the early inequality.
\end{proof}

\begin{corollary}\label{cor-fano-n-1}
Consider the equivariant dynamical systems of normal projective varieties
$$\xymatrix{
\save[]+<2pc,0pc>*{f \acts} \restore &X \ar@{-->}[r]^{\sigma}  & X' \ar[d]^{\tau}  &\save[]+<-1.93pc,0pc>*{\racts f'} \restore\\
&& Y&\save[]+<-2pc,0pc>*{\racts g} \restore
}$$
satisfying the following 
\begin{enumerate}
\item $X$ is smooth of dimension $n$, $\sigma$ is birational and $\sigma^{-1}$ contracts no divisors,
\item $X'$ is $\Q$-factorial lc and $\dim Y=n-1>0$,
\item $\tau$ is a Fano contraction, and
\item $\tau$ is the only $K_X$-negative extremal contraction.
\end{enumerate}
Then $f$ is $\delta$-imprimitive.
\end{corollary}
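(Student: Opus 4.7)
The plan is to argue by contradiction: assume $f$ is $\delta$-primitive and derive an absurdity. Since $\phi := \tau \circ \sigma : X \dashrightarrow Y$ is an $f$-equivariant dominant rational map (after iterating $f$ so that the equivariance of $\sigma$ holds on the nose) with $0 < \dim Y < \dim X$, $\delta$-primitivity of $f$ forces $\delta_g < \delta_f$, where $g = f|_Y$. Since $\sigma$ is birational, the product formula in Definition \ref{def-d1} gives $\delta_{f'} = \delta_f$, and hence $\delta_{f'} > \delta_g$ as well.

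Next I would apply Theorem \ref{thm-rf-big} to the data $(X', f', \tau, g)$: its hypotheses (1)--(3) are supplied by our (2)--(4) (reading condition (4) as the natural statement that $\tau$ is the only $K_{X'}$-negative extremal contraction on $X'$), and hypothesis (4) of that theorem is the degree inequality just established. The conclusion is that $R_{f'}$ is not big on $X'$. Running the same argument with each iterate $f^s$ in place of $f$ gives that every $R_{(f')^s}$ is non-big, and therefore $\kappa_{f'}(X', R_{f'}) < n$. To transfer this to $X$, I would pass to a common resolution $W$ of $\sigma$ with projections $p_X : W \to X$, $p_{X'} : W \to X'$, and an equivariant lift $h$, and compare the three ramification formulas on $X$, $X'$, and $W$. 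Because $X$ is smooth and $\sigma^{-1}$ contracts no divisors, this yields an identity of the form $R_f = \sigma^* R_{f'} + \Delta$ with $\Delta$ supported on the $\sigma$-exceptional prime divisors $E_1, \ldots, E_k$ of $X$. Since effective $\mathbb{Q}$-combinations supported on $\sigma$-exceptional primes are never big (Negativity Lemma on $W$, using the $\mathbb{Q}$-factoriality of $X'$), and $\sigma^* R_{f'}$ inherits non-bigness, we conclude that $R_f$ is not big, and by the same argument for iterates $\kappa_f(X, R_f) < n = 3$.

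Finally, I would invoke Corollary \ref{cor-kappa-rf12} (when $0 < \kappa_f(X, R_f) < 3$) or Corollary \ref{cor-kappa-rf0} (when $\kappa_f(X, R_f) = 0$) to conclude that $f$ is $\delta$-imprimitive, contradicting the assumption. The ``$f$ \'etale'' alternative provided by Corollary \ref{cor-kappa-rf0} is excluded because $\delta_f > \delta_g$ forces $f$ to induce a degree $>1$ map on the general fiber of the Fano contraction $\tau$, which is $\mathbb{P}^1$, and this map is necessarily ramified so $R_f \neq 0$.

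The main obstacle will be producing the prime divisor $E$ with $\kappa(X, E) = 0$ and not $f^{-1}$-periodic that Corollaries \ref{cor-kappa-rf12} and \ref{cor-kappa-rf0} require. The natural candidates are the $\sigma$-exceptional primes $E_i$, which satisfy $\kappa(X, E_i) = 0$ by the Negativity Lemma; however, after the iteration of $f$ needed to make $\sigma$ equivariant, they are permuted by $f$ and hence all become $f^{-1}$-periodic. I expect to circumvent this by either (i) working with an intermediate model in a factorization $\sigma = \sigma_{k-1} \circ \cdots \circ \sigma_0$ of $\sigma$ into MMP steps, identifying some divisorial contraction $\sigma_i$ whose exceptional divisor fails to be periodic for $f_i$ (and there applying Theorem \ref{thm-E-Rf-notbig} directly), or (ii) in the case $\sigma$ is an isomorphism, using the $\mathbb{P}^1$-bundle structure of $\tau$ to produce $E$ from an $f$-non-periodic irreducible component of $\Supp R_f$, whose existence is guaranteed by $\delta_f > \delta_g$ together with Lemma \ref{lem-reduced}.
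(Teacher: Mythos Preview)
Your overall strategy---assume $f$ is $\delta$-primitive, deduce $\delta_{f'}>\delta_g$, and then appeal to Theorem~\ref{thm-rf-big}---matches the paper's opening moves. However, your Step~4 has a genuine gap that you yourself flag and do not close. Corollaries~\ref{cor-kappa-rf12} and~\ref{cor-kappa-rf0} both carry the hypothesis ``there exists a prime divisor $E$ with $\kappa(X,E)=0$ that is not $f^{-1}$-periodic,'' and in the setting of this corollary there is no such $E$ available. Your workaround (i) fails because the diagram is equivariant by assumption, so every exceptional prime of any factorization of $\sigma$ is $f^{-1}$-periodic after iteration---indeed, in the paper's intended application (the proof of Theorem~\ref{thm-kappa<0}) one reaches this corollary precisely \emph{after} running an equivariant MMP. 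Your workaround (ii) also fails: Lemma~\ref{lem-reduced} assumes $\deg g=1$, which is not known here, and in any case if $\kappa_f(X,R_f)=0$ then by Lemma~\ref{lem-fkappa>0} every component of $\Supp R_f$ is $f^{-1}$-periodic, so no non-periodic component can exist. A secondary issue is that Corollaries~\ref{cor-kappa-rf12} and~\ref{cor-kappa-rf0} are stated only for threefolds, whereas the present corollary is for arbitrary $n$.

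The paper avoids this problem entirely by working with the $\tau$-horizontal part $R_{f'}^h$ rather than the full $R_{f'}$. If $\kappa_{f'}(X',R_{f'}^h)=n$ then $R_{f'}$ is big, contradicting Theorem~\ref{thm-rf-big}. If $0<\kappa_{f'}(X',R_{f'}^h)<n$, the $f'$-Iitaka fibration $\phi:X'\dashrightarrow Z$ of $R_{f'}^h$ cannot factor through $\tau$ (horizontal components dominate $Y$), so $X'\dashrightarrow Y\times Z$ is generically finite since $\dim X'=\dim Y+1$, and Lemma~\ref{lem-imprimitive} gives $\delta$-imprimitivity directly---no $E$ needed. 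If $\kappa_{f'}(X',R_{f'}^h)=0$, one transfers to $X$ via $R_{f'}=\sigma_*R_f$ and Lemma~\ref{lem-pushforward-fiitka}, obtaining $\kappa_f(X,R_f^h)=0$, and then applies Lemma~\ref{lem-Rf=0} \emph{directly} (its hypotheses involve $R_f^h$, not an auxiliary $E$): the general fibre of $\tau\circ\sigma$ would be an elliptic curve, contradicting that $\tau$ is a Fano contraction. The key lesson is to separate off the horizontal part and, in the intermediate Iitaka-dimension case, use the $Y\times Z$ product trick rather than the machinery that presupposes a non-periodic exceptional divisor.
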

\begin{proof}
Suppose the contrary that $f$ is $\delta$-primitive.
Then $f'$ is $\delta$-primitive too, and $\delta_{f'}>\delta_g$.
Let  $R_{f'}^h$ be the $\tau$-horizontal part of $R_f$.
If $\kappa_{f'}(X, R_{f'}^h)=n$,
then $R_{f'}^h$ and hence $R_{f'}$ are big after iteration, a contradiction to Theorem \ref{thm-rf-big}.
Let $\phi:X'\dashrightarrow Z$ be the $f'$-Iitaka fibration of $R_{f'}^h$.
If $\dim Z=\kappa_{f'}(X, R_{f'}^h)>0$, then $\phi$ does not factor through $\tau$ and hence
the induced map $X'\dashrightarrow Y\times Z$ is generically finite by noting that $\dim X'=\dim Y+1$.
By Lemma \ref{lem-imprimitive}, $f'$ is $\delta$-imprimitive, a contradiction.
So $\kappa_{f'}(X, R_{f'}^h)=0$.
Let $R_f^h$ be the $\tau\circ\sigma$-horizontal part of $R_f$.
Since $\sigma^{-1}$ contracts no divisors, we have $R_{f'}=\sigma_*R_f$.
So $\kappa_f(X,R_f^h)=0$ by Lemma \ref{lem-pushforward-fiitka}.
Since $\dim X=\dim Y+1$ and  $\sigma^{-1}$ contracts no divisors, $\tau\circ\sigma$ is almost well-defined.
By Lemma \ref{lem-Rf=0}, the general fibre of $\tau\circ\sigma$ (isomorphic to the general fibre of $\tau$) is an elliptic curve, contradicting that $\tau$ is a Fano contraction.
\end{proof}

\section{Equivariant flips}
In this section, we prove the equivariancy for flips in the birational MMP starting from a smooth projective threefold $X$.
Theorem \ref{thm-emmp-flip} is our main result.
As usual, our MMP of $X$ has no boundary part and involves only $K_X$-negative extremal rays.

We recall the following useful result from the proof of Claim \ref{claim-pxp}.
\begin{lemma}\label{lem-eq-cartier}
Consider the equivariant dynamical systems of normal projective varieties
$$\xymatrix{
h \acts W \ar[r]^{\sigma} &X\racts f
}$$
where $Y$ is $\Q$-factorial and $\sigma$ is birational.
Suppose each exceptional prime divisor of $\sigma$ is $h^{-1}$-invariant.
Then there is an effective Cartier $\sigma$-exceptional divisor $E=\sum\limits_q E_q$ with $\Supp E=\Exc(\sigma)$ such that $-E$ is $\sigma$-ample, $E_q$ is Cartier, and $h^*E_q=qE_q$.
Moreover, $-E_q|_P$ is $\sigma|_P$-big for any prime divisor $P\subseteq \Supp E_q$.
In particular, $h|_P$ is $q$-polarized when $\sigma(P)$ is a point.
\end{lemma}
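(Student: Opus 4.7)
The plan is to follow the template already laid out in the proof of Claim \ref{claim-pxp}, now packaged as a clean lemma. First, I would invoke \cite[Lemma 2.62]{KM98} to produce an effective $\sigma$-exceptional Cartier divisor $F$ with $\Supp F = \Exc(\sigma)$ and $-F$ being $\sigma$-ample. Using that every exceptional prime $F_{q,i}$ is $h^{-1}$-invariant and $h$ is finite surjective, each $F_{q,i}$ satisfies $h^*F_{q,i} = q F_{q,i}$ for a positive integer $q$ (the ramification index of $h$ along $F_{q,i}$). Grouping by the eigenvalue then gives $F = \sum_q F_q$ with $F_q := \sum_i a_{q,i} F_{q,i}$ and $h^*F_q = qF_q$, as well as $\Supp F = \bigcup_q \Supp F_q = \Exc(\sigma)$.

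The first nontrivial step is to upgrade each formal eigencomponent $F_q$ to a Cartier divisor. For this I would copy the closedness trick from Claim \ref{claim-pxp}: let $V \subseteq \text{Div}_{\Q}(W)$ be the finite-dimensional subspace spanned by the $F_{q,i}$ and set $U := V \cap \text{CDiv}_{\Q}(W)$, a linear subspace of $V$ (hence closed after tensoring with $\R$). Because $(h^m)^* F = \sum_q q^m F_q \in U$ for every $m \ge 0$, dividing by $s^m$ with $s$ the largest eigenvalue occurring and passing to the limit yields $F_s \in U$. Subtracting $F_s$ from $F$ and iterating the same argument, every $F_q$ lies in $U$, i.e., is $\Q$-Cartier; replacing $F$ by a suitable positive integer multiple at the outset clears all denominators and makes each $F_q$ Cartier, as claimed.

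For the positivity statements, take any prime divisor $P \subseteq \Supp F_q$. Then $-F|_P$ is $\sigma|_P$-ample since $-F$ is $\sigma$-ample, while $(F - F_q)|_P = \sum_{q' \neq q} F_{q'}|_P$ is an effective Cartier divisor on $P$ (its support avoids $P$ because the $F_{q',i'}$ with $q' \neq q$ are prime divisors distinct from $P$). Adding the two, $-F_q|_P = -F|_P + (F - F_q)|_P$ is $\sigma|_P$-ample plus effective, hence $\sigma|_P$-big. When $\sigma(P)$ is a point, $\sigma|_P$-bigness becomes honest bigness on $P$, and the Cartier identity $h^* F_q = qF_q$ restricts to $(h|_P)^*(-F_q|_P) \equiv q(-F_q|_P)$ via the standard base-change for line bundles (using $h^{-1}(P) = P$). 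By \cite[Proposition 1.1]{MZ18} the big $q$-eigenvector of $(h|_P)^*|_{\N^1(P)}$ upgrades to an ample $q$-eigenvector, and then \cite[Theorem 3.11]{MZ18} forces $h|_P$ to be $q$-polarized. The main obstacle is the second step: extracting Cartierness of a single eigencomponent from that of the sum is not formal, and it is precisely what necessitates the limit argument inside the finite-dimensional subspace $V$, since eigenspace projections for $h^*$ on $\text{Div}_{\Q}(W)$ do not a priori preserve $\Q$-Cartier classes.
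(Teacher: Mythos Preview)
Your proposal is correct and follows exactly the approach the paper intends: the paper explicitly says this lemma is recalled from the proof of Claim \ref{claim-pxp}, and your write-up reproduces that argument step by step---the Cartier anti-ample exceptional divisor from \cite[Lemma 2.62]{KM98}, the eigen-decomposition, the limit trick inside $U = V \cap \text{CDiv}_{\Q}(W)$ to extract $\Q$-Cartierness of each $F_q$, and the ample-plus-effective computation for $-F_q|_P$. The only cosmetic remark is that your phrase ``at the outset'' for the integer scaling is slightly misplaced (the correct multiple is only known after the limit argument shows each $F_q$ is $\Q$-Cartier), but the logic is fine.
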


\begin{definition}
Let $f:X\to X$ be a surjective endomorphism of a projective variety.
We say $f$ is {\it $q$-amplified} if all the eigenvalues of $f^*|_{\N^1(X)}$ have the same modulos $q>1$.
If $f$ is $q$-amplified (resp.~$q$-polarized), then $f$ is int-amplified (resp.~$q$-amplified).

Like $q$-polarized and int-amplified endomorphisms, being $q$-amplified is preserved under generically finite lifting and dominant descending.
\end{definition}

\begin{lemma}\label{lem-qamplified}
Let $f:S\to S$ be a surjective endomorphism of a projective surface $S$ with $\deg f=q^2$ for some $q>1$
Let $D\in \N^1(S)\backslash\{0\}$ such that $D$ is pseudo-effective and $f^*D\equiv qD$.
Then $f$ is $q$-amplified.
\end{lemma}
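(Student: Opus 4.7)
The plan is to reduce the statement to showing $\delta_f=q$. Since $f^*D\equiv qD$ with $D\neq 0$, $q$ is an eigenvalue of $f^*|_{\N^1(S)}$, so $\iota_f\le q\le\delta_f$; by Lemma~\ref{lem-surf-deg}, $\delta_f\iota_f=\deg f=q^2$. Thus $\delta_f=q$ forces $\iota_f=q$, which pins every eigenvalue of $f^*|_{\N^1(S)}$ to modulus $q$ (they all lie in $[\iota_f,\delta_f]$ via the basic pairing structure), i.e.\ $f$ is $q$-amplified. So I may assume $\delta_f>q$ and seek a contradiction.

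First I apply Perron-Frobenius to $f^*$ acting on the closed salient cone $\Nef(S)$ to obtain a nonzero nef class $L$ with $f^*L\equiv\delta_f L$. From $(f^*L)\cdot(f^*D)=\deg f\cdot L\cdot D=q^2\,L\cdot D$ and simultaneously $(f^*L)\cdot(f^*D)=q\delta_f\,L\cdot D$, the hypothesis $\delta_f\neq q$ forces $L\cdot D=0$. Now take the Zariski decomposition $D=P+N$ on the projective surface $S$, where $P$ is the nef part and $N=\sum n_iN_i$ is an effective $\R$-divisor whose components have negative-definite intersection matrix. Since $f^*D=qD$ and $f^*P+f^*N$ satisfies the Zariski conditions for $qD$ (it is a sum of a nef class and an effective class whose components pair to zero with the nef part via $f^*P\cdot f^*N=q^2P\cdot N=0$), uniqueness of Zariski decomposition yields $f^*P\equiv qP$ and $f^*N\equiv qN$. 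From $L\cdot D=L\cdot P+L\cdot N=0$ and the non-negativity of each summand (nef-nef and nef-effective), I conclude $L\cdot P=L\cdot N=0$.

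If $P\neq 0$, then $L$ and $P$ are nonzero nef classes with $L\cdot P=0$, so Lemma~\ref{lem-hodge} gives $L\equiv\lambda P$ for some $\lambda>0$. Applying $f^*$ yields $\delta_f L\equiv \lambda f^*P=q\lambda P=qL$, contradicting $\delta_f>q$. Thus $P=0$, i.e.\ $D=N$ is effective with negative-definite support. Letting $V=\mathrm{span}(N_1,\dots,N_k)\subseteq\N^1_\R(S)$, the fact that the prime components of $f^*N_i$ must lie among the $\{N_j\}$ (else they would contribute to $P$) shows $V$ is $f^*$-invariant, and $f^*|_V$ is a non-negative integer matrix with $D$ as a strictly positive eigenvector for $q$. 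By Perron-Frobenius $q$ is the spectral radius on $V$, and since $(f^*/q)|_V$ is an isometry of the negative-definite form on $V$, every eigenvalue of $f^*|_V$ has modulus $q$. The orthogonal complement $V^\perp$ is $f^*$-invariant (using $f_*=q^2(f^*)^{-1}$), contains $L$ (since $L\cdot N_i=0$ for each $i$), and carries the induced form of signature $(1,\dim V^\perp-1)$.

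If $L^2>0$, then $L$ is nef-big, so by the Hodge index theorem the $f^*$-invariant hyperplane $L^\perp\subseteq\N^1_\R(S)$ is negative-definite, and $(f^*/q)|_{L^\perp}$ is a Euclidean isometry; every eigenvalue of $f^*|_{L^\perp}$ has modulus $q$. Combined with the lone eigenvalue $\delta_f$ on $\R L$, this gives $\iota_f=q$ and hence $\delta_f=q^2/\iota_f=q$, a contradiction. The main obstacle is the remaining case $L^2=0$ (with $P=0$): here $L$ is isotropic in the restricted form on $V^\perp$, and a direct Hodge-type argument stalls. My plan to close this case is to contract the negative-definite support of $D$ via $\pi\colon S\to S'$ (possible by negative-definiteness), descend $f$ to $f'\colon S'\to S'$ after iteration via the rigidity lemma, identify $\pi^*\N^1_\R(S')$ with a full-rank subspace of $V^\perp$, and then induct on $\rho(S)$: one produces a pseudo-effective $f'^*$-eigenvector on $S'$ (either by descending an appropriate piece of the eigenstructure from $V^\perp$, or by Perron-Frobenius on $\PE^1(S')$ combined with the forced coincidence $\delta_{f'}=\delta_f$ and $\deg f'=q^2$) and applies the lemma to $(S',f')$, obtaining $\delta_{f'}=q$ and hence the desired $\delta_f=q$, a contradiction.
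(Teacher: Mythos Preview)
Your approach is quite different from the paper's, which does not use Zariski decomposition at all but instead invokes the classification of projective surfaces admitting non-isomorphic endomorphisms: when $K_S$ is not pseudo-effective one runs EMMP down to $\rho(S)=2$ (where $f^*$ has only the two eigenvalues $\delta_f,\iota_f$, neither equal to $q$); when $K_S$ is pseudo-effective the classification forces $\Nef(S)=\PE^1(S)$, so $D$ itself is nef and Lemma~\ref{lem-hodge} applied to $D$ and the Perron--Frobenius class finishes immediately. Your route is more elementary in spirit, and the cases $L^2>0$ and $P\neq 0$ are handled correctly.

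The genuine gap is the case $L^2=0$ with $P=0$, and your proposed induction does not close it. After you contract $\Supp N$ via $\pi:S\to S'$, the class $D=N$ lives \emph{entirely} in the span $V$ of the exceptional curves, so its image in $\N^1(S')$ is zero: you lose the $q$-eigenvector. Neither of your two suggestions recovers it. Descending ``an appropriate piece of the eigenstructure from $V^\perp$'' would require knowing that $q$ occurs as an eigenvalue of $f^*|_{V^\perp}$, but the only eigenvector you have produced there is $L$ with eigenvalue $\delta_f$; there is no reason $q$ should appear on $V^\perp$. And Perron--Frobenius on $\PE^1(S')$ hands you a $\delta_{f'}$-eigenvector, which is again $\delta_f$, not $q$, so the inductive hypothesis cannot be invoked. (There is also a smaller gap earlier: the claim that the prime components of $f^*N_i$ lie among the $N_j$ ``else they would contribute to $P$'' is not justified---$f^*N\equiv qN$ is only a numerical statement, and two effective divisors with negative-definite support can be numerically equivalent without sharing support---so the $f^*$-invariance of $V$ and the descent of $f$ to $S'$ are not yet established.) The paper circumvents exactly this difficulty by using the structural input $\Nef(S)=\PE^1(S)$, which forces $P=D\neq 0$ and lands you back in the case you already handled.
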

\begin{proof}
We may assume $S$ is normal.
Suppose the contrary that $f$ is not $q$-amplified.
Then $\delta_f>\iota_f$ (cf.~Definition \ref{def-d1}).
By Lemma \ref{lem-surf-deg}, $q^2=\deg f=\delta_f\cdot \iota_f$.
So $\delta_f > q > \iota_f$.

Suppose $K_{S}$ is not pseudo-effective.
By \cite[Theorem 5.4]{MZ22}, we may assume $\rho(S)=2$.
After iteration, $f^*|_{\N^1(S)}$ is a diagonal action with at most two different eigenvalues, a contradiction.

Suppose $K_{S}$ is pseudo-effective.
Then  $f$ is quasi-\'etale, and $S$ is a quasi-\'etale quotient of either an abelian surface or a product of an elliptic curve $E$ and a smooth projective curve $T$ of genus $\ge 2$, see \cite[Theorem 5.1]{MZ22}.
Suppose $S$ admits a negative curve $C$.
Then the latter case occurs: $S\cong E\times T$.
Note that $f^{-1}(C)=C$ after iteration (cf.~\cite[Lemma 4.3]{MZ22}) and hence $f^*C=qC$ with $q=\sqrt{\deg f}>1$ by the projection formula.
However, $C\subseteq B_f$ and $f$ is not quasi-\'etale, a contradiction.
In particular, $\Nef(S)=\PE^1(S)$.
Then $D$ is further nef.
Write $f^*D_+\equiv \delta_f D_+$ for some $D_+\in \Nef(S)\backslash\{0\}$.
We have $D\cdot D_+\neq 0$ by \cite[Lemma 4.1]{MZg22}.
By the projection formula, $\deg f=\delta_f\cdot q$, a contradiction. 
\end{proof}

\begin{lemma}[Connecting Lemma]\label{lem-connection}
Consider the equivariant dynamical systems of normal projective threefolds
$$\xymatrix{
h \acts W \ar[r]^{\sigma} &X\racts f
}$$
where $X$ is klt and $\sigma$ is birational.
Let $Q$ be an $h^{-1}$-invariant prime divisor of $W$ such that $h|_Q$ is $q$-amplified with $q^3=\deg h\ge 2$.
Then $h|_P$ is $q$-amplified and $h^*P=qP$ for any $h^{-1}$-invarint prime divisor $P$ with $P\cap Q\neq\emptyset$. 
\end{lemma}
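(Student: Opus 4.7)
The plan is to split the argument into three stages: (i) compute the ramification multiplicity $h^*Q=qQ$; (ii) transfer this information via a common curve $C\subseteq P\cap Q$ to obtain $h^*P=qP$; (iii) apply Lemma~\ref{lem-qamplified} to conclude $q$-amplification of $h|_P$.

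For stage (i), Lemma~\ref{lem-surf-deg} applied to the $q$-amplified surface endomorphism $h|_Q$ gives $\deg h|_Q=\delta_{h|_Q}\cdot \iota_{h|_Q}=q^2$. Since $h^{-1}(Q)=Q$ set-theoretically and $\deg h=q^3$, the ramification index of $h$ along $Q$ must equal $q^3/q^2=q$, so $h^*Q=qQ$ as Weil divisors on $W$.

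For stage (ii), fix an $h^{-1}$-invariant prime divisor $P$ with $P\cap Q\neq\emptyset$. By dimension the intersection $P\cap Q$ has pure dimension one; pick an irreducible component $C$, and after iterating $h$ assume $h^{-1}(C)=C$ inside both $P$ and $Q$. Write $h^*P=cP$ as Weil divisors on $W$ for the ramification index $c$ of $h$ along $P$. I will identify $c$ by computing $(h|_Q)^*C$ in two ways. On one hand, restricting a local equation $f_P$ of $P$ at a generic (smooth) point of $C$ in $W$ to $Q$, together with the relation $h^*f_P=u\cdot f_P^c$, yields $(h|_Q)^*C=cC$ as Weil divisors on $Q$. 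On the other hand, $[C]\in\N_1(Q)$ is a nonzero class (being effective and intersecting any ample class positively) and so is an eigenvector of $(h|_Q)^*$ with positive real eigenvalue, which by the $q$-amplification of $h|_Q$ (and the duality forcing eigenvalues of $(h|_Q)^*$ on $\N_1(Q)$ to have modulus $q$ via Proposition~\ref{prop-f*f*} and Lemma~\ref{lem-d1-weil}) must equal $q$. Hence $c=q$ and $h^*P=qP$.

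For stage (iii), from $h^*P=qP$ and $\deg h=q^3$ we obtain $\deg h|_P=q^2$. The symmetric restriction argument applied to $h^*Q=qQ$ and the same curve $C$ viewed inside $P$ yields $(h|_P)^*C=qC$ as Weil divisors on $P$. Hence $[C]$ is a nonzero pseudo-effective class on $P$ with $(h|_P)^*[C]\equiv q[C]$, and Lemma~\ref{lem-qamplified} applied to the surface $P$ concludes that $h|_P$ is $q$-amplified.

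The main obstacle is the ramification computation in stage (ii) when $C$ happens to lie entirely inside $\operatorname{Sing}(W)$: this is possible a priori since $W$ is only assumed normal, so $\operatorname{Sing}(W)$ may well contain the codimension-two curve $C$, and then no local-coordinate description of $P$ near the generic point of $C$ is directly available. To handle this case one should either replace $W$ by an $h$-equivariant $\Q$-factorialization so that $P$ and $Q$ become locally principal near $C$, or descend along $\sigma$ and exploit the klt hypothesis on $X$, which guarantees $\Q$-factoriality of $X$ in codimension two, and reconstruct the multiplicity via the finite morphism $h$ and Proposition~\ref{prop-f*f*}. Apart from this technicality, the argument rests only on Lemmas~\ref{lem-surf-deg}, \ref{lem-d1-weil}, and~\ref{lem-qamplified}.
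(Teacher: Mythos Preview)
Your outline is sound and stages (i) and (iii) match the paper. The gap you flag in stage (ii) is genuine and is precisely where the argument fails as written: on a merely normal threefold $W$ there is no reason for $P$ (or $Q$) to be $\Q$-Cartier near the generic point of $C$, so the ``restrict a local equation of $P$ to $Q$'' step and the symmetric step in (iii) have no meaning. Your first proposed fix, an $h$-equivariant $\Q$-factorialization of $W$, is not available in general: small $\Q$-factorializations exist for klt varieties, but $W$ is only normal, and in any case there is no mechanism forcing the endomorphism to lift.

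Your second suggestion --- descend along $\sigma$ --- is exactly what the paper does, but it is not a one-line patch; it forces a case analysis on $(\dim\sigma(Q),\dim\sigma(P))$. When $\dim\sigma(P)=2$ the image $\sigma(P)$ is $\Q$-Cartier on the klt $X$ (klt varieties being $\Q$-factorial in codimension $2$), so restrictions such as $\sigma(P)|_{\sigma(Q)}$ or $\sigma(Q)|_{\sigma(P)}$ are legitimate and carry the eigenvalue information you want. When $\dim\sigma(P)\le 1$ (i.e.\ $P$ is $\sigma$-exceptional) one instead invokes Lemma~\ref{lem-eq-cartier}: after iteration the exceptional locus decomposes into \emph{Cartier} eigen-divisors $E_q$ with $h^*E_q=qE_q$, and it is $E_q|_P$ and $E_q|_Q$ (which are honest Cartier divisors) that play the role of your curve $C$. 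In either case the input to Lemma~\ref{lem-qamplified} is then a genuine $\Q$-Cartier class on the surface. So the klt hypothesis on $X$ and the birational morphism $\sigma$ are not incidental: they are the mechanism by which one manufactures the $\Q$-Cartier divisors that your direct argument on $W$ lacks.
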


\begin{proof}
After iteration, we may assume each exceptional prime divisor of $\sigma$ is $h^{-1}$-invariant.
Set $q:=(\deg h)^{1/3}$.
By the assumption and Lemma \ref{lem-surf-deg}, $\deg h|_Q=q^2$ and $h^*Q=qQ$.
Write $h^*P=rP$ for some integer $r>0$.
Note that $\deg h|_P=q^2$ if $r=q$.

\textbf{Case $\dim \sigma(Q)=0$ and $\dim \sigma(P)=2$}.

Note that 
$$\deg f|_{\sigma(P)}=\deg h|_P=\frac{\deg h}{r}.$$
So we have
$$f^*\sigma(P)=r\sigma(P)$$ and hence 
$$h^*(\sigma^*\sigma(P))= r\sigma^*\sigma(P).$$
Note that $Q\subseteq \Supp \sigma^*\sigma(P)$.
So $r=q$ and $\deg h|_P=q^2$.
Since $Q$ is $\sigma$-exceptional, $Q\subseteq \Supp E_q$ by Lemma \ref{lem-eq-cartier}.
Then $E_q|_P$ is a non-zero effective Cartier divisor and
$$(h|_P)^*E_q|_P\sim qE_q|_P.$$
By Lemma \ref{lem-qamplified}, $h|_P$ is $q$-amplified.

\textbf{Case $\dim \sigma(Q)=1$ and $\dim \sigma(P)=2$}.

If $\sigma(Q)\subseteq\sigma(P)$, then we are done for the same reason as in the previous case.
Suppose $\sigma(Q)\not\subseteq\sigma(P)$.
Then $\sigma(P)|_{\sigma(Q)}$ is a non-zero effective $\Q$-Cartier divisor and
$$(f|_{\sigma(Q)})^*\sigma(P)|_{\sigma(Q)}=r\sigma(P)|_{\sigma(Q)}.$$
Note that $h|_Q$ and hence $f|_{\sigma(Q)}$ are $q$-amplified.
So $r=q$.
Since $Q\subseteq \Supp E_q$, we have 
$$(h|_P)^*E_q|_P\sim qE_q|_P$$
where $E_q|_P$ is a non-zero effective Cartier divisor.
By Lemma \ref{lem-qamplified}, $h|_P$ is $q$-amplified .

\textbf{Case $\dim \sigma(Q)=2$ and $\dim \sigma(P)=2$}.

If $\sigma(Q)=\sigma(P)$, then $P=Q$ and we are done.
Suppose $\sigma(Q)\neq \sigma(P)$.
By looking at $\sigma(P)|_{\sigma(Q)}$, we have $r=q$ by a similar reason.
Note that $\sigma(Q)|_{\sigma(P)}$ is a non-zero effective $\Q$-Catier divisor and 
$$(f|_{\sigma(P)})^*\sigma(Q)|_{\sigma(P)}=q\sigma(Q)|_{\sigma(P)}.$$
By Lemma \ref{lem-qamplified}, $f|_{\sigma(P)}$ and hence $h|_P$ are $q$-amplified.

\textbf{Case $\dim \sigma(P)\le 1$}.

By Lemma \ref{lem-eq-cartier}, $P\subseteq \Supp E_r$.
If $Q\subseteq \Supp E_r$, then $r=q$.
If $Q\not\subseteq \Supp E_r$, then $E_r|_Q$ is a non-zero effective Cartier divisor and
$$(h|_Q)^*E_r|_Q\sim rE_r|_Q.$$
So $r=q$ all the time.
Note that  
$$(h|_P)^*E_q|_P\sim qE_q|_P$$
where $E_q|_P$ is $\sigma|_P$-big by Lemma \ref{lem-eq-cartier}.
If $\dim \sigma(P)=0$, then $-E_q|_P$ is big.
So $h|_P$ is $q$-polarized (cf.~\cite[Proposition 1.1]{MZ18}) and hence $q$-amplified.
If $\dim \sigma(P)=1$, then $-E_q|_P\cdot \ell>0$ where $\ell$ is a general fibre of $\sigma|_P$.
Note that $f|_{\sigma(P)}$ is $a$-polarized with $a=\deg f|_{\sigma(P)}$.
Then $(f|_{\sigma(P)})^*\ell\equiv a\ell$ and $\deg h|_P=q\cdot a=q^2$ by the projection formula.
So $a=q$.
Note that $\ell$ is effective on $P$.
By Lemma \ref{lem-qamplified}, $h|_P$ is $q$-amplified.
\end{proof}

Lemma \ref{lem-connected} tells the influence of earlier exceptional divisors on the later singular locus during the birational MMP starting from a smooth projective threefold.
We begin with:

\begin{lemma}\label{lem-ben}
Let $X$ be a $\Q$-Gorenstein normal projective threefold with canonical singularities.
Let $\pi:X\to Y$ be the flipping contraction of some $K_X$-negative curve $C$.
Then $C\cap \Sing(X)\neq \emptyset$.
\end{lemma}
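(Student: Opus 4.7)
The proof will proceed by contradiction. Assume $C\cap\Sing(X)=\emptyset$, so that $X$ is smooth in a Zariski open neighborhood $U\supseteq C$. Since $\pi$ is the contraction of a $K_X$-negative extremal ray on a variety with canonical (hence rational) singularities, every fibre of $\pi$ is a tree of rational curves; the extremality of the ray plus flippingness forces $C\cong\mathbb{P}^1$. Working in $U$, the canonical divisor $K_X$ is Cartier along $C$, and the normal bundle splits by Grothendieck as $N_{C/X}\cong\mathcal{O}(a)\oplus\mathcal{O}(b)$ with $a\ge b$. Adjunction gives $a+b=-K_X\cdot C-2\ge -1$, so in particular $a\ge 0$ and $h^0(C,N_{C/X})\ge 1$.

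Next I would invoke Mori's deformation / bend-and-break machinery: since $X$ is smooth along $C$ and $-K_X\cdot C>0$, the Hilbert scheme of $X$ has a positive-dimensional component through $[C]$, giving a nontrivial family $\{C_t\}_{t\in T}$ of deformations of $C=C_0$ in $X$. Because a genuinely moving family of curves in a smooth threefold cannot remain inside a purely $1$-dimensional subset, the union $S:=\bigcup_{t\in T}C_t$ is a subvariety of $X$ of dimension at least $2$.

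To conclude, observe that all $C_t$ are numerically equivalent to $C$ in $N_1(X)$, so $[C_t]$ lies on the same extremal ray $\mathbb{R}_{\ge 0}[C]$ contracted by $\pi$. Hence $\pi(C_t)$ is a single point for every $t$, which forces $C_t\subseteq\Exc(\pi)$ and therefore $S\subseteq\Exc(\pi)$. However, $\pi$ is a flipping contraction, hence small, so $\Exc(\pi)$ has pure codimension $\ge 2$ in the threefold $X$, i.e.\ $\dim\Exc(\pi)\le 1$. This contradicts $\dim S\ge 2$, and so $C\cap\Sing(X)\neq\emptyset$.

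The main obstacle is the middle step, namely ensuring that the deformation family of $C$ is genuinely positive-dimensional and that its sweep $S$ has dimension $\ge 2$ rather than collapsing to $C$ itself with multiplicity. This is precisely what requires smoothness of $X$ along $C$ (so that $N_{C/X}$ is locally free and deformation theory is unobstructed in the expected dimension) together with $-K_X\cdot C>0$ (which powers bend-and-break). Once the surface $S\subseteq\Exc(\pi)$ is produced, the contradiction with smallness of the flipping contraction is immediate, so the whole proof hinges on this single geometric input.
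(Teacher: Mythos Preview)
Your argument is correct and self-contained, whereas the paper's proof is a two-line citation of Benveniste: if $C\cap\Sing(X)=\emptyset$ then $K_X$ is Cartier along $C$, so $K_X\cdot C\in\mathbb{Z}_{<0}$ gives $K_X\cdot C\le -1$, contradicting Benveniste's bound $K_X\cdot C>-1$ for curves contracted by small extremal contractions on canonical threefolds \cite[Theorem~0]{Ben85}. Your deformation argument is essentially a reproof of the relevant direction of Benveniste's result: the lower bound $\dim_{[C]}\Hilb(X)\ge\chi(N_{C/X})=-K_X\cdot C\ge 1$ forces $C$ to move in a positive-dimensional family, while all deformations stay in the extremal ray and hence in the $1$-dimensional $\Exc(\pi)$. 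Two minor imprecisions in your write-up: what you actually invoke is the Hilbert-scheme dimension bound rather than bend-and-break proper, and you do not need deformations to be unobstructed---when $b\le -2$ there are obstructions, but the local dimension is nonetheless at least $h^0-h^1=\chi(N_{C/X})$. For the step you flag as the main obstacle, note that the generic $C_t$ is again smooth, connected, and of arithmetic genus $0$ (openness of smoothness in flat proper families plus upper semicontinuity of $h^0(\mathcal{O}_{C_t})$), hence equal to one of the finitely many irreducible components of $\Exc(\pi)$; this already contradicts the positive-dimensionality of the family, without needing to produce the surface $S$ explicitly.
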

\begin{proof}
If $C\cap  \Sing(X)= \emptyset$, then $K_X\cdot C\le -1$.
However, this contradicts \cite[Theorem 0]{Ben85}.
\end{proof}

The lemma below is essentially due to \cite{Mor82}.
We give a proof for the convenience.

\begin{lemma}\label{lem-mori}
Let $X$ be a $\Q$-Gorenstein normal projective threefold and
$\pi:X\to Y$ a divisorial contraction.
Suppose the $\pi$-exceptional prime divisor $D$ is contracted to a curve $S$ in $Y$.
Then $\pi^{-1}(y)\cong \mathbb{P}^1$ and $y\not\in \Sing(Y)$ for any $y\in S$ with  $\pi^{-1}(y)\cap \Sing(X)=\emptyset$.
\end{lemma}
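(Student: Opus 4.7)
The plan is to reduce $\pi$ to the smooth threefold setting near the fiber $\pi^{-1}(y)$ and then invoke Mori's classification of divisorial contractions. Since $\pi$ is proper, $\pi^{-1}(y)$ is a compact subset of $X$, and by hypothesis it avoids the closed subset $\Sing(X)$. Hence I can pick a Zariski open $U \subseteq X_{\mathrm{reg}}$ containing $\pi^{-1}(y)$, and after shrinking $Y$ to the open set $V := Y \setminus \pi(X \setminus U)$ (a neighborhood of $y$), I obtain a proper birational morphism $\pi' := \pi|_{U'} : U' \to V$ with $U' := \pi^{-1}(V) \subseteq U$ smooth, whose exceptional prime divisor $D \cap U'$ is contracted to the curve $S \cap V$.

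Next I would verify that the general fiber of $\pi|_D : D \to S$ is $\mathbb{P}^1$. Let $C_0$ be such a general fiber; since two distinct general fibers of this surjection are disjoint, $C_0^2 = 0$ on $D$. The $K_X$-negativity of the contracted extremal ray gives $K_X \cdot C_0 < 0$, and the $\pi$-ampleness of $-D$ gives $D \cdot C_0 < 0$. Adjunction on $D$ yields $K_D \cdot C_0 = (K_X + D) \cdot C_0 < 0$, so $C_0$ has negative canonical degree, forcing $C_0 \cong \mathbb{P}^1$; consequently $K_X \cdot C_0 = D \cdot C_0 = -1$.

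To pass to the special fiber over $y$, I would use a normal bundle calculation on the smooth ambient $U'$: $N_{C_0/U'}$ fits into an extension of $N_{D/U'}|_{C_0} = \mathcal{O}_{\mathbb{P}^1}(-1)$ by $N_{C_0/D} = \mathcal{O}_{\mathbb{P}^1}$, and (being an extension of line bundles on $\mathbb{P}^1$ of degrees $-1$ and $0$) splits as $\mathcal{O} \oplus \mathcal{O}(-1)$. Mori's classification \cite{Mor82} then shows that $\pi'$ is, Zariski-locally around $S \cap V$, the blow-up of $V$ along a smooth curve. In particular $V$ is smooth at $y$, so $y \notin \Sing(Y)$, and every scheme-theoretic fiber of $\pi'$ is isomorphic to $\mathbb{P}^1$, giving $\pi^{-1}(y) \cong \mathbb{P}^1$.

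The main obstacle is this last step: Mori's original theorem is stated for global extremal contractions of smooth projective threefolds, whereas $\pi'$ is only a local, non-projective birational morphism. However, Mori's argument is essentially local over the image curve, requiring only (i) smoothness of the source, (ii) the existence in each fiber of a rational curve of $K_X$-degree $-1$ with normal bundle $\mathcal{O} \oplus \mathcal{O}(-1)$, and (iii) irreducibility of the exceptional divisor; each of these is satisfied by $\pi'$, so the desired local blow-up structure, and with it the conclusion of the lemma, follows.
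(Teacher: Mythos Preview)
Your reduction to a smooth neighbourhood $U'$ and your adjunction analysis of the \emph{general} fibre $C_0$ are correct but largely beside the point: the whole content of the lemma lives at the \emph{special} fibre $C=\pi^{-1}(y)$, and you defer that to the assertion that ``Mori's argument is essentially local'' subject to (i)--(iii). The gap is in your (ii): you have produced a $K_X$-degree $-1$ rational curve with normal bundle $\mathcal{O}\oplus\mathcal{O}(-1)$ only in the general fibre, not ``in each fibre'', and in particular not over $y$. A~priori $\pi^{-1}(y)$ could be reducible or non-reduced, and the general normal bundle gives no control over this. In Mori's argument the statement ``every fibre of $D\to S$ is a single smooth $\mathbb{P}^1$'' is a \emph{conclusion}, not a hypothesis, and is exactly what has to be established here.

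The paper's proof supplies precisely this missing local computation, working directly with $C=\pi^{-1}(y)$ and never using the general fibre. From the contraction one has $\chi(\mathcal{O}_{C'})\ge 0$ for every closed subscheme $C'$ with $(C')_{\red}=C$ (\cite[Lemma~3.20]{Mor82}); combined with $\chi(\Omega_X^1\otimes\mathcal{O}_C)=K_X\cdot C<0$ this already forces $C\cong\mathbb{P}^1$ (\cite[(3.38)]{Mor82}). One then computes the conormal sheaf $\mathcal{I}/\mathcal{I}^2\cong\mathcal{O}_C\oplus\mathcal{O}_C(1)$ as in \cite[(3.39)]{Mor82}, hence $\mathcal{I}^n/\mathcal{I}^{n+1}\cong\bigoplus_{i=0}^{n}\mathcal{O}_C(i)$ and $H^0(\mathcal{O}_{C_n})\cong K[x,y,z]/(x,y,z)^n$; the theorem on formal functions then gives $\widehat{\mathcal{O}}_{Y,y}\cong K[[x,y,z]]$, so $y\notin\Sing(Y)$. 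In short, the paper \emph{executes} the local Mori argument you cite as a black box, and does so at the specific fibre over $y$.
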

\begin{proof}
Let $C=\pi^{-1}(y)$ with $y\in S$ and $C\cap \Sing(X)=\emptyset$.
Denote by $\mathcal{I}$ the ideal sheaf of $C$ in $X$.
Let $C_n$ be the closed subscheme of $X$ defined by $\mathcal{I}^n$.

By the same argument of \cite[Lemmas 3.20]{Mor82},
we have $\chi(\mathcal{O}_{C'})\ge 0$ for any closed subscheme $C'$ of $X$ with $(C')_{\red}=C$.
In this step, we do not need $C\cap \Sing(X)=\emptyset$.
Applying the same argument of \cite[(3.38)]{Mor82}, we further have $C\cong \mathbb{P}^1$
by noticing that 
$$\chi(\Omega_X^1\otimes\mathcal{O}_C)=K_X\cdot C<0$$ still holds.
By the same discussion as in \cite[(3.39)]{Mor82}, we have 
$$\mathcal{I}/\mathcal{I}^2\cong \mathcal{O}_C\oplus\mathcal{O}_C(1).$$
It is easy to compute that
$$\mathcal{I}^n/\mathcal{I}^{n+1}\cong \Sym^n(\mathcal{I}/\mathcal{I}^2)\cong \bigoplus_{i=0}^{n}\mathcal{O}_C(i)$$
and hence 
$$h^0(C_n,\mathcal{I}^n/\mathcal{I}^{n+1})=\frac{(n+1)(n+2)}{2}$$
Using the exact sequence 
$$0\to \mathcal{I}^n/\mathcal{I}^{n+1}\to \mathcal{O}_{C_n}\to \mathcal{O}_{C_{n+1}}\to 0,$$
we have 
$$H^0(C_n,\mathcal{O}_{C_n})\cong K[x,y,z]/(x,y,z)^n$$
by induction, where $K$ is the base field.

Let $X_n$ be the closed subscheme of $X$ defined by $m_y^n\cdot\mathcal{O}_X$ where $m_y$ is the maximal ideal of $y$.
Let $\hat{\mathcal{O}}_y$ be the completion of the stalk $\mathcal{O}_y$.
By the theorem of formal functions, 
$$\hat{\mathcal{O}}_y\cong\lim_{\longleftarrow} H^0(X_n,\mathcal{O}_{X_n}).$$
Note that the sequence of ideals $m_y^n\cdot\mathcal{O}_X$ is cofinal with the sequence of ideals $\mathcal{I}^n$.
So
$$\hat{\mathcal{O}}_y\cong K[[x,y,z]].$$
Therefore, $Y$ is smooth at $y$.
\end{proof}

\begin{lemma}\label{lem-connected}
Consider the commutative diagram
$$\xymatrix{
W_1\ar@{=}[d]^{p_1}& W_2\ar[l]_{\sigma_2}\ar[d]^{p_2} &\cdots\ar[l]_{\sigma_3} &W_{n-1}\ar[l]_{\sigma_{n-1}}\ar[d]^{p_{n-1}} &W_n\ar[l]_{\sigma_n}\ar[d]^{p_n}\\
X_1\ar[r]_{\pi_1}& X_2\ar@{-->}[r]_{\pi_2} &\cdots\ar@{-->}[r]_{\pi_{n-2}} &X_{n-1}\ar@{-->}[r]_{\pi_{n-1}} &X_n
}$$
where $X_1$ is a smooth projective threefold, $\pi_i$ is either a divisorial contraction or a flip, $W_i$ is the normalization of the graph of $\pi_{i-1}\circ p_{i-1}:W_{i-1}\dashrightarrow X_i$.
Let $x_n\in \Sing(X_n)$.
Then there exists a connected reduced divisor $E\subseteq \Exc(p_n)$ such that $p_n^{-1}(x_n)\subseteq E$ and $E_1$ is contracted to a point in some $X_i$ for some prime divisor $E_1\subseteq E$.
\end{lemma}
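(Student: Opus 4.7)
My plan is to argue by induction on $n$, the base case $n=1$ being vacuous since $X_1$ is smooth. For the inductive step, given $x_n \in \Sing(X_n)$, I would analyze how this singularity arises relative to the last step $\pi_{n-1}$, distinguishing a ``new'' singularity (one originating at step $n$) from an ``inherited'' one.

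First I would handle the favorable case where $\pi_{n-1}:X_{n-1}\to X_n$ is a divisorial contraction whose exceptional prime divisor $D_{n-1}$ is contracted exactly to the point $x_n$. Here the strict transform $\widetilde{D}_{n-1}\subseteq W_n$ is a prime divisor satisfying $p_n(\widetilde{D}_{n-1})=\{x_n\}$; it therefore lies in $\Exc(p_n)$ and is contracted to a point in $X_n$. One takes $E := p_n^{-1}(x_n)_{\red}$, which is connected by Zariski's connectedness theorem applied to the proper birational morphism $p_n$ from the normal variety $W_n$, and $E_1 := \widetilde{D}_{n-1}$.

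In all remaining subcases I would locate an ``ancestor'' singularity $x_{n-1}\in\Sing(X_{n-1})$ with $x_{n-1}\in (p_{n-1}\circ\sigma_n)(p_n^{-1}(x_n))$ and then invoke the induction hypothesis at $x_{n-1}$. Concretely: (a) if $\pi_{n-1}$ is divisorial with $D_{n-1}$ contracted to a curve through $x_n$, then Lemma \ref{lem-mori} (contrapositive) gives $\pi_{n-1}^{-1}(x_n)\cap\Sing(X_{n-1})\neq\emptyset$ and we pick any such $x_{n-1}$; (b) if $\pi_{n-1}$ is a flip and $x_n$ lies on the flipped curve, then Lemma \ref{lem-ben} applied to the flipping contraction $X_{n-1}\to Y$ yields a singular point on the flipping curve $C$, and the common-resolution geometry of the flip (which $W_n$ realizes, through its construction as a normalization of a graph) shows that $C\subseteq (p_{n-1}\circ\sigma_n)(p_n^{-1}(x_n))$; (c) otherwise $x_n$ lies off the modified locus of $\pi_{n-1}$, so $X_{n-1}\cong X_n$ near $x_n$ and the ancestor is immediate. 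The induction supplies a connected reduced divisor $E'\subseteq\Exc(p_{n-1})$ containing $p_{n-1}^{-1}(x_{n-1})$ together with a prime component $E_1'\subseteq E'$ contracted to a point in some $X_j$, $j<n$. I would then take $E$ to be the union, in $W_n$, of the strict transform of $E'$ under $\sigma_n$, the $\sigma_n$-exceptional divisors lying over $E'$, and $p_n^{-1}(x_n)_{\red}$; and take $E_1$ to be the strict transform of $E_1'$ in $W_n$, which is still contracted to a point in $X_j$ through $W_n\to W_j\to X_j$.

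The hard part, I expect, will be the flip subcase (b): one must justify precisely that on $W_n$, the fiber over a point of the flipped curve maps, via the composition $p_{n-1}\circ\sigma_n$, onto (or at least meets every component of) the flipping curve of $X_{n-1}$. This is the standard common-resolution picture for three-dimensional flips, but threading it through the iterated graph-normalization construction that defines $W_n$, and ensuring the resulting $E$ is genuinely a divisor and genuinely connected, will require the most care. Connectedness of $E$ then follows formally from connectedness of $E'$ (by induction), connectedness of $p_n^{-1}(x_n)$ and of the fibers of $\sigma_n$, and from the existence of a point of $W_n$ simultaneously witnessing $p_n(w)=x_n$ and $(p_{n-1}\circ\sigma_n)(w)=x_{n-1}$.
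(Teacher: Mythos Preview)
Your approach is essentially the paper's: induction on $n$, the same three-case split (isomorphism near $x_n$; divisorial contraction; flip), with Lemmas~\ref{lem-mori} and~\ref{lem-ben} playing exactly the same roles. Two points deserve correction.

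First, your construction of $E$ as an explicit union including $p_n^{-1}(x_n)_{\red}$ is not guaranteed to be a divisor: the fiber $p_n^{-1}(x_n)$ may be $1$-dimensional, or have $1$-dimensional components not lying inside $\sigma_n^{-1}(E')$. The paper's fix is cleaner: since $X_n$ is $\Q$-factorial, $\Exc(p_n)$ is of pure codimension one; take $E$ to be the \emph{connected component} of $\Exc(p_n)$ containing the relevant preimage. Then the identity $E=p_n^{-1}(p_n(E))$ (from connectedness of fibers of $p_n$ and the fact that such fibers over $p_n(E)$ lie in $\Exc(p_n)$) gives $p_n^{-1}(x_n)\subseteq E$ as soon as $x_n\in p_n(E)$.

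Second, in the flip subcase you correctly flag the hard part, but the claim that $(p_{n-1}\circ\sigma_n)(p_n^{-1}(x_n))$ equals $C$, or even meets every component of $C$, is stronger than you need and not obvious when $C$ is reducible. The paper avoids this entirely. By \cite[Corollary 1.3]{Mor88}, $C$ and $C^+$ are connected; hence there is a connected $p_n$-exceptional reduced divisor $P\subseteq W_n$ with $(p_{n-1}\circ\sigma_n)(P)=C$ and $p_n(P)=C^+$. One picks \emph{any} $x_{n-1}\in C\cap\Sing(X_{n-1})$ via Lemma~\ref{lem-ben}, applies induction to get $F\subseteq\Exc(p_{n-1})$, and forms $E_F$ as above. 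Since $x_{n-1}\in C=(p_{n-1}\circ\sigma_n)(P)$, the divisor $P$ meets $(p_{n-1}\circ\sigma_n)^{-1}(p_{n-1}(F))$, so $P\subseteq E_F$; then $x_n\in C^+=p_n(P)\subseteq p_n(E_F)$ yields $p_n^{-1}(x_n)\subseteq E_F$. This ``bridging divisor'' $P$ replaces your fiberwise surjectivity claim.
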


\begin{proof}
We show by induction on $n$.
Note that $\pi_1$ is a divisorial contraction by \cite[Theorem 3.3]{Mor82}.
When $n=2$, if $X_2$ is singular, we simply take $E=\Exc(\pi_1)$ by noting that $\Sing(X_2)=\pi_1(\Exc(\pi_1))$ is a point.
In general, if we find some connected reduced divisor $F\subseteq\Exc(p_{n-1})$, then $(p_{n-1}\circ\sigma_n)^{-1}(p_{n-1}(F))$ is connected since $p_{n-1}\circ\sigma_n$ has connected fibres.
Note that $\pi_{n-1}^{-1}$ contracts no divisors.
So $(p_{n-1}\circ\sigma_n)^{-1}(p_{n-1}(F))\subseteq \Exc(p_n)$.
Then we can take $E_F$ as the connected component of $\Exc(p_n)$ containing $(p_{n-1}\circ\sigma_n)^{-1}(p_{n-1}(F))$.
Note that $p_n^{-1}(p_n(E_F))$ is connected.
So $E_F= p_n^{-1}(p_n(E_F))$ and hence $p_n^{-1}(x_n)\subseteq E_F$ if and only if $x_n\in p_n(E_F)$.

\textbf{Case: $\pi_{n-1}$ is isomorphic at $x_n$.}

Let $x_{n-1}=\pi_{n-1}^{-1}(x_n)$.
Then $x_{n-1}\in\Sing(X_{n-1})$.
By induction, there exists a connected reduced divisor $F\subseteq \Exc(p_{n-1})$ such that $p_{n-1}^{-1}(x_{n-1})\subseteq F$ and $F_1$ is contracted to a point in some $X_i$ with $i\le n-1$ for some prime divisor $F_1\subseteq F$.
Then we can take $E=E_F$.

\textbf{Case: $\pi_{n-1}$ is not isomorphic at $x_n$ and $\pi_{n-1}$ is a divisorial contraction.}

Let $P$ be the exceptional divisor of $\pi_{n-1}$.
If $\pi_{n-1}(P)$ is a point, then $x_n=\pi_{n-1}(P)$ and we are done by simply taking $E$ as the connected component of $\Exc(p_n)$ containing $P$.
So we may assume $\pi_{n-1}(P)=C$ is a curve.
Note that $x_n\in C\cap\Sing(X_n)$.
By Lemma \ref{lem-mori}, we can find some $x_{n-1}\in \pi_{n-1}^{-1}(x_n)\cap \Sing(X_{n-1})$.
By the same argument as in the first case, we can find the desired $F$.
Note that $x_{n-1}\in P\cap p_{n-1}(F)$ and $(p_{n-1}\circ\sigma_n)^{-1}(P)\subseteq \Exc(p_n)$.
By the construction, $(p_{n-1}\circ\sigma_n)^{-1}(P)\subseteq E_F$ and $x_n\in p_n(E_F)$.
So we can take $E=E_F$.

\textbf{Case: $\pi_{n-1}$ is not isomorphic at $x_n$ and $\pi_{n-1}$ is a flip.}

Let $C\subseteq X_{n-1}$ and $C^+\subseteq X_n$ be the flipping loci which are connected by \cite[Corollary 1.3]{Mor88}.
Then there exists a $p_n$-exceptional connected reduced divisor $P$ of $W_n$ such that $p_{n-1}\circ\sigma_n(P)=C$ and $p_n(P)=C^+$.
By Lemma \ref{lem-ben}, there exists some $x_{n-1}\in C\cap \Sing(X_{n-1})$.
By the same argument as in the previous cases, we can find the desired $F$ with $P\subseteq E_F$.
Note that $\pi_{n-1}$ is not isomorphic at $x_n$.
Then $x_n\in C^+\subseteq p_n(E_F)$.
So we can take $E=E_F$.
\end{proof}

\begin{theorem}\label{thm-emmp-flip}
Consider the equivariant dynamical systems of a minimal model program
$$f_1\acts X_1\to X_2\dashrightarrow\cdots\dashrightarrow X_n\racts f_n$$ 
where $f_1$ is a non-isomorphic surjective endomorphism of a smooth projective threefold $X_1$.
Let $\pi_n:X_n\dashrightarrow X_{n+1}$ be a flip.
Then $\pi_n$ is $f_n$-equivariant after iteration of $f$.
\end{theorem}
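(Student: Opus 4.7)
The plan is to prove that the flipping locus $C \subseteq X_n$ of $\pi_n$ is $f_n$-periodic after iteration; Theorem \ref{thm-emmp-flip} will then follow via \cite[Lemma 6.2]{MZ18}. Let $W = W_n$ denote the normalized graph of the birational map $X_1 \dashrightarrow X_n$, as in Lemma \ref{lem-connected}. Since the MMP is $f$-equivariant through step $n-1$ by hypothesis, after replacing $f_1$ by a suitable iterate we obtain a surjective endomorphism $h \colon W \to W$ with $p_n \circ h = f_n \circ p_n$ and with every $p_n$-exceptional prime divisor $h$-invariant.

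Next I would localize at a singular point on $C$ and bootstrap the polarization across the exceptional fiber. By Lemma \ref{lem-ben}, choose $x_n \in C \cap \Sing(X_n)$. Lemma \ref{lem-connected} produces a connected reduced divisor $E \subseteq \Exc(p_n)$ with $p_n^{-1}(x_n) \subseteq E$ and a prime component $E_1 \subseteq E$ contracted to a point by $W \to X_i$ for some $i < n$. Lemma \ref{lem-eq-cartier} applied to this contraction forces $h|_{E_1}$ to be $q$-polarized for an integer $q > 1$ with $q^3 = \deg h$. The Connecting Lemma \ref{lem-connection}, applied inductively along the connected divisor $E$, then yields $h^* P = qP$ and $h|_P$ is $q$-amplified for every prime divisor $P \subseteq E$. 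As a consequence, $Z := p_n(\Exc(p_n)) \subseteq X_n$ is a closed subset of dimension at most $1$ with finitely many irreducible components (each $p_n$-exceptional prime divisor has image of dimension at most $1$, since otherwise $p_n$ would be an isomorphism at the generic point of that image), and $Z$ is $f_n$-invariant because $f_n \circ p_n = p_n \circ h$ and every prime of $\Exc(p_n)$ is $h$-invariant.

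The final step is to show $C \subseteq Z$; once this is established, $C$ is a union of finitely many $f_n$-invariant irreducible components of $Z$, so after a further iteration $C$ is $f_n$-invariant. To verify $C \subseteq Z$, let $\ell \subseteq C$ be a flipping curve with strict transform $\ell' \subseteq W$, and suppose toward contradiction that $\ell' \not\subseteq \Exc(p_n)$. The identity $h^*P = qP$ yields $(h^k)_*\ell' \cdot P = q^k (\ell' \cdot P)$ for any prime $P \subseteq E$; since $\ell'$ meets $E$ along $p_n^{-1}(x_n)$, the resulting unbounded growth of these intersection numbers should be contradicted by combining the ramification identity $f_n^*K_{X_n} = K_{X_n} - R_{f_n}$ with the smallness of the flipping contraction $\phi_n$ and the $K_{X_n}$-negativity of $\ell$. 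I expect this last intersection-theoretic step, which forces $\ell' \subseteq E$, to be the principal technical obstacle, requiring a delicate local analysis at the terminal singular point $x_n$ that the flipping curve passes through.
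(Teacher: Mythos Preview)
Your setup through the Connecting Lemma is correct and matches the paper. The gap is the final step: the inclusion $C \subseteq Z := p_n(\Exc(p_n))$ you aim to establish is \emph{false} under the contradiction hypothesis. Indeed, $Z$ is $f_n^{-1}$-invariant with only finitely many one-dimensional components, so if a flipping curve $\ell$ lay in $Z$ then all its forward images $\ell_m := f_n^m(\ell)$ would too, forcing the $\ell_m$ to be eventually periodic---precisely what you are assuming fails. The paper therefore uses the \emph{opposite} fact: the strict transforms $\gamma_m \subseteq W$ satisfy $\gamma_m \not\subseteq \Exc(p_n)$ yet $\gamma_m \cap E \neq \emptyset$ (the latter because $p_n^{-1}(x_n) \subseteq E$ and $E$ is $h$-invariant). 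Your proposed contradiction via the ramification identity, smallness of the flipping contraction, and $K$-negativity of $\ell$ cannot force $\ell' \subseteq E$, since these constraints give no control over the images $\ell_m$ for $m \gg 0$, which is where the argument must live.

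The correct endgame runs as follows. Writing $f_n^*\ell_{m+1} = a_m \ell_m$ for $m \gg 0$, the Cartier $p_n$-exceptional divisor $D$ of Lemma \ref{lem-eq-cartier} with $E \subseteq \Supp D$ and $h^*D = qD$ gives $D \cdot \gamma_m \in \Z_{>0}$ and $D \cdot \gamma_{m+1} = (a_m/q^2)\, D \cdot \gamma_m$, forcing $a_m \ge q^2 > 1$. Since $\Sing X_n$ is finite, purity of the branch locus then yields $\ell_m \subseteq \Supp R_{f_n}$ for $m \gg 0$; after iteration the closure $\overline{\bigcup_{m \gg 0} \ell_m}$ is a single prime divisor $P$ with $f_n^{-1}(P) = P$. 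Its strict transform $Q \subseteq W$ contains the $\gamma_m$ and hence meets $E$, so a second application of the Connecting Lemma gives $f_n^*P = qP$. But then $(f_n|_P)^*\ell_{m+1} = (a_m/q)\,\ell_m$ with $a_m/q \ge q > 1$, so $f_n|_P$ is ramified along infinitely many distinct curves, impossible in characteristic $0$. This last use of the Connecting Lemma---to the newly produced divisor $P$ rather than only to components of $E$---is the key idea your plan is missing.
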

\begin{proof}
Let $C$ be the flipping locus of $\pi_n$.
By Lemma \ref{lem-ben}, there exists some $x_n\in \Sing(X_n)\cap C$.
Let $\ell$ be an irreducible curve in $C$ containing $x_n$.
Denote by $\ell_m=(f_n)^m(\ell)$ when $m\ge 0$.
Note that $(f_n)^m(C)$ has the same number of irreducible components when $m\gg 1$.
Then $f_n^{-1}(\ell_{m+1})=\ell_m$ when $m\gg 1$ (cf.~\cite[Lemma 4.2]{MZ20a}).
Write $(f_n)^*\ell_{m+1}= a_m \ell_m$ with $a_m$ being a positive integer.
Suppose the contrary that the theorem is false, i.e., the $\ell_m$ are all different.

We apply Lemma \ref{lem-connected} and use the same notation there.
There exists a connected $p_n$-exceptional divisor $E$ such that $p_n^{-1}(x_n)\subseteq E$ 
and some prime divisor $E_1\subseteq E$ is contracted to a point in some $X_i$ with $i\le n$.
We may write $E=\sum\limits_{t=1}^s E_t$ such that the $E_t$ are prime divisors with $E_t\cap E_{t+1}\neq \emptyset$.
Note that $f_n$ lifts to $h_n:W_n\to W_n$.
After iteration, we may assume all $E_t$ are $h_n^{-1}$-invariant.
Note that $\dim p_n(\Exc(p_n))\le 1$ and $p_n(\Exc(p_n))$ is $f_n^{-1}$-invariant.
So $\ell_m\not\subseteq p_n(\Exc(p_n))$.
Let $\gamma_m$ be the strict transform of $\ell_m$ in $W_n$.
Note that $x_n\in \ell_0$ and $p_n^{-1}(x_n)\subseteq E$.
So $\gamma_0\cap E\neq\emptyset$ and hence  $\gamma_m\cap E\neq\emptyset$ for any $m\ge 0$.
By Lemma \ref{lem-eq-cartier}, $h_n|_{E_1}$ is $q$-polarized with $q=(\deg h)^{1/3}$.
By the Connecting Lemma \ref{lem-connection}, each $h_n|_{E_t}$ is $q$-amplified (by induction on $t \ge 1$).
In particular, we have 
$h_n^*E=qE$.

By Lemma \ref{lem-eq-cartier}, there is a $p_n$-exceptional effective Cartier divisor $D$ such that $E\subseteq \Supp D$ and $h_n^*D=qD$.
Since $\gamma_m\not\subseteq \Exc(p_n)$ and $\gamma_m\cap E\neq\emptyset$ for any $m\ge 0$, we have $D\cdot \gamma_m\in \mathbb{Z}_{>0}$ and
$$1\le D\cdot \gamma_{m+1}=\frac{1}{q^3}h_n^*D\cdot h_n^*\gamma_{m+1}=\frac{a_m}{q^2}D\cdot \gamma_m.$$
So $a_m\ge q^2>1$ for $m\gg 1$.
Note that $X_n$ has only finitely many singular points.
By the purity of branch locus, $\ell_m\subseteq \Supp R_{f_n}$ when $m\gg 1$.
After iteration, we may assume $P:=\overline{\bigcup_{m\gg 1} \ell_m}\subseteq \Supp R_{f_n}$ is a prime divisor of $X_n$.
Note that for any prime divisor $P'$ with $f_n(P')=P$, we have $P'\subseteq \overline{\bigcup_{m-1\gg 1} \ell_m}$.
So $f_n^{-1}(P)=P$.
Let $Q$ be the strict transform of $P$ in $W_n$.
Note that $\gamma_m\cap E\neq \emptyset$ implies $Q\cap E\neq \emptyset$.
By the Connection Lemma \ref{lem-connection}, $h_n^*Q=qQ$ and hence $f_n^*P=qP$.
Then we have 
$$(f_n|_P)^*\ell_{m+1}=\frac{a_m}{q} \ell_m,$$
where $\frac{a_m}{q}\ge q>1$ when $m\gg 1$.
So $f|_P$ is ramified along infinitely many curves, a contradiction as our base field has characteristic $0$.
\end{proof}

\section{Proof of Theorem \ref{mainthm-A}}

\begin{proof}[Proof of Theorem \ref{mainthm-A}] 
Suppose the contrary $f$ is $\delta$ primitive, and some birational MMP
$$\xymatrix{
X=X_1\ar[r]^{\pi_1}& X_2\ar@{-->}[r]^{\pi_2} &\cdots\ar@{-->}[r]^{\pi_{n-1}} &X_{n}\ar@{-->}[r]^{\pi_{n}} &X_{n+1}
}$$
has $\pi_i$ being $f_i:=f|_{X_i}$-equivariant after iteration for $i<n$ and $\pi_n$ not being $f_n$-equivariant even after iteration.
By Theorem \ref{thm-emmp-flip}, $\pi_n$ is a divisorial contraction.
By \cite[Lemma 6.2]{MZ18}, the exceptional divisor of $\pi_n$ is not $f_n^{-1}$-periodic.
By Theorem \ref{thm-E-Rf-notbig}, $\kappa_{f_n}(X,R_{f_n})<3$.
By Corollary \ref{cor-kappa-rf12}, $\kappa_{f_n}(X,R_{f_n})=0$ since ${f_n}$ is $\delta$-primitive.
Note that 
$$R_{f_n}=(\pi_{n-1}\circ\pi_{n-2}\circ\cdots\circ \pi_1)_*R_f.$$
So $\kappa_f(X, R_f)=0$ by Lemma \ref{lem-pushforward-fiitka}.
Hence the general fibre of $\phi$ is an elliptic curve, by Lemma \ref{lem-Rf=0}, 
Let $E_{X_n}$ be the exceptional divisor of $\pi_n$ and $E \subseteq X$ its strict transform.
By Lemma \ref{lem-fprime}, $E_{X_n}$ is $f_n$-prime, so $E$ is $f$-prime.
Let $E_i:=f^i(E)$ with $i\ge 0$.
Note that $E_i$ is covered by rational curves and $\kappa(X, E_i)=0$.
Let $\phi:X\dashrightarrow Y$ be the $f$-Iitaka fibration of $E$.
By Theorem \ref{thm-fiitaka2}, $\dim Y=2$, $\deg f|_Y=1$ and $\phi$ is almost well-defined.
By Corollary \ref{cor-kappa-rf0}, $f$ is quasi-\'etale and hence \'etale since $X$ is smooth.
In particular, $X$ admits a non-trivial \'etale cover.
So $X$ is not rationally connected (cf.~\cite[Corollary 4.18]{Deb01}).
If $X$ is non-uniruled, then we can always run smooth birational EMMP for non-uniruled $X$ (cf.~\cite[Section 4]{Fuj02}), a contradiction.
So, by \cite[Theorem 4.19]{Nak10}, there exists an $f$-equivariant special MRC fibration 
$$\psi:X\dashrightarrow Z$$
where $Z$ is non-uniruled normal projective variety  with $0<\dim Z<3$ and $\psi$ has rationally connected general fibres.
If $\phi \times \psi: X\dashrightarrow Y\times Z$ is generically finite, then $f$ is $\delta$-imprimitive by Lemma \ref{lem-imprimitive}, a contradiction.
Note that the induced map $X\dashrightarrow Y\times Z$ is generically finite when $\dim Z=2$.
So $\dim Z=1$ and $\psi$ factors through $\phi$.

Write $\psi=\sigma\circ \phi$ where $\sigma:Y\dashrightarrow Z$ is a dominant rational map.
Let $C$ be the image of $E_i$ in $Y$ with $i\gg 1$.
Then $C$ is a rational curve since the general fibre of $\phi$ is elliptic and $E_i$ is covered by rational curves.
Note that $Z$ is not a rational curve.
So $C$ and hence $E_i$ do not dominate $Z$.
Since $E_i$ is not $f^{-1}$-periodic, $Z$ is an elliptic curve and $\psi$ is a well-defined fibration.
So $E_i=\psi^{-1}(\psi(E_i))$ for $i\gg 1$ since $\psi$ has irreducible general fibre.
In particular, $E_i$ and hence $E=f^{-i}(E_i)$ are semi-ample, a contradiction.
\end{proof}

The following example is our first test on Theorem \ref{mainthm-A} before it is proved.
\begin{example}\label{example-emmp}
There exists a smooth rational surface $S$ admitting an automorphism $g$ of positive entropy with no $g$-periodic curves (cf.~\cite{Les21}).
There are infinitely many choices of MMPs from $S$, but none of them is $g$-equivariant even after iteration.
Let $E$ be an elliptic curve and $n_E:E\to E$ the multiplication map by $n$ with $n^2 > \delta_g$.
Let $X:=S\times E$ and $f=g\times n_E$.
We claim that $X$ admits no $f^{-1}$-periodic prime divisor.
Indeed, let $P$ be an $f^{-1}$-invariant prime divisor and $e\in E$ the identity element.
Then $P\cap S\times \{e\}$ is $(f|_{S\times\{e\}})^{-1}$-invariant and hence $P=S\times \{a\}$ for some $a$. 
However, $E$ admits no $(n_E)^{-1}$-periodic point.
So the claim is true.
Hence there are infinitely many choices of MMPs from $X$, but none of them is $f$-equivariant even after iteration.
Nevertheless, $f$ is still $\delta$-imprimitive, via the projection $X\to E$.
\end{example}

\section{Fano contraction to a curve}

In this section, we focus on the case of a Fano contraction $X \to Y$ to a curve (hence $\rho(X) = 2$). Theorem \ref{thm-fano31} is our main result.
By a Fano contraction of $X$, we always mean a Fano contraction of a $K_X$-negative extremal ray. We begin with some lemmas.

\begin{lemma}\label{lem-2fano}
Let $f:X\to X$ be a $\delta$-primitive endomorphism of a normal projective variety $X$.
Then $X$ admits at most one Fano contraction.
\end{lemma}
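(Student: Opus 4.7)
The plan is to argue by contradiction. Suppose $X$ admits two distinct Fano contractions $\tau_i \colon X \to Y_i$ ($i=1,2$), arising from distinct $K_X$-negative extremal rays $R_1 \ne R_2$ of $\NE(X)$. Since every Fano contraction is $f$-equivariant after iteration, I may replace $f$ by a common power so that both $\tau_i$ become simultaneously $f$-equivariant, inducing surjective endomorphisms $g_i \colon Y_i \to Y_i$ with $\tau_i \circ f = g_i \circ \tau_i$.

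First I would verify the dimension constraint $0 < \dim Y_i < \dim X$. The strict upper bound is built into the definition of a Fano contraction. For the lower bound, note that $\rho(Y_i) = \rho(X) - 1$; if $\dim Y_i = 0$ then $\rho(X) = 1$, so $\NE(X)$ has a unique extremal ray and $X$ admits only one Fano contraction, contradicting the hypothesis.

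The heart of the argument is to show that the product map
\[
\sigma = (\tau_1,\tau_2) \colon X \longrightarrow Y_1 \times Y_2
\]
is generically finite. If not, a general fiber of $\sigma$ would be positive-dimensional and hence contain an irreducible curve $C$; such $C$ is contracted by both $\tau_1$ and $\tau_2$, so $[C] \in R_1 \cap R_2$ inside $\NE(X)$. But $R_1$ and $R_2$ are distinct extremal rays of the pointed cone $\NE(X)$, so $R_1 \cap R_2 = \{0\}$, forcing $[C] = 0$, which is impossible for a curve. Hence $\sigma$ is generically finite.

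With these ingredients in hand, Lemma \ref{lem-imprimitive} applied to $X$ with $\phi = \tau_1$ and $\psi = \tau_2$ yields $\delta_f = \delta_{g_1}$ or $\delta_f = \delta_{g_2}$. Together with $0 < \dim Y_i < \dim X$ and the $f$-equivariance of the $\tau_i$, this exhibits $f$ as $\delta$-imprimitive, contradicting the hypothesis. The main subtlety is the generic finiteness of $\sigma$, but it falls out cleanly from the distinctness of the two extremal rays; handling the simultaneous $f$-equivariance of $\tau_1$ and $\tau_2$ is a routine matter of passing to a sufficiently divisible iterate of $f$.
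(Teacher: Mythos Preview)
Your proof is correct. Both you and the paper argue by contradiction, pass to an iterate so that the two Fano contractions $\tau_i:X\to Y_i$ become $f$-equivariant, and observe that $\dim Y_i>0$ (else $\rho(X)=1$). The divergence is in how $\delta$-imprimitivity is extracted. You show the product map $(\tau_1,\tau_2):X\to Y_1\times Y_2$ is generically finite (since any curve contracted by both lies in $R_1\cap R_2=\{0\}$) and then invoke Lemma~\ref{lem-imprimitive}. The paper instead argues directly on $\N^1(X)$: because $R_1\ne R_2$, the hyperplanes $\tau_1^*\N^1(Y_1)$ and $\tau_2^*\N^1(Y_2)$ (each of codimension one in $\N^1(X)$) are distinct and hence span $\N^1(X)$; since $f^*$ preserves both, its spectral radius $\delta_f$ equals $\max\{\delta_{f|_{Y_1}},\delta_{f|_{Y_2}}\}$. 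Your route reuses the product-formula machinery already packaged in Lemma~\ref{lem-imprimitive}, which is economical given the paper's toolkit; the paper's route is a touch more self-contained, avoiding the dynamical-degree product formula in favor of elementary linear algebra. Both hinge on the same core observation that distinct extremal rays force the two contractions to separate points generically (your version) or, dually, their pullbacks to span $\N^1(X)$ (the paper's version).
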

\begin{proof}
Suppose the contrary that there are two different Fano contractions $\pi:X\to Y$ and $\tau:X\to Z$.
After iteration, we may assume $\pi$ and $\tau$ are $f$-equivariant (cf.~\cite[Remark 6.3]{MZ18}).
Since $\rho(X)=\rho(Y)+1=\rho(Z)+1$, we have $\N^1(X)=\pi^*\N^1(Y)+\tau^*\N^1(Z)$.
Therefore, $\delta_f=\max\{\delta_{f|_Y},\delta_{f|_Z}\}$.
Note that $Y$ and $Z$ must be positive dimensional.
So $f$ is $\delta$-imprimitive.
\end{proof}

\begin{lemma}\label{lem-degg>1}
Consider the following equivariant dynamical systems of smooth projective varieties
$$\xymatrix{
f \acts X\ar[r]^{\tau} &Y\racts g
}$$
where $\dim X=3$, $\dim Y=1$, and $\tau$ is a Fano contraction. 
Suppose $\deg g>1$.
Then $f$ is $\delta$-imprimitive.
\end{lemma}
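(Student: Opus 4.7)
The plan is to invoke the dynamical product formula $\delta_f=\max\{\delta_{f|_\tau},\delta_g\}$ (Definition \ref{def-d1}) and split into two cases. Since $\tau$ is a Mori fiber contraction from a smooth threefold to a curve, $\rho(X)=\rho(Y)+1=2$ and a general fiber $F$ of $\tau$ is isomorphic to $\mathbb{P}^2$. If $\delta_f=\delta_g$, then $\tau$ itself is an $f$-equivariant dominant morphism with $0<\dim Y=1<3$ and $\delta_{f|_Y}=\delta_g=\delta_f$, so $f$ is $\delta$-imprimitive.

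Suppose instead $\delta_f>\delta_g$, so $d:=\delta_{f|_\tau}=\delta_f>\deg g$. Both eigenvalues $\deg g$ and $d$ of the $2\times 2$ action $f^*|_{\N^1(X)}$ then exceed $1$ (using $\deg g>1$), so $f$ is int-amplified; Perron-Frobenius on $\Nef(X)$ gives a nonzero nef $\R$-Cartier class $D$ with $f^*D\equiv dD$. Since the two eigenvalues differ, $D\notin\tau^*\N^1(Y)$ and $D|_F\ne 0$. After iteration, $f|_F$ is a polarized endomorphism of $\mathbb{P}^2$ of degree $d$, so $\deg(f|_F)=d^2$ and $\deg f=\deg g\cdot d^2$. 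Comparing
\[
d^3 D^3 \;=\; (f^*D)^3 \;=\; (\deg f)\,D^3 \;=\; \deg g\cdot d^2\,D^3
\]
and using $d\ne\deg g$ forces $D^3=0$; meanwhile $D^2\cdot\tau^*[\text{pt}]=(D|_F)^2>0$, so $D$ is nef of numerical dimension $2$ but not big.

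From here, the plan is to apply Theorem \ref{thm-fiitaka-polarized} to the $f$-Iitaka fibration $\phi_{f,D}\colon X\dashrightarrow Z$ of $D$: assuming the key claim $\kappa(X,D)\ge 1$, one has $\dim Z\in\{1,2\}$ (since $D$ is not big), and Theorem \ref{thm-fiitaka-polarized} then makes $f|_Z$ a $d$-polarized endomorphism, giving $\delta_{f|_Z}=d=\delta_f$; so $\phi_{f,D}$ witnesses $\delta$-imprimitivity. The main obstacle is thus to establish $\kappa(X,D)\ge 1$. The route I would try is to verify that $mD-K_X$ is nef and big for $m\gg 0$ by combining the $\tau$-ampleness of $-K_X$ (from the Fano contraction) with the fiberwise positivity of $D$, then apply Kawamata-Viehweg vanishing to get $h^0(X,mD)=\chi(X,mD)$, which grows like $m^2$ because $D^3=0$ while $D^2\not\equiv_w 0$, yielding $\kappa(X,D)=2$. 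Handling the interaction of $D$ and $K_X$ on all of $X$ (rather than on a single fiber), and avoiding vertical cancellations from $R_g$, will be the most delicate step.
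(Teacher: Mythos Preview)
Your setup through the identification of the nef eigen-divisor $D$ with $f^*D\equiv \delta_f D$, $D^3=0$, and $D^2\cdot F>0$ is correct (though the general fibre $F$ is only a smooth del Pezzo surface, not necessarily $\mathbb{P}^2$; this is harmless since $D|_F$ is still a positive multiple of $-K_F$, hence ample, and $\deg(f|_F)=\delta_f^2$ follows). The genuine gap is the step $\kappa(X,D)\ge 1$.

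Since $D$ is nef, not big, and $\tau$-ample, it spans the boundary ray of $\Nef(X)$ other than $\R_{\ge 0}[F]$. Write $-K_X\equiv \alpha D+\beta F$ with $\alpha>0$ (from $\tau$-ampleness of $-K_X$). Then $mD-K_X=(m+\alpha)D+\beta F$ is nef if and only if $\beta\ge 0$. So your Kawamata--Viehweg route works only when $\beta>0$; and then indeed Riemann--Roch gives $\chi(X,mD)\sim \tfrac{1}{4}\beta(D^2\cdot F)\,m^2$ and $\kappa(X,D)=2$. But when $\beta\le 0$ (equivalently, when $B:=D+aK_X$ is pseudo-effective for the unique $a>0$ making $B$ $\tau$-trivial), the vanishing argument breaks, and in fact $\kappa(X,D)=0$ can genuinely occur here: the analysis in \cite[Proposition 9.2]{MZ22} shows that in this situation one gets $-K_X\sim_{\Q} D$ with $D=\Supp R_f$ an $f^{-1}$-invariant prime divisor of Iitaka dimension $0$. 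So the inequality you need is simply false without further input.

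The paper avoids computing $\kappa(X,D)$ altogether. It splits on whether $B$ is pseudo-effective. If not (your $\beta>0$ case), the other extremal ray of $\NE(X)$ is $B$-negative and hence $K_X$-negative, yielding either a second Fano contraction (contradicting Lemma~\ref{lem-2fano}) or a birational contraction forcing $f$ to be $\delta_f$-polarized (contradicting $\delta_f\ne\delta_g$). If $B$ is pseudo-effective, the paper lands in ``Case TIR'' and invokes \cite[Theorem 6.6]{MMSZ23} to rule it out. Your route cannot bypass this last step: some result of comparable depth is needed to exclude $\kappa(X,D)=0$.
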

\begin{proof}
Suppose the contrary $f$ is $\delta$-primitive.
Then $\delta_f>\delta_g>1$ and $f$ is int-amplified (cf.~\cite[Theorem 1.1]{Men20}).
Then $-K_X$ is $\Q$-effective (cf.~\cite[Theorem 1.5]{MZ22}).
We apply the proof of \cite[Proposition 9.2]{MZ22}.
First, we can find some nef $\tau$-ample Cartier divisor $D$ such that $f^*D\equiv \delta_fD$, see the second paragraph of the proof of \cite[Proposition 9.2]{MZ22}.
Let $a>0$ be a rational number such that $B:=D+aK_X$ is $\tau$-trivial.

In the Situation 1 of the proof of \cite[Proposition 9.2]{MZ22}, i.e., 
when $B$ is pseudo-effective, we have $\kappa(X,-K_X)=0$ and $-K_X\sim_{\Q} D$ where $D=\Supp R_f$ is an $f^{-1}$-invariant prime divisor.
This is also Case TIR (cf.~\cite[Section 6]{MMSZ23}) and it does not occur by \cite[Theorem 6.6]{MMSZ23}.

So we are left with Situation 2 in the proof of \cite[Proposition 9.2]{MZ22}: $B$ is not pseudo-effective.
Note that $\rho(X)=2$, $B$ is not nef, and $B$ is $\tau$-trivial.
Then the extremal ray of $\NE(X)$ not contracted by $\tau$ is $B$-negative and hence $K_X$-negative since $D$ is nef.
Let $\phi:X\to Z$ be the $f$-equivariant contraction of the $B$-negative extremal ray.
If $\dim Z=\dim X$, then $f|_Z$ and hence $f$ and $g$ are $\delta_f$-polarized (cf.~\cite[Section 3]{MZ18}), a contradiction.
If $\dim Z<\dim X$, then $f$ is $\delta$-imprimitive by Lemma \ref{lem-2fano}.
\end{proof}

The following two lemmas collect the common part that we shall use in the proof of Lemmas \ref{lem-elliptic} and \ref{lem-rational}.

\begin{lemma}\label{lem-2comp}
Let $\pi:D\to Y$ be a surjective morphism where $D$ is a projective surface and $Y$ is a smooth projective curve.
Suppose the general fibre of $\pi$ is a simple normal crossing loop of $m$ smooth rational curves.
Let $Z$ be the union of curves in $\Sing(D)$ dominating $Y$.
Then $Z$ has at most $2$ irreducible components.
\end{lemma}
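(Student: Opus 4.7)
The plan is to pull the problem back to the normalization and analyse it via Stein factorization. Let $\nu\colon \widetilde{D}\to D$ be the normalization, so $\widetilde{D}$ is an irreducible normal projective surface, and let $\widetilde{D}\xrightarrow{\rho}Y'\xrightarrow{\mu}Y$ be the Stein factorization of $\pi\circ\nu$. Because the general fibre of $\pi$ is an SNC loop of $m$ smooth rational curves, its normalization is a disjoint union of $m$ smooth rational curves; therefore $\mu$ is a finite morphism of degree $m$, the general fibre of $\rho$ is $\mathbb{P}^{1}$, and $Y'$ is an irreducible (hence, after normalization, smooth) projective curve.

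Now set $\Sigma := \nu^{-1}(Z)\subseteq \widetilde{D}$. The core observation is that for a general $y\in Y$ and any $y'\in\mu^{-1}(y)$, the fibre $\rho^{-1}(y')$ is one component $\widetilde{C}_i$ of the normalization of $D|_y$, and the only points of $\widetilde{C}_i$ whose $\nu$-image lies in $\Sing(D)$ are the two gluing points arising from the two nodes $C_{i-1}\cap C_i$ and $C_i\cap C_{i+1}$ of the cycle. Indeed, at any smooth point of $D|_y$ the morphism $\pi$ is smooth (since $Y$ is a smooth curve and $\pi$ is flat over $y$ by equidimensionality of an irreducible surface over a smooth curve), so $D$ is smooth there; hence the only possible singularities of $D$ on $D|_y$ sit at the nodes of the cycle, and each such node contributes exactly one point to each of the two adjacent $\widetilde{C}_i$. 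Consequently $\Sigma\cap \rho^{-1}(y')$ has cardinality at most $2$, so $\rho|_{\Sigma}\colon \Sigma\to Y'$ is a finite surjective morphism of degree at most $2$.

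Since $Y'$ is irreducible, $\Sigma$ has at most $2$ irreducible components; hence $Z=\nu(\Sigma)$ also has at most $2$ irreducible components, proving the lemma. The delicate step I expect is the identification of the generic fibre of $\pi\circ\nu$ with the normalization of the generic fibre of $\pi$ (needed in order to conclude that each $\widetilde{C}_i$ carries exactly the two cycle-marked points and nothing else); this uses flatness of $\pi$ at the generic point of $Y$ together with the reducedness of its general fibre.
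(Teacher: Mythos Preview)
Your proof is correct and follows essentially the same route as the paper: normalize, take the Stein factorization, and bound the degree of $\nu^{-1}(Z)$ over the Stein base. The only cosmetic difference is that the paper counts $2m$ intersection points over $Y$ and then divides by $\deg(\overline{Y}\to Y)=m$, whereas you count directly over $Y'$ and obtain $\le 2$ per general fibre; your smoothness argument (flat plus smooth fibre implies smooth total space) is a cleaner justification than the paper's bare assertion that $\sharp\, Z\cap\nu(Q)=2$.
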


\begin{proof}
Let $\nu:\overline{D}\to D$ be the normalization.
Let $y\in Y$ be a general point.
Then the general fibre $\overline{D}_y$ of $\pi\circ \nu$ has $m$ connected (and irreducible) components.
Let $Q$ be an irreducible component of $\overline{D}_y:=(\pi\circ \nu)^{-1}(y)$.
Note that $\nu|_{Q}$ is isomorphic.
So we have
$$\nu^{-1}(Z)\cap \overline{D}_y=\bigsqcup_{Q\subseteq \overline{D}_y} (\nu^{-1}(Z)\cap Q)=\bigsqcup_{Q\subseteq \overline{D}_y} (\nu^{-1}(Z\cap \nu(Q))\cap Q).$$
Note that $\sharp Z\cap \nu(Q)=2$.
Then $\sharp \nu^{-1}(Z)\cap \overline{D}_y= 2m$ and hence $\deg (\pi|_D\circ \nu)|_{\nu^{-1}(Z)}=2m$.
Let $p_1:\overline{D}\to \overline{Y}$ and $p_2:\overline{Y}\to Y$ form the Stein factorization of $\pi\circ \nu$.
Note that $\deg p_2=m$.
Then $\deg (p_1)|_{\nu^{-1}(Z)}=2m/m=2$.
Since $\overline{Y}$ is irreducible, $\nu^{-1}(Z)$ and hence $Z$ have at most two irreducible components.
\end{proof}

\begin{lemma}\label{lem-fano-toric}
Consider the following equivariant dynamical systems of smooth projective varieties
$$\xymatrix{
f \acts X\ar[r]^{\tau} &Y\racts g
}$$
where $\dim X=3$, $\dim Y=1$, and $\tau$ is a Fano contraction. 
Suppose $f$ is $\delta$-primitive and non-isomorphic.
Then $\deg g=1$ and the following hold (after iterating $f$).
\begin{enumerate}
\item $B_f=\Supp R_f$ is an $f^{-1}$-invariant, nef and $\tau$-ample prime divisor 
such that $\kappa(X, B_f)=0$,  $(X, B_f)$ is lc and $f^*B_f = \delta_f B_f$.
\item Let $Z$ be the union of $f^{-1}$-periodic curves dominating $Y$. Then $Z$ is the union of curves $C\subseteq \Sing(B_f)$ with $C$ dominating $Y$.
\item $B_f\cap X_y$ is a simple normal crossing loop of $m$ smooth rational curves with $m\ge 3$ for general $y\in Y$.
In particular, $\deg \tau|_Z=m\ge 3$.
\item $Z$ has at most $2$ irreducible components.
\end{enumerate}
\end{lemma}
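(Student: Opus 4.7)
The plan is to assemble the lemma in three stages, leveraging the established tools.

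\emph{Stage 1 (degree of $g$ and class of $R_f$).} If $\deg g > 1$, Lemma \ref{lem-degg>1} forces $f$ to be $\delta$-imprimitive, a contradiction; so $\deg g = 1$ and $R_f = R_f^h$ by Lemma \ref{lem-reduced}. Using $\rho(X) = 2$ with eigenvectors $[F]$ (fibre class, eigenvalue $1$) and $[D]$ (eigenvalue $\delta_f > 1$ by the product formula and Lemma \ref{lem-surf-deg} on a general fibre), the identity $R_f = K_X - f^*K_X$ places $R_f$ in the image of $f^* - \id$, giving $R_f \equiv cD$ with $c \ge 0$. I would rule out $c = 0$ by noting that an \'etale $f$ of $\deg > 1$ would force the general fibre to be elliptic (via the fibrewise ramification argument of Lemma \ref{lem-Rf=0}), contradicting its being del Pezzo. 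Since $[D]$ is not $\tau$-trivial and $R_f$ is horizontal effective, $D$ (hence $B_f$) is $\tau$-ample and nef.

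\emph{Stage 2 (primality, equivariance, Iitaka dimension).} After iterating $f$, each prime component $P_i$ of $B_f$ is $f^{-1}$-invariant, and the eigenvalue analysis on $\N^1(X) = \R[F] \oplus \R[D]$ forces $[P_i] = b_i[D]$ (with $b_i > 0$) and $f^*P_i = \delta_f P_i$. If $B_f$ had two distinct components $P_1, P_2$, then $b_2 P_1 - b_1 P_2$ would be a numerically trivial $\Q$-Cartier divisor satisfying $f^*(b_2 P_1 - b_1 P_2) \equiv \delta_f(b_2 P_1 - b_1 P_2)$, hence $\sim_\Q 0$ by \cite[Proposition 6.3]{MMS+22}; this yields $\kappa(X, P_1) \ge 1$, and Theorem \ref{thm-fiitaka-polarized} then produces an $f$-equivariant polarized quotient with dynamical degree $\delta_f$, violating $\delta$-primitivity. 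Hence $B_f$ is prime and $f^*B_f = \delta_f B_f$. The same Theorem \ref{thm-fiitaka-polarized} applied to $B_f$ itself forces $\kappa(X, B_f) = 0$. Log canonicity of $(X, B_f)$ will follow from the codimension-$\le 2$ SNC structure established in Stage 3.

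\emph{Stage 3 (fibrewise toric structure and (2)--(4)).} Restricting to a general fibre, $f|_{X_y}: X_y \to X_{g(y)}$ is a finite surjective degree-$\deg f$ morphism between smooth del Pezzo surfaces of equal Picard number, totally ramified after iteration (since $B_f$ is prime), with branch $B_{f|_{X_y}} = B_f|_{X_y}$. Writing $K_X \equiv \alpha F + \beta D$ and using the eigenvalue equation $R_f = (\delta_f - 1)B_f$ yields $\beta = -b_1$, so restricting to the fibre (where $F|_{X_y} = 0$) gives $K_{X_y} + B_f|_{X_y} \equiv 0$, hence $\sim_\Q 0$ on the del Pezzo, making the target a log Calabi--Yau pair. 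Lemma \ref{lem-surf-toric} then identifies $(X_y, B_f|_{X_y})$ with a toric del Pezzo pair whose boundary is an SNC loop of $m \ge 3$ smooth rational curves, with $\Sing(B_f|_{X_y})$ the set of $f|_{X_y}$-unsplitting points. This SNC structure along general fibres delivers lc of $(X, B_f)$ (completing (1)), identifies $Z$ with the horizontal part of $\Sing(B_f)$ (giving (2) and $\deg \tau|_Z = m \ge 3$ in (3)), and finally Lemma \ref{lem-2comp} applied to $\tau|_{B_f}: B_f \to Y$ caps the number of irreducible components of $Z$ at $2$, proving (4).

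The main obstacle will be the primeness argument of Stage 2: lifting numerical to linear equivalence for the $\delta_f$-eigenvectors via \cite[Proposition 6.3]{MMS+22} and then invoking the $f$-Iitaka fibration (Theorem \ref{thm-fiitaka-polarized}) to produce a polarized quotient that contradicts $\delta$-primitivity.
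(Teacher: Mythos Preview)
Your overall strategy matches the paper's closely—same key inputs (Lemma \ref{lem-degg>1}, the eigenvalue analysis on $\N^1(X)$, Theorem \ref{thm-fiitaka-polarized}, Lemma \ref{lem-surf-toric}, Lemma \ref{lem-2comp})—but two steps are genuine gaps.

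\textbf{Ordering in Stage 2.} You assert that each component $P_i$ of $B_f$ is $f^{-1}$-invariant after iteration \emph{before} establishing $\kappa_f(X,R_f)=0$, but there is no a priori reason for this: in general $f^{-1}(B_f)$ can strictly contain $\Supp R_f$. The paper proceeds in the opposite order: it first shows $\kappa_f(X,R_f)=0$ (since $f^*R_f\equiv\delta_f R_f$, a positive-dimensional $f$-Iitaka base would be $\delta_f$-polarized by Theorem \ref{thm-fiitaka-polarized}, contradicting $\delta$-primitivity), then invokes Lemma \ref{lem-fkappa>0} to conclude $\Supp R_f$ is $f^{-1}$-periodic, and only then runs the primeness argument. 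Your ingredients are all present; only the order must change. Note also that your use of \cite[Proposition 6.3]{MMS+22} needs $f^*P_i=\delta_f P_i$ in $\Pic_{\Q}(X)$, not merely numerically, which again requires the $f^{-1}$-invariance first.

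\textbf{Log canonicity.} The claim that the fibrewise SNC structure ``delivers lc of $(X,B_f)$'' is incorrect: SNC along general fibres gives lc only over general $y\in Y$ and says nothing over special fibres. The paper instead cites \cite[Proposition 2.1]{Zha14}, a global result that $(X,D)$ is lc whenever $D$ is a reduced $f^{-1}$-invariant divisor for non-isomorphic $f$ on smooth $X$; this is not recoverable from the fibrewise picture. Global lc is moreover essential for the reverse inclusion in (2): the paper identifies horizontal curves in $\Sing(B_f)$ as lc centres via \cite[Lemma 2.29]{KM98} and then applies \cite[Lemma 2.10]{BH14} to conclude they are $f^{-1}$-periodic. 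Your unsplitting-point argument handles the forward inclusion $Z\subseteq\Sing(B_f)$, but you do not supply the converse, and the clean route goes through lc.
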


\begin{proof}
Since $f$ is $\delta$-primitive, $\delta_f>\delta_g$.
By Lemma \ref{lem-degg>1}, $\deg g=1$.
Let $V$ be the $1$-dimensional $\delta_f$-eigenspace of $f^*|_{\N^1(X)}$ (generated by a nef divisor).
The other eigenvalue of $f^*|_{\N^1(X)}$ is $\delta_g=1$.
Then $V=\text{Im} (f^*-\id)|_{\N^1(X)}$.
By the ramification divisor formula, 
$$R_f=f^*(-K_X)-(-K_X)\in V$$ and $f^*R_f\equiv \delta_f R_f$.
Note that $f$ is not polarized (or else, $\delta_f = \delta_g$, absurd).
So $R_f$ is not big even after any iteration (cf.~\cite[Proposition 1.1]{MZ18}).
Then $\kappa_f(X, R_f)<3$.

Let $y, y'\in Y$ be general points such that $g(y)=y'$.
We may assume that the fibres $X_{y'}$ and $X_y$ are smooth del pezzo surfaces with the same Picard number.
Note that $\deg f|_{X_y}=\deg f>1$.

(1) 
By \cite[Corollary 4.18]{Deb01}, $X_{y'}$ has no non-trivial \'etale cover.
By the purity of branch locus, $R_{f|_{X_y}}\neq 0$.
By the adjunction formula and the ramification divisor formula, $R_f|_{X_y}=R_{f|_{X_y}}\neq 0$.
So $R_f$ is $\tau$-ample and also an extremal ray of $\PE^1(X) = \Nef(X)$.

Suppose the contrary that $\kappa_f(X, R_f)>0$.
Then $\kappa(X, R_f)>0$ after iteration.
Let $\phi:X\dashrightarrow Z$ be the $f$-Iitaka fibration of $R_f$.
By Theorem \ref{thm-fiitaka-polarized}, $f|_Z$ is $\delta_f$-polarized.
In particular, $f$ is $\delta$-imprimitive, a contradiction.
So $\kappa_f(X, R_f)=\kappa(X,B_f)=0$ (cf.~Proposition \ref{prop-f*f*}).
By Lemma \ref{lem-fkappa>0}, each component of $B_f=\Supp R_f$ is $f^{-1}$-invariant after iteration.
By \cite[Proposition 2.1]{Zha14}, the pair $(X,B_f)$ is log canonical.

Suppose the contrary that $B_f$ contains at least two irreducible components $P_1$ and $P_2$.
Since $R_f$ is extremal in $\PE^1(X)$, $P_1\equiv tP_2$ for some rational number $t>0$.
Then $P_1-tP_2=\tau^*Q$ for some $\Q$-Cartier divisor $Q$ on $Y$ by the cone theorem (cf. \cite[Theorem 3.7]{KM98}).
Note that 
$g^*Q=\delta_f Q$ and $\delta_f>\delta_g$.
So $Q\equiv 0$ and hence $rQ\in \Pic^0(Y)$ for some integer $r>0$.
Note that $\delta_f^2>\delta_g$.
By \cite[Proposition 6.3]{MMS+22}, $rQ\sim_{\Q} 0$  and hence $P_1\sim_{\Q} tP_2$.
Then $\kappa(X,R_f)\ge \kappa(X, P_1)> 0$, a contradiction.
So $B_f$ is irreducible.

(2) 
By choosing $y$ general, we may assume 
$$(\Supp R_f)|_{X_y}=B_f|_{X_y}=B_f\cap X_y=\Supp R_{f|_{X_y}}.$$ 
Note that 
$$K_X+B_f=f^*(K_X+B_f)$$ and $\delta_f>1$.
So $K_X+B_f\in \tau^*\N^1(Y)$ is $\tau$-trivial.
By the adjunction formula, 
$$K_{X_y}+B_f\cap X_y=(K_X+B_f)|_{X_y}=(f|_{X_y})^*(K_{X_{y'}}+B_f\cap X_{y'})\equiv 0.$$
Note that $B_f\cap X_y$ has the same number of irreducible components for general $y$.
So $f|_{X_y}$ is totally ramified.
By \cite[Theorem 5.50]{KM98}, $(X_y, R_{f|_{X_y}})$ is log Calabi-Yau. 
By Lemma \ref{lem-surf-toric}, $(X_y, R_{f|_{X_y}})$ is further a toric pair and the unsplitting points of $f|_{X_y}$ are $\Sing(B_f\cap X_{y'})=\Sing(B_f)\cap X_{y'}$.
Let $C$ be an $f^{-1}$-periodic curve dominating $Y$.
Choosing $y, y'$ general, we may assume $\sharp C\cap X_y=\sharp C\cap X_{y'}$.
Then the points in $C\cap X_{y'}$ are $f|_{X_y}$-unsplitting and hence $C\cap X_{y'}\subseteq \Sing(B_f)\cap X_{y'}$.
So $C\subseteq \Sing(B_f)$.

On the other hand, each irreducible curve $C'$ (not necessarily dominating $Y$) of $\Sing(B_f)$ is an lc centre of $(X,B_f)$ (cf.~\cite[Lemma 2.29]{KM98}).
So $f^{-1}(C')=C'$ after iteration by \cite[Lemma 2.10]{BH14}.
Therefore, $Z$ is the union of of curves $C\subseteq \Sing(B_f)$ with $C$ dominating $Y$.

(3) Let $y\in Y$ be a general point.
By the classification of smooth toric Fano surfaces, the boundary $B_f\cap X_y$ is a simple normal crossing loop of $m$ smooth rational curves with $m\ge 3$.
So $\sharp Z\cap X_y=\sharp \Sing(B_f\cap X_y)=m\ge 3$.
Therefore, $\deg \tau|_Z\ge 3$.

(4) This follows from Lemma \ref{lem-2comp}.
\end{proof}


\begin{lemma}\label{lem-elliptic}
Consider the following equivariant dynamical systems of smooth projective varieties
$$\xymatrix{
f \acts X\ar[r]^{\tau} &Y\racts g
}$$
where $\dim X=3$, $Y$ is an elliptic curve, $\deg g=1$, and $\tau$ is a Fano contraction. 
Suppose $f$ is $\delta$-primitive and non-isomorphic.
Then $g=\id$ after iteration.
\end{lemma}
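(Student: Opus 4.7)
The plan is by contradiction: I would assume that no iterate of $f$ makes $g$ the identity and derive a structural contradiction. Since $\deg g=1$ and $Y$ is an elliptic curve, $g\in\Aut(Y)=Y\rtimes\Aut(Y,0)$ with $\Aut(Y,0)$ finite, so after iterating $f$ I may assume $g=t_a$ is a translation by some $a\in Y$; the persistence of $g\neq\id$ after all iterates forces $a$ to be non-torsion. The goal is then to contradict either the $\delta$-primitivity of $f$ or the Fano condition, using the highly constrained structure provided by Lemma \ref{lem-fano-toric}.

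Invoking Lemma \ref{lem-fano-toric}, I have that $B_f$ is an irreducible $f^{-1}$-invariant prime divisor with $(X,B_f)$ log canonical, $f^*B_f=\delta_f B_f$, and on each general fibre $B_f\cap X_y$ is a simple normal crossing cycle of $m\ge 3$ smooth rational curves. The locus $Z$ of $f^{-1}$-periodic curves dominating $Y$ coincides with the union of curves in $\Sing(B_f)$ dominating $Y$, has at most two irreducible components, and $\tau|_Z$ has degree $m$. Each component $C$ of $Z$ is $f$-invariant and $f|_C$ is an automorphism of $C$ (degree 1) lifting $g$. If some $C$ had genus $\ge 2$, then $\Aut(C)$ is finite, so $(f|_C)^n=\id$ for some $n$, and surjectivity of $\tau|_C$ would give $g^n=\id$, contradicting that $a$ is non-torsion. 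Hence every component of $Z$ is an elliptic curve, and moreover $\Sing(B_f)=Z$: an isolated component of $\Sing(B_f)\setminus Z$ would be an lc centre of $(X,B_f)$ (by \cite[Lemma 2.10]{BH14}) hence $f^{-1}$-periodic after iteration, and its image would be a finite $g$-periodic subset of $Y$, impossible for a non-torsion translation.

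Consequently every fibre of $B_f\to Y$ is a cycle of $m$ smooth rational curves. Let $\nu:\widetilde{B_f}\to B_f$ be the normalisation; then the Stein factorisation of $\widetilde{B_f}\to Y$ is $\widetilde{B_f}\to Y'\xrightarrow{\rho} Y$ with $\rho$ an étale cover of degree $m$ (hence $Y'$ elliptic by Hurwitz) and $\widetilde{B_f}\to Y'$ a $\mathbb{P}^1$-bundle. The lifted endomorphism $\widetilde f$ of $\widetilde{B_f}$ is non-isomorphic of degree $\delta_f>1$ and covers a non-torsion translation $g'$ on $Y'$ (since $\rho$ is an isogeny with finite kernel). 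Applying Theorem \ref{thm-dbbc-surf} to $\widetilde f$ on the ruled surface $\widetilde{B_f}$: the quasi-abelian case is excluded since $\widetilde{B_f}$ is uniruled, and the polarised case is excluded since the eigenvalue of $\widetilde f^*$ on the fibre class of $\widetilde{B_f}\to Y'$ is $1$ (while the other is $\delta_f>1$), forcing $\widetilde f$ to be $\delta$-imprimitive. The resulting auxiliary fibration must go to $\mathbb{P}^1$ (elliptic target is excluded because $\widetilde{B_f}$ is ruled), and combined with the $\mathbb{P}^1$-bundle structure it identifies $\widetilde{B_f}\cong Y'\times\mathbb{P}^1$ with $\widetilde f$ diagonal after iteration.

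The main obstacle is the final step: extending the second projection $p_2:\widetilde{B_f}\cong Y'\times\mathbb{P}^1\to\mathbb{P}^1$ to an $f$-equivariant rational map $X\dashrightarrow\mathbb{P}^1$ of dynamical degree $\delta_f$, which would contradict the $\delta$-primitivity of $f$. The natural strategy is to use the rigidity of the toric log Calabi-Yau pairs $(X_y,B_f\cap X_y)$: each fibre has a canonical $T$-action with $B_f\cap X_y$ its toric boundary, and the identification $\widetilde{B_f}\cong Y'\times\mathbb{P}^1$ selects a canonical one-parameter subgroup of $T$ whose fixed-point locus across fibres gives an $f$-invariant divisorial structure on $X$ from which the desired rational map can be built. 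Carrying out this globalisation carefully — reconciling the twisting by the $T$-torsor classifying $X\to Y$ with the diagonal structure on $\widetilde{B_f}$ — is the technical heart of the proof.
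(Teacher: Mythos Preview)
Your strategy diverges from the paper's and runs into a real obstruction at the end. The paper also assumes $g$ is a non-torsion translation and invokes Lemma~\ref{lem-fano-toric}, but then takes a completely different route: it performs successive \'etale base changes $X\times_Y C\to C$ along the $f^{-1}$-periodic curves $C$ (which are \'etale over $Y$ since $g$ has no periodic points, via \cite[Lemma 7.5]{CMZ20}) until every such curve becomes a section over the base. This destroys the Fano contraction, so Theorem~\ref{mainthm-A} is invoked to run an $f$-equivariant birational MMP back down to a smooth Fano contraction over the same elliptic curve; the argument checks that $f^{-1}$-periodic curves on the new model are still sections. Applying Lemma~\ref{lem-fano-toric} once more then gives an immediate numerical contradiction: $Z$ has at most $2$ components, each now a section, yet $\deg\tau|_Z\ge 3$.

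Your approach through the normalised surface $\widetilde{B_f}$ is creative, but the final step you flag is not a mere technicality---it is a genuine gap. Extending a rational map from the single prime divisor $B_f\subset X$ to all of $X$ requires producing global structure on $X$ from fibrewise data, and your suggestion of using ``canonical one-parameter subgroups of the fibrewise torus'' presupposes a coherent global torus action that is not given: the toric structure on each $X_y$ is canonical only up to the (non-trivial) automorphism group of the pair, and there is no mechanism to glue these compatibly with $f$. There are also smaller gaps earlier: the claim that $\widetilde{B_f}\to Y'$ is a $\mathbb{P}^1$-bundle needs smoothness of $\widetilde{B_f}$ and control of all special fibres (knowing $\Sing(B_f)=Z$ does not by itself rule out degenerate fibres of $B_f\to Y$), and the product decomposition $\widetilde{B_f}\cong Y'\times\mathbb{P}^1$ does not follow from $\delta$-imprimitivity alone. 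The paper's base-change-plus-MMP manoeuvre sidesteps all of this by forcing a contradiction at the level of coarse numerical invariants rather than attempting to construct a new fibration on $X$.
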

\begin{proof}
Suppose the contrary that $g$ is not an automorphism of finite order.
Since $Y$ is an elliptic curve, $g$ is a non-torsion translation.
Therefore,
$$\textbf{``any $f^{-1}$-periodic irreducible closed subvariety of $X$ will dominate $Y$''}$$ by \cite[Lemma 7.5]{CMZ20}.
In particular, any $f^{-1}$-periodic irreducible curve is an elliptic curve \'etale over $Y$.
Let $D:=B_f$ and $Z$ the union of $f^{-1}$-periodic irreducible curves.
By Lemma \ref{lem-fano-toric}, $D$ is a nef $\tau$-ample prime divisor, $Z$ is the union of curves dominating $Y$ in $B_f$, and $\deg \tau|_Z\ge 3$.
In particular, $D$ contains an $f^{-1}$-periodic irreducible curve.
Note that $f^*D=\delta_f D$ and $D^2\cdot X_y>0$ for any general fibre $X_y$ of $\tau$.
By the projection formula, $\deg f=\delta_f^2$.

{\it We do replacements of $\tau:X\to Y$ to make every $f^{-1}$-periodic curve a section over $Y$.}

Let $C$ be an $f^{-1}$-periodic curve such that $\deg \tau|_C>1$.
Then the natural embedding $C\hookrightarrow C\times_Y C$ implies that the preimage of $C$ in $X\times_Y C$ splits into at least two irreducible components.
Each time, we replace $\tau:X\to Y$ by $X\times_Y C\to C$, $D$ by $D\times_Y C$, and $f,g$ by the natural liftings.
So after a suitable \'etale base change, we may assume $\deg \tau|_C=1$ for every $f^{-1}$-periodic curve $C$. 
However, $\tau$ may no longer be a Fano contraction and $D$ may no longer be irreducible.
Note that $g$ is still a non-torsion translation.
So any $f^{-1}$-periodic irreducible closed subvariety of $X$ still dominates $Y$.
Then $D$ is the union $f^{-1}$-periodic irreducible closed subvarieties with dimension $\le 2$.
Moreover, each irreducible component of $D$ contains an $f^{-1}$-periodic irreducible curve.

$K_X$ is still not pseudo-effective since the general fibres over $Y$ are still toric surfaces.
By Theorem \ref{mainthm-A}, we can further run an $f$-equivariant (after iteration) birational MMP
$$X\dashrightarrow X_2\dashrightarrow\cdots \dashrightarrow X_r$$ and a Fano contraction $\tau_r:X_r\to Y'$.
Since $Y$ is an elliptic curve, the MMP is over $Y$.
Note that $X$ is still smooth and there is no $g^{-1}$-periodic point on $Y$.
So each $X_i\to X_{i+1}$ has to be a divisorial contraction with the exceptional divisor contracted to an elliptic curve \'etale over $Y$ (cf.~\cite[Lemma 7.5]{CMZ20}).
In particular, $X_r$ is still smooth.
Since $f$ is $\delta$-primitive, $\delta_{f|_{Y'}}<\delta_f$.
Note that $\deg f=\delta_f^2$.
If $\dim Y'=2$, then $\delta_f = \deg f|_{X_{y'}} = (\deg f)/(\deg f|_{Y'})$,
$\deg f|_{Y'}=\delta_f$ and $\iota_{f|_{Y'}}=\delta_f/\delta_{f|_{Y'}}>1$ by Lemma \ref{lem-surf-deg}.
In particular, $f|_{Y'}$ and hence $g$ are int-amplified, a contradiction.
So $Y'\to Y$ is isomorphic by noting that $Y'\to Y$ has connected fibres.
Let $C_r$ be an $f_r^{-1}$-periodic irreducible curve of $X_r$.
Let $V$ be the preimage of $C_r$ in $X$.
Since $V$ is $f^{-1}$-periodic, $V\subseteq D$.
Now either $V$ is a curve or $V$ is an irreducible component of $D$ containing an $f^{-1}$-periodic curve.
So $C_r$ is always dominated by some $f^{-1}$-periodic curve in $X$.
In particular, $\deg \tau_r|_{C_r}=1$.
Now, replacing $\tau$ by $\tau_r$, we finally have that $\tau$ is a Fano contraction as in Lemma \ref{lem-fano-toric}; meanwhile, $\deg \tau|_C=1$ holds for each $f^{-1}$-periodic curve $C$.

We now apply Lemma \ref{lem-fano-toric} to our new setting again.
So $\deg \tau|_Z\ge 3$ and $Z$ has at most $2$ irreducible components.
However, this is impossible since each irreducible component of $Z$ is only a section over $Y$.
\end{proof}

\begin{lemma}\label{lem-rational}
Consider the following equivariant dynamical systems of smooth projective varieties
$$\xymatrix{
f \acts X\ar[r]^{\tau} &Y\racts g
}$$
where $\dim X=3$, $Y$ is a rational curve, $\deg g=1$, and $\tau$ is a Fano contraction. 
Suppose $f$ is $\delta$-primitive and non-isomorphic.
Then $g=\id$ after iteration.
\end{lemma}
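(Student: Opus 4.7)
The plan is to argue by contradiction, following the strategy of Lemma \ref{lem-elliptic}. I will suppose that $g$ has infinite order after any iteration of $f$, and after a further iterate arrange $\text{Fix}(g^s)=\text{Fix}(g)$ for every $s \ge 1$; since $g \in \Aut(\mathbb{P}^1)$ is non-torsion, $|\text{Fix}(g)| \in \{1,2\}$ according as $g$ is parabolic or has two fixed points. Lemma \ref{lem-fano-toric} will then give $B_f$ as an $f^{-1}$-invariant, nef, $\tau$-ample prime divisor with $f^*B_f=\delta_f B_f$ and $(X,B_f)$ lc, general fibres $B_f \cap X_y$ being simple normal crossing loops of $m \ge 3$ smooth rational curves, and the union $Z$ of $f^{-1}$-periodic curves dominating $Y$ having at most two irreducible components with $\deg \tau|_Z = m \ge 3$.

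For each component $C \subseteq Z$, I will use the compatibility $\tau|_C \circ f|_C = g \circ \tau|_C$ together with $\deg f|_C = \deg g = 1$ to see that $f|_C \in \Aut(C)$ has infinite order, forcing the geometric genus of $C$ to be at most one. Moreover, the branch locus of $\tau|_C$ is a finite $g$-invariant subset of $Y$, hence contained in $\text{Fix}(g)$. A Riemann--Hurwitz computation for $C \to \mathbb{P}^1$ will then rule out $C$ being elliptic, since one would require $\deg R_{\tau|_C} = 2d$ but at most two totally ramified branch points can contribute only $\le 2(d-1)$. Hence every component of $Z$ is rational, and $\tau|_C$ is totally ramified over each of its branch points in $\text{Fix}(g)$.

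In the parabolic case $|\text{Fix}(g)|=1$, Riemann--Hurwitz on a rational $C$ with at most one branch point gives $2d-2 \le d-1$, forcing $d = \deg \tau|_C = 1$; so every component of $Z$ is a section, whence $\deg \tau|_Z \le 2 < 3$, contradicting Lemma \ref{lem-fano-toric}. For the two-fixed-point case I will mimic the base-change reduction of Lemma \ref{lem-elliptic}: whenever some $C \subseteq Z$ has $d = \deg \tau|_C > 1$, I replace $\tau \colon X \to Y$ by a smooth $f$-equivariant model of $X \times_Y C \to C$, obtaining a new contraction to $C \cong \mathbb{P}^1$ with new non-torsion automorphism $f|_C$; after finitely many such reductions every $f^{-1}$-periodic curve will be a section. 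I will then invoke Theorem \ref{mainthm-A} to run an $f$-equivariant birational MMP down to a new Fano contraction over a rational base, verify that the components of the new $Z$ arise as sections from the old periodic curves, and derive the same contradiction $\deg \tau|_Z \le 2 < 3$.

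The principal obstacle is the base-change step: unlike in Lemma \ref{lem-elliptic} where $C \to Y$ is \'etale and $X \times_Y C$ stays smooth, here $\tau|_C$ is ramified over $\text{Fix}(g)$ and $X \times_Y C$ may develop singularities along the fibres above the branch points. I will address this by first normalizing (which is $f$-equivariant by functoriality, cf.~\cite[Lemma 5.2]{CMZ20}) and then taking an $f$-equivariant terminalisation or desingularisation, checking at each stage that $\delta$-primitivity and the conclusions of Lemma \ref{lem-fano-toric} survive so that the iterative MMP argument remains valid.
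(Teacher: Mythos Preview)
Your parabolic case ($|\text{Fix}(g)|=1$) is fine and is implicitly handled in the paper as well. The gap is in the two-fixed-point case, where you try to transplant the base-change-and-iterate strategy of Lemma~\ref{lem-elliptic}. The paper's own Remark after Lemma~\ref{lem-rational} warns that the two proofs ``adopt quite different strategies'': the elliptic argument relies essentially on the fact that every $f^{-1}$-periodic closed subvariety dominates $Y$ (since a non-torsion translation on an elliptic curve has no periodic points), and this is exactly what fails over $\mathbb{P}^1$ once $g$ has fixed points $y_0,y_\infty$.

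Concretely, in the elliptic proof the smoothness of $X$ survives the \'etale base change, and after running MMP each step is forced to be a divisorial contraction to a curve \'etale over $Y$; this is how one shows $X_r$ stays smooth and that every $f_r^{-1}$-periodic curve in $X_r$ is dominated by one of the original periodic sections. In your rational setting the base change by $\tau|_C$ is ramified over $y_0,y_\infty$, so after normalizing and resolving you introduce exceptional divisors over those fibres; the subsequent MMP can now involve flips and divisorial contractions to points (the fibres over $y_0,y_\infty$ are periodic), so the output $X_r$ need not be smooth, and Lemma~\ref{lem-fano-toric} (which is stated for smooth $X$) is no longer available. You also give no mechanism for termination of the iteration: in the elliptic case the degree of $Z$ over $Y$ is controlled by the pullback of $D$, but after a resolution the ramification divisor and hence $B_{\widetilde f}$ can acquire new components, so the bookkeeping breaks down. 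Finally, even checking that $\delta$-primitivity persists on the cover (so that Theorem~\ref{mainthm-A} puts you in its case~(1)) is not automatic: $\delta$-primitivity on $X$ does not imply $\delta$-primitivity on a finite cover.

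The paper avoids all of this by doing a \emph{single} base change along $p\colon\widetilde{C}\cong\mathbb{P}^1\to Y$, never resolving $\widetilde{X}$ or running MMP again. Instead it shows that $(X,D+X_{y_0}+X_{y_\infty})$ is lc (using that each non-lc centre would be $f^{-1}$-periodic and comparing degrees on it with the polarized action on $X_{y_0}$), pulls this back to an lc pair $(\widetilde{X},\widetilde{D}+\widetilde{X}_{\widetilde y_0}+\widetilde{X}_{\widetilde y_\infty})$, and then uses Lemma~\ref{lem-2comp} to force $\widetilde{D}$ to be reducible (since $\widetilde{p}^{-1}(Z)$ has $\ge 3$ components). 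The contradiction comes from a local count: on one hand $\widetilde{p}|_{\widetilde X_{\widetilde y_0}}$ is an isomorphism over the generic points of $D\cap X_{y_0}$ (by the lc condition and Zariski's main theorem), so a general point of $D\cap X_{y_0}$ has a single preimage; on the other hand the distinct components $\widetilde D_i$ cut out distinct divisors on $\widetilde X_{\widetilde y_0}$, giving at least two preimages. This argument works directly on the possibly non-$\Q$-factorial $\widetilde{X}$ and sidesteps the smoothness and termination issues entirely.
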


\begin{proof}
Suppose the contrary that $g$ is an automorphism of infinite order.
Then $g$ has at most $2$ periodic points.
By Lemma \ref{lem-fano-toric}, $D:=B_f\neq 0$ is an $f^{-1}$-invariant prime divisor and $\kappa(X, D)=0$.
Let $Z$ be the union of $f^{-1}$-periodic curves dominating $Y$.
Then $\deg \tau|_Z\ge 3$ and $Z$ has at most $2$ irreducible components.
So there is an irreducible component $C$ of $Z$ such that $q:=\deg \tau|_C>1$.
Let $\widetilde{C}$ be the normalization of $C$ and $p:\widetilde{C}\to Y$ the induced finite surjective morphism with $\deg p=q>1$.
Let $\widetilde{X}$ the normalization of $X\times_Y \widetilde{C}$.
Note that we can not guarantee the smoothness or even $\Q$-factorial singularities for $\widetilde{X}$.

Now we have the following equivariant dynamical system:
$$\xymatrix{
\save[]+<2pc,0.22pc>*{\widetilde{f} \acts} \restore &\widetilde{X} \ar[r]^{\widetilde{p}}\ar[d]_{\widetilde{\tau}}  & X \ar[d]^{\tau}  &\save[]+<-2pc,0.1pc>*{\racts f} \restore\\
\save[]+<2pc,0.1pc>*{\widetilde{g} \acts} \restore&\widetilde{C}\ar[r]_p& Y&\save[]+<-2pc,0.05pc>*{\racts g} \restore
}$$
where $\widetilde{g}$ is an automorphism of infinite order and hence $\widetilde{C}$ has genus $\le 1$.
Note that $g$ has at least one fixed point by the Lefschetz trace formula.
So $\widetilde{g}$ has a periodic point.
Recall that an automorphism of an elliptic curve has finite order if it has a periodic point.
Therefore, we have $\widetilde{C}\cong \mathbb{P}^1$.
Since $\mathbb{A}^1$ and $\mathbb{P}^1$ are simply connected, the branch divisor $B_p$ contains at least $2$ points.
On the other hand, $B_p$ is $g$-invariant and $g$ has at most $2$ periodic points.
So after choosing a suitable coordinate of $Y$, we may assume $B_p=y_0+y_{\infty}$ where $y_0=[0:1]$ and $y_{\infty}=[1:0]$.
After iteration, we may write $g([x_0:x_1])=[ax_0:x_1]$ for some non-zero $a$ which is not a root of unity.
By choosing a suitable coordinate of $\widetilde{C}\cong Y$, we may write $p([x_0:x_1])=[x_0^q:x_1^q]$ and $\widetilde{g}([x_0:x_1])=[a^{1/q}x_0:x_1]$ satisfying $g\circ p=p\circ\widetilde{g}$.
Note that 
$$\widetilde{C}\times_Y \widetilde{C}=\{([a_0:a_1],[b_0:b_1])\in \widetilde{C}\times \widetilde{C}\,|\, [a_0^q:a_1^q]=[b_0^q:b_1^q]\}$$ and  $a_0^qb_1^q-b_0^qa_1^q=\prod_{i=1}^q(a_0b_1-\xi_q^ib_0a_1)$ where $\xi_q$ is the primitive $q$-th root of unity.
So $\widetilde{C}\times_Y \widetilde{C}$ has exactly $q$ irreducible components.
Recall that $\deg \tau|_Z\ge 3$.
Then $\widetilde{p}^{-1}(Z)$ has at least $3$ irreducible components.

Let $y\in Y$ and $P$ an irreducible component of $\tau^{-1}(y)$.
Since $X$ is smooth, P is Cartier.
By the cone theorem (cf.~\cite[Theorem 3.7]{KM98}), $P=\tau^*y$.
So each fibre of $\tau$ is irreducible and reduced.
Note that $\widetilde{p}^*X_{y_0}=\widetilde{\tau}^*p^*y_0=q\widetilde{\tau}^*y_0$.
Then $\widetilde{\tau}^*y_0$ is irreducible and reduced by noting that $\deg \widetilde{p}=q$.
In particular, $\deg \widetilde{p}|_{\widetilde{X}_{y_0}}=1$.
The same argument holds for $y_{\infty}$.
Note that $\widetilde{p}$ is \'etale outside $X_{y_0}$ and $X_{y_\infty}$.
So $\widetilde{\tau}$ has irreducible and reduced fibres.

Let $V$ be an irreducible component of the non-lc locus of the pair $(X, D+X_{y_0}+X_{y_\infty})$.
By Lemma \ref{lem-fano-toric}, $(X,D)$ is log canonical.
Then $V\subseteq X_{y_0}\cup X_{y_\infty}$ and $\dim V\le 1$.
By \cite[Theorem 1.2]{BH14}, after iteration, $f^{-1}(V)=V$ and $\deg f|_V=\deg f\ge \deg f|_{X_{y_0}}$.
Note that $D|_{X_{y_0}}$ is ample and $(f|_{X_{y_0}})^*D|_{X_{y_0}}=\delta_f D|_{X_{y_0}}$.
So $f|_{X_{y_0}}$ and hence $f|_V$ are $\delta_f$-polarized.
Then $\deg f|_{X_{y_0}}>\deg f|_V$, a contradiction.
Therefore, $(X, D+X_{y_0}+X_{y_\infty})$ is log canonical.
In particular, $X_{y_0}$ is smooth at every ($1$-dimensional) generic point $\eta$ of $D\cap X_{y_0}$ by \cite[Lemma 2.29]{KM98}.
Then $\widetilde{p}|_{\widetilde{X}_{y_0}}$ is isomorphic over $\eta$ by the Zariski main theorem.
In particular, $\widetilde{p}^{-1}(x)$ consists of only $1$ point when $x$ is a general point of $D\cap X_{y_0}$, and we shall derive a contradiction based on this. 

Let $\widetilde{D}:=\widetilde{p}^*D=\widetilde{p}^{-1}(D)$.
Note that $K_{\widetilde{X}}+\widetilde{D}+\widetilde{X}_{y_0}+\widetilde{X}_{y_\infty}=\widetilde{p}^*(K_X+D+X_{y_0}+X_{y_\infty})$.
So the pair $(\widetilde{X}, \widetilde{D}+\widetilde{X}_{y_0}+\widetilde{X}_{y_\infty})$ is log canonical by \cite[Proposition 5.20]{KM98}.
Note that $\widetilde{p}^{-1}(Z)\subseteq \Sing(\widetilde{D})$ since $\widetilde{p}$ is \'etale outside $\widetilde{X}_{y_0}\cup \widetilde{X}_{y_\infty}$.
Applying Lemma \ref{lem-2comp} to $\widetilde{D}\to \widetilde{C}$, we have $\widetilde{D}$ reducible since $\widetilde{p}^{-1}(Z)$ has at least $3$ irreducible components.
Write $\widetilde{D}=\sum\limits_{i=1}^n \widetilde{D}_i$ where $\widetilde{D}_i$ is irreducible and $n\ge 2$.
Let $\ell_i:=\widetilde{D}_i\cap \widetilde{X}_{y_0}$.
Note that $\widetilde{p}(\ell_i)=D\cap X_{y_0}$. 
Then $\widetilde{X}$ is smooth at every generic point of $\ell_i$ (cf.~\cite[Lemma 2.51]{KM98}).
By \cite[Lemma 2.29]{KM98} again, $\ell_i$ and $\ell_j$ have no common irreducible component for $i\neq j$.
Let $x\in D\cap X_{y_0}$ be a general point.
Then $\widetilde{p}^{-1}(x)$ consists of at least $2$ points, a contradiction.
\end{proof}

\begin{theorem}\label{thm-fano31}
Consider the following equivariant dynamical systems of smooth projective varieties
$$\xymatrix{
f \acts X\ar[r]^{\tau} &Y\racts g
}$$
where $\dim X=3$, $\dim Y=1$, and $\tau$ is a Fano contraction of some $K_X$-negative extremal ray. 
Suppose $f$ is $\delta$-primitive and non-isomorphic.
Then $g=\id$ after iteration.
\end{theorem}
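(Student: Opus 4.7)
The plan is to reduce the statement to the two lemmas already proved in this section, namely Lemma \ref{lem-elliptic} and Lemma \ref{lem-rational}, after first handling the trivial high-genus case. The first step is to invoke Lemma \ref{lem-degg>1}: since $f$ is $\delta$-primitive, the conclusion of that lemma (that $\delta$-imprimitivity would follow from $\deg g > 1$) forces $\deg g = 1$. Therefore $g$ is an automorphism of the smooth projective curve $Y$.

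The next step is to split into three cases according to the genus of $Y$. If the genus of $Y$ is at least $2$, then $\Aut(Y)$ is a finite group, so $g$ has finite order and $g^s = \id$ for some $s > 0$, giving the conclusion immediately after iteration. If $Y$ is an elliptic curve, Lemma \ref{lem-elliptic} applies directly (its hypotheses are exactly those of our theorem together with $\deg g = 1$, which we have just established) and yields $g = \id$ after iteration. If $Y \cong \mathbb{P}^1$, Lemma \ref{lem-rational} applies and gives the same conclusion.

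Since these three cases exhaust all possibilities for the smooth projective curve $Y$, assembling them completes the proof. There is essentially no obstacle in the argument itself; the theorem is simply the packaging of the genus-by-genus analysis already carried out. The genuine work has been done earlier in the section: Lemma \ref{lem-fano-toric} set up the toric/log Calabi-Yau structure of the general fiber together with the key inequality $\deg \tau|_Z \geq 3$, and Lemmas \ref{lem-elliptic} and \ref{lem-rational} exploited this structure in the two respective geometries (an \'etale base change trick for the elliptic case, and the fixed-point/branch locus analysis of $g$ on $\mathbb{P}^1$ for the rational case) to derive contradictions when $g$ has infinite order.
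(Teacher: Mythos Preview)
Your proposal is correct and follows essentially the same approach as the paper's proof: invoke Lemma \ref{lem-degg>1} to get $\deg g = 1$, note that if $g$ has infinite order then $Y$ must be rational or elliptic, and apply Lemmas \ref{lem-elliptic} and \ref{lem-rational} respectively. The paper's proof is just a terse version of what you wrote.
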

\begin{proof}
Since $f$ is $\delta$-primitive, $\deg g=1$ by Lemma \ref{lem-degg>1}.
Suppose $g$ is not an automorphism of finite order.
Then $Y$ is either rational or elliptic.
The theorem follows from Lemmas \ref{lem-elliptic} and \ref{lem-rational}.
\end{proof}

\begin{remark}
The proofs of Lemmas \ref{lem-elliptic} and \ref{lem-rational} adopt quite different strategies.
The proof of Lemma \ref{claim-pxp} is similar to the proof of \cite[Theorem 6.6]{MMSZ23}, which essentially makes use of ``$f^{-1}$-periodic subvarieties must dominate $Y$'' to keep the smoothness (or at least $\Q$-factorial singularities) of $X$ after base change and EMMP.
This phenomenon no longer holds in Lemma \ref{lem-rational} and we make a big effort to control the singularities of $X$ after base change.
\end{remark}

\section{Proof of Theorem \ref{mainthm-B}}

\begin{theorem}\label{thm-kappa<0}
Let $f:X\to X$ be a $\delta$-primitive non-isomorphic surjective endomorphism of a smooth projective threefold $X$ with Kodaira dimension $\kappa(X)<0$.
Then, after iteration, $f$ is either polarized or strongly imprimitive.
\end{theorem}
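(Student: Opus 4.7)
The plan is to run an $f$-equivariant minimal model program and classify the terminating Fano contraction by the dimension of its base. By Theorem~\ref{mainthm-A}, since $f$ is $\delta$-primitive, every birational MMP on $X$ is $f$-equivariant after iterating $f$; since $\kappa(X)<0$, such an MMP terminates at a Fano contraction $\tau:X'\to Y$ with $X'$ being $\Q$-factorial terminal and $0\le\dim Y\le 2$. Writing $\sigma:X\dashrightarrow X'$ for the composite MMP, the inverse $\sigma^{-1}$ contracts no divisors, and $f|_{X'}$ is $\delta$-primitive since $\sigma$ is birational.

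I would then analyze the cases by $\dim Y$. If $\dim Y=1$, Theorem~\ref{thm-fano31} immediately yields $f|_Y=\id$ after iteration, so $f$ is strongly imprimitive. If $\dim Y=2$, I would rule this out: by Lemma~\ref{lem-2fano}, $\tau$ is the unique Fano contraction on $X'$, and if $\tau$ is in addition the only $K_{X'}$-negative extremal contraction, Corollary~\ref{cor-fano-n-1} forces $f$ to be $\delta$-imprimitive, contradicting the hypothesis; otherwise, any other $K_{X'}$-negative extremal contraction is birational and $f$-equivariant after iteration, so we continue the MMP. By termination of the 3-fold MMP, this iteration stops either at a Fano contraction meeting the uniqueness condition of Corollary~\ref{cor-fano-n-1} (yielding the same contradiction) or at one with $\dim Y\in\{0,1\}$, reducing to a previously handled case.

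If $\dim Y=0$, then $\rho(X')=1$, hence $f|_{X'}$ is $q$-polarized with $q=\delta_f>1$. To conclude that $f$ on $X$ itself is polarized, I would lift the polarization inductively along the MMP steps $X=X_1\dashrightarrow\cdots\dashrightarrow X_r=X'$. For each $f$-equivariant divisorial contraction $\pi_i:X_i\to X_{i+1}$ with $f|_{X_{i+1}}$ being $q$-polarized, iteration of $f$ makes the $\pi_i$-exceptional prime divisor $E_i$ invariant; Lemma~\ref{lem-eq-cartier} together with the Connecting Lemma~\ref{lem-connection} forces $f|_{X_i}^* E_i = q E_i$. Then for small $\epsilon>0$ the class $\pi_i^* H_{i+1} - \epsilon E_i$ is an ample $q$-eigenvector of $f|_{X_i}^*$, so $f|_{X_i}$ is $q$-polarized. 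Flips are handled similarly via the natural identification of $\N^1$ across a flip (which is an isomorphism in codimension one), using uniqueness of the $q$-eigenvector to transport ampleness.

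The main obstacle is this lifting step. One must verify at each divisorial contraction that the hypotheses of the Connecting Lemma can be arranged so that the exceptional divisor $E_i$ inherits the $q$-amplified structure from below, forcing its $f^*$-eigenvalue to be exactly $q$; and in the flip case, one must ensure that the $q$-eigenvector remains ample on the flip source, where the ample cone differs from that of the target — this will require exploiting the rigidity of the (unique up to scalar) $q$-eigenvector and its position in the interior of the nef cones on both sides.
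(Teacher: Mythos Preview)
Your case analysis by $\dim Y$ is the right shape, but there is a genuine gap in the $\dim Y=1$ case. You invoke Theorem~\ref{thm-fano31} directly on $X'$, but that theorem is stated for \emph{smooth} projective varieties, while the output $X'$ of the MMP is only $\Q$-factorial terminal. The proof of Theorem~\ref{thm-fano31} (via Lemma~\ref{lem-fano-toric}) genuinely uses smoothness: it needs the general fibres $X_y$ to be smooth del Pezzo surfaces so that Lemma~\ref{lem-surf-toric} applies and the boundary $B_f\cap X_y$ is a toric simple normal crossing loop. The paper handles this by first proving a nontrivial claim that $X_n$ is smooth: using $\rho(X_n)=2$ and the projection formula to compute $\deg f_n=\delta_f^2\cdot\delta_g$, it shows that no exceptional divisor of the MMP can be contracted to a point (otherwise the polarized eigenvalue $a$ on that divisor would satisfy $\delta_g<a<\delta_f$ and force a contradiction via the pullback from $Y$), hence by Mori's classification each $X_i$ stays smooth. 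You need this argument or an equivalent one before you can appeal to Theorem~\ref{thm-fano31}.

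For $\dim Y=0$, your lifting scheme via Lemma~\ref{lem-eq-cartier} and the Connecting Lemma is overcomplicated and not fully set up: the Connecting Lemma requires as input an $h^{-1}$-invariant divisor $Q$ on which $h|_Q$ is already $q$-amplified with $q^3=\deg h$, and you have not identified such a $Q$ at each step. The paper bypasses this entirely by citing \cite[Corollary 3.12]{MZ18}, which says that polarizedness of $f_n$ lifts to $f$ along any equivariant birational map; this handles both divisorial contractions and flips uniformly and removes the ``main obstacle'' you flag. Your treatment of $\dim Y=2$ is fine and essentially matches the paper once you arrange (as the paper does, by termination of flips) that $X_n$ admits no further birational $K$-negative extremal contraction, so that $\tau$ is the unique such contraction and Corollary~\ref{cor-fano-n-1} applies directly.
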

\begin{proof}
Note that $K_X$ is not pseudo-effective.
So we can run the following MMP
$$\xymatrix{
X_1\ar[r]^{\pi_1}& X_2\ar@{-->}[r]^{\pi_2} &\cdots\ar@{-->}[r]^{\pi_{n-2}} &X_{n-1}\ar@{-->}[r]^{\pi_{n-1}} &X_n\ar[r]^\tau&Y
}$$
where $\pi_i$ is either a divisorial contraction or a flip and $\tau$ is a Fano contraction.
By the termination of flips (cf.~\cite{Sho96}), we may assume that $X_n$ admits no $K_X$-negative extremal birational contraction.
Since $f$ is $\delta$-primitive, the MMP is $f$-equivariant after iteration by Theorem \ref{mainthm-A}.
Denote by $f_i:=f|_{X_i}$ and $g:=f|_Y$.
If $\dim Y=0$, then $f_n$ and hence $f$ are polarized (cf.~\cite[Corollary 3.12]{MZ18}).
So we may assume $0<\dim Y<3$.
Since $f$ is $\delta$-primitive, $\delta_f=\delta_{f_i}>\delta_g$.
By Lemma \ref{lem-2fano}, $X_n$ has only one Fano contraction which is $\tau$.
Then there is only one $K_{X_n}$-negative extremal ray.
If $\dim Y=2$, then $f$ is $\delta$-imprimitive by Corollary \ref{cor-fano-n-1} since $\pi_i^{-1}$ does not contract divisors.
This is a contradiction.
So $\dim Y=1$ and the Picard number $\rho(X_n)=2$.
Write $f_n^*D_+\equiv \delta_f D_+$ for some nef $\tau$-ample $D_+\in \N^1(X_n)$ and $f_n^*D_-\equiv \delta_g D_-$ for some $D_-\in \tau^*(\Amp(Y))$.
Then $D_+^2\cdot D_->0$ and hence $\deg f_n=\delta_f^2\cdot\delta_g$ by the projection formula.

We claim that $X_n$ is smooth.
Note that $\pi_1$ is a divisorial contraction by \cite[Theorem 3.3]{Mor82}.
Let $E$ be the exceptional divisor of $\pi_1$.
Write $f^*E=aE$.
If $\pi_1(E)$ is a point, then $E^3<0$ (cf.~\cite[Lemma 2.62]{KM98}) and $\deg f=a^3=\deg f_n=\delta_f^2\cdot \delta_g$.
In particular, $\delta_g<a<\delta_f$.
Moreover, $f|_E$ is $a$-polarized since $-E|_E$ is ample.
Let $W$ be the normalization of the graph of $X\dashrightarrow X_n$ and $h:W\to W$ the lifting of $f$.
Let $p_X:W\to X$ and $p_Y:W\to Y$ be the projections.
Let $E'$ be the strict transform of $E$ in $W$.
Then $h|_{E'}$ is $a$-polarized.
If $p_Y(E')=Y$, then $g$ is also $a$-polarized (cf.~\cite[Theorem 1.3]{MZ18}) and hence $a=\delta_g$, absurd.
So $y:=p_Y(E')$ is a point and $g^{-1}(y)=y$ by \cite[Lemma 7.5]{CMZ20}.
Let $D:=p_Y^*y$.
Then $E'\subseteq \Supp D$.
Note that $g^*y=\delta_g y$.
Then $h^*D=\delta_g D$, a contradiction because $h^*E'=aE'$.
So $\pi_1(E)$ is a curve.
By \cite[Theorem 3.3]{Mor82}, $X_2$ is smooth and $\pi_2$ is also a divisorial contraction.
Repeating the above argument for the subsequent steps, the claim is proved.

The theorem then follows from Theorem \ref{thm-fano31}.
\end{proof}

\begin{proof}[Proof of Theorem \ref{mainthm-B}]
(1) is just Theorem \ref{thm-kappa<0} and (3) follows from \cite[Theorem A]{NZ09}.

For (2), by \cite[Main Theorem (A), Case 3, Page 61]{Fuj02}, an \'etale cover $\widetilde{X}$ of $X$ is either an abelian veriety or $\widetilde{X}\cong S\times C$ where $S$ is a K3 surface and $C$ is an elliptic curve.
For the first case, $f$ is quasi-abelian.
Indeed, we can choose the cover suitably, called the Albanese closure (cf.~\cite[Lemmas 2.12]{NZ10}), 
such that $f$ lifts to a surjective endomorphism $\widetilde{f}:\widetilde{X}\to \widetilde{X}$; further, in the second case, 
$\widetilde{f}=g\times h$ splits as an automorphism $g:S\to S$ and a non-isomorphic surjective endomorphism $h:C\to C$ (cf.~\cite[Proposition 3.5]{NZ10} and \cite[Proposition 7.4]{Men23}).
By Lemma \ref{lem-imprimitive}, $\widetilde{f}$ is $\delta$-imprimitive.
\end{proof}

\section{Applications: Proofs of Theorems \ref{mainthm-C}, \ref{thm-sand} and \ref{mainthm-D}}

As applications of the EMMP Theorem \ref{mainthm-A} and the structure Theorem \ref{mainthm-B}, we prove Theorem \ref{mainthm-C} for the Kawaguchi-Silverman conjecture, Theorem \ref{thm-sand} for the sAND conjecture, and Theorem \ref{mainthm-D} for the Zariski dense orbit conjecture.

\begin{proof}[Proof of Theorem \ref{mainthm-C}]
Note that KSC holds for projective curves and surfaces and we may assume $f$ is weakly primitive and not quasi-abelian (cf.~Remark \ref{rmk-ksc}).
By Lemma \ref{lem-ksc-iff}, we may assume $f$ is strongly $\delta$-primitive.
By Theorem \ref{mainthm-B}, $f$ is polarized after iteration.
Note that KSC holds for polarized endomorphisms (cf.~Remark \ref{rmk-ksc}).
\end{proof}

We generalize the KSC Conjecture \ref{conj-KSC} to sAND Conjecture \ref{conj-sand} (cf.~\cite[Conjecture 1.3]{MMSZ23}).

\begin{definition}\label{def-sand}
Let $X$ be a projective variety and $f$ a surjective endomorphism on $X$ over a number field $K$ with a fixed algebraic closure $\overline{K}=\overline{\mathbb{Q}}$.
Let $d > 0$. 
Set
$$X(d)=X(K,d)=\{ x \in X(L) \mid K \subseteq L \subseteq \overline K,\ [L:K] \leq d\},$$
$$Z_{f}=Z_f( \overline{K})=\{ x \in X(\overline{K}) \mid  \alpha_f(x) < \delta_f \},$$
$$Z_{f}(d)=Z_f(K, d)=Z_f( \overline{K})\cap X(K,d).$$
\end{definition}

\begin{conjecture}[{\bf small Arithmetic Non-Density = sAND}]\label{conj-sand} 
Let $f:X\to X$ be a surjective endomorphism of a projective variety $X$ defined over a number field $K$.
Then the set $Z_f(K, d)$ is not Zariski dense in $X_{\overline{K}} = X \times_K \overline{K}$
for any positive constant $d>0$.
\end{conjecture}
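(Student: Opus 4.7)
The plan is to prove Conjecture \ref{conj-sand} by reducing it, via the Dream Building Blocks Conjecture \ref{mainconj}, to four tractable cases, and then handling each. Because $\alpha_{f^s}(x)=\alpha_f(x)^s$ and $\delta_{f^s}=\delta_f^s$, the set $Z_f$ is invariant under iteration, so we may replace $f$ by $f^s$ freely. Two general descent lemmas form the backbone. First, if $\widetilde{\pi}:\widetilde{X}\to X$ is a finite surjective morphism to which $f$ lifts as $\widetilde{f}$, then $Z_f(K,d)\subseteq \widetilde{\pi}(Z_{\widetilde{f}}(\widetilde{K},d\cdot\deg\widetilde{\pi}))$ for a fixed finite extension $\widetilde{K}/K$, so sAND for $\widetilde{f}$ transfers to $f$. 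Second, if $\pi:X\dashrightarrow Y$ is $f$-equivariant with $\delta_{f|_Y}=\delta_f$ and $0<\dim Y<\dim X$, then $\alpha_f(x)\ge \alpha_{f|_Y}(\pi(x))$ wherever $\pi$ is regular, while $\alpha_f(x)\le\delta_f=\delta_{f|_Y}$; hence $\pi(Z_f(K,d))$ is contained in $Z_{f|_Y}(K,d)$ up to the indeterminacy locus of $\pi$, and sAND for $f|_Y$ plus induction on $\dim X$ yields sAND for $f$.

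With these reductions, we examine each of the four cases of Conjecture \ref{mainconj} after iteration. For polarized $f$, sAND follows from a direct height comparison: $f^*H\sim qH$ forces $h_H(f^n x)=q^n h_H(x)+O(1)$, so $\alpha_f(x)=q=\delta_f$ outside the set $\{h_H\le C\}$, which is not Zariski dense by Northcott. For quasi-abelian $f$, passage to the Albanese closure reduces to an abelian variety, on which $Z_f$ is a finite union of translates of proper abelian subvarieties by a Silverman-type result; then $Z_f(K,d)$ is not Zariski dense by Mordell--Weil combined with the first descent lemma. For weakly $\delta$-imprimitive $f$, the lift $\widetilde{f}$ admits a $\delta$-preserving equivariant fibration, and the two descent lemmas together with induction on $\dim X$ conclude.

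The genuine obstacle is the strongly imprimitive case, in which $f^s$ admits an equivariant $\pi:X\dashrightarrow Y$ with $\dim Y=1$ and $f^s|_Y=\id_Y$. Here $\delta_{f|_Y}=1<\delta_f$, so the second descent lemma is unavailable: every fibre of $\pi$ is $f^s$-invariant and carries the entire dynamics, and $\delta_f=\delta_{f|_{X_y}}$ for general $y$. The task is therefore a \emph{uniform} version of sAND across the one-parameter family $\{f|_{X_y}: y\in Y(\overline{K})\}$, combined with a Northcott bound on $Y(K,d)$: by induction on $\dim X$, each $Z_{f|_{X_y}}(L,d)\subseteq X_y$ is not Zariski dense for any finite extension $L/K$, but to assemble these into a non-density statement on $X$ one must show that $\bigcup_{y\in Y(K,d)} Z_{f|_{X_y}}(K,d)$ lies in a proper Zariski closed subset of $X$. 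Establishing this uniformity is the principal difficulty; plausible routes include a moduli-theoretic interpretation of $h_H$ restricted to fibres, or an effective form of the structure theorem producing a common exceptional divisor that absorbs every $Z_{f|_{X_y}}(K,d)$. Without such a uniform mechanism, the strongly imprimitive case is where the sAND conjecture genuinely exceeds the reach of KSC.
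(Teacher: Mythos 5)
The statement you were asked to prove is Conjecture \ref{conj-sand} itself, which is \emph{not} proved in the paper: the authors establish only the partial result Theorem \ref{thm-sand}, namely sAND for a \emph{weakly primitive} non-isomorphic surjective endomorphism of a \emph{smooth projective threefold}, by combining the structure Theorem \ref{mainthm-B} with known cases (sAND for curves and surfaces and for quasi-abelian maps, both quoted from \cite{MMSZ23}) and the descent mechanism \cite[Lemma 2.9]{MMSZ23} playing the role your two ``descent lemmas'' play. Your proposal follows essentially this same reduction scheme, and you correctly, and honestly, isolate the strongly imprimitive case as the genuine obstruction: there $\delta_{f|_Y}=1<\delta_f$, the $\delta$-preserving descent is unavailable, and what is needed is a \emph{uniform} non-density statement across the one-parameter family of invariant fibres. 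This matches the paper exactly: the authors exclude this case by hypothesis (weak primitivity) and remark after Theorem \ref{thm-sand} that it is tied to the very hard Uniform Boundedness Conjecture \cite[Conjecture 1.9]{MMSZ23}. So your proposal does not prove the stated conjecture, but neither does the paper, and your identification of where the difficulty lies is accurate.

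Two further caveats on the parts you do sketch. First, your reduction to the four blocks invokes Conjecture \ref{mainconj} (Dream Building Blocks) in arbitrary dimension, which is itself open; it is known only for surfaces (Theorem \ref{thm-dbbc-surf}) and smooth threefolds (Theorem \ref{mainthm-B}), so even outside the strongly imprimitive case your argument is conditional in dimension $\ge 4$. Second, your treatments of the polarized and quasi-abelian cases are looser than what is actually needed: in the polarized case one should work with the canonical height $\hat h$ (a set of bounded Weil height can be Zariski dense; it is the combination of bounded height with bounded degree, via Northcott, that gives finiteness of $Z_f(K,d)$), and in the quasi-abelian case the set $Z_f(\overline{K})$ can itself be Zariski dense (e.g.\ torsion points), so the bounded-degree hypothesis is essential and the correct statement is precisely \cite[Corollary 5.9]{MMSZ23}, not a naive ``finite union of translates'' description of $Z_f$. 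With those references substituted, your outline reproduces the paper's proof of Theorem \ref{thm-sand}; the full Conjecture \ref{conj-sand} remains open.
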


\begin{theorem}\label{thm-sand}
Let $f:X\to X$ be a weakly primitive non-isomorphic surjective endomorphism of a smooth projective threefold $X$.
Then Conjecture sAND holds for $f$.
\end{theorem}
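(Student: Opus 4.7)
The plan is to mirror the reduction strategy used in the proof of Theorem \ref{mainthm-C}, with the weak primitivity assumption playing the same role there as here: it excludes the genuinely unresolved ``strongly imprimitive'' branch, which is the only obstruction separating sAND from KSC in the known literature.

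First, I would assemble the known sAND inputs: sAND holds for surjective endomorphisms of projective varieties of dimension $\le 2$, for polarized endomorphisms, and for quasi-abelian endomorphisms of arbitrary projective varieties, by the results collected in \cite{MMSZ23} (which parallel the list in Remark \ref{rmk-ksc}).

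Next, I would record the sAND analog of Lemma \ref{lem-ksc-iff} (such a statement is available in \cite{MMSZ23}): for an $f$-equivariant dominant rational map $\pi:X\dashrightarrow Y$,
\begin{itemize}
\item[(a)] if $\pi$ is generically finite, then sAND for $f$ is equivalent to sAND for $f|_Y$, and
\item[(b)] if $\delta_f=\delta_{f|_Y}$ and sAND holds for $f|_Y$, then sAND holds for $f$.
\end{itemize}
The content of (b) is that for $x\in Z_f(K,d)$ at which $\pi$ is defined, the height comparison $h_B(\pi(x))=h_{\pi^*B}(x)+O(1)$ and $\alpha_f(x)\le \delta_f$ force $\pi(x)\in Z_{f|_Y}(K,d')$ for some $d'$ depending only on $d$ and $\pi$; Zariski density of $Z_f(K,d)$ would then force Zariski density of its image in $Y$, contradicting sAND on $Y$.

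Finally, I would apply Theorem \ref{mainthm-B} together with weak primitivity. Case (3) ($\kappa(X)>0$) forces $f$ to be strongly imprimitive and is therefore excluded. In case (1) ($\kappa(X)<0$), the ``strongly imprimitive'' option is again excluded, so after iteration $f$ is either $\delta$-imprimitive, in which case the sAND reduction (b) applied to the witnessing fibration together with induction on dimension (base case: sAND on surfaces and curves) closes the argument, or polarized, for which sAND is already known. In case (2) ($\kappa(X)=0$), $f$ is either weakly $\delta$-imprimitive, in which case (a) pushes the problem to a finite cover $\widetilde X\to X$ and then (b) reduces to lower dimension, or quasi-abelian, for which sAND is again known. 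Iterating $f$ and passing to finite covers is harmless since both operations preserve $Z_f$ up to controlled modifications and since $\alpha_{f^s}=\alpha_f^s$, $\delta_{f^s}=\delta_f^s$.

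The main obstacle is the careful verification of (b) in the non-morphism setting: one must control the locus where $\pi$ is undefined and check that Zariski density of $Z_f(K,d)$ cannot be concentrated there, and in the weakly $\delta$-imprimitive step one must ensure that lifting to $\widetilde X$ does not create extra points of small arithmetic degree that destroy the sAND conclusion. Both points are addressed in the framework of \cite{MMSZ23}, and the remainder of the argument is a direct assembly of the structure theorem with these reduction lemmas.
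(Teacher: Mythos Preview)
Your proposal is correct and follows essentially the same route as the paper. The paper's own proof is a one-line reduction: it cites \cite[Theorem 4.4]{MMSZ23} for sAND on curves and surfaces, \cite[Corollary 5.9]{MMSZ23} for the quasi-abelian case, and then says that the proof of Theorem \ref{mainthm-C} goes through verbatim once Lemma \ref{lem-ksc-iff} is replaced by its sAND analogue \cite[Lemma 2.9]{MMSZ23}; your write-up is simply an unpacking of that same argument, organized case-by-case in $\kappa(X)$ rather than first passing to the strongly $\delta$-primitive situation.
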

\begin{proof}
Note that Conjecture sAND holds for projective curves and surfaces by \cite[Theorem 4.4]{MMSZ23}.
Moreover, Conjecture sAND holds for quasi-abelian $f$ by \cite[Corollary 5.9]{MMSZ23}.
Then the proof of Theorem \ref{mainthm-C} works after replacing Lemma \ref{lem-ksc-iff} by \cite[Lemma 2.9]{MMSZ23}.
\end{proof}

\begin{remark}
We cannot fully deal with the strongly imprimitive $f$ for Conjecture sAND, which is also highly related to another very hard ``Uniform Boundedness Conjecture'' (cf.~\cite[Conjecture 1.9]{MMSZ23}), see \cite[Section 7]{MMSZ23} for some special cases.
\end{remark}

In the rest of this section, we treat the following conjecture and prove Theorem \ref{mainthm-D}.

\begin{conjecture}[{\bf Zariski Dense Orbit = ZDO}]\label{conj-zdo} 
Let $f: X \to  X$ be a surjective endomorphism of a projective variety. 
Then either $f$ is strongly imprimitive or $f$ has a Zariski dense orbit, i.e., there is a closed point $x\in X$ such that the orbit
$$O_f(x):=\{f^i(x)\,|\, i\ge 0\}$$
is Zariski dense in $X$. 
\end{conjecture}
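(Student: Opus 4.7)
The plan is to prove the dichotomy by induction on $n=\dim X$, using the Dream Building Blocks Conjecture \ref{mainconj} (unconditional for $n\le 3$ by Theorems \ref{thm-dbbc-surf} and \ref{mainthm-B}) to split each inductive step into a handful of tractable cases. By a standard spreading-out argument we may assume $X$ and $f$ are defined over $\overline{\mathbb{Q}}$, since Zariski density of a forward orbit is preserved under such base change. The case $n=0$ is vacuous and $n=1$ is classical: a surjective self-map of a smooth projective curve either has finite order, hence is strongly imprimitive with $Y=X$ and $g=\id$, or admits a Zariski dense forward orbit via a counting argument producing a non-preperiodic point (whose orbit is automatically dense on a curve).

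For the inductive step, assume $f$ is not strongly imprimitive. After iteration, Conjecture \ref{mainconj} places $f$ in one of four classes, and the strongly imprimitive alternative is excluded by assumption. If $f$ is polarized, a dense orbit is produced by combining Amerik's theorem on existence of non-preperiodic algebraic points with the dense-orbit results for polarized endomorphisms (Xie, Ghioca--Tucker--Zhang). If $f$ is quasi-abelian, pass to the finite cover $\pi:A\to X$ \'etale in codimension one where $\widetilde f$ is a surjective endomorphism of an abelian variety $A$; decomposing $\widetilde f$ as the composition of an isogeny with a translation and exploiting a translation component of infinite order, one produces a Zariski dense $\widetilde f$-orbit on $A$ whose image under $\pi$ is a Zariski dense $f$-orbit on $X$.

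The substantive case is when $f$ is weakly $\delta$-imprimitive. Then there are a finite surjective cover $\pi:\widetilde X\to X$ lifting $f$ to $\widetilde f$ and an $\widetilde f$-equivariant dominant rational map $\phi:\widetilde X\dashrightarrow Y$ with $0<\dim Y<\dim X$ and $\delta_{\widetilde f|_Y}=\delta_{\widetilde f}$. Since both strong imprimitivity and Zariski density of orbits transfer between $f$ and $\widetilde f$ through the finite $\pi$, it suffices to produce a dense $\widetilde f$-orbit on $\widetilde X$. Apply induction to $g:=\widetilde f|_Y$: if $g$ is strongly imprimitive, composing with $\phi$ exhibits $\widetilde f$, and hence $f$, as strongly imprimitive, a contradiction; otherwise $g$ admits a Zariski dense orbit $O_g(y_0)\subset Y$, which we aim to lift to a Zariski dense $\widetilde f$-orbit inside $\phi^{-1}(\overline{O_g(y_0)})=\widetilde X$.

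The main obstacle is precisely this relative lifting step. The equality $\delta_{\widetilde f}=\delta_{\widetilde f|_Y}$ combined with the dynamical product formula constrains the relative dynamical degree $\delta_{\widetilde f|_\phi}$ and forbids any proper $\widetilde f$-periodic closed subvariety of $\widetilde X$ from carrying the full dynamical weight along $\phi$; one then seeks a point $x\in\phi^{-1}(y_0)$ whose forward orbit avoids every prescribed proper subvariety, a relative Zariski-dense-orbit problem of Amerik--Campana and dynamical Mordell--Lang flavour. In the threefold setting this lifting is rendered accessible by Theorem \ref{mainthm-D}, which further reduces the remaining geometry to a $\mathbb{Q}$-factorial terminal model admitting only $f$-equivariant Fano contractions, so that the fibers are low-dimensional Fano varieties and the lifting becomes tractable through the height and intersection-theoretic machinery already underlying the proof of Theorem \ref{mainthm-C}. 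Beyond dimension three this relative lifting remains open and constitutes the principal remaining difficulty.
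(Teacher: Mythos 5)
You have proposed a proof of Conjecture \ref{conj-zdo}, but this statement is a \emph{conjecture}: the paper neither proves it nor claims to. What the paper establishes is partial progress only — ZDO is quoted as known for curves (cf.\ \cite{Ame11}), for surfaces (cf.\ \cite{JXZ23}), and for quasi-abelian endomorphisms (cf.\ \cite{GS17}), and the paper's own contribution, Theorem \ref{mainthm-D}, merely \emph{reduces} the threefold case: if a weakly primitive non-isomorphic $f$ on a smooth threefold had no dense orbit, then $X$ is uniruled and every birational MMP is $f$-equivariant after iteration, so ZDO for threefolds is reduced to terminal models carrying only Fano contractions. There is therefore no proof in the paper to compare yours with; the only relevant question is whether your argument closes the conjecture, and it does not.

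The decisive gap is the one you acknowledge yourself: in the weakly $\delta$-imprimitive case you must lift a dense $g$-orbit on $Y$ to a dense $\widetilde f$-orbit on $\widetilde X$, and you concede that this relative lifting ``remains open.'' That step is the whole content of the conjecture in the fibered situation, and nothing in the paper supplies it; in particular Theorem \ref{mainthm-D} does not make the lifting ``tractable'' — it is a reduction statement, proved under the hypothesis that no dense orbit exists, and the paper stops exactly there (even for threefolds the residual Fano-fibered case is left open, as the abstract says explicitly). Several auxiliary steps are also unsound. The induction is conditional on Conjecture \ref{mainconj}, which is open in dimension $\ge 4$, so even with the lifting done you would obtain a conditional result. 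The polarized case is unsubstantiated: Amerik's theorem produces a non-preperiodic algebraic point, but in dimension $\ge 2$ the closure of its orbit may be a proper subvariety, and neither the paper nor the references you gesture at establish ZDO for arbitrary polarized endomorphisms (the paper only ever invokes ZDO for curves, surfaces and abelian varieties). Finally, the opening reduction to $\overline{\mathbb{Q}}$ is backwards: the base field is an arbitrary algebraically closed field of characteristic $0$, a model obtained by spreading out and specializing need not preserve the hypotheses (e.g.\ failure of strong imprimitivity), and a dense orbit on a special fiber over $\overline{\mathbb{Q}}$ neither exists automatically nor transfers back to the original variety; over large fields the conjecture is in fact the easier case, so nothing is gained by this reduction.
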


Under a generically finite dominant map, the property of having a Zariski dense orbit is stable.
The following lemma says that the same is true for the weakly primitive property, see \cite[Lemma 2.1]{Xie22}.
\begin{lemma}\label{lem-zdo-gf}
Consider the following equivariant dynamical systems of projective varieties
$$\xymatrix{
f \acts X\ar@{-->}[r]^{\phi} &Y\racts g
}$$
where $\phi$ is dominant and generically finite.
Then $f$ is weakly primitive (resp.~has a Zariski dense orbit) if and only if so is $g$.
\end{lemma}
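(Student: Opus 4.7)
The plan is to first reduce to the case where $\phi: X \to Y$ is a surjective generically finite morphism. Let $W$ be the normalization of the graph of $\phi$, with projections $p_X: W \to X$ (birational) and $p_Y: W \to Y$. Standard lifting arguments produce a surjective endomorphism $h: W \to W$ with $p_X \circ h = f \circ p_X$ and $p_Y \circ h = g \circ p_Y$, and both properties (Zariski dense orbit and weak primitivity) are preserved under the birational equivariant projection $p_X$. Thus we may assume $\phi$ itself is a generically finite morphism.

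For the orbit-density equivalence, the forward direction is immediate: if $\overline{O_f(x)} = X$, then dominance of $\phi$ gives $\overline{O_g(\phi(x))} = \phi(\overline{O_f(x)}) = Y$. For the converse, pick $y \in Y$ with dense $g$-orbit and any $x \in \phi^{-1}(y)$. Since $\phi$ is a proper morphism, $V := \overline{O_f(x)}$ satisfies $\phi(V) = \overline{\phi(O_f(x))} = \overline{O_g(y)} = Y$. As $\phi$ is generically finite with $\dim X = \dim Y$, any irreducible component of $V$ mapping onto $Y$ has dimension $\dim X$, hence equals $X$ by irreducibility of $X$; therefore $V = X$.

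For the weak primitivity equivalence, one direction is immediate: if $g$ is strongly imprimitive via $\tau: Y \dashrightarrow C$ with $\dim C = 1$ and $g^s|_C = \id_C$, then $\tau \circ \phi: X \dashrightarrow C$ witnesses strong imprimitivity of $f$. For the converse, suppose $f$ is strongly imprimitive via $\psi: X \dashrightarrow Z$ with $\dim Z = 1$ and $f^s|_Z = \id_Z$, and set $d = \deg \phi$. Form the symmetrized rational map $\Psi: Y \dashrightarrow \Sym^d(Z)$ sending a general $y$ with $\phi^{-1}(y) = \{x_1, \ldots, x_d\}$ to the multiset $\{\psi(x_1), \ldots, \psi(x_d)\}$. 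Since $f^s|_Z = \id_Z$, the induced action on $\Sym^d(Z)$ is trivial, and the equivariance of $(f^s, g^s)$ under $\phi$ forces $\Psi \circ g^s = \Psi$; thus $g^s$ acts trivially on $W := \overline{\Psi(Y)}$. If $\Psi$ were constant, then $\psi(X)$ would lie in a finite subset of $Z$, contradicting the dominance of $\psi$; hence $\dim W \geq 1$. Composing $\Psi$ with any dominant rational map from $W$ to a curve $C$ produces a $g^s$-equivariant map $Y \dashrightarrow C$ with $g^s|_C = \id_C$, showing $g$ is strongly imprimitive.

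The main obstacle lies in the symmetric-product construction: one must verify that $\Psi$ is a genuine rational map (requiring restriction to the open set where $\phi$ is étale of degree $d$ and $\psi$ is regular on every point of the fiber $\phi^{-1}(y)$), and then confirm that the triviality of $f^s$ on $Z$ genuinely descends to pointwise fixing of $W$ inside $\Sym^d(Z)$. Once these technical points are handled, the dimension count $\dim W \geq 1$ and the extraction of a dominated curve complete the argument.
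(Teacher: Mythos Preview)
Your argument is correct. The paper itself does not give a proof of this lemma; it only remarks that the Zariski-dense-orbit part is standard and cites \cite[Lemma 2.1]{Xie22} for the weak-primitivity part. So you are supplying a self-contained proof where the paper defers to the literature.

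A few remarks on your approach. The reduction to a morphism via the graph is fine, though note that invoking ``both properties are preserved under $p_X$'' is itself the birational special case of the lemma; this is harmless since for a birational $p_X$ the argument is strictly easier (compose the witnessing rational map with $p_X$ or $p_X^{-1}$). For the Zariski-dense-orbit direction your dimension count is clean and correct.

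The symmetric-product construction for the hard direction of weak primitivity is the interesting part. The technical points you flag can indeed be handled: the map $\Psi$ is regular on the dense open subset of $Y$ over which $\phi$ is finite \'etale and every point of the fiber lies in the domain of $\psi$ (such an open set exists because $\phi(X\setminus \mathrm{dom}\,\psi)$ has dimension strictly less than $\dim Y$). For the equivariance $\Psi\circ g^s=\Psi$, one needs that $f^s$ induces a bijection $\phi^{-1}(y)\to\phi^{-1}(g^s(y))$ for general $y$; this follows because the natural morphism $X\to X\times_{Y,g^s}Y$, $x\mapsto(f^s(x),\phi(x))$, is generically finite of degree one onto the unique top-dimensional component of the fiber product (a degree count forces that component to have degree $d$ over $Y$). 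With this in hand, your argument that $\dim W\ge 1$ and the extraction of a dominated curve go through exactly as you describe.
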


In general, the product case for Conjecture ZDO is hard. We show a special case below.
\begin{lemma}\label{lem-zdo-prod}
Let $g:Y\to Y$ and $h:Z\to Z$ be surjective endomorphisms of projective varieties.
Suppose $g$ and $h$ have Zariski dense orbits, $\deg g<\deg h$ and $\dim Z=1$.
Then $g\times h$ has a Zariski dense orbit.
\end{lemma}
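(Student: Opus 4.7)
The plan is to take $y_0 \in Y$ and $z_0 \in Z$ with $\overline{O_g(y_0)} = Y$ and $\overline{O_h(z_0)} = Z$ and show that the orbit closure $W := \overline{O_f(y_0, z_0)}$ equals $Y \times Z$, where $f = g \times h$. Suppose for contradiction $W \subsetneq Y \times Z$. Replacing $f$ by a suitable iterate $f^s$ (noting that $\deg g^s < \deg h^s$ is preserved) and $W$ by the irreducible component containing a tail of the orbit, I may assume $W$ is irreducible, normal, and $f$-invariant with $f(W) = W$. The two projections $\pi_Y : W \to Y$ and $\pi_Z : W \to Z$ are surjective because $W$ contains dense $g$- and $h$-orbits in the respective factors. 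Since $W \ne Y \times Z$ and $\dim(Y \times Z) = \dim Y + 1$, we must have $\dim W = \dim Y =: n$, so $\pi_Y$ is generically finite while $\pi_Z$ has general fibers of dimension $n-1$.

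Next I apply Stein factorization to $\pi_Z$ and iterate $f$ further so that the induced self-map descends, replacing $(Z, h)$ by a finite cover with a surjective endomorphism of the same degree as $h$; now $\pi_Z$ has irreducible general fibers (and $\deg g < \deg h$ still holds). For general $z \in Z$, set $\Gamma_z := \pi_Y(W_z) \subseteq Y$. Because $\pi_Y^{-1}(y) \cap W_z$ is at most the single point $(y,z)$, the map $\pi_Y|_{W_z} : W_z \to \Gamma_z$ is birational, so $\Gamma_z$ is an irreducible prime divisor. As $z$ varies in the connected curve $Z$, the class $[\Gamma_z] \in \N_{n-1}(Y)$ is independent of $z$; denote it by $[\Gamma]$. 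This class is nonzero, since $\Gamma \cdot H^{n-1} > 0$ for any ample Cartier divisor $H$ on $Y$.

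The $f$-invariance of $W$ translates into the set-theoretic identity
$$ g^{-1}\bigl(\Gamma_{h(z)}\bigr) = \bigcup_{z^* \in h^{-1}(h(z))} \Gamma_{z^*}, $$
because $y \in g^{-1}(\Gamma_{h(z)})$ iff $(g(y), h(z)) \in W$ iff $S_y \cap h^{-1}(h(z)) \ne \emptyset$, where $S_y := \pi_Z(\pi_Y^{-1}(y))$. For general $z$, the $\deg h$ components $\Gamma_{z^*}$ are distinct prime divisors avoiding the ramification divisor of $g$, so the scheme-theoretic pullback is reduced, giving $g^*[\Gamma] = (\deg h)\,[\Gamma]$ in $\N_{n-1}(Y)$. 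On the other hand, $g|_{\Gamma_z} : \Gamma_z \twoheadrightarrow \Gamma_{h(z)}$ is a surjective morphism of irreducible $(n-1)$-dimensional varieties of some degree $e' \ge 1$, giving $g_*[\Gamma] = e'\,[\Gamma]$. Applying $g_* \circ g^* = (\deg g)\cdot \mathrm{id}$ on $\N_{n-1}(Y)$ (the analogue of Proposition \ref{prop-f*f*}) yields $(\deg g)\,[\Gamma] = e'\,(\deg h)\,[\Gamma]$, and cancelling $[\Gamma] \ne 0$ gives $\deg g = e' \cdot \deg h \ge \deg h$, contradicting $\deg g < \deg h$.

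The main technical hurdles I expect are: (i) arranging the general fiber of $\pi_Z$ to be irreducible, which requires the Stein factorization plus normalization and a further iteration of $f$ to ensure the self-map descends to the factorization; and (ii) verifying that the pullback $g^* \Gamma_{h(z)}$ is reduced for general $z$, which is handled by genericity since the branch divisor of $g$ is a fixed proper subvariety of $Y$ and a generic $\Gamma_{z^*}$ cannot be contained in it.
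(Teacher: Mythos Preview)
Your overall strategy is right, and after the Stein-factorization step you are one line away from the paper's argument: since $\pi_Y : W \to Y$ is generically finite and $g \circ \pi_Y = \pi_Y \circ f|_W$, one has $\deg f|_W = \deg g$; and since $\pi_Z : W \to Z$ is surjective with irreducible general fibre, the $\deg h$ fibres $W_{z^*}$ for $z^* \in h^{-1}(z)$ are genuinely distinct and each surjects onto $W_z$, giving $\deg f|_W \ge \deg h$ --- contradiction. This is exactly the paper's proof. Your detour through the cycles $\Gamma_z = \pi_Y(W_z) \subset Y$, however, introduces a real gap.

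Two problems arise. First, your displayed equivalence $(g(y),h(z))\in W \Leftrightarrow S_y\cap h^{-1}(h(z))\neq\emptyset$ only holds in the direction $\Leftarrow$: from $f(W)=W$ one gets that $(y,z^*)\in W$ implies $(g(y),h(z))\in W$, but the converse would need $f^{-1}(W)=W$, which an orbit closure does not satisfy in general. So you only obtain $g^{-1}(\Gamma_{h(z)}) \supseteq \bigcup_{z^*}\Gamma_{z^*}$. This alone is survivable --- an inclusion still yields $g^*[\Gamma]\cdot H^{n-1} \ge (\deg h)\,[\Gamma]\cdot H^{n-1}$ \emph{provided} the $\Gamma_{z^*}$ are distinct and reduced in the pullback. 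But second, and fatally, the distinctness is unjustified: the assignment $z\mapsto \Gamma_z$ need not be injective once $\deg(\pi_Y|_W)>1$, since $\Gamma_{z_1}=\Gamma_{z_2}$ merely says that $W_{z_1}$ and $W_{z_2}$ are two components of $\pi_Y^{-1}(\Gamma_{z_1})$, perfectly compatible with $\pi_Y$ having degree $\ge 2$. When the $\Gamma_{z^*}$ collide, the count $(\deg h)[\Gamma]$ is lost. The remedy is to run your $f_*f^*=(\deg f)\,\id$ argument on $W$ itself, using the honestly distinct fibres $W_{z^*}$ in place of their images $\Gamma_{z^*}$; this collapses immediately to the degree comparison above.
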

\begin{proof}
Let $X:=Y\times Z$ and $f:=g\times h$.
Let $y\in Y$ and $z\in Z$ such that $\overline{O_g(y)}=Y$ and $\overline{O_h(z)}=Z$.
Then $\overline{O_{g\times h}(y,z)}$ dominates both $Y$ and $Z$.
By \cite[Lemma 2.7]{MMSZ23}, we may assume $P:=\overline{O_{g\times h}(y,z)}$ is irreducible and $f$-invariant after suitable iteration and replacing $(y,z)$ by $(g(y), h(z))$.
If $P\neq X$, then $P\to Y$ is generically finite and hence 
$$\deg g<\deg h\le \deg f|_P=\deg g,$$
a contradiction.
So $P=X$ and the lemma is proved. 
\end{proof}

\begin{lemma}\label{lem-zdo-reldim1}
Consider the following equivariant dynamical systems of normal projective varieties
$$\xymatrix{
f \acts X\ar@{-->}[r]^{\phi} &Y\racts g
}$$
satisfying the following:
\begin{enumerate}
\item $\phi$ is dominant,
\item $X$ is $\Q$-factorial and $\dim X=\dim Y+1$,
\item $\deg f>\deg g=1$, and
\item $g$ has a Zariski dense orbit.
\end{enumerate}
Then $f$ has a Zariski dense orbit.
\end{lemma}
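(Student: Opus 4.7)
The plan is to construct $x_0 \in X$ with Zariski dense $f$-orbit by lifting a Zariski dense $g$-orbit on $Y$, exploiting the relative dimension one structure. First I would replace $\phi$ by its Stein factorization (after normalizing the graph of $\phi$) to assume $\phi : X \to Y$ is a morphism with connected fibres; this is an $f$-equivariant modification that does not affect the ZDO property, by a direct orbit-pushforward argument together with Lemma \ref{lem-zdo-gf} for the finite part. Since $\deg g = 1$, Lemma \ref{lem-reduced} then yields $R_f^v = 0$, so the ramification divisor is $\phi$-horizontal and the general fibre $X_y := \phi^{-1}(y)$ is a smooth integral curve on which $f$ restricts to a degree-$\deg f$ finite covering $X_y \to X_{g(y)}$.

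Next I would pick $y_0 \in Y$ with $\overline{O_g(y_0)} = Y$ whose entire forward $g$-orbit avoids the discriminant locus of $\phi$. For a candidate $x_0 \in X_{y_0}$, set $P := \overline{O_f(x_0)}$; then $\phi(P) \supseteq \overline{O_g(y_0)} = Y$. If $P = X$ we are done, so assume $P \subsetneq X$. The iteration-and-shift trick used in the proof of Lemma \ref{lem-zdo-prod} (via \cite[Lemma 2.7]{MMSZ23}) reduces us, after replacing $f$ by an iterate and $x_0$ by some $f^j(x_0)$, to the case that $P$ is irreducible and $f$-invariant. Since $P \subsetneq X$ dominates $Y$ and $\dim X = \dim Y + 1$, one has $\dim P = \dim Y$ and $\phi|_P : P \to Y$ is generically finite. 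The commutative diagram
\[
\xymatrix{
P \ar[r]^{f|_P} \ar[d]_{\phi|_P} & P \ar[d]^{\phi|_P} \\
Y \ar[r]^{g} & Y
}
\]
combined with $\deg g = 1$ and generic finiteness of $\phi|_P$ forces $\deg f|_P = 1$, so $f|_P$ is an automorphism of $P$.

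The remaining and most delicate step is to choose $x_0 \in X_{y_0}$ avoiding every such ``bad'' irreducible $f^s$-invariant $P \subsetneq X$ generically finite over $Y$. The plan is to argue that this collection $\mathcal{P}$ is at most countable: stratifying by the degree of $P$ with respect to a fixed ample divisor on $X$, the locus in each relevant Chow component consisting of irreducible subvarieties dominant over $Y$ and fixed by $f^s_*$ must be zero-dimensional, for a positive-dimensional such family would sweep out $X$ and produce a second $f$-equivariant dominant rational map from $X$ to a positive-dimensional base transverse to $\phi$, incompatible with the relative dimension being exactly $1$ and with $\Q$-factoriality in codimension two. Each $P \in \mathcal{P}$ then meets the curve $X_{y_0}$ in only finitely many points, so $X_{y_0} \setminus \bigcup_{P \in \mathcal{P}} P$ is nonempty whenever the base field is uncountable, and any point therein has Zariski dense $f$-orbit. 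The main obstacle is precisely this Chow-theoretic rigidity step (ruling out positive-dimensional families of $f^s$-invariant subvarieties generically finite over $Y$) together with transferring the avoidance conclusion to countable base fields such as $\overline{\mathbb{Q}}$, where one substitutes a specialization or Hilbert-irreducibility argument starting from the uncountable case.
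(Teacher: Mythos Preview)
Your setup through the reduction to an irreducible $f$-invariant divisor $P$ generically finite over $Y$ with $\deg f|_P = 1$ is sound, and you have correctly identified where the difficulty lies. The gap is real, however, and your proposed justification for the Chow-rigidity step does not go through. Suppose a positive-dimensional irreducible family $T$ of $f^s$-invariant prime divisors generically finite over $Y$ existed. The induced rational map $\psi : X \dashrightarrow T$ would be $f^s$-equivariant with $f^s|_T = \id_T$, and $\phi \times \psi : X \dashrightarrow Y \times T$ would be generically finite; but then by Lemma~\ref{lem-zdo-gf} the map $f^s$ has a dense orbit if and only if $g^s \times \id_T$ does, and the latter visibly has none. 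So the existence of such a family is exactly equivalent to the \emph{failure} of the lemma's conclusion; nothing about relative dimension $1$ or $\Q$-factoriality rules it out a priori, and you cannot invoke its non-existence without circularity. The specialisation transfer to countable base fields is a second unresolved obstacle.

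The paper's proof avoids both issues by arguing by contradiction and never bounding the bad locus. Assuming $f$ has no dense orbit, one first shows that every $f$-periodic prime divisor $P$ dominating $Y$ is actually $f^{-1}$-periodic: if not, an infinite backward chain $P_0, P_1, \ldots$ of distinct divisors with $f^*P_i \sim_{\Q} (\deg f) P_i$ is forced (via an eigenvalue argument on $\NS(P)$) to be pairwise disjoint, whence $\kappa(X,P) = 1$ by \cite{BPS16}, and the resulting $f$-Iitaka fibration $X \dashrightarrow Z$ together with $\phi$ gives a generically finite map to $Y \times Z$ with $f|_Z$ being $(\deg f)$-polarized, so Lemmas~\ref{lem-zdo-gf} and~\ref{lem-zdo-prod} produce a dense orbit for $f$. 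With this claim in hand, one iteratively lifts a dense $g$-orbit, each time avoiding the previously found $f^{-1}$-periodic divisors (this uses that $\phi^{-1}(y)$ is an infinite set, not a countability bound), and thereby manufactures infinitely many distinct $f^{-1}$-periodic divisors $Q_i$ dominating $Y$. Eventually some $Q_i$ lies outside $\Supp R_{f^s}$ for every $s$, so $f^*Q_i = Q_i$ with coefficient $1$, giving $\deg f = \deg f|_{Q_i} = \deg g = 1$, the desired contradiction. Note that the paper works with the rational map $\phi$ as given and uses $\Q$-factoriality of $X$ directly (to make the $P_i$ and $Q_i$ $\Q$-Cartier); your initial graph-normalisation step would lose this and would itself need care.
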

\begin{proof}
Note that Conjecture ZDO holds for curves (cf.~\cite[Corollary 9]{Ame11}).
So we may assume $\dim Y>0$.
Suppose the contrary that $f$ has no Zariski dense orbit.

We claim that any $f$-periodic prime divisor $P$ of $X$ dominating $Y$ is $f^{-1}$-periodic.
After iteration, we may assume $f(P)=P$.
Suppose the contrary that $P$ is not $f^{-1}$-periodic.
Then we can find an infinite sequence of prime divisors $P_i$ with $i\ge 0$ such that $f(P_{i+1})=P_i$, $P_i\neq P_j$ for any $i\neq j$, and $P_0=P$.
Note that $\deg f|_P=\deg g=1$ by the assumption (3).
By Proposition \ref{prop-f*f*}, we have $f^*P\sim_{\Q} (\deg f) P$, and hence
$f^*P_i\sim_{\Q} (\deg f) P_i$ for all $i$.
Suppose $P_i\cap P_j\neq\emptyset$ for some $i>j\ge 0$.
Then $P_{i-j}\cap P\neq \emptyset$ and hence $P_{i-j}|_P$ is a non-zero effective divisor on $P$.
Since $f|_P$ is invertible, for any eigenvalue $\lambda$ of $(f|_P)^*|_{\NS(P)}$, $1/\lambda$ is an algebriac integer.
In particular, $(f|_P)^*|_{\NS(P)}$ has no rational eigenvalue $>1$. 
However, $(f|_P)^*(P_{i-j}|_P)=(\deg f) P_{i-j}|_P$, a contradiction.
So $P_i\cap P_j=\emptyset$ for any $i\neq j$.
By \cite[Theorem 1.1]{BPS16}, $\kappa(X, P)=1$.
Let $\psi:X\dashrightarrow Z$ be the $f$-Iitaka fibration of $P$.
Then $\dim Z=1$ and the induced map $X\dashrightarrow Y\times Z$ is generically finite and dominant.
By Theorem \ref{thm-fiitaka-polarized}, $f|_Z$ is $(\deg f)$-polarized and hence $f|_Z$ has a Zariski dense orbit (cf.~\cite[Corollary 9]{Ame11}).
By Lemmas \ref{lem-zdo-gf}, \ref{lem-zdo-prod} and the assumption (4), $f$ has a Zariski dense orbit, a contradiction.
The claim is proved.

Let $y\in Y$ such that $O_g(y)$ is Zariski dense in $Y$.
Note that $\dim X=\dim Y+1$ and $X$ is normal.
Then $\phi^{-1}(y)$, the set of well-defined points of $\phi$ mapping to $y$, is non-empty and consists of infinitely many points.
Let $x\in \phi^{-1}(y)$.
Then $\overline{O_f(x)}$ dominates $Y$, is not equal to $X$, and contains an $f$-periodic prime divisor $Q_1$ dominating $Y$ (cf.~\cite[Lemma 2.7]{MMSZ23}).
By the previous claim, $Q_1$ is $f^{-1}$-periodic.
Then $\phi^{-1}(y)\backslash \bigcup\limits_{i=0}^{+\infty} f^{-i}(Q_1)\neq \emptyset$ after replacing $y$ by $g^s(y)$ for some $s>0$.
Let $x_2\in \phi^{-1}(y)\backslash \bigcup\limits_{i=0}^{+\infty} f^{-i}(Q_1)$.
We have $O_f(x_2)\cap Q_1=\emptyset$.
So there is another $f$-periodic prime divisor $Q_2$ dominating $Y$.
Repeating this process, we obtain an infinite sequence of different $f^{-1}$-periodic prime divisors $Q_i$ dominating $Y$.
Let $\Sigma_0$ be the union of $f^{-1}$-periodic prime divisors in $\Supp R_f$ and $\Sigma:=\bigcup\limits_{j=0}^{+\infty} f^{-j}(\Sigma_0)$.
Then $\Sigma$ is still a divisor.
If $Q_i\subseteq \Supp R_{f^s}$ for some $s>0$, then $Q_i\subseteq \Sigma$.
So when $i\gg 1$, we have $Q_i\not\subseteq \Supp R_{f^s}$ for any $s>0$.
In particular, we have $f^*Q_i=Q_i$ for some $i$ after iteration.
Then $\deg g=\deg f|_{Q_i}=\deg f>1$, a contradiction.
\end{proof}

Now we prove the birational EMMP: the ZDO version (compare with Theorem \ref{mainthm-A}).

\begin{proof}[Proof of Theorem \ref{mainthm-D}]
Since $f$ is weakly primitive, $\kappa(X)\le 0$.
Note that Conjecture ZDO holds when $f$ is quasi-abelian (cf.~\cite[Theorem 1.2]{GS17}).

Suppose $\kappa(X)=0$. 
We recall the proof of Theorem \ref{mainthm-B}.
After an $f$-equivariant \'etale cover, $X\cong S\times C$ where $S$ is a K3 surface and $C$ is an elliptic curve.
Further, $f=g\times h$ with $g:S\to S$ an automorphism and $h:C\to C$ of $\deg h>1$.
Since $f$ is weakly primitive, so are $g$ and $h$.
Since Conjecture ZDO holds for projective curves and surfaces (cf.~\cite[Corollary 9]{Ame11} and \cite[Theorem 1.9]{JXZ23}),
$g$ and $h$ have Zariski dense orbits.
By Lemma \ref{lem-zdo-prod}, $f$ has a Zariski dense orbit, a contradiction.
So $X$ is uniruled.

Suppose the contrary that there is a birational MMP
$$\xymatrix{
X=X_1\ar[r]^{\pi_1}& X_2\ar@{-->}[r]^{\pi_2} &\cdots\ar@{-->}[r]^{\pi_{n-1}} &X_{n}\ar@{-->}[r]^{\pi_{n}} &X_{n+1}
}$$
such that $\pi_i$ is $f_i:=f|_{X_i}$-equivariant after iteration for $i<n$ and $\pi_n$ is not $f_n$-equivariant even after any iteration.
By Theorem \ref{thm-emmp-flip}, $\pi_n$ is a divisorial contraction.
Let $E$ be the strict transform of the exceptional divisor of $\pi_n$ in $X$.
Note that $E$ is not $f^{-1}$-periodic by \cite[Lemma 6.2]{MZ18} and $\kappa(X,E)=0$.
Let $\phi:X\dashrightarrow Y$ be the $f$-Iitaka fibration of $E$.
By Theorem \ref{thm-fiitaka2}, $\dim Y=2$ and $\deg f|_Y=1$.
Since $f$ is weakly primitive, so is $g$.
Note that ZDO holds for projective surfaces by \cite[Theorem 1.9]{JXZ23}.
So $g$ has a Zariski dense orbit.
Then we get a contradiction to Lemma \ref{lem-zdo-reldim1}. 
\end{proof}


\begin{thebibliography}{MMS+22}

\bibitem[Ame11]{Ame11}
E. Amerik, 
Existence of non-preperiodic algebraic points for a rational self-map of infinite order, Math. Res. Lett., \textbf{18}(2):251-256, 2011.

\bibitem[Ben85]{Ben85}
X. Benveniste,
Sur le cone des 1-cycles effectifs en dimension 3
Math. Ann. \textbf{272} (1985), 257-265.

\bibitem[BCHM10]{BCHM10}
C. Birkar, P. Cascini, C. D. Hacon and J. McKernan,
Existence of minimal models for varieties of log general type. J. Amer. Math. Soc., \textbf{23}(2):405-468, 2010.


\bibitem[BPS16]{BPS16}
F.~Bogomolov, A.~Pirutka and A.~Silberstein, Families of disjoint divisors on varieties, Eur. J. Math. \textbf{2} (2016), no. 4, 917--928.


\bibitem[BH14]{BH14}
A.~Broustet and A.~H\"oring,
Singularities of varieties admitting an endomorphism,
Math. Ann. \textbf{360} (2014), no. 1-2, 439-456.


\bibitem[CMZ20]{CMZ20}
P.~Cascini, S.~Meng and D.-Q.~Zhang,
Polarized endomorphisms of normal projective threefolds in arbitrary characteristic,
Math. Ann. \textbf{378} (2020), no. 1-2, 637-665.

\bibitem[CLO22]{CLO22}
J. A. Chen, H. Lin and K. Oguiso,
On the Kawaguchi--Silverman Conjecture for birational automorphisms of irregular varieties,
\href{https://arxiv.org/abs/2204.09845v2}{arxiv:2204.09845}


\bibitem[Deb01]{Deb01}
O.~Debarre,
Higher-dimensional algebraic geometry, Universitext. Springer-Verlag, New York, 2001.

\bibitem[DS04]{DS04}
T.-C. Dinh and N. Sibony, Groupes commutatifs d'automorphismes d'une vari\'{e}t\'{e} k\"{a}hl\'{e}rienne, Duke Math. J. \textbf{123}(2004), no. 2, 311--328.

\bibitem[Fak03]{Fak03}
N.~Fakhruddin,
Questions on self-maps of algebraic varieties,
J. Ramanujan Math. Soc., \textbf{18}(2):109-122, 2003.

\bibitem[Fuj02]{Fuj02}
Y. Fujimoto, 
Endomorphisms of smooth projective $3$-folds with nonnegative
Kodaira dimension, 
Publ. RIMS. Kyoto Univ. \textbf{38} (2002), 33-92.

\bibitem[FN07]{FN07}
Y. Fujimoto and N. Nakayama,
Endomorphisms of smooth projective 3-folds with nonnegative Kodaira dimension. II, 
J. Math. Kyoto Univ. \textbf{47} (2007), no. 1, 79-114. 


\bibitem[GS17]{GS17}
D. Ghioca and T. Scanlon, 
Density of orbits of endomorphisms of abelian varieties, Trans. Amer. Math. Soc. \textbf{369} (2017), no. 1, 447-466.


\bibitem[GKKP11]{GKKP11}
D. Greb, S. Kebekus, S. J. Kov\'acs and T. Peternell, 
Differential forms on log canonical spaces,
Publ. Math. Inst. Hautes \'Etudes Sci.(2011), no.114, 87-169.

\bibitem[Gur03]{Gur03}
R. V. Gurjar, On ramification of self-maps of {$\bold P^2$}, J. Algebra, \textbf{259} (2003), no. 1, 191--200.

\bibitem[HS00]{HS00}
M.~Hindry and J.~Silverman,
Diophantine geometry: An introduction,
Graduate Texts in Mathematics, \textbf{201}. Springer-Verlag, New York, 2000.

\bibitem[JSXZ21]{JSXZ21}
J. Jia, T. Shibata, J. Xie and D.-Q. Zhang,
Endomorphisms of quasi-projective varieties: towards Zariski dense orbit and Kawaguchi-Silverman conjectures,
Math. Res. Lett. (to appear), 
\href{https://arxiv.org/abs/2104.05339}{arXiv:2104.05339}

\bibitem[JXZ23]{JXZ23}
J. Jia, J. Xie and D.-Q. Zhang,
Surjective endomorphisms of projective surfaces: the existence of infinitely many dense orbits, Math. Z. \textbf{303} (2023), no. 2, Paper No. 39, 23 pp. 

\bibitem[KS14]{KS14}
S.~Kawaguchi and J.~Silverman,
Examples of dynamical degree equals arithmetic degree,
Michigan Math. J. \textbf{63}(2014), no. 1, 41-63.

\bibitem[KS16a]{KS16a}
S.~Kawaguchi and J.~Silverman,
Dynamical canonical heights for Jordan blocks, arithmetic degrees of orbits, and nef canonical heights on abelian varieties,
Trans. Amer. Math. Soc. \textbf{368} (2016), no. 7, 5009-5035.

\bibitem[KS16b]{KS16b}
S.~Kawaguchi and J.~Silverman,
On the dynamical and arithmetic degrees of rational self-maps of algebraic varieties,
J. Reine Angew. Math. \textbf{713} (2016), 21-48.


\bibitem[KM98]{KM98}
J.~Koll\'ar and S.~Mori,
Birational geometry of algebraic varieties,
Cambridge Tracts in Math.,
\textbf{134} Cambridge Univ. Press, 1998.

\bibitem[Les18]{Les18}
J. Lesieutre, 
Some constraints of positive entropy automorphisms of smooth threefolds, 
Ann. Sci. \'Ec. Norm. Sup\'er. \textbf{51} (2018) 1507-1547.

\bibitem[Les21]{Les21}
J. Lesieutre,
Tri-Coble surfaces and their automorphisms,
J. Mod. Dyn. \textbf{17} (2021), 267-284. 

\bibitem[LS21]{LS21}
J.~Lesieutre and M.~Santriano,
Canonical heights on hyper-K\"{a}hler varieties and the Kawaguchi--Silverman Conjecture,
Int. Math. Res. Not. \textbf{2021}, no. 10, 7677--7714. 


\bibitem[Mat20a]{Mat20a}
Y.~Matsuzawa,
Kawaguchi--Silverman Conjecture for endomorphisms on several classes of varieties,
Adv. Math. \textbf{366} (2020), 107086, 26 pp.

\bibitem[Mat20b]{Mat20b}
Y.~Matsuzawa,
On upper bounds of arithmetic degrees,
Amer. J. Math. \textbf{142} (2020), no. 6, 1797--1821.


\bibitem[MMSZ23]{MMSZ23}
Y.~Matsuzawa, S.~Meng, T.~Shibata and D.-Q.~Zhang,
Non-density of points of small arithmetic degrees,
J. Geom. Anal. \textbf{33} (2023), no. 4, Paper No. 112, 41 pp.

\bibitem[MMS+22]{MMS+22}
Y.~Matsuzawa, S.~Meng, T.~Shibata, D.-Q.~Zhang and G.~Zhong,
Invariant subvarieties with small dynamical degree,
Int. Math. Res. Not., 2022, \textbf{2022}(15): 11448-11483.


\bibitem[MSS18]{MSS18}
Y.~Matsuzawa, K.~Sano and T.~Shibata,
Arithmetic degrees and dynamical degrees of endomorphisms on surfaces,
Algebra Number Theory {\textbf{12}} (2018), no.~7, 1635--1657.

\bibitem[MW22]{MW22}
Y.~Matsuzawa and L. Wang,
Arithmetic degrees and Zariski dense orbits of cohomologically hyperbolic maps,
\href{https://arxiv.org/abs/2212.05804v1}{arxiv:2212.05804}

\bibitem[MY22]{MY22}
Y.~Matsuzawa and S. Yoshikawa,
Kawaguchi-Silverman conjecture for endomorphisms on rationally connected varieties admitting an int-amplified endomorphism, 
Math. Ann. \textbf{382} (2022), no. 3-4, 1681-1704.

\bibitem[Men20]{Men20}
S.~Meng,
Building blocks of amplified endomorphisms of normal projective varieties,
Math.~Z. \textbf{294} (2020), no. 3, 1727--1747.

\bibitem[Men22]{Men22}
S.~Meng,
Log Calabi-Yau structure of projective threefolds admitting polarized endomorphisms, Int. Math. Res. Not. IMRN, (to appear), \href{https://doi.org/10.1093/imrn/rnac308}{doi:10.1093/imrn/rnac308}

\bibitem[Men23]{Men23}
S.~Meng,
On endomorphisms of projective varieties with numerically trivial canonical divisors,
Internat. J. Math. \textbf{34} (2023), no. 1, Paper No. 2250093, 27 pp.

\bibitem[MZ18]{MZ18}
S.~Meng and D. -Q.~Zhang,
Building blocks of polarized endomorphisms of normal projective varieties,
Adv. Math. \textbf{325} (2018), 243-273.

\bibitem[MZ20a]{MZ20a}
S.~Meng and D. -Q.~Zhang,
Semi-group structure of all endomorphisms of a projective variety admitting a polarized endomorphism,
Math. Res. Lett. \textbf{27} (2020), no. 2, 523-550.

\bibitem[MZ20b]{MZ20b}
S.~Meng and D. -Q.~Zhang,
Normal projective varieties admitting polarized or int-amplified endomorphisms,
Acta Math. Vietnam. \textbf{45} (2020), no. 1, 11-26.


\bibitem[MZ22]{MZ22}
S.~Meng and D. -Q.~Zhang,
Kawaguchi-Silverman conjecture for surjective endomorphisms,
Doc. Math. \textbf{27}(2022), 1605-1642.

\bibitem[MZg22]{MZg22}
S.~Meng and G. Zhong,
Rigidity of rationally connected smooth projective varieties from dynamical viewpoints, 
Math. Res. Lett. (to appear) \href{https://arxiv.org/abs/2005.03983}{arxiv:2005.03983}.

\bibitem[Mor82]{Mor82}
S. Mori, 
Threefolds whose canonical bundles are not numerically effective, 
Ann. of Math. (2) \textbf{116} (1982), no. 1, 133-176.

\bibitem[Mor88]{Mor88}
S. Mori,
Flip theorem and the existence of minimal models for 3-folds.
J. Amer. Math. Soc. \textbf{1}(1988), no.1, 117-253.

\bibitem[Nak02]{Nak02}
N.~Nakayama,
Ruled surfaces with non-trivial surjective endomorphisms,
Kyushu J. Math. \textbf{56} (2002), no. 2, 433-446.



\bibitem[Nak10]{Nak10}
N.~Nakayama,
Intersection sheaves over normal schemes,
J. Math. Soc. Japan \textbf{62} (2010), no. 2, 487-595; also \href{http://www.kurims.kyoto-u.ac.jp/preprint/file/RIMS1614.pdf}{RIMS1614}.


\bibitem[Nak17]{Nak17}
N.~Nakayama,
A variant of Shokurov's criterion of toric surface,
Algebraic varieties and automorphism groups, 287-392,
Adv. Stud. Pure Math., \textbf{75}. Math. Soc. Japan, Tokyo, 2017.

\bibitem[Nak20]{Nak20}
N.~Nakayama,
On the structure of normal projective surfaces admitting non-isomorphic surjective endomorphisms, RIMS Preprints \textbf{1934} (2020).


\bibitem[NZ09]{NZ09}
N.~Nakayama and D.-Q.~Zhang,
Building blocks of \'etale endomorphisms of complex projective manifolds,
Proc. Lond. Math. Soc. (3) \textbf{99} (2009), no. 3, 725-756. 


\bibitem[NZ10]{NZ10}
N.~Nakayama and D.-Q.~Zhang,
Polarized endomorphisms of complex normal varieties,
Math. Ann. \textbf{346} (2010), no. 4, 991-1018.


\bibitem[Sho96]{Sho96}
V.~Shokurov,
3-fold log models,
Algebraic geometry, 4. J. Math. Sci. \textbf{81} (1996), no. 3, 2667-2699.

\bibitem[Sil17]{Sil17}
J.~Silverman,
Arithmetic and dynamical degrees on abelian varieties,
J. Th\'eor. Nombres Bordeaux \textbf{29} (2017), no. 1, 151-167.

\bibitem[Uen75]{Uen75}
K.~Ueno,
Classification theory of algebraic varieties and compact complex spaces, Lecture Notes in Mathematics, Vol. \textbf{439}, Springer-Verlag, Berlin, 1975, Notes written in collaboration with P. Cherenack.

\bibitem[Xie22]{Xie22}
J. Xie,
The existence of Zariski dense orbits for endomorphisms of projective surfaces (with an appendix in collaboration with Thomas Tucker),
J. Amer. Math. Soc. (published online) (2022).


\bibitem[Zha14]{Zha14}
D.-Q.~Zhang, 
Invariant hypersurfaces of endomorphisms of projective varieties, 
Advances in Mathematics, \textbf{252} (2014) 185-203.

\bibitem[Zha16]{Zha16}
D.-Q.~Zhang, 
$n$-dimensional projective varieties with the action of an abelian group of rank $n - 1$,
Trans. Amer. Math. Soc. \textbf{368} (2016), no. 12, 8849-8872.


\end{thebibliography}
\end{document}